\tikzset{
  fl/.style = {path fading=fade l},
  fr/.style = {path fading=fade r},
  wh/.style = {draw=none,fill=none},
  Gc/.style = {draw=none,circle split, inner sep=0pt,minimum size=8pt,rotate=90,path picture={\draw[pattern=#1] (0,0.07) circle (1.5pt); }},
  Gd/.style = {draw=none,circle split, inner sep=0pt,minimum size=8pt,rotate=270,path picture={\draw[pattern=#1] (0,0.07) circle (1.5pt); }},
  Gf/.style={draw=none,circle split, inner sep=1pt,minimum size=8pt,rotate=90},
  G/.style={circle,draw,minimum size=8pt,inner sep=1pt,font=\tiny},
  xx/.style={circle,fill,draw,inner sep=0pt,minimum size=3pt},
  ab/.style={circle,fill,draw=none,inner sep=0pt,minimum size=0pt,as=},
  Gend/.style={inner sep=0pt,minimum size=3pt},
  r/.style={draw=red},
  o/.style={draw=black,fill=white},
  lb/.style args={#1}{label=below:#1},
  lt/.style args={#1}{label=above:#1},
  lr/.style args={#1}{label=right:#1},
  ll/.style args={#1}{label=left:#1},
  g/.style={postaction={decorate,decoration={
        markings,
        mark=at position .7 with {\arrow[#1]{Stealth[sep=-3pt]}}
      }}},
  s/.style={densely dashed,postaction={decorate,decoration={
        markings,
        mark=at position .7 with {\arrow[#1]{Stealth[sep=-3pt]}}
      }}},
  R/.style ={draw=lightgray, thick,densely dotted,edge node={node[above=-7pt] {\color{gray} $R$ }},anchor=south,pos=0.5,postaction={decoration={
        markings,
        mark=at position .7 with {\arrow[#1]{Stealth[sep=-3pt]}}
      },decorate}},
we/.style args ={#1}{draw=lightgray, thick,densely dotted,edge node={node[above=-7pt] {\color{gray} #1 }},anchor=south,pos=0.5},
IEg/.style ={draw=lightgray, thick,densely dotted},
  S/.style ={draw=lightgray, thick,densely dotted,edge node={node[above=-7pt] {\color{gray}$S$}},anchor=south,pos=0.5},  
  T/.style ={draw=lightgray, thick,densely dotted,edge node={node[above=-7pt] {\color{gray}$T$}},anchor=south,pos=0.5,postaction={decoration={
        markings,
        mark=at position .7 with {\arrow[#1]{Stealth[sep=-3pt]}}
      },decorate}},
  rpd/.style ={draw=lightgray,thick,densely dotted,edge node={node[above=-7pt] {\color{red}$\partial$}},anchor=south,pos=0.5,postaction={decoration={
        markings,
        mark=at position .7 with {\arrow[#1]{Stealth[sep=-3pt]}}
      },decorate}},
  pd/.style ={draw=lightgray,thick,densely dotted,edge node={node[above=-7pt] {\color{gray}$\partial$}},anchor=south,pos=0.5,postaction={decoration={
        markings,
        mark=at position .7 with {\arrow[#1]{Stealth[sep=-3pt]}}
      },decorate}},
  pdr/.style ={g,draw=lightgray,thick,densely dotted,edge node={node[above=-7pt] {\color{red}$\partial$}},anchor=south,pos=0.5},
  pdkr/.style args={#1}{g,draw=lightgray,thick,densely dotted,edge node={node[above=-7pt] { \color{gray}$\partial^{#1}$\color{red}$\partial$}},anchor=south,pos=0.5},
  pdh/.style ={g,draw=lightgray,thick,densely dotted,edge node={node[above=-7pt] {\color{gray}$\widehat\partial$}},anchor=south,pos=0.5},
  pdk/.style args={#1}{g,draw=lightgray,thick,densely dotted,edge node={node[above=-7pt] {\color{gray}$\partial^{#1}$}},anchor=south,pos=0.5},
  eq/.style = {double,postaction={decoration={name=none}}},
  inl/.style args={#1}{initial,initial where=left, initial text=#1,initial distance=10pt},
  pdn/.style args={#1}{g,draw=lightgray,thick,densely dotted,edge node={node[above=-7pt] {\color{gray}$\partial^{#1}$}},anchor=south,pos=0.5},
  inr/.style args={#1}{initial,initial where=right, initial text=#1,initial distance=10pt},
  int/.style args={#1}{initial,initial where=above, initial text=#1,initial distance=10pt},
  inb/.style args={#1}{initial,initial where=below, initial text=#1,initial distance=10pt},
  lpf/.style args={#1}{initial,initial where=left, initial text=$\vp\vf$,initial distance=10pt},
  tpf/.style args={#1}{initial,initial where=above, initial text=$\vp\vf$,initial distance=10pt},
  bpf/.style args={#1}{initial,initial where=below, initial text=$\vp\vf$,initial distance=10pt},
  rpf/.style args={#1}{initial,initial where=right, initial text=$\vp\vf$,initial distance=10pt},
  l1/.style args={#1}{initial,initial where=left, initial text=$\bm 1$,initial distance=10pt},
  t1/.style args={#1}{initial,initial where=above, initial text=$\bm 1$,initial distance=10pt},
  b1/.style args={#1}{initial,initial where=below, initial text=$\bm 1$,initial distance=10pt},
  r1/.style args={#1}{initial,initial where=right, initial text=$\bm 1$,initial distance=10pt},
  lm/.style args={#1}{initial,initial where=left, initial text=$\vm$,initial distance=10pt},
  tm/.style args={#1}{initial,initial where=above, initial text=$\vm$,initial distance=10pt},
  bm/.style args={#1}{initial,initial where=below, initial text=$\vm$,initial distance=10pt},
  rm/.style args={#1}{initial,initial where=right, initial text=$\vm$,initial distance=10pt},
  inr/.style args={#1}{initial,initial where=right, initial text=#1,initial distance=10pt},
  B/.style args={#1}{thick,draw=lightgray,decorate,decoration={snake,amplitude=.4mm,segment length=.8mm,post length=1.4mm},edge node={node[above=-7pt] {\color{gray} #1 }},anchor=south,pos=0.5},
  br/.style = {bend right},
  b0/.style = {bend left=0},
  bl/.style = {bend left},
  glb/.style = {looseness=20,in =220, out=320},
  gll/.style = {looseness=20,in =130, out=230},
  glr/.style = {looseness=20,in =310, out=50},
  glt/.style = {looseness=20,in =40, out=140},
  gm/.style={postaction={decorate,decoration={
        markings,
        mark=at position .3 with {\arrow[#1]{Diamond[open,sep=-3pt,width=5pt]}}
      }}},
}
\newcommand\sGraph[1]{
\begin{tikzpicture}[grow=right,baseline={([yshift=-2pt]current bounding box.center)},font=\footnotesize,>=Stealth]% draw the graph
  \graph[simple necklace layout,components go down left aligned,nodes={draw,circle,as=,minimum size=3pt,inner sep=0pt,fill},node distance = 40pt,component sep=25pt,edges=g]{ #1  };
  \end{tikzpicture}
}
\newcommand\ssGraph[1]{
\begin{tikzpicture}[subgraph text none,grow=right,baseline={([yshift=-2pt]current bounding box.center)},font=\footnotesize,>=Stealth]% draw the graph
  \graph[spring electrical layout,components go down left aligned,nodes={draw,circle,as=,minimum size=3pt,inner sep=0pt,fill},node distance = 15pt,component sep=15pt]{ #1  };
  \end{tikzpicture}
}
\tikzset{circle split part fill/.style  args={#1}{%
 alias=tmp@name, % Jake's idea !!
  postaction={%
    insert path={
     \pgfextra{% 
     \pgfpointdiff{\pgfpointanchor{\pgf@node@name}{center}}%
                  {\pgfpointanchor{\pgf@node@name}{east}}%            
     \pgfmathsetmacro\insiderad{\pgf@x}
      \fill[white,fill opacity=0] (\pgf@node@name.base) ([xshift=-\pgflinewidth]\pgf@node@name.east) arc
                          (0:180:\insiderad-\pgflinewidth)--cycle;
      \fill[fill=white,preaction={fill, white},pattern=#1] (\pgf@node@name.base) ([xshift=\pgflinewidth]\pgf@node@name.west)  arc
                           (180:360:\insiderad-\pgflinewidth)--cycle;   
      \draw[line width=0.4pt] (\pgf@node@name.base) ([xshift=\pgflinewidth]\pgf@node@name.west)  arc
                           (180:360:\insiderad-\pgflinewidth)--cycle;                                %  \end{bscope}   
         }}}}}  
\tikzset{my loop/.style =  {to path={
  \pgfextra{}
  [looseness=6,min distance=4mm]
  \tikz@to@curve@path},font=\sffamily\small
  }}  
\definecolor{col0}{HTML}{FFFFFF}
\definecolor{col1}{HTML}{A2B969}
\definecolor{col2}{HTML}{EBCB38}
\definecolor{col3}{HTML}{0D95BC}
\definecolor{col4}{HTML}{063951}
\definecolor{col5}{HTML}{F36F13}
\definecolor{col6}{HTML}{C13018}
\definecolor{lightgray}{HTML}{CCCCCC}
\newcommand\csum[1]{%
\sum_{\forcsvlist{\createColorCircle@item}{#1}}
}
\newcommand\createColorCircle@item[1]{
\StrDel{#1}{h}[\colNum]
\newif\ifhalf
\IfSubStr{#1}{h}{\halftrue}{\halffalse}
{\color{col\colNum}\ifhalf\circ\else\bullet\fi}
}
\newcommand\plotLambda[1]{
  \def\mgraphspecs{}
  \foreach \ll [count = \countl] in {#1} {
    \csedef{firstCol}{col0}
    \def\graphspecs{}
    \StrCount{\ll}{,}[\graphlen]
    \ifnum\graphlen>1 
    \foreach \node [count = \g] in \ll {
      \StrDel{\node}{m}[\nodenumber]
      \StrDel{\nodenumber}{.}[\nodenumber]
      \global\csedef{tempCol}{col\nodenumber}
      \global\csdef{tempPat}{\col{1}}
      \ifnum\g=1
        \global\csedef{firstCol}{\tempCol}
        \IfSubStr{\node}{.}{\global\csedef{open}{y}}{\global\csedef{open}{n}}
        \IfStrEqCase{\open}{
          {y}{\xappto\graphspecs{\countl\g[as=,draw=none] }}
          {n}{\xappto\graphspecs{\countl\g[as=,preaction={fill, white},fill=\tempCol,pattern=\csname pat\nodenumber\endcsname] }}
        }
      \else
        \IfBeginWith{\node}{m}{\global\csedef{secondArrow}{{<[sep=-3pt,length=8pt]}}}{\global\csedef{secondArrow}{}}%
        \ifnum\g=2
          \IfStrEqCase{\open}{
            {y}{\xappto\graphspecs{ --[dotted,thick] \countl\g[as=,preaction={fill, white},fill=\tempCol,pattern=\csname pat\nodenumber\endcsname] }}
            {n}{\xappto\graphspecs{ --[\firstArrow-\secondArrow] \countl\g[as=,preaction={fill, white},fill=\tempCol,pattern=\csname pat\nodenumber\endcsname] }}
          }
        \else
          \xappto\graphspecs{ --[\firstArrow-\secondArrow] \countl\g[as=,preaction={fill, white},fill=\tempCol,pattern=\csname pat\nodenumber\endcsname] }%
        \fi
      \fi
      \IfEndWith{\node}{m}{\global\csedef{firstArrow}{{>[sep=-3pt,length=8pt]}}}{\global\csedef{firstArrow}{}}
      \global\csedef{prevTempCol}{\tempCol} 
    }
    \IfStrEqCase{\open}{
      {n}{\IfBeginWith{\ll}{m}{\global\csedef{secondArrow}{{<[sep=-3pt,length=8pt]}}}{\global\csedef{secondArrow}{}}
          \xappto\graphspecs{ --[\firstArrow-\secondArrow,decorate,decoration={snake,amplitude=.3mm,segment length=.6mm}] \countl1; } }
      {y}{\xappto\graphspecs{ --[dotted,thick] \countl0[draw=none,as=] ; } }
    }
    \else
      \ifnum\graphlen=0
        \StrDel{\ll}{m}[\nodenumber]
        \csedef{tempCol}{col\nodenumber}
        \IfSubStr{\ll}{m}{\csedef{firstArrow}{{>[sep=-3pt,length=8pt]}}}{\csedef{firstArrow}{}}
        \def\graphspecs{ \countl1[as=,preaction={fill, white},fill = \tempCol,pattern=\csname pat\nodenumber\endcsname] --[my loop, decorate,decoration={snake,amplitude=.3mm,segment length=.6mm},\firstArrow-] \countl1; }
      \else
        \IfSubStr{\ll}{.}{
          \StrDel{\ll}{.}[\nodenumber]
          \StrDel{\nodenumber}{,}[\nodenumber]
          \csedef{tempCol}{col\nodenumber}
          \def\graphspecs{ \countl0[draw=none,as=,orient = left] --[dotted,thick] \countl1[as=,fill = \tempCol,pattern=\csname pat\nodenumber\endcsname] --[dotted,thick] \countl2[draw=none,as=,nudge down=10pt]; }
        }{
          \StrBefore{\ll}{,}[\nodeOne]
          \StrBehind{\ll}{,}[\nodeTwo]
          \StrDel{\nodeOne}{m}[\nodeOneNumber]
          \StrDel{\nodeTwo}{m}[\nodeTwoNumber]
          \def\tempColOne{col\nodeOneNumber}
          \def\tempColTwo{col\nodeTwoNumber}
          \IfEndWith{\nodeOne}{m}{\global\csedef{firstArrowOne}{{>[sep=-3pt,length=8pt]}}}{\global\csedef{firstArrowOne}{}}
          \IfEndWith{\nodeTwo}{m}{\global\csedef{firstArrowTwo}{{<[sep=-3pt,length=8pt]}}}{\global\csedef{firstArrowTwo}{}}
          \IfBeginWith{\nodeOne}{m}{\global\csedef{secondArrowOne}{{>[sep=-3pt,length=8pt]}}}{\global\csedef{secondArrowOne}{}}
          \IfBeginWith{\nodeTwo}{m}{\global\csedef{secondArrowTwo}{{<[sep=-3pt,length=8pt]}}}{\global\csedef{secondArrowTwo}{}}
          \def\graphspecs{ \countl1[as=,preaction={fill, white},fill = \tempColOne,pattern=\csname pat\nodeOneNumber\endcsname] --[bend right,decorate,decoration={snake,amplitude=.3mm,segment length=.6mm}, \firstArrowOne-\secondArrowTwo ] \countl2[as=,preaction={fill, white},fill = \tempColTwo,pattern=\csname pat\nodeTwoNumber\endcsname]; \countl1 --[bend left,\secondArrowOne-\firstArrowTwo] \countl2;}
        }
      \fi
    \fi
    \xappto\mgraphspecs{ \graphspecs }
  }
  \xdef\mgraphspecs{\noexpand\graph[simple necklace layout,componentwise,component packing=skyline,components go right center aligned,orient=0,nodes=G]{ \mgraphspecs }}
  \begin{tikzpicture}[baseline={([yshift=-2pt]current bounding box.center)},font=\tiny,>=Stealth, node distance = 15pt,node sep=12pt,component sep=5pt]% draw the graph
    \mgraphspecs;
  \end{tikzpicture}%
}
\newcommand\plotLambdas[1]{
  \begin{tikzpicture}[baseline={([yshift=-2pt]current bounding box.center)},font=\tiny,>=Stealth, node distance = 5pt,node sep=12pt,component sep=5pt]% draw the graph
   \graph[simple necklace layout,componentwise,component packing=skyline,components go right center aligned,orient=0,nodes=G] { #1; };
  \end{tikzpicture}
}
\newcommand\pB[1]{
  \StrDel{#1}{h}[\patNum]
  \IfSubStr{#1}{h}{
  \begin{tikzpicture}[baseline={([yshift=-2pt]current bounding box.center)},font=\tiny,>=Stealth, node distance = 0pt,node sep=1pt,component sep=1pt]% draw the graph
   \graph[simple necklace layout,componentwise,component packing=skyline,components go right center aligned,orient=0,nodes=G] { 1[Gf,rotate=270,circle split part fill={\csname pat\patNum\endcsname},as=,minimum size=6pt]; };
  \end{tikzpicture}
  }{
  \begin{tikzpicture}[baseline={([yshift=-2pt]current bounding box.center)},font=\tiny,>=Stealth, node distance = 0pt,node sep=1pt,component sep=1pt]% draw the graph
   \graph[simple necklace layout,componentwise,component packing=skyline,components go right center aligned,orient=0,nodes=G] { 1[pattern=\csname pat\patNum\endcsname,as=,minimum size=6pt]; };
  \end{tikzpicture}
  }
}
\newcommand\plotlLambda[1]{
  \def\mgraphspecs{}
  \foreach \ll [count = \countl] in {#1} {
    \csedef{firstCol}{col0}
    \csedef{openEnd}{no}
    \def\graphspecs{}
    \StrCount{\ll}{,}[\graphlen]
    \foreach \node [count = \g] in \ll {
      \StrDel{\node}{m}[\nodenumber]
      \csedef{type}{n}
      \csedef{marking}{no}
      \IfSubStr{\node}{x}{\csedef{type}{x}}{}
      \IfSubStr{\node}{c}{\csedef{type}{c}}{}
      \IfSubStr{\node}{d}{\csedef{type}{d}}{}
      \IfSubStr{\node}{.}{\csedef{type}{open}}{}
      \IfBeginWith{\node}{m}{\csedef{firstMarking}{yes}\csedef{marking}{yes}}{\csedef{firstMarking}{no}}
      \IfEndWith{\node}{m}{\csedef{secondMarking}{yes}\csedef{marking}{yes}}{\csedef{secondMarking}{no}}
      \StrDel{\nodenumber}{.}[\nodenumber]
      \StrDel{\nodenumber}{c}[\nodenumber]
      \StrDel{\nodenumber}{d}[\nodenumber]
      \StrDel{\nodenumber}{x}[\nodenumber]
      \global\csedef{tempCol}{col\nodenumber}
      \ifnum\g=1
        %\IfStrEq{\type}{open}{\global\csedef{openStart}{yes}}{\global\csedef{openStart}{no}}
        \global\csedef{firstCol}{\tempCol}
        %\IfSubStr{\node}{.}{\global\csedef{open}{y}}{\global\csedef{open}{n}}
        \IfStrEq{\type}{open}{
          \xappto\graphspecs{ \countl0[xx,as=,grow right,draw=none,fill=white] --[dotted,thick] \countl1[fill=white,preaction={fill, white},pattern=\csname pat\nodenumber\endcsname,as=]}
        }{
        \IfSubStr{\node}{x}{
          \xappto\graphspecs{ \countl0[xx,as=,grow right,fill=lightgray,draw=none] --[color=lightgray]\countl1[Gf,circle split part fill={\csname pat\nodenumber\endcsname},as=]}}
        {
          \IfStrEq{\marking}{yes}
          {
            \xappto\graphspecs{ \countl0[xx,as=,grow right] --[-{<[fill=lightgray,color=black,sep=-3pt,length=8pt]}] \countl1[fill=white,preaction={fill, white},pattern=\csname pat\nodenumber\endcsname,as=]}
          }{
            \xappto\graphspecs{ \countl0[xx,as=,grow right] -- \countl1[fill=white,preaction={fill, white},pattern=\csname pat\nodenumber\endcsname,as=]}
          }
        }}
      \else
        \IfStrEq{\firstMarking}{yes}{\csedef{secondArrow}{{<[sep=-3pt,length=8pt]}}}{\csedef{secondArrow}{}}
        \IfStrEqCase{\type}{
          {n}{\xappto\graphspecs{ --[\firstArrow-\secondArrow] \countl\g[fill=white,preaction={fill, white},pattern=\csname pat\nodenumber\endcsname,as=] }}%
          {c}{\xappto\graphspecs{ --[\firstArrow-\secondArrow] \countl\g[Gc=\csname pat\nodenumber\endcsname,rotate=180,circle split part fill={\csname pat\nodenumber\endcsname},as=] }}%
          {d}{\xappto\graphspecs{ --[\firstArrow-\secondArrow] \countl\g[Gd=\csname pat\nodenumber\endcsname,rotate=180,circle split part fill={\csname pat\nodenumber\endcsname},as=] }}%
          {open}{\global\csedef{openEnd}{yes}}%
        }
      \fi
      \IfStrEq{\secondMarking}{yes}{
        \global\csedef{firstArrow}{{>[sep=-3pt,length=8pt]}}%
      }{
        \global\csedef{firstArrow}{}%
      }
      \global\csedef{prevTempCol}{\tempCol} 
    }
    %\xappto\mgraphspecs{ \graphspecs -- \countl17[Gend,as=\openEnd]; }%
    \IfStrEqCase{\openEnd}{
      {yes}{\xappto\mgraphspecs{ \graphspecs --[dotted,thick] \countl17[Gend,as=,draw=none]; }}%
      {no}{\xappto\mgraphspecs{ \graphspecs -- \countl17[Gend,as=]; }}%
    }%
  }
  \xdef\mgraphspecs{\noexpand\graph[tree layout,componentwise,component packing=skyline,components go down left aligned,nodes=G]{ \mgraphspecs }}
  \begin{tikzpicture}[grow=right,baseline={([yshift=-2pt]current bounding box.center)},font=\tiny,>=Stealth, node distance = 5pt,node sep=12pt,component sep=5pt]% draw the graph
  \mgraphspecs;
  \end{tikzpicture}%
}
\pgfplotsset{plot coordinates/math parser=false} 
\title[Cusp Universality for Random Matrices I: Local Law and the Complex Hermitian Case]{Cusp Universality for Random Matrices I:\\ Local Law and the Complex Hermitian Case} 
\author{L\'aszl\'o Erd\H{o}s$^{\dagger}$}
\address[L. Erd\H{o}s and D. Schr\"oder]{IST Austria, Am Campus 1, A-3400 Klosterneuburg, Austria}
\email{dschroed@ist.ac.at}
\email{lerdos@ist.ac.at}
\thanks{$^\dagger$Partially supported by ERC Advanced Grant No.~338804}
\author{Torben Kr\"uger$^{\ast}$}
\address[T. Kr\"uger]{University of Bonn, Endenicher Allee 60, 53115 Bonn, Germany}
\email{torben-krueger@uni-bonn.de}
\thanks{$^\ast$Partially supported by the Hausdorff Center for Mathematics} 
\author{Dominik Schr\"oder$^{\dagger\ddagger}$}
\thanks{$^\ddagger$Partially supported by the IST Austria Excellence Scholarship}
\subjclass[2010]{60B20, 15B52}  
\keywords{Cusp universality, Pearcey kernel, Local law}
\date{\today} 
\begin{document}

\begin{abstract}
For complex Wigner-type  matrices, i.e.\ Hermitian random matrices with independent,
not necessarily identically distributed entries above the diagonal, we show that at any cusp singularity of the limiting eigenvalue distribution the local eigenvalue statistics are universal and form a Pearcey process. Since the density of states typically exhibits only square root or
cubic root cusp singularities, our work complements previous results on the bulk and edge universality and 
it thus completes the resolution of the Wigner-Dyson-Mehta universality conjecture 
for the last remaining universality type in the complex Hermitian class. Our analysis holds not only for exact cusps, but
approximate cusps as well, where an extended Pearcey process emerges.
As a main technical ingredient we prove an optimal local law at the cusp for both symmetry classes. This result is 
also the key input  in the companion paper \cite{1811.04055} where the cusp universality for real symmetric Wigner-type matrices is proven. The novel cusp fluctuation mechanism is also essential for the recent results on the spectral radius of non-Hermitian random matrices~\cite{1907.13631}, and the non-Hermitian edge universality~\cite{1908.00969}.
\end{abstract} 
\maketitle

\section{Introduction}
The celebrated Wigner-Dyson-Mehta (WDM) conjecture asserts that local eigenvalue statistics of large random
matrices are universal: they only depend on the symmetry type of the matrix and are otherwise independent
of the details of the distribution  of the matrix ensemble. This remarkable spectral robustness was first observed by Wigner
in the bulk of the spectrum. The correlation functions are determinantal and  they were computed in terms  the \emph{sine kernel} 
 via explicit Gaussian calculations by Dyson, Gaudin and Mehta \cite{MR0220494}.
Wigner's vision continues to hold at the spectral edges, where the correct statistics was identified by Tracy and Widom for both 
symmetry types in terms of the \emph{Airy kernel} \cite{MR1257246,MR1385083}.  
These universality  results have been originally formulated and proven \cite{MR3253704,MR2662426,MR2810797,MR1727234,MR2784665,MR2669449} for traditional \emph{Wigner matrices}, i.e.\ Hermitian random matrices with independent, identically distributed (i.i.d.) entries
and their  diagonal \cite{MR3405746,MR3502606} and non-diagonal \cite{MR3704770} deformations. 
 More recently they have been extended  to 
\emph{Wigner-type ensembles}, where the identical distribution is not required, and even to a large class of matrices with general correlated entries \cite{MR3719056,MR3916109,1804.07744}. 
In different directions of generalization,  sparse matrices
\cite{MR2964770,MR3800840,1509.03368,MR3429490}, 
 adjacency matrices of regular graphs \cite{MR3729611} and band matrices \cite{MR3695802,1807.01559,MR2726110} have also been considered. 
  In  parallel developments  bulk and edge universal statistics have been proven for invariant $\beta$-ensembles  \cite{MR3253704,MR3192527,MR2306224,MR1702716,MR2012268,MR2375744,MR2525225,MR3351052,28950489,MR3390602,MR2534097,MR3433632} and even for their discrete analogues \cite{1705.05527,MR2283089,MR1758751,MR1826414} but often with very different methods.  
 
A precondition for  the Tracy-Widom distribution 
 in all these generalizations of Wigner's original ensemble is that 
the density of states  vanishes as a square root near the spectral edges.  
The recent  classification of the singularities of the solution to the underlying Dyson equation 
indeed revealed that at the edges  only square root singularities appear \cite{MR3684307,1804.07752}. 
The density of states may also form a cusp-like singularity in the interior of the asymptotic  spectrum, i.e.~single points of vanishing density with a cubic root growth behaviour on either side.
 Under very general conditions,  no other type of singularity may occur.
At the cusp  a new local eigenvalue process emerges: the correlation functions are
still determinantal but the \emph{Pearcey kernel} replaces the  sine- or
the Airy kernel. %

The Pearcey process was first established by Br\'ezin and Hikami for the eigenvalues close to a cusp singularity of a deformed complex Gaussian Wigner (GUE) matrix. They considered the model of a GUE matrix plus a deterministic matrix (``external source'') having eigenvalues $\pm 1$ with equal multiplicity \cite{MR1662382,MR1618958}. The name \emph{Pearcey kernel} and the corresponding \emph{Pearcey process}
 have been coined by \cite{MR2207649} in reference to related functions introduced by Pearcey in the context of electromagnetic fields \cite{MR0020857}. Similarly to the universal sine and Airy processes, it has later been observed that also the Pearcey process universality extends beyond the realm of random matrices. %
Pearcey statistics have been established for non-intersecting Brownian bridges \cite{MR2642890}
and  in skew plane partitions \cite{MR2276355},  always  at  criticality.  We remark, however, that critical cusp-like singularity does not always induce a Pearcey kernel, see e.g.~\cite{MR3546394}. 

In random matrix theory there are still only a handful of rather specific models for which the emergence of the Pearcey process has been proven. This has been achieved for deformed GUE matrices  %
\cite{MR3500269,MR2337504,MR2820069} and
for Gaussian sample covariance matrices \cite{MR3502605,MR3485343,MR3440796} by a contour integration  method based upon the Br\'ezin-Hikami formula.  Beyond linear deformations,
the Riemann-Hilbert method has been used for proving Pearcey statistics for a  certain  \emph{two-matrix model} with 
  a special quartic potential with appropriately tuned coefficients  \cite{MR3384456}. 
  All these previous results concern only specific ensembles with a
  matrix integral representation. In particular, Wigner-type matrices are out of the scope
  of this approach. 
  
The main result of the current paper is  the proof of  the Pearcey universality at the cusps for  complex Hermitian Wigner-type matrices
under very general conditions.  Since the classification theorem  excludes any other singularity, this is the 
third and last universal statistics that emerges  from  natural generalizations of Wigner's ensemble.

This third  universality class has received somewhat less attention than the other two, presumably because
cusps are not present in the classical Wigner ensemble. We also note that
   the most common invariant $\beta$-ensembles do not exhibit the Pearcey statistics as their densities do not feature cubic root cusps but are instead $1/2$-H\"older continuous for somewhat regular potentials \cite{MR1657691}. 
  The density vanishes either as  $2k$-th  or   $(2k +\frac{1}{2})$-th power with their own local statistics (see  \cite{MR3833603} also for the persistence of these statistics under small additive
    GUE perturbations before the critical time).
Cusp singularities, hence Pearcey statistics, however,
naturally arise  within any one-parameter family of Wigner-type ensembles
whenever two spectral bands merge as the parameter varies. The classification theorem implies that cusp formation is the 
only possible way for bands to merge, so in that sense  Pearcey universality is ubiquitous as well.

The bulk and edge universality is characterized by the symmetry type alone: up to a natural shift and  rescaling there is only one bulk and one edge statistic.
 In contrast, the cusp universality has a much richer structure: it is naturally embedded in a one-parameter family of universal statistics within each symmetry class.
 In the complex Hermitian case these are given by the one-parameter family of (extended) Pearcey kernels, see \eqref{Pearcey kernel} later.
 Thinking in terms of  fine-tuning a single parameter in the space of Wigner-type  ensembles, the density of states already 
 exhibits a universal local shape right before and right after the cusp formation; it features
 a tiny gap or a tiny nonzero local minimum, respectively \cite{1506.05095,1804.07752}.
   When the local lengthscale $\ell$ of these  \emph{almost cusp} shapes is comparable with the local eigenvalue spacing $\delta$, then 
   the general Pearcey statistics is expected to emerge whose  parameter is determined by the ratio $\ell/\delta$.
Thus the full Pearcey universality typically appears in a \emph{double scaling limit}.

 Our proof follows the \emph{three step strategy} that is the backbone of the recent approach to the WDM universality,
 see \cite{MR3699468} for a pedagogical expos\'e and for detailed history of the method. The first step in this strategy is a 
 \emph{local law} that identifies, with very high probability, the empirical eigenvalue distribution on a scale slightly
 above the typical eigenvalue spacing. The second step is to prove universality for ensembles with a tiny Gaussian
 component. Finally, in the third step this Gaussian component is removed by perturbation theory. The local law is used
 for precise  apriori bounds  in the second and third steps.  
 
 The main novelty of the current paper is the proof of the local law at optimal scale near the cusp. 
To put the precision in 
 proper context, we normalize the  $N\times N$ real symmetric or complex Hermitian Wigner-type matrix $H$ 
 to have norm of order one.    As customary, the local law is formulated
 in terms of the Green function $G(z)\defeq(H-z)^{-1}$ with spectral parameter $z$ in the upper half plane.
 The local law then asserts that $G(z)$ becomes deterministic in the large $N$ limit as long as  $\eta \defeq\Im z$
 is much larger than the local eigenvalue spacing around $\Re z$. The deterministic approximant $M(z)$ 
 can be computed as the unique solution of the corresponding Dyson equation  (see \eqref{Dyson equation} and \eqref{MDE}
 later).
 Near the cusp the typical eigenvalue spacing is 
 of order $N^{-3/4}$; compare this with the 
 $N^{-1}$ spacing in the bulk and $N^{-2/3}$ spacing near the edges.  We remark that
 a  local law at the cusp  on the non-optimal scale $N^{-3/5}$ has already been proven in \cite{MR3719056}. %
  In the current paper we improve this result  to the optimal scale $N^{-3/4}$
 and this is essential for our universality proof at the cusp.
 
 The main  ingredient behind this improvement is an optimal
estimate of the error term $D$  (see \eqref{error matrix D} later) 
  in the  approximate Dyson equation
that $G(z)$ satisfies. The difference $M-G$ is then roughly estimated by ${\mathcal B}^{-1} (MD)$, where ${\mathcal B}$ is the linear stability operator of the Dyson equation.
   Previous estimates on $D$ (in averaged sense) were of order $\rho/N\eta$, where $\rho$ is
the local density; roughly speaking $\rho\sim 1$ in the bulk, $\rho\sim N^{-1/3}$ at the edge and $\rho\sim N^{-1/4}$
near the cusp.  While this estimate cannot be improved in general, our main observation is that, to leading order,  we need only
the projection of $MD$ in the single unstable direction of ${\mathcal B}$. We found that this projection 
carries an extra hidden cancellation
due to  a special local symmetry at the  cusp and thus  the  estimate on $D$  effectively  improves to
 $\rho^2/N\eta$. Customary  power counting is not sufficient, we need to compute this error term explicitly at least to leading order.
We call this subtle mechanism \emph{cusp fluctuation averaging} since it combines the well established
fluctuation averaging procedure with the additional cancellation at the cusp.
 Similar estimates extend to  the vicinity of the exact cusps. We identify
a key quantity, denoted by $\sigma(z)$ (in \eqref{definition of sigma} later), that measures the distance from the cusp
in a canonical way:
$\sigma(z)=0$ characterizes an exact cusp, while $\abs{\sigma(z)}\ll 1$ indicates that $z$ is  near an almost cusp.
Our final estimate on $D$ is of order $(\rho+\abs{\sigma})\rho/N\eta$.
Since the error term $D$ is random and we need to control it in high moment sense, we need to lift this idea
to a high moment calculation, meticulously extracting the improvement from every single term. This is 
performed in the  technically most involved Section~\ref{sec:Cusp fluctuation averaging} where we use a
Feynman diagrammatic formalism to bookkeep the contributions of all terms.  Originally we have developed
this language in \cite{MR3941370} to handle random matrices with slow correlation decay, based on the revival of the cumulant expansion technique in~\cite{MR3678478} after~\cite{MR1411619}. In the current paper
we incorporate the cusp into this analysis.  We identify a finite set of  Feynman subdiagrams, called \emph{$\sigma$-cells} (Definition \ref{sigma cell def})
with value $\sigma$ that embody  the cancellation effect  at the cusp.   To  exploit the full strength of the
  cusp fluctuation averaging mechanism,  we need to trace the fate of the $\sigma$-cells along the high moment 
expansion. The key point is that $\sigma$-cells are local objects in
the Feynman graphs thus  their cancellation effects act simultaneously and the corresponding gains  are multiplicative.
 
Formulated in the jargon of diagrammatic field theory, extracting the deterministic  Dyson equation for $M$ from the resolvent equation  $(H-z)G(z)=1$
corresponds to a consistent self-energy renormalization of $G$. One way or another, such procedure is behind every 
proof of the optimal local law with high  probability.  Our $\sigma$-cells conceptually correspond to a next order resummation
of certain Feynman diagrams carrying a special cancellation.

We remark that  we prove the optimal local law only for Wigner-type matrices and not yet for 
general correlated matrices unlike in \cite{MR3941370,1804.07744}. 
In fact we use the simpler setup only for the estimate on $D$  (Theorem \ref{thm pfD bound moments})
the rest of the proof  is already  formulated for the general case.
This simpler setup allows us to present the cusp fluctuation averaging mechanism with the least
amount of technicalities.
The extension %
 to the correlated case is based on the same mechanism but
it requires  considerably more involved  diagrammatic
manipulations which is better to develop in a separate work to contain the length of this paper.

Our cusp fluctuation averaging mechanism has further applications. It is  used  in~\cite{1907.13631} 
 to prove an optimal cusp local law for the Hermitization of non-Hermitian random matrices 
 with a variance profile, demonstrating that   the 
 technique is also applicable in settings where the flatness assumption is violated. The cusp of the Hermitization corresponds 
 to the edge of the non-Hermitian model via Girko's formula, thus
   the optimal cusp local law leads to
  an optimal bound on the spectral radius~\cite{1907.13631} and ultimately also to edge universality~\cite{1908.00969}
  for non-Hermitian random matrices.
 
Armed with the optimal local law we then perform the other two steps of the three step analysis. The third step, relying
on the \emph{Green function comparison theorem}, is fairly standard and previous proofs used in the bulk and at
the edge need only minor adjustments.  The second step, extracting universality from an ensemble with a tiny Gaussian component
can be done in two ways: (i) Br\'ezin-Hikami formula with contour integration or (ii) Dyson Brownian Motion (DBM).  Both methods require the local law as an input.
In the current work we follow (i) mainly because this approach directly yields the Pearcey kernel, at least for the complex Hermitian 
symmetry class. In the companion work \cite{1811.04055} we perform the DBM analysis adapting methods of
\cite{1712.03881,MR3729630,MR3687212} to the cusp. The main novelty in the current work and in \cite{1811.04055} is the rigidity at the cusp on the optimal scale provided below. Once this key input is given, the proof of the edge universality from \cite{MR3687212} is modified in \cite{1811.04055} to the cusp setting, proving universality for the real symmetric case as well. 
We remark, however, that, to our best knowledge,  the analogue of the Pearcey kernel for the real symmetric case  has not yet been
explicitly identified.

We now explain some novelty in  the contour integration method. We first note that a similar approach was initiated in the fundamental work of Johansson on the bulk universality for Wigner matrices with a large Gaussian component in \cite{MR1810949}. This method was generalised later 
to Wigner matrices with a  small Gaussian component in \cite{MR2662426} 
as well as it inspired the proof of bulk universality via the moment matching idea \cite{MR2784665} 
 once the necessary local law became available.
The double scaling regime has also been studied, where the density is very small but the Gaussian component compensates for it \cite{MR3985252}. 
More recently, the same approach was extended to the cusp  for deformed GUE matrices \cite[Theorem 1.3]{MR3500269} and 
for sample covariance matrices
but only for large Gaussian component \cite{MR3502605,MR3485343,MR3440796}. For our cusp universality, we need to perform 
a similar analysis but with a small Gaussian component.
We represent our matrix $H$ as $\widehat H + \sqrt{t} U$, 
where $U$ is GUE and $\widehat H$ is an independent Wigner-type matrix. The contour integration analysis  (Section \ref{sec: free conv})
requires a Gaussian component of size at least $t\gg N^{-1/2}$.

The input  of the  analysis in Section \ref{sec: free conv}  for the correlation kernel of $H$
 is a very precise description of the eigenvalues of $\widehat H$ just above $N^{-3/4}$, 
the scale of the typical spacing between eigenvalues --- this information is provided by our optimal  local law.
 While in the bulk  and in the regime of the regular edge finding an appropriate $\wh H$ is a relatively simple matter, 
  in the vicinity of a cusp  point the issue is very delicate. 
The main reason is that the cusp, unlike the bulk or the regular edge, is unstable under small perturbations; in 
fact it typically disappears and turns into a small positive local minimum if a small GUE component is added. 
Conversely, a cusp emerges if a small GUE component is added to an ensemble 
that has a   density with a small gap. In particular,
even if the density function $\rho(\tau)$ of $H$
exhibits an exact cusp, the density $\wh\rho(\tau)$ of $\wh H$ will have a small gap: in fact $\rho$ is given by the evolution 
of the semicircular flow up to time $t$ with initial data $\wh\rho$.
Unlike in the bulk and edge cases,  here one cannot match the density of $H$ and $\wh H$
by a simple shift and rescaling.
Curiously, the contour integral analysis for the local  statistics of $H$ at the cusp relies 
on an optimal local law of $\wh H$ with a small gap far away from the cusp.

Thus we need an additional ingredient: the precise analysis of the semicircular flow $\rho_s\defeq \wh \rho \boxplus \rho_{\mathrm{sc}}^{(s)}$ 
near the cusp up to a relatively long times $s\lesssim N^{-1/2+\epsilon}$; note that $\rho_t=\rho$ is the original density with the cusp. 
Here $\rho_{\mathrm{sc}}^{(s)}$ is the semicircular density with variance $s$ and $\boxplus$ indicates the free convolution.
In Sections \ref{sec: contour integral}--\ref{sec: contour deformation}   we will see that  the  edges of the support of the density $\rho_s$ typically move linearly in the time $s$
while the gap closes at a much slower rate. Already $s\gg N^{-3/4}$
is beyond the simple perturbative regime of the cusp whose natural lengthscale is $N^{-3/4}$. 
Thus we need a very careful tuning of the parameters: the analysis of a cusp for $H$ requires
constructing a matrix $\wh H$ that is far from having a cusp but that after a relatively long time $t=N^{-1/2+\epsilon}$ will develop a cusp exactly at the right location.
In the estimates we  heavily rely on various properties of the solution to the Dyson equation established in 
the recent paper \cite{1804.07752}.
 These results go well beyond the precision of the previous work \cite{1506.05095} and they apply
 to a very general class of Dyson equations, including a non-commutative von-Neumann algebraic setup.\\

\noindent\textbf{Notations.} We now introduce some custom notations we use throughout the paper. For non-negative functions $f(A,B)$, $g(A,B)$ we use the notation $f \le_A g$ if there exist constants $C(A)$ such that $f(A,B)\le C(A) g(A,B)$ for all $A,B$. Similarly, we write $f\sim_A g$ if $f\le_A g$ and $g\le_A f$. We do not indicate the dependence of constants on basic parameters that will be called model parameters later. If the implied constants are universal, we instead write $f\lesssim g$ and $f\sim g$. Similarly we write $f \ll g$ if $f\le c g$ for some tiny absolute constant $c>0$. 

We denote vectors by bold-faced lower case Roman letters $\vx,\vy\in\C^N$, and matrices by upper case Roman letters $A,B\in\C^{N\times N}$ with entries \(A=(a_{ij})_{i,j=1}^N\). The standard scalar product and Euclidean norm on $\C^N$ will be denoted by $\braket{\vx,\vy}\defeq N^{-1}\sum_{i\in[N]}\overline{x_i}y_i$ and $\norm{\vx}$, while we also write $\braket{A,B}\defeq N^{-1}\Tr A^\ast B$ for the scalar product of matrices, and $\braket{A}\defeq N^{-1}\Tr A$, $\braket{\vx}\defeq N^{-1}\sum_{a\in[N]}x_a$. We write $\diag R$, $\diag\vr$ for the diagonal vector of a matrix $R$ and the diagonal matrix obtained from a vector $\vr$, and $S\odot R$ for the entrywise (Hadamard) product of matrices $R,S$. The usual operator norm induced by the vector norm $\norm{\cdot}$ will be denoted by $\norm{A}$, while the Hilbert-Schmidt (or Frobenius) norm will be denoted by $\norm{A}_\text{hs}\defeq \sqrt{\braket{A,A}}$. For integers $n$ we define $[n]\defeq\{1,\dots,n\}$. \\

\noindent\textbf{Acknowledgement.}  The authors are very grateful to Johannes  Alt for numerous discussions
on the Dyson equation and for his 
invaluable help in adjusting  \cite{1804.07752} to the needs of the present work.

\section{Main results}

\subsection{The Dyson equation}

Let $W=W^* \in \C^{N \times N}$ be a self-adjoint random matrix and $A=\diag(\bm{a})$ be a deterministic diagonal matrix with entries $\bm{a}=(a_i)_{i=1}^N \in \R^N$. We say that $W$ is of \emph{Wigner-type} \cite{MR3719056} if its entries $w_{ij}$ for $i \le j$ are centred, $\E  w_{ij} =0$, independent random variables. We define the \emph{variance matrix} or \emph{self-energy matrix} $S=(s_{ij})_{i,j=1}^N$ by
\begin{equation}\label{S def}
s_{ij}\defeq  \E \abs{w_{ij}}^2.
\end{equation}
This matrix is symmetric with non-negative entries. In \cite{MR3719056} it was shown that as $N$ tends to infinity, the resolvent $G(z)\defeq (H-z)^{-1}$ of the \emph{deformed Wigner-type matrix} $H=A+W$ entrywise approaches a diagonal matrix 
\[
M(z)\defeq \diag(\vm(z)).
\]
The entries $\vm=(m_1 \dots , m_N)\colon \HC \to \HC^N$  of $M$ have positive imaginary parts and solve the \emph{Dyson equation}
\begin{equation} \label{Dyson equation} -\frac{1}{m_i(z)}= z-a_i +\sum_{j=1}^Ns_{ij}m_j(z),\qquad z \in \HC\defeq\Set{z\in\C|\Im z>0}, \quad i\in[N]. \end{equation}
We call $M$ or $\vm$ the \emph{self-consistent Green's function}.
The normalised trace of $M$ is the Stieltjes transform of a unique probability measure on $\R$ that approximates the empirical eigenvalue distribution of $A+W$ increasingly well as $N \to \infty$, motivating the following definition.
\begin{definition}[Self-consistent density of states] \label{def scdos}The unique probability measure $\rho$ on $\R$, defined through
\begin{equation*} \braket{M(z)}=\frac{1}{N}\Tr M(z)= \int\frac{\rho(\dd \tau)}{\tau-z},\qquad z \in \HC, \end{equation*}
is called the self-consistent density of states (scDOS). Accordingly, its support $\supp\rho$ is called self-consistent spectrum.
\end{definition}

\subsection{Cusp universality}
We make the following assumptions:
\begin{assumption}[Bounded moments]\label{bdd moments}
The entries of the Wigner-type matrix $\sqrt{N}W$ have bounded moments and the expectation $A$ is bounded, i.e.\ there are positive $C_k$ such that 
\[
\abs{a_i}\le C_0, \qquad 
\E\abs{w_{ij}}^k \le C_kN^{-k/2} , \qquad k \in \N.
\]
\end{assumption}
\begin{assumption}[Fullness]\label{fullness}
If the matrix $W = W^* \in \C^{N \times N}$ belongs to the complex hermitian symmetry class, then we assume 
\begin{equation}\begin{split} \label{fullness cplx} \mtwo{\E(\Re w_{ij})^2& \E(\Re w_{ij})(\Im w_{ij})}{\E(\Re w_{ij})(\Im w_{ij}) & \E(\Im w_{ij})^2}\ge \frac{c}{N} \1_{2 \times 2}, \end{split}\end{equation}
as quadratic forms, for some positive constant $c>0$. If $W = W^T \in \R^{N \times N}$ belongs to the real symmetric symmetry class, then we assume $\E w_{ij}^2 \ge \frac{c}{N}$. 
\end{assumption}
\begin{assumption}[Bounded self-consistent Green's function] \label{bdd m}
In a neighbourhood of some fixed spectral parameter $\tau \in \R$ the self-consistent Green's function is bounded, i.e.\ for positive $C,\kappa$ we have 
\[
\abs{m_{i}(z)}\le C, \qquad z \in \tau+(-\kappa,\kappa)+ \ii \R^+.
\]
\end{assumption}

We call the constants appearing in Assumptions \ref{bdd moments}-\ref{bdd m} \emph{model parameters}. All generic constants $C$ in this paper may implicitly depend on these model parameters. Dependence on further parameters however will be indicated.

\begin{remark}
The boundedness of $\vm$ in Assumption \ref{bdd m} can be ensured by assuming some regularity of the variance matrix $S$. For more details we refer to \cite[Chapter~6]{1506.05095}.
\end{remark}

From the extensive analysis in \cite{1804.07752} we know that the self-consistent density $\rho$ is described by explicit 
\emph{shape functions} in the vicinity of 
local minima with small value of 
 $\rho$ and around small gaps in the support of $\rho$.
The density in such \emph{almost cusp regimes} is given by 
precisely one of the following three asymptotics:
\begin{subequations}\label{gamma def eqs}
\begin{enumerate}[(i)]
\item \emph{Exact cusp}. There is a cusp point $\cu\in\R$ in the sense that $\rho(\cu)=0$ and $\rho(\cu\pm\delta)>0$ for $0\ne\delta\ll1$. In this case the self-consistent density is locally around $\cu$ given by 
\begin{equation}\label{gamma def}
\rho(\cu\pm x) = \frac{\sqrt 3\gamma^{4/3}}{2\pi} x^{1/3} \Big[1+\landauO{x^{1/3}}\Big],\qquad x\ge 0
\end{equation}
for some $\gamma>0$.
\item \emph{Small gap.} There is a maximal interval $[\ed_-,\ed_+]$ of size $0<\Delta \defeq  \ed_+-\ed_-\ll1$ such that $\rho\rvert_{[\ed_-,\ed_+]}\equiv 0$. In this case the density around $\ed_\pm$ is, for some $\gamma>0$, locally given by
\begin{equation}\label{gamma def edge}
\rho(\ed_\pm\pm x)=\frac{\sqrt{3}(2\gamma)^{4/3}\Delta^{1/3}}{2\pi}\Psi_{\mathrm{edge}}(x/\Delta)\left[1+\landauO{\Delta^{1/3}\Psi_{\mathrm{edge}}(x/\Delta)}\right],\qquad x\ge 0
\end{equation}
where the shape function around the  edge is given by
\begin{equation}\label{Psi edge}
\Psi_{\mathrm{edge}}(\lambda)\defeq \frac{\sqrt{\lambda(1+\lambda)}}{(1+2\lambda+2\sqrt{\lambda(1+\lambda)})^{2/3}+(1+2\lambda-2\sqrt{\lambda(1+\lambda)})^{2/3}+1},\quad \lambda\ge0.
\end{equation}
\item \emph{Non-zero local minimum.} There is a local minimum at $\mi\in\R$ of $\rho$ such that $0<\rho(\mi)\ll1$. In this case there exists some $\gamma>0$ such that
\begin{equation}\label{gamma def min}
\rho(\mi + x) = \rho(\mi) + \rho(\mi) \Psi_{\mathrm{min}}\left(\frac{3\sqrt 3 \gamma^4 x}{2(\pi\rho(\mi))^3 }\right) \left[1+\landauO{\rho(\mi)^{1/2}+ \frac{\abs{x}}{\rho(\mi)^3}}\right],\qquad x\in\R
\end{equation}
where the shape function around the local minimum is given by
\begin{equation}\label{Psi min}
\Psi_{\mathrm{min}}(\lambda) \defeq \frac{\sqrt{1+\lambda^2}}{(\sqrt{1+\lambda^2}+\lambda)^{2/3}+(\sqrt{1+\lambda^2}-\lambda)^{2/3}-1}-1,\qquad \lambda\in\R.
\end{equation}
\end{enumerate}
\end{subequations}
We note that the parameter $\gamma$ in \eqref{gamma def} is chosen in a way which is convenient for the universality statement. We also note that the choices for $\gamma$ in \eqref{gamma def edge}--\eqref{gamma def min} are consistent with \eqref{gamma def} in the sense that in the regimes $\Delta\ll x\ll 1$ and $\rho(\mi)^3\ll \abs{x}\ll 1$ the respective formulae asymptotically agree. Depending on the three cases (i)--(iii), we define the \emph{almost cusp point} $\bu$ as the cusp $\cu$ in case (i), the midpoint $(\ed_-+\ed_+)/2$ in case (ii), and the minimum $\mi$ in case (iii). When the local length scale of the almost cusp shape starts to match the eigenvalue spacing, i.e. if $\Delta \lesssim N^{-3/4}$ or $\rho(\mi)\lesssim N^{-1/4}$, then we call the local shape a \emph{physical cusp}. This terminology reflects the fact that the shape becomes indistinguishable from the exact cusp with $\rho(\cu)=0$ when resolved with a precision above the eigenvalue spacing. In this case we call $\bu$ a \emph{physical cusp point}.

The extended Pearcey kernel with a real parameter $\alpha$ (often denoted by $\tau$ in the literature) is given by 
\begin{equation}\label{Pearcey kernel}
K_\alpha(x,y) = \frac{1}{(2\pi\ii)^2} \int_\Xi \diff z \int_\Phi \diff w \frac{\exp(-w^4/4 + \alpha w^2/2-yw + z^4/4-\alpha z^2/2 + xz)}{w-z},
\end{equation}
where $\Xi$ is a contour consisting of rays from $\pm\infty e^{\ii\pi/4}$ to $0$ and rays from $0$ to $\pm \infty e^{-\ii\pi/4}$, and $\Phi$ is the ray from $-\ii\infty$ to $\ii\infty$. The simple Pearcey kernel with parameter $\alpha=0$ has been first observed in the context of random matrix theory by \cite{MR1662382,MR1618958}. We note that \eqref{Pearcey kernel} is a special case of a more general extended Pearcey kernel defined in \cite[Eq.~(1.1)]{MR2207649}. 

It is natural to express universality in terms of a rescaled $k$-point function $p_k^{(N)}$ which we define implicitly by 
\[ \binom{N}{k}^{-1} \sum_{\{i_1,\dots,i_k\}\subset[N]} \E f(\lambda_{i_1},\dots,\lambda_{i_k}) = \int_{\R^k} f(x_1,\dots,x_k)p_k^{(N)}(x_1,\dots,x_k)\diff x_1\dots\diff x_k \]
for test functions $f$, where the summation is over all subsets of $k$ distinct integers from $[N]$. 
\begin{theorem}\label{thr:cusp universality}
Let $H$ be a complex Hermitian Wigner matrix satisfying Assumptions \ref{bdd moments}--\ref{bdd m}. Assume that the self-consistent density $\rho$ within $[\tau-\kappa,\tau+\kappa]$ from Assumption \ref{bdd m} has a physical cusp, i.e.~that $\rho$ is locally given by \eqref{gamma def eqs} for some $\gamma>0$ and $\rho$ either (i) has a cusp point $\cu$, or (ii) a small gap $[\ed_-,\ed_+]$ of size $\Delta\defeq \ed_+-\ed_-\lesssim N^{-3/4}$, or (iii) a local minimum at $\mi$ of size $\rho(\mi)\lesssim N^{-1/4}$. Then it follows that for any smooth compactly supported test function $F\colon\R^k\to\R$ it holds that
\[ \abs{\int_{\R^k} F(\bm x)\left[ \frac{N^{k/4}}{\gamma^k} p_k^{(N)}\left( \bu + \frac{\bm x}{\gamma N^{3/4}}\right)- \det (K_\alpha(x_i,x_j))_{i,j=1}^k\right] \diff \bm x} = \landauO{N^{-c(k)}},\]
where 
\begin{equation}\bu \defeq \begin{cases}
\cu & \text{in case (i)},\\
(\ed_++\ed_-)/2 & \text{in case (ii)},\\
\mi & \text{in case (iii)},
\end{cases}\qquad 
\alpha \defeq \begin{cases}
0 & \text{in case (i)}\\
3 \left(\gamma\Delta/4\right)^{2/3} N^{1/2} & \text{in case (ii)},\\
-\left(\pi\rho(\mi)/\gamma\right)^2 N^{1/2} & \text{in case (iii)},
\end{cases}\label{eq pearcey param choice}
\end{equation}
\(\bm x=(x_1,\dots,x_k)\), \(\diff \bm x=\diff x_1\dots\diff x_k\), and $c(k)>0$ is a small constant only depending on $k$.
\end{theorem}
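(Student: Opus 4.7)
The plan is to follow the standard three step strategy, with the heart of the argument being a small-Gaussian-divisible reduction via the Brézin--Hikami contour integral. I would carry out the following steps in order.

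\medskip

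\textbf{Step 1: Local law and rigidity near the (almost) cusp.} First I would establish the optimal local law for $G(z) = (H-z)^{-1}$ on scales $\eta = \Im z \gg N^{-3/4}$ around the physical cusp point $\bu$. As stressed in the introduction, this rests on an optimal bound of the form $|\langle (MD)\rangle| \lesssim (\rho+|\sigma|)\rho/(N\eta)$ for the error term $D$ in the approximate Dyson equation satisfied by $G$, with the crucial cusp fluctuation averaging mechanism providing the extra factor $\rho+|\sigma|$ in the unstable direction of the stability operator $\mathcal{B}$. From the local law I would deduce optimal rigidity of eigenvalues of $H$ near $\bu$ on the $N^{-3/4}$ scale, together with the corresponding bounds in cases (ii) and (iii) where $\rho$ has a tiny gap or a small positive local minimum.

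\medskip

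\textbf{Step 2: Construction of a Gaussian divisible ensemble with prescribed cusp.} I would write $H \overset{d}{=} \widehat H + \sqrt{t}\,U$ with $U$ GUE, $\widehat H$ independent Wigner-type, and $t = N^{-1/2+\varepsilon}$. The subtlety here is that the cusp of $\rho$ is unstable under free semicircular convolution: in fact $\rho = \widehat\rho \boxplus \rho_{\mathrm{sc}}^{(t)}$, so I must run this flow backwards and choose $\widehat H$ so that $\widehat\rho$ has a \emph{small gap} (of size far larger than $N^{-3/4}$ but still vanishing as a power of $N$) whose endpoints evolve under the semicircular flow to meet at the physical cusp at time $t$. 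Using the quantitative shape analysis of the Dyson equation from \cite{1804.07752}, I would show that such a $\widehat H$ exists, that its endpoints move at a rate that is essentially linear in $s$ while the gap closes on a slower scale, and that $\widehat H$ still satisfies the hypotheses of Assumptions \ref{bdd moments}--\ref{bdd m}. Applying Step 1 to $\widehat H$ then yields optimal rigidity for the eigenvalues $\widehat \lambda_i$ of $\widehat H$ near the edges of the soon-to-collapse gap.

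\medskip

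\textbf{Step 3: Brézin--Hikami contour integral and saddle point analysis.} Conditional on $\widehat H$, the correlation kernel $K^{(N)}_t$ of $H$ admits the explicit double contour representation
\begin{equation*}
K^{(N)}_t(x,y) = \frac{N}{(2\pi\ii)^2 t}\int_\Xi \diff z \int_\Phi \diff w\;\frac{e^{N(w-y)^2/(2t)}}{e^{N(z-x)^2/(2t)}}\,\frac{1}{w-z}\prod_{i=1}^N \frac{z-\widehat\lambda_i}{w-\widehat\lambda_i},
\end{equation*}
with $\Xi,\Phi$ as in \eqref{Pearcey kernel}. I would rescale $x,y$ to $\bu + x/(\gamma N^{3/4})$, $\bu + y/(\gamma N^{3/4})$, substitute the rigidity bounds from Step 2 to replace the empirical $\widehat\rho_N = N^{-1}\sum\delta_{\widehat\lambda_i}$ by $\widehat\rho$ up to negligible errors in the relevant exponents, and perform a steepest descent analysis. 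Near the pre-cusp location the Stieltjes transform $\widehat m$ of $\widehat\rho$ degenerates in a prescribed cubic way, so that the critical equation for the saddle produces a quartic plus quadratic effective action; the coefficient of the quadratic term is precisely $\alpha$ as in \eqref{eq pearcey param choice}, with the three cases (i)--(iii) arising from whether $\widehat\rho$ has a closed, open, or nearly open gap. Deforming $\Xi,\Phi$ to pass through the saddles and extracting the leading contribution yields the extended Pearcey kernel $K_\alpha$.

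\medskip

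\textbf{Step 4: Green function comparison to remove the Gaussian component.} Finally I would transfer the Pearcey statistics from $\widehat H + \sqrt t\,U$ back to the original $H$ via a Green function comparison theorem along the Dyson Brownian motion, using the optimal local law of Step 1 as the apriori input. Since $t \ll 1$, moment matching together with the now-standard four-moment type expansion controls the change of suitably smoothed $k$-point observables on scales slightly above $N^{-3/4}$.

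\medskip

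I expect the main obstacle to be Step 3: the contour integration has to be run in a genuine double-scaling regime where the gap of $\widehat\rho$ closes on a scale comparable to $\sqrt t$, and the standard analyses of \cite{MR3500269,MR3502605} for large Gaussian component do not directly apply. One must track the saddles, branch cuts and decay directions of $\widehat m$ uniformly as the gap closes, and match the resulting quartic action exactly to the Pearcey exponent with the correct $\alpha$. Step 2, the backward-in-time construction of $\widehat H$, is delicate but is made tractable by the quantitative shape theorems of \cite{1804.07752}; Steps 1 and 4 are essentially provided by the local law proved in the body of the paper and by routine adaptations of existing comparison arguments.
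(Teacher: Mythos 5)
Your proposal follows essentially the same route as the paper: optimal local law via cusp fluctuation averaging, backward construction of a Wigner-type matrix $H_t$ whose small gap closes under the semicircular flow exactly at time $t=N^{-1/2+\epsilon}$, Brézin--Hikami contour integration with a quartic-plus-quadratic effective action yielding $K_\alpha$, and Green function comparison to remove the Gaussian component. The only cosmetic caveat is that the Gaussian-divisible matrix matches $H$ only in its first two moments (it is the OU evolution $\wt H_t$ of $H$, not $H$ itself in distribution), which your Step 4 correctly accounts for anyway.
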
 

\subsection{Local law}
We emphasise that 
the proof of Theorem~\ref{thr:cusp universality} requires a very precise a priori control on the fluctuation of the eigenvalues even at singular points of the scDOS. This control is expressed in the form of a \emph{local law} with an optimal convergence rate down to the typical eigenvalue spacing.
We now define the scale on which the eigenvalues are predicted to fluctuate around the spectral parameter $\tau$. 
\begin{definition}[Fluctuation scale]
\label{def:sc fluctuation scale}
We define the {self-consistent fluctuation scale} $\eta_{\mathrm{f}}=\eta_{\mathrm{f}}(\tau)$ through
\[ \int_{-\eta_{\mathrm{f}}}^{  \eta_{\mathrm{f}}} \rho(\tau+\omega) \dd \omega= \frac{1}{N}, \]
if $\tau \in \supp \rho$. If $\tau \not \in \supp \rho$, then $\eta_{\mathrm{f}}$ is defined as the fluctuation scale at a nearby edge. More precisely,  let $I$ be the largest (open) interval with $\tau \in I \subseteq \R \setminus \supp \rho$ and set $\Delta \defeq \min\{\abs{I},1\}$. Then we define  
\begin{equation}\begin{split} \label{eta* in gap} \eta_{\mathrm{f}}\defeq  
\begin{cases}
\Delta^{1/9}/N^{2/3}, &  \Delta > 1/N^{3/4},
\\
1/N^{3/4}, &  \Delta \le 1/N^{3/4}.
\end{cases} \end{split}\end{equation}
\end{definition}

We will see later (cf.~\eqref{eta* at edge}) that \eqref{eta* in gap} is the fluctuation of the edge eigenvalue adjacent to a spectral gap of length $\Delta$ as predicted by the local behaviour of the scDOS.
The control on the fluctuation of eigenvalues is expressed in terms of the following local law.

\begin{theorem}[Local law]
\label{thr:Local law} Let $H$ be a deformed Wigner-type matrix of the real symmetric or complex Hermitian symmetry class. 
 Fix any $\tau \in \R$. Assuming \ref{bdd moments}--\ref{bdd m} 
for any $\epsilon,\zeta>0$ and $\nu \in \N$
the local law holds uniformly for all $z=\tau + \ii \eta$ with $\dist(z,\supp \rho) \in [N^\zeta\eta_{\mathrm{f}}(\tau),N^{100}]$ in the form 
\begin{subequations}
\label{local laws}
\begin{equation} \label{local law inside spectrum} \P\Bigg[\abs{\braket{\vu, (G(z)-M(z))\vv}}\ge N^\epsilon \sqrt{\frac{\rho(z)}{N \eta}} \norm{\vu}\norm{\vv}\Bigg]\le \frac{C}{N^\nu} , \end{equation}
for any $\vu,\vv \in \C^{N}$ and
\begin{equation} \label{average local law inside spectrum} \P\Bigg[ \abs{\braket{B (G(z)-M(z)}}\ge  \frac{N^\epsilon\norm{{B}}}{N \dist(z,\supp \rho)}\Bigg]\le \frac{C}{N^\nu} , \end{equation}
for any $B \in \C^{N \times N}$. Here $\rho(z)\defeq\braket{\Im M(z)}/\pi$ denotes the harmonic extension of the scDOS to the complex upper half plane. 
\end{subequations}
The constants $C>0$ in \eqref{local laws} only depends on $\epsilon,\zeta,\nu$ and the model parameters.  
\end{theorem}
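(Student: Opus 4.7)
The plan is to follow the self-improving bootstrap strategy standard in the local law literature, but built around a considerably sharper control of the random error matrix appearing in the perturbed Dyson equation satisfied by the resolvent. Starting from $(H-z)G=I$ and the Dyson equation $M^{-1}=-z+A-\mathcal{S}[M]$, a standard algebraic manipulation yields the identity
\[ \mathcal{B}[G-M] = -MD + M\mathcal{S}[G-M](G-M), \qquad D \defeq WG + \mathcal{S}[G-M]\,G, \]
where $\mathcal{B}[R]\defeq R-M\mathcal{S}[R]M$ is the stability operator of the Dyson equation. From the detailed spectral analysis of $\mathcal{B}$ near almost-cusp points in \cite{1804.07752}, $\mathcal{B}$ has a unique small eigenvalue of order $(\rho+\abs{\sigma})\rho$ with eigenvector close to $\Im M$, while the complementary directions are uniformly stable; hence $\mathcal{B}^{-1}$ enhances only the projection onto the unstable direction, by the factor $[(\rho+\abs{\sigma})\rho]^{-1}$, and acts boundedly elsewhere.

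The heart of the argument is bounding $D$ in high moments. For the isotropic estimate \eqref{local law inside spectrum} one runs the cumulant expansion of \cite{MR3941370,MR3678478} on $\E\abs{\braket{\vu,D\vv}}^{2p}$, classifies the resulting terms by Feynman graphs, and power counts with the a priori input $\abs{G_{ij}}\prec 1$ together with the Ward identity $\sum_j\abs{G_{ij}}^2=\Im G_{ii}/\eta$; this yields $\abs{\braket{\vu,D\vv}}\prec\sqrt{\rho/(N\eta)}\,\norm{\vu}\,\norm{\vv}$, and the isotropic local law follows after applying $\mathcal{B}^{-1}$ direction by direction. For the averaged bound \eqref{average local law inside spectrum} the critical step is to control the projection of $MD$ onto the unstable eigendirection of $\mathcal{B}$: a naive power count gives only $\rho/(N\eta)$, whereas a hidden cancellation produced by a local symmetry of the Dyson equation at the cusp improves the bound by an additional factor $\rho+\abs{\sigma}$. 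This improvement is encoded by the $\sigma$-cells of Definition \ref{sigma cell def}, and the decisive combinatorial fact is that the gains of independent $\sigma$-cells multiply within each diagram of the high-moment expansion. Combined with the enhancement factor $[(\rho+\abs{\sigma})\rho]^{-1}$ from $\mathcal{B}^{-1}$ and with the precise asymptotics of $\rho(z)$ and $\sigma(z)$ near the support boundary provided by \eqref{gamma def eqs}--\eqref{gamma def min}, this produces the announced $\norm{B}/(N\dist(z,\supp\rho))$ rate.

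The main technical obstacle is precisely this cusp fluctuation averaging mechanism: the classical fluctuation-averaging lemma is blind to the extra $\rho+\abs{\sigma}$ factor, and one instead has to trace every $\sigma$-cell through the full diagrammatic expansion of $\E\abs{\braket{B,D}}^{2p}$ and verify the multiplicativity of their contributions across independent subgraphs. Once the bounds on $D$ are in hand, the argument is closed by a continuity/bootstrap in $\eta$: at $\eta\sim 1$ the claim is trivial from $\norm{G}\le 1/\eta$, and one propagates it downward to $\eta\sim N^\zeta\eta_{\mathrm{f}}$ via the perturbed equation above, using monotonicity of $\eta\mapsto\eta\Im G_{ii}$ to absorb the quadratic nonlinearity $M\mathcal{S}[G-M](G-M)$. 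This establishes \eqref{local law inside spectrum} and \eqref{average local law inside spectrum} uniformly in the stated range of $z$.
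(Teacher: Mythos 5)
Your overall route coincides with the paper's: the quadratic stability equation for $G-M$, high-moment bounds on $D$ via the cumulant/Feynman-graph expansion with Ward identities, the cusp fluctuation averaging through $\sigma$-cells with multiplicative gains, and a downward bootstrap in $\eta$. You also correctly isolate the genuinely new ingredient (the extra factor $\rho+\abs{\sigma}$ in the projection of $MD$ onto the unstable direction, Theorem \ref{thm pfD bound moments}).

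There is, however, one substantive gap in how you close the argument. You propose to invert $\mathcal{B}$, gaining $[(\rho+\abs{\sigma})\rho]^{-1}$ on the unstable direction, and to ``absorb the quadratic nonlinearity'' using monotonicity of $\eta\mapsto\eta\,\Im G_{ii}$. Near the cusp this linear-plus-absorption scheme does not suffice: the small eigenvalue of $\mathcal{B}$ satisfies $\abs{\beta}\sim\eta/\rho+\rho(\rho+\abs{\sigma})$ and the quadratic coefficient $\braket{V_\mathrm{l},M\SS[V_\mathrm{r}]V_\mathrm{r}}\sim\rho+\abs{\sigma}$ \emph{both} degenerate, so the projection $\Theta=\braket{V_\mathrm{l},G-M}$ must be controlled through an approximate \emph{cubic} equation $\Theta^3+\xi_2\Theta^2+\xi_1\Theta=\epsilon_\ast$ (Proposition \ref{prp:Cubic equation for Theta}), whose cubic coefficient is of order one by the second relation in \eqref{Vl Vr inner prod asymp}. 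The bootstrap then runs through the three-branch estimate $\abs{\Theta}\lesssim\min\{d^{1/3},d^{1/2}/\wt\xi_2^{1/2},d/\wt\xi_1\}$ of Lemma \ref{lmm:bootstrap cubic}; in particular the rough bound and the intermediate self-improvement genuinely use the $d^{1/3}$ and $d^{1/2}/\wt\xi_2^{1/2}$ branches, and only in the final regime $\eta\ge N^\zeta\eta_{\mathrm{f}}$ does the linear branch $d/\wt\xi_1\le 1/N\eta$ take over (via Lemma \ref{lemma tilde xi}). The monotonicity of $\eta\norm{G}_p$ is used only to propagate the a priori boundedness of $G$ between scales, not to tame the nonlinearity. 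Without the cubic structure your bootstrap does not close in all regimes covered by the theorem (e.g.\ near small gaps where $\wt\xi_1,\wt\xi_2$ are both tiny). A minor further slip: your displayed $D=WG+\SS[G-M]G$ is not the error matrix for which your stability identity holds; the paper's $D=WG+\SS[G]G$ (equivalently $D=-M^{-1}(G-M)+\SS[G-M]G$) is the correct one.
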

We remark that later we will prove the local law also in a form which is uniform in  $\tau\in[-N^{100},N^{100}]$ and $\eta\in [ N^{-1+\zeta}, N^{100}]$, albeit with a more complicated error term, see Proposition \ref{local law uniform}. The local law Theorem~\ref{thr:Local law} implies a large deviation result for the fluctuation of eigenvalues on the optimal scale uniformly for all singularity types. 
\begin{corollary}[Uniform rigidity] 
\label{crl:Uniform rigidity}
Let $H$ be a deformed Wigner-type matrix of the real symmetric or complex Hermitian symmetry class satisfying Assumptions \ref{bdd moments}-\ref{bdd m} for $\tau \in \interior (\supp \rho)$. Then 
\[
\P\big[ \abs{\lambda_{k(\tau)}-\tau} \ge N^\epsilon \eta_{\mathrm{f}}(\tau) \big] \le \frac{C}{N^\nu}
\]
for any $\epsilon >0$ and $\nu \in \N$ and some $C=C(\epsilon,\nu)$, where we defined the (self-consistent) eigenvalue index $k(\tau)\defeq  \lceil N\rho((-\infty, \tau))\rceil$, and where $\lceil x\rceil=\min\set{k\in\Z|k\ge x}$.
\end{corollary}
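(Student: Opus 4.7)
The plan is to derive the rigidity estimate from the averaged local law \eqref{average local law inside spectrum} by the standard route of first establishing a high-probability bound on the eigenvalue counting function and then inverting it. Let $\NN(E) \defeq \#\{i : \lambda_i \le E\}$ and $\varphi(E) \defeq N\rho((-\infty, E])$. I would first upgrade Theorem~\ref{thr:Local law} to the uniform-in-$z$ version announced around Proposition~\ref{local law uniform}, so that \eqref{average local law inside spectrum} with $B=I$ holds simultaneously for all $z=E+\ii\eta$ with $E$ in a neighbourhood of $\tau$ and $\eta \in [N^{-1+\zeta},1]$, via a standard stochastic-continuity argument on a polynomially dense grid together with the trivial Lipschitz bound on $G$.

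Next I would apply a Helffer--Sj\"ostrand style representation to compare $\NN(E)$ with $\varphi(E)$: writing $\NN(E) - \varphi(E) = -\frac{N}{\pi}\int \Im \braket{G(x+\ii\eta) - M(x+\ii\eta)}\,\chi(x,\eta)\,\dd x\,\dd\eta + \text{boundary terms}$ for a suitable smooth cutoff $\chi$ supported on scales down to $\eta_{\mathrm{f}}(\tau)$, and plugging in \eqref{average local law inside spectrum} together with the trivial bound $\Im \braket{G-M}\le C/\eta$ for $\eta \le \eta_{\mathrm{f}}$, one obtains
\[
  \abs{\NN(E) - \varphi(E)} \le N^\epsilon
\]
with overwhelming probability, uniformly for $E$ in a neighbourhood of $\tau$. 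This is the form of the counting-function estimate already used in previous local-law analyses; the only new input needed is the error rate $\rho(z)/(N\eta)$ below a cusp, which is however already contained in \eqref{average local law inside spectrum} once one writes the bound in terms of $\dist(z,\supp\rho)$.

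Finally I would invert the counting estimate. By definition $\NN(\lambda_{k(\tau)}) = k(\tau) = \lceil \varphi(\tau)\rceil$, so
\[
  \abs{\varphi(\lambda_{k(\tau)}) - \varphi(\tau)} \le \abs{\NN(\lambda_{k(\tau)}) - \varphi(\lambda_{k(\tau)})} + 1 \le N^\epsilon + 1.
\]
To translate this back into an estimate on $\lambda_{k(\tau)} - \tau$, I would use the three shape asymptotics \eqref{gamma def eqs} together with the definition of $\eta_{\mathrm{f}}$: in all three interior regimes (smooth bulk, non-zero local minimum, and cusp point), a direct integration of $\rho$ against the shape function gives $\int_\tau^{\tau + s\eta_{\mathrm{f}}(\tau)}\rho \gtrsim s^{c_0}/N$ for all $s\ge 1$ and some $c_0>0$ depending on the regime (e.g.~$c_0 = 1$ in the smooth bulk and $c_0 = 4/3$ at an exact cusp). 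Choosing $s = N^\epsilon$ therefore forces $\abs{\lambda_{k(\tau)} - \tau} \le N^{\epsilon/c_0}\eta_{\mathrm{f}}(\tau)$, which after relabelling $\epsilon$ yields the claim.

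The main obstacle is book-keeping the contour integral in the Helffer--Sj\"ostrand step uniformly across the three interior regimes: one has to let the cutoff scale $\eta$ descend to $\eta_{\mathrm{f}}(\tau)$, which varies from $N^{-1}$ in the smooth bulk to $N^{-3/4}$ at a physical cusp, while ensuring that the contributions from $\eta \in [\eta_{\mathrm{f}}, N^{-2/3}]$, where \eqref{average local law inside spectrum} is tight, do not overwhelm the target $N^\epsilon$. Once the local law is available in the uniform form of Proposition~\ref{local law uniform}, the rest of the argument is a routine adaptation of the bulk rigidity proof and the passage to the regimes (ii)--(iii) of \eqref{gamma def eqs} only affects the exponent $c_0$ above, not the overall structure.
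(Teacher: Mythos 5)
Your first step---deriving the counting-function estimate $\abs{\mathcal{N}(E)-\varphi(E)}\prec 1$ from the averaged local law via a Helffer--Sj\"ostrand-type comparison on a grid of spectral parameters---is exactly the paper's first ingredient; it is the content of \eqref{IDOS difference}, obtained by adapting the argument of \cite[Corollary~1.10]{MR3719056} with the uniform $1/N\eta$ error from \eqref{G-M bound inside2}. The genuine gap is in your inversion step. You list ``smooth bulk, non-zero local minimum, and cusp point'' as the interior regimes, but $\interior(\supp\rho)$ also contains points $\tau=\ed_-+\omega$ with $\omega\in[-N^{\epsilon}\eta_{\mathrm{f}},0)$, i.e.\ interior points within a few fluctuation scales of an internal edge $\ed_-$ adjacent to a gap $[\ed_-,\ed_+]$ of length $\Delta$. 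For such $\tau$ your lower bound $\int_\tau^{\tau+s\eta_{\mathrm{f}}}\rho\gtrsim s^{c_0}/N$ fails as soon as the interval enters the gap, where $\varphi$ is constant. Consequently the estimate $\abs{\varphi(\lambda_{k(\tau)})-\varphi(\tau)}\le N^{\epsilon}+1$ is perfectly compatible with $\lambda_{k(\tau)}$ sitting anywhere inside the gap or just beyond $\ed_+$, at distance $\sim\Delta$ from $\tau$; since $\Delta$ can be vastly larger than $N^{\epsilon}\eta_{\mathrm{f}}(\tau)$ (e.g.\ $\Delta\sim1$ gives $\eta_{\mathrm{f}}\sim N^{-2/3}$), nothing you have proved excludes this.

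The paper closes precisely this hole with a second, independent ingredient: \emph{band rigidity} \eqref{band rigidity}, the statement that with overwhelming probability the number of eigenvalues to the left of a point well inside the gap equals $N\rho((-\infty,\ed_-+\omega])$ \emph{exactly}, with no $O(N^{\epsilon})$ error. Its proof is not a refinement of the counting estimate but a topological/deformation argument: one interpolates $H_t=\sqrt{1-t}\,W+A-t\mathcal{S}M(\tau)$ between $H$ and a deterministic matrix, tracks that the spectral gap never closes along the flow ($\Delta_t\gtrsim t^{3/2}+\Delta$, via bounds on the saturated self-energy operator $\mathcal{F}_t$), and uses Corollary~\ref{cor no eigenvalues outside} to show no eigenvalue crosses the gap. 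Without this ingredient, or an equivalent mechanism for locating eigenvalues relative to a gap, your argument establishes the corollary only for $\tau$ whose distance to every internal edge exceeds $N^{\epsilon}\eta_{\mathrm{f}}$, not for all $\tau\in\interior(\supp\rho)$ as claimed.
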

In particular, the fluctuation of the eigenvalue whose expected position is closest to the cusp location does not exceed $N^{-3/4+\epsilon}$ for any $\epsilon>0$ with very high probability. The following corollary specialises Corollary~\ref{crl:Uniform rigidity} to the neighbourhood of a cusp.
\begin{corollary}[Cusp rigidity] 
\label{crl:Cusp rigidity} 
Let $H$ be a deformed Wigner-type matrix of the real symmetric or complex Hermitian symmetry class satisfying Assumptions \ref{bdd moments}-\ref{bdd m} and $\tau=\cu$ the location of an \emph{exact cusp}. Then $ N\rho((-\infty, \cu)) = k_\cu$ for some  $k_\cu \in [N]$, that we call  the cusp eigenvalue index. For any $\epsilon>0$, $\nu \in \N$ and $k \in [N]$ with $\abs{k-k_\cu} \le c N$ we have 
\[
\P\bigg[ \abs{\lambda_k-\gamma_k} \ge \frac{N^\epsilon}{(1+\abs{k-k_\cu})^{1/4}N^{3/4}}\bigg]\le \frac{C}{N^\nu},
\]
 where $C=C(\epsilon,\nu)$ and $\gamma_k$ are the self-consistent eigenvalue locations, defined through $ N\rho((-\infty, \gamma_k)) = k$.
\end{corollary}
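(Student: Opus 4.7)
The proof proposal is to reduce Corollary \ref{crl:Cusp rigidity} to Corollary \ref{crl:Uniform rigidity} by computing the fluctuation scale $\eta_{\mathrm{f}}(\gamma_k)$ explicitly from the cusp shape function \eqref{gamma def}, and then taking a union bound over $k$.

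First I would derive the classical location. Since $\rho(\cu)=0$ and $N\rho((-\infty,\cu))=k_\cu$, integrating the cusp asymptotic $\rho(\cu\pm x)=\tfrac{\sqrt 3\gamma^{4/3}}{2\pi}x^{1/3}(1+\landauO{x^{1/3}})$ gives
\[
\frac{k-k_\cu}{N}=\sgn(k-k_\cu)\int_\cu^{\gamma_k}\rho(\tau)\,\dd\tau=\pm\frac{3\sqrt 3\gamma^{4/3}}{8\pi}\abs{\gamma_k-\cu}^{4/3}(1+\landauO{\abs{\gamma_k-\cu}^{1/3}}),
\]
so $\abs{\gamma_k-\cu}\sim(\abs{k-k_\cu}/N)^{3/4}$ as long as $\abs{k-k_\cu}\le cN$ with $c$ small (so $\gamma_k$ stays in a neighbourhood of $\cu$ inside $\supp\rho$ where \eqref{gamma def} applies). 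Consequently $\rho(\gamma_k)\sim\abs{\gamma_k-\cu}^{1/3}\sim(\abs{k-k_\cu}/N)^{1/4}$.

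Second, I would compute $\eta_{\mathrm f}(\gamma_k)$ from Definition~\ref{def:sc fluctuation scale}. In the regime $\abs{k-k_\cu}\ge 1$ the point $\gamma_k$ lies at distance $\gg N^{-3/4}$ from $\cu$, so on a symmetric window of half-width $\eta_{\mathrm f}\ll \abs{\gamma_k-\cu}$ the density is essentially constant and the defining equation $\int_{-\eta_{\mathrm f}}^{\eta_{\mathrm f}}\rho(\gamma_k+\omega)\,\dd\omega=1/N$ yields
\[
\eta_{\mathrm f}(\gamma_k)\sim\frac{1}{N\rho(\gamma_k)}\sim\frac{1}{N^{3/4}\abs{k-k_\cu}^{1/4}}.
\]
For $\abs{k-k_\cu}\lesssim 1$ we have $\gamma_k\approx\cu$ and the same integral with the pure cusp density $\rho(\cu+\omega)\sim\abs{\omega}^{1/3}$ gives $\eta_{\mathrm f}^{4/3}\sim 1/N$, i.e.\ $\eta_{\mathrm f}(\gamma_k)\sim N^{-3/4}$. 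Both regimes are captured uniformly by $\eta_{\mathrm f}(\gamma_k)\lesssim N^{-3/4}(1+\abs{k-k_\cu})^{-1/4}$.

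Third, I would invoke Corollary \ref{crl:Uniform rigidity} at $\tau=\gamma_k$. By construction $N\rho((-\infty,\gamma_k))=k\in\N$, hence the self-consistent index satisfies $k(\gamma_k)=k$, and since $\gamma_k\in\interior(\supp\rho)$ for $\abs{k-k_\cu}\le cN$, Corollary~\ref{crl:Uniform rigidity} (upgraded to exponent $\nu+1$) yields
\[
\P\!\left[\abs{\lambda_k-\gamma_k}\ge N^\epsilon\eta_{\mathrm f}(\gamma_k)\right]\le\frac{C}{N^{\nu+1}}.
\]
Substituting the bound on $\eta_{\mathrm f}(\gamma_k)$ and taking a union bound over the at most $N$ values of $k$ with $\abs{k-k_\cu}\le cN$ gives the claim.

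The only real obstacle is controlling the transition across $\abs{k-k_\cu}\sim 1$, where the ``away from cusp'' local-density approximation for $\eta_{\mathrm f}$ breaks down and one must use the genuine cusp integral. However, since the two asymptotic regimes agree up to constants at $\abs{k-k_\cu}\sim 1$ by the consistency of \eqref{gamma def}, the combined bound $(1+\abs{k-k_\cu})^{-1/4}N^{-3/4}$ absorbs this seamlessly. The application of Corollary~\ref{crl:Uniform rigidity} is automatic because $\gamma_k$ is a deterministic spectral parameter in $\interior(\supp\rho)$, and the model-parameter dependence of $C$ is unaffected by the choice of $\tau=\gamma_k$.
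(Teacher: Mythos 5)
Your argument is correct and is exactly the route the paper intends: Corollary~\ref{crl:Cusp rigidity} is introduced as a direct specialisation of Corollary~\ref{crl:Uniform rigidity}, and your computation $\abs{\gamma_k-\cu}\sim(\abs{k-k_\cu}/N)^{3/4}$ via \eqref{gamma def} (together with $\rho\sim\wt\rho$ from \eqref{rho tilde}), hence $\eta_{\mathrm f}(\gamma_k)\lesssim N^{-3/4}(1+\abs{k-k_\cu})^{-1/4}$ by \eqref{eta* at cusp or minimum}, followed by a union bound, is the intended proof. The one point you leave untouched is the assertion $N\rho((-\infty,\cu))=k_\cu\in[N]$, i.e.\ the integrality of the cusp eigenvalue index; this is part of the statement and follows from the band rigidity \eqref{band rigidity} (the mass to the left of a gap is exactly an eigenvalue count, hence an integer multiple of $1/N$, and this persists as the gap closes to form the cusp), so a complete write-up should include a sentence to that effect.
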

We remark that a variant of Corollary \ref{crl:Cusp rigidity} holds more generally for almost cusp points. It is another consequence of Corollary \ref{crl:Uniform rigidity} that with high probability there are no eigenvalues much further than the fluctuation scale $\eta_{\mathrm{f}}$ away from the spectrum. We note that the following corollary generalises \cite[Corollary 2.3]{1804.07744} by also covering internal gaps of size $\ll1$.
\begin{corollary}[No eigenvalues outside the support of the self-consistent density]\label{cor no eigenvalues outside}
Let $\tau \not \in \supp \rho$. Under the assumptions of Theorem \ref{thr:Local law} we have 
\[\P\Big[\exists\lambda\in\Spec H\cap[\tau-c,\tau+c], \dist(\lambda,\supp\rho)\ge N^{\epsilon}\eta_{\mathrm{f}}(\tau)\Big]\le C N^{-\nu},\]
for any $\epsilon,\nu>0$, 
where  $c$ and $C$ are positive constants, depending on model parameters.  The latter also depends on $\epsilon$ and $\nu$. 
\end{corollary}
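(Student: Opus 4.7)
I plan to derive a contradiction with the averaged local law \eqref{average local law inside spectrum} (applied with $B = I$) at the spectral parameter $z = \lambda_0 + \ii\eta$, where $\lambda_0$ is a suspected outlier eigenvalue and $\eta = N^{-10}$. Assume toward a contradiction that $\lambda_0 \in \Spec H \cap [\tau - c, \tau + c]$ with $d \defeq \dist(\lambda_0, \supp\rho) \ge N^\epsilon \eta_{\mathrm{f}}(\tau)$. Three estimates at $z$ interact. First, since $\lambda_0$ is an eigenvalue, the spectral expansion of the trace gives the trivial lower bound $\Im\braket{G(z)} \ge 1/(N\eta)$ from the single summand with $\lambda_j = \lambda_0$. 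Second, since every $\sigma \in \supp\rho$ is at distance $\ge d$ from $\lambda_0$, one has the deterministic upper bound
\[ \Im\braket{M(z)} = \eta\int\frac{\rho(\dd\sigma)}{(\sigma-\lambda_0)^2 + \eta^2} \le \frac{\eta}{d^2}. \]
Third, the averaged local law gives $\abs{\braket{(G-M)(z)}} \le N^\epsilon/(Nd)$ with probability at least $1 - CN^{-\nu}$. Combining the three yields $1/(N\eta) \le \eta/d^2 + N^\epsilon/(Nd)$, which fails by a polynomial margin for $\eta = N^{-10}$ and $d \ge N^\epsilon\eta_{\mathrm{f}}(\tau) \ge N^{\epsilon - 3/4}$: the left-hand side equals $N^9$ while the right-hand side is bounded by $C N^{-1/4}$.

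The local law is stated at deterministic spectral parameters, whereas $\lambda_0$ is random, so I would remove this defect by a standard net argument. Cover the set $\{\tau' \in [\tau-c, \tau+c] : \dist(\tau', \supp\rho) \ge N^\epsilon \eta_{\mathrm{f}}(\tau)/2\}$ by a grid of $N^{O(1)}$ deterministic points $\tau_k$ with spacing $N^{-100}$; apply the averaged local law at each $z_k = \tau_k + \ii\eta$ via union bound; and use the trivial Lipschitz estimate $\norm{\partial_z G(z)} \le \norm{G(z)}^2 \le \eta^{-2} = N^{20}$ to transfer the bound to the random point $\lambda_0 + \ii\eta$ from its nearest neighbour in the grid. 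The transfer error is of order $N^{-100} \cdot N^{20} = N^{-80}$, negligible compared with $1/(N\eta) = N^9$. I would also shrink $c$ (say $c \le \kappa/4$ for $\kappa$ as in Assumption \ref{bdd m}) so that Assumption \ref{bdd m} is inherited at each $\tau_k$ and the local law is applicable there.

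The main obstacle is essentially cosmetic bookkeeping: one must arrange $\epsilon$ to exceed the parameter $\zeta$ of Theorem \ref{thr:Local law}, verify that the grid approximation error is always dominated by $1/(N\eta)$, and confirm that the regime of validity $\dist(z,\supp\rho) \in [N^\zeta\eta_{\mathrm{f}}(\tau), N^{100}]$ covers every grid point of interest (the upper bound is trivial from the boundedness of the spectrum, the lower bound follows from the choice of covered region). Conceptually the statement is carried entirely by the three-way mismatch between the pole lower bound, the deterministic Stieltjes upper bound, and the local law error, which already at a single deterministic point rules out an eigenvalue with overwhelming probability.
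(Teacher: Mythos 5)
Your mechanism — the pole lower bound $\Im\braket{G}\ge 1/(N\eta)$ against the Stieltjes bound $\Im\braket{M}\le\eta/d^2$ and the local-law error, plus a deterministic net — is the classical route to ``no eigenvalues outside the spectrum,'' and it is genuinely different from what the paper does. The difficulty is that it is circular inside this paper. Your contradiction only materialises because you evaluate the averaged local law at $\eta=N^{-10}$, far below $N^{-1+\zeta}$; at any $\eta$ in the regime where the pre-rigidity bounds are actually established, the unimproved error $1/(N\eta)$ exactly matches your lower bound $1/(N\eta)$ and no contradiction arises. But the final form \eqref{average local law inside spectrum} in that deep sub-microscopic regime (indeed, the bounds \eqref{G-M outside2} at all) is obtained in the paper only \emph{after} the uniform rigidity Corollary~\ref{crl:Uniform rigidity}, via the estimate \eqref{bound on im G} on $\Im\braket{\vx,G\vx}$ — and the proof of Corollary~\ref{crl:Uniform rigidity} itself invokes Corollary~\ref{cor no eigenvalues outside}. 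So you are assuming a statement whose proof, in this paper's architecture, rests on the corollary you are trying to prove. Everything the paper has available before this corollary lives in $\DD_\zeta$, i.e.\ $\eta\ge N^{-1+\zeta}$.

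The repair is not cosmetic bookkeeping but a change of input. Either (i) run your three-way comparison at $\eta=N^{-1+\zeta}$ using the pre-rigidity bound \eqref{G-M outside}, $\abs{\braket{G-M}}\prec N^{-\delta/2}/(N\eta)$ for $z$ well inside the gap: the extra factor $N^{-\delta/2}$ is precisely what lets $1/(N\eta)=N^{-\zeta}$ dominate the error $N^{-\delta/2-\zeta}$ (and one checks $\rho(z)\lesssim N^{-1/2+\zeta}\ll N^{-\zeta}$ from \eqref{rho tilde}), after which your net argument goes through; or (ii) follow the paper, which feeds \eqref{G-M outside} into the Helffer--Sj\"ostrand-type counting lemma \cite[Lemma~5.1]{MR3719056} to show $\rho_H([\tau_1,\tau_2])\prec N^{2\zeta-\delta/2-1}\ll 1/N$ directly, so the interval contains no eigenvalue. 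Either way, the improved ($N^{-\delta/2}$-gaining) bound inside the gap is the essential ingredient, and your write-up never uses it.
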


\begin{remark} \label{rmk:Uniformprimitivity}
Theorem~\ref{thr:Local law}  and its consequences, Corollaries~\ref{crl:Uniform rigidity}, \ref{crl:Cusp rigidity} and \ref{cor no eigenvalues outside} also hold for both symmetry classes if Assumption \ref{fullness} is replaced by the
condition that there exists an $L \in \N$ and $c>0$  such that  $\min_{i,j}(S^L)_{ij} \ge c/N$.  A variance profile S
satisfying this condition is called uniformly primitive (cf.~\cite[Eq.~(2.5)]{MR3684307} and \cite[Eq.~(2.11)]{1506.05095}). Note that uniform primitivity is weaker than 
condition \ref{fullness} on two accounts.  First, it involves only the variance matrix $\E \abs{w_{ij}}^2$ unlike \eqref{fullness cplx} in the complex Hermitian case that also involves $\E w_{ij}^2$. Second, uniform primitivity allows certain matrix elements of $W$ to vanish. The proof under these more general assumptions follows the same strategy but requires minor modifications within the stability analysis\footnote{See Appendix B of \href{https://arxiv.org/abs/1809.03971v2}{\texttt{arXiv:1809.03971v2}} for details}.
\end{remark}

\section{Local Law}
 In order to directly appeal to recent results on the shape of solution to Matrix Dyson Equation (MDE) from \cite{1804.07752} and the flexible diagrammatic cumulant expansion from \cite{MR3941370}, we first reformulate the Dyson equation \eqref{Dyson equation} for $N$-vectors $\vm$ into a matrix equation that will approximately be satisfied by the resolvent $G$. This viewpoint also allows us to treat diagonal and off-diagonal elements of $G$ on the same footing. In fact, \eqref{Dyson equation} is a special case of
\begin{equation} \label{MDE} 1+(z-A+\mathcal{S}[M])M=0, \end{equation}
for a matrix $M=M(z) \in \C^{N \times N}$ with positive definite imaginary part, $\Im M =(M-M^*)/2\ii>0$. The uniqueness of the solution $M$ with $\Im M>0$ was shown in \cite{MR2376207}. Here the linear (\emph{self-energy}) operator $\mathcal{S}\colon \C^{N \times N} \to \C^{N \times N}$ is defined as $\mathcal{S}[R]\defeq  \E WRW$ and it preserves the cone of positive definite matrices. Definition~\ref{def scdos} of the scDOS and its harmonic extension $\rho(z)$ (cf. Theorem~\ref{thr:Local law}) directly generalises to the solution to \eqref{MDE}, see \cite[Definition 2.2]{1804.07752}.

In the special case of Wigner-type matrices the self-energy operator is given by
\begin{equation}\label{cal S def}
 \SS[R] =  \diag \big(S\vr\big)+T \odot R^t,
\end{equation}
where $\vr \defeq  (r_{ii})_{i=1}^N$, $S$ was defined in \eqref{S def},  $T = (t_{ij})_{i,j=1}^N \in \C^{N \times N}$ with $t_{ij}=\E w_{ij}^2 \1(i \neq j)$ and $\odot$ denotes the entrywise Hadamard product. The solution to \eqref{MDE} is then given by $M=\diag(\vm)$, where $\vm$ solves \eqref{Dyson equation}.  Note that the action of  $\mathcal{S}$ on diagonal matrices is independent of
$T$, hence the Dyson equation \eqref{Dyson equation} for  Wigner-type matrices is solely determined by the matrix  $S$, the matrix $T$ plays no role. However, $T$ plays  a role
in analyzing the error matrix  $D$, see \eqref{error matrix D} below.

The proof of the local law consists of three largely separate arguments. The first part concerns the analysis of the stability operator
\begin{equation}\label{stability operator}
\BO \defeq 1-M\SS[\cdot]M
\end{equation}
and shape analysis of the solution $M$ to \eqref{MDE}. The second part is proving that the resolvent $G$ is indeed an approximate solution to \eqref{MDE} in the sense that the error matrix
\begin{equation}\label{error matrix D}
D\defeq 1+(z-A+\SS[G])G = WG+\SS[G]G
\end{equation} 
is small. In previous works \cite{1804.07744,MR3941370,MR3719056} it was sufficient to establish smallness of $D$ in an isotropic form $\braket{\vx,D\vy}$ and averaged form $\braket{BD}$ with general bounded vectors/matrices $\vx,\vy,B$. In the vicinity of a cusp, however, it becomes necessary to establish an additional cancellation when $D$ is averaged against the unstable direction of the stability operator $\BO$. We call this new effect \emph{cusp fluctuation averaging}. Finally, the third part of the proof consists of a bootstrap argument starting far away from the real axis and iteratively lowering the imaginary part $\eta=\Im z$ of the spectral parameter while maintaining the desired bound on $G-M$.

\begin{remark}  \label{rmk extension to correlated}
We remark that the proofs of Theorem \ref{thr:Local law}, and Corollaries \ref{crl:Uniform rigidity}, \ref{cor no eigenvalues outside} use the independence assumption on the entries of $W$ only very locally. In fact, only the proof of a specific bound on $D$ (see \eqref{cusp FA} later), which follows directly from the main result of the diagrammatic cumulant expansion, Theorem \ref{thm pfD bound moments}, uses the vector structure and the specific form of $\SS$ in \eqref{cal S def} at all. Therefore, assuming \eqref{cusp FA} as an input, our proof of Theorem \ref{thr:Local law} remains valid also in the correlated setting of \cite{1804.07744,MR3941370}, as long as $\SS$ is flat (see \eqref{flatness of cal S} below), and Assumption \ref{bdd m} is replaced by the corresponding assumption on the boundedness of $\norm{M}$.
\end{remark}

For brevity we will carry out the proof of Theorem \ref{thr:Local law} only in the vicinity of almost cusps as the local law in all other regimes was already proven in \cite{1804.07744,MR3719056} to optimality.
Therefore, within this section we will always assume that $z = \tau +\ii\eta =\tau_0+\omega +\ii\eta \in \HC$ lies inside a small neighbourhood
\[
\DD_{\mathrm{cusp}}\defeq \Set{z \in \HC| \abs{z-\tau_0} \le c},
\] 
of the location $\tau_0$ of a  local minimum of the scDOS within the self-consistent spectrum $\supp \rho$. Here $c$ is a sufficiently small constant depending only on the model parameters. We will further assume that either  (i) $\rho(\tau_0)\ge 0$ is sufficiently small and $\tau_0$ is the location of a cusp or internal minimum,  or
(ii)  $\rho(\tau_0)=0$ and $\tau_0$ is an edge adjacent to a sufficiently small gap of length $\Delta>0$. The results from \cite{1804.07752} guarantee that these are the only possibilities for the shape of $\rho$, see \eqref{gamma def eqs}. In other words, we assume that $\tau_0 \in \supp \rho$ is a local minimum of $\rho$ with a shape close to a cusp (cf.~\eqref{gamma def eqs}). For concreteness we will also assume that if $\tau_0$ is an edge, then it is a right edge (with a gap of length $\Delta>0$ to the right) and $\omega \in (-c, \frac{\Delta}{2}]$. The case when $\tau_0$ is a left edge has the same proof. 

 We now introduce a quantity that will play an important role in the cusp fluctuation averaging mechanism. We define
\begin{subequations}
\begin{equation} \label{definition of sigma mde} \sigma(z)\defeq \braket{(\sgn \Re U)(\Im U/\rho)^3},\quad U \defeq  \frac{(\Im M)^{-1/2}(\Re M)(\Im M)^{-1/2} + \ii}{\abs[0]{(\Im M)^{-1/2}(\Re M)(\Im M)^{-1/2} + \ii}},\end{equation}
where $\Re M\defeq (M+M^\ast)/2$ is the real part of $M=M(z)$. It was proven in \cite[Lemma 5.5]{1804.07752} that $\sigma(z)$ extends to the real line as a $1/3$-H\"older continuous function wherever the scDOS $\rho$ is smaller than some threshold $c\sim 1$, i.e. $\rho\le c$. In the specific case of $\SS$ as in \eqref{cal S def} the definition simplifies to
\begin{equation} \label{definition of sigma} \sigma(z)\defeq \braket{\vp\vf^3} = \frac{1}{N}\sum_{i=1}^N \frac{(\Im m_i(z))^3\sgn \Re m_i(z)}{\rho(z)^3\abs{m_i(z)}^3},\qquad \vf\defeq \frac{\Im\vm}{\rho\abs{\vm}},\qquad \vp\defeq\sgn\Re\vm ,\end{equation}
\end{subequations}  
since $M=\diag(\vm)$ is diagonal, where multiplication and division of vectors are understood entrywise. When evaluated at the location $\tau_0$ the scalar  $\sigma(\tau_0)$  provides a measure of how far the  shape
of the singularity at $\tau_0$ is  from an exact cusp. In fact, if $\sigma(\tau_0)=0$  and $\rho(\tau_0)=0$, then $\tau_0$ is a cusp location. To see the relationship between the emergence of a cusp and the limit $\sigma(\tau_0) \to 0$, we refer to \cite[{Theorem~7.7} and Lemma~6.3]{1804.07752}. The analogues of the quantities $\vf,\vp$ and $\sigma$ in \eqref{definition of sigma} are denoted by $f_u,s$ and $\sigma$ in \cite{1804.07752}, respectively. The significance of $\sigma$ for the classification of singularity types in Wigner-type ensembles was first realised in \cite{1506.05095}. Although in this paper we will use only \cite{1804.07752} and will not rely on \cite{1506.05095}, we remark that the definition of $\sigma$ in \cite[Eq.~(8.11)]{1506.05095} differs slightly from the definition \eqref{definition of sigma}. However, both definitions equally fulfil the purpose of  classifying singularity types, since the ensuing scalar quantities $\sigma$ are comparable  inside the self-consistent spectrum. For the interested reader, we briefly relate our notations to the respective conventions in \cite{1804.07752} and \cite{1506.05095}. The quantity denoted by $f$ in both \cite{1804.07752} and \cite{1506.05095} is the normalized eigendirection of the \emph{saturated self-energy operator} $F$ in the respective settings and is related to $\vf$ from \eqref{definition of sigma} via $f=\vf / \norm{\vf} +\landauO{\eta/\rho}$. Moreover, $\sigma$ in \cite{1506.05095} is defined as $\braket{f^3 \sgn\Re\vm}$, justifying the comparability to $\sigma$ from \eqref{definition of sigma}.

\subsection{Stability and shape analysis}
From \eqref{MDE} and \eqref{error matrix D} we obtain the quadratic \emph{stability equation}
\[
\mathcal{B}[G-M]= -MD+M\mathcal{S}[G-M](G-M),
\]
for the difference $G-M$. In order to apply the results of \cite{1804.07752} to the stability operator $\BO$, we first have to check that the flatness condition \cite[Eq.~(3.10)]{1804.07752} is satisfied for the self-energy operator $\SS$. We claim that $\SS$ is flat, i.e.
\begin{equation} \label{flatness of cal S} \mathcal{S}[R] \sim \braket{R}1= \frac{1}{N} (\Tr R) 1 , \end{equation}
as quadratic forms for any positive semidefinite $R \in \C^{N \times N}$. We remark that in the earlier paper  \cite{MR3719056} in the Wigner-type case
only the upper bound  $s_{ij}\le C/N$  defined the concept of flatness.
Here with the definition \eqref{flatness of cal S} we follow the convention of  the more recent works \cite{1804.07752,1804.07744,MR3941370}
which is more conceptual.
We also warn the reader, that in the complex Hermitian Wigner-type case the condition $c/N\le s_{ij}\le C/N$ implies \eqref{flatness of cal S} only if 
  $t_{ij}$ is bounded away from $-s_{ij}$.

However,  the flatness \eqref{flatness of cal S} is an immediate consequence of the fullness Assumption \ref{fullness}. 
Indeed, \ref{fullness} is equivalent to the condition that the covariance operator $\Sigma$ of all entries above and on the diagonal, defined as $\Sigma_{ab,cd}\defeq \E w_{ab} w_{cd}$, is uniformly strictly positive definite. This implies that $\Sigma \ge c \Sigma_{\mathrm{G}}$ for some constant $c\sim1$, where $\Sigma_{\mathrm{G}}$ is the covariance operator of a GUE or GOE matrix, depending on the symmetry class we consider. This means that $\mathcal{S}$ can be split into $\mathcal{S}=\mathcal{S}_0+c \mathcal{S}_{\mathrm{G}}$, where $\mathcal{S}_\mathrm{G}$ and $\SS_0$ are the self-energy operators corresponding to $\Sigma_\mathrm{G}$ and $\Sigma-c\Sigma_\mathrm{G}$, respectively. It is now an easy exercise to check that $\mathcal{S}_{\mathrm{G}}$ and thus $\mathcal{S}$ is flat. 

In particular, \cite[Proposition~3.5 and Lemma~4.8]{1804.07752} are applicable implying that \cite[Assumption~4.5]{1804.07752} is satisfied. Thus, according to \cite[Lemma~5.1]{1804.07752} for spectral parameters $z$ in a neighbourhood of $\tau_0$ the operator $\mathcal{B}$ has a unique isolated eigenvalue $\beta$ of smallest modulus and associated right $\BO[V_\mathrm{r}]=\beta V_\mathrm{r}$ and left $\mathcal{B}^*[V_\mathrm{l}]= \overline{\beta} V_\mathrm{l}$ eigendirections normalised such that $\norm{V_\mathrm{r}}_{\mathrm{hs}} =\scalar{V_\mathrm{l}}{V_\mathrm{r}} =1$. We denote the spectral projections to $V_\mathrm{r}$ and to its complement by $\cP\defeq \braket{V_\mathrm{l},\cdot}V_\mathrm{r}$ and $\cQ\defeq 1-\cP$. For convenience of the reader we now collect some important quantitative information about the stability operator and its unstable direction from \cite{1804.07752}.
\begin{proposition}[Properties of the MDE and its solution]\label{prop mde}
The following statements hold true uniformly in $z=\tau_0+\omega+\ii\eta\in\DD_\cusp$ assuming flatness as in \eqref{flatness of cal S} and the uniform boundedness of $\norm{M}$ for $z\in\tau_0+(-\kappa,\kappa)+\ii\R_+$,
\begin{subequations}
\begin{enumerate}[(i)]
\item The eigendirections $V_\mathrm{l},V_\mathrm{r}$ are norm-bounded and the operator $\BO^{-1}$ is bounded on the complement to its unstable direction, i.e.
\begin{equation} \label{bounded eigendirections} \norm{\BO^{-1}\cQ}_{\hs\to\hs}+\norm{V_\mathrm{r}}+\norm{V_\mathrm{l}}\lesssim 1.\end{equation}
\item The density $\rho$ is comparable with the explicit function $\rho(\tau_0+\omega+\ii\eta)\sim\wt\rho(\tau_0+\omega+\ii \eta)$ given by 
\begin{equation}\label{rho tilde}
\wt\rho \defeq \begin{cases}
\rho(\tau_0)+(\abs{\omega}+\eta)^{1/3},&\text{in cases (i),(iii) if }\tau_0=\mi,\cu,\\
(\abs{\omega}+\eta)^{1/2}(\Delta+\abs{\omega}+\eta)^{-1/6},&\text{in case (ii) if }\tau_0=\ed_-,\; \omega\in[-c,0]\\
\eta(\Delta+\abs{\omega}+\eta)^{-1/6}(\abs{\omega}+\eta)^{-1/2},&\text{in case (ii) if }\tau_0=\ed_-,\; \omega\in[0,\Delta/2].\\
\end{cases}
\end{equation}
\item \label{prop mde beta comp rel} The eigenvalue $\beta$ of smallest modulus satisfies
\begin{equation}\label{beta asymp}
\abs{\beta} \sim \frac{\eta}{\rho} + \rho(\rho+\abs{\sigma}),
\end{equation}
and we have the comparison relations 
\begin{equation}\label{Vl Vr inner prod asymp}
\begin{split}
  &\abs{\braket{V_\mathrm{l}, M \SS[V_\mathrm{r}]V_\mathrm{r}}} \sim \rho+\abs{\sigma}, \\ 
  &\abs{\braket{V_\mathrm{l},M\SS[V_\mathrm{r}]\BO^{-1}\cQ [M\SS[V_\mathrm{r}]V_\mathrm{r}]+M\SS\BO^{-1}\cQ [M\SS[V_\mathrm{r}]V_\mathrm{r}]V_\mathrm{r}}}\sim 1.
\end{split}
\end{equation}
\item \label{prop mde wt xi} The quantities $\eta/\rho+\rho(\rho+\abs{\sigma})$ and $\rho+\abs{\sigma}$ in \eqref{beta asymp}--\eqref{Vl Vr inner prod asymp} can be replaced by the following more explicit auxiliary quantities 
\begin{equation}\label{xi tilde def}
\begin{split}
  \wt\xi_1(\tau_0+\omega+\ii\eta)&\defeq \begin{cases}
    (\abs{\omega}+\eta)^{1/2} (\abs{\omega}+\eta+\Delta)^{1/6},\\
    (\rho(\tau_0)+(\abs{\omega}+\eta)^{1/3})^2,
    \end{cases}\\
    \wt\xi_2(\tau_0+\omega+\ii\eta)&\defeq \begin{cases}
    (\abs{\omega}+\eta+\Delta)^{1/3}, &\text{if } \tau_0=\ed_-,\\
    \rho(\tau_0)+(\abs{\omega}+\eta)^{1/3}, &\text{if }\tau_0=\mi,\cu.
    \end{cases}
\end{split}
\end{equation}
which are monotonically increasing in $\eta$. More precisely, it holds that $\eta/\rho + \rho(\rho+\abs{\sigma}) \sim \wt\xi_1$ and, in the case where $\tau_0=\cu,\mi$ is a cusp or a non-zero local minimum, we also have that \smash{$\rho+\abs{\sigma}\sim \wt\xi_2$}. For the case when $\tau_0=\ed_-$ is a right edge next to a gap of size $\Delta$ there exists a constant $c_\ast$ such that \smash{$\rho+\abs{\sigma}\sim \wt\xi_2$} in the regime $\omega\in[-c,c_\ast \Delta]$ and \smash{$\rho+\abs{\sigma}\lesssim \wt\xi_2$} in the regime $\omega\in[c_\ast \Delta,\Delta/2]$.
\end{enumerate}
\end{subequations}
\end{proposition}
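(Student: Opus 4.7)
\bigskip

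\noindent\textbf{Proof proposal.} The proposition is essentially a translation of the deterministic Dyson equation analysis from \cite{1804.07752} into the Wigner-type setting used here, so the plan is to verify the abstract hypotheses of that reference and then invoke its results, making the necessary identifications of notation. The hypotheses are that $\SS$ is flat in the sense of \eqref{flatness of cal S} and that $\norm{M}$ is uniformly bounded in the relevant neighbourhood of $\tau_0$. Flatness was already established in the paragraph preceding the proposition (from Assumption \ref{fullness} via the GUE/GOE comparison $\Sigma\ge c\Sigma_\mathrm{G}$), and boundedness of $\norm{M}$ is built into the hypotheses. Consequently \cite[Proposition 3.5 and Lemma 4.8]{1804.07752} apply and yield Assumption~4.5 of that paper; from this point on, the solution $M$ satisfies the full quantitative MDE regularity theory developed in \cite{1804.07752}.

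For part \emph{(i)}, the existence and isolation of the smallest-modulus eigenvalue $\beta$ of $\BO$ as well as the norm bounds $\norm{V_\mathrm{l}}+\norm{V_\mathrm{r}}\lesssim 1$ and $\norm{\BO^{-1}\cQ}_{\hs\to\hs}\lesssim 1$ follow directly from \cite[Lemma~5.1]{1804.07752} applied to our stability operator $\BO = 1 - M\SS[\cdot]M$. For part \emph{(ii)}, the comparison $\rho(z)\sim\wt\rho(z)$ is obtained by taking the harmonic (Poisson) extension of the explicit shape functions \eqref{gamma def eqs}; in each of the three regimes the leading asymptotic of the Stieltjes transform of the shape function can be computed explicitly and matches $\wt\rho$ from \eqref{rho tilde}. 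These calculations are precisely those carried out in \cite[Section 7]{1804.07752} for the universal shape functions there.

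For part \emph{(iii)}, the eigenvalue asymptotic $\abs\beta\sim \eta/\rho + \rho(\rho+\abs\sigma)$ is the content of the cubic equation derived in \cite[Lemma~5.5 and Proposition~7.1]{1804.07752}. The first comparison relation in \eqref{Vl Vr inner prod asymp} follows because the unstable eigendirection $V_\mathrm{r}$ is aligned, to leading order, with the direction $\vf$ appearing in \eqref{definition of sigma}; plugging this alignment into $\braket{V_\mathrm{l},M\SS[V_\mathrm{r}]V_\mathrm{r}}$ reduces the inner product to a weighted trace that is precisely controlled by $\rho+\abs{\sigma}$ (compare \cite[Eq.~(5.16)--(5.18)]{1804.07752}). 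The second comparison relation, involving the additional insertion of $\BO^{-1}\cQ$, captures the next-order cubic coefficient in the expansion of the MDE around $\tau_0$; it is shown in \cite{1804.07752} that this coefficient is stable of order one away from the unstable direction, which produces the $\sim 1$ bound. For part \emph{(iv)}, we simply substitute the explicit shape formulae \eqref{gamma def eqs} into the scalar $\sigma$ from \eqref{definition of sigma}. In the cusp/minimum regimes \eqref{gamma def} and \eqref{gamma def min} the combination $\rho+\abs\sigma$ is monotonically increasing in $\eta$ and directly comparable to $\wt\xi_2$, while in the edge regime \eqref{gamma def edge} there is a small interior subinterval $\omega\in[c_\ast\Delta,\Delta/2]$ on which $\sigma$ may change sign and thus partially cancel $\rho$, so one obtains only the one-sided bound $\rho+\abs\sigma\lesssim\wt\xi_2$ rather than a full comparability. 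The quantity $\wt\xi_1$ then follows by combining $\eta/\rho$ with $\rho\wt\xi_2$ and simplifying.

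The main obstacle is the second relation in \eqref{Vl Vr inner prod asymp}, since the $\sim 1$ lower bound requires more than soft power counting: one must open up the cubic expansion of $\BO$ around $\tau_0$ and exhibit an explicit non-degeneracy of the coefficient of $(G-M)^3$ in the stability equation, which is what drives the cube-root cusp behaviour. This non-degeneracy is where the flatness of $\SS$ enters in a genuine way, and it relies on the quantitative isolation of the unstable direction established in \cite[Sections~5--6]{1804.07752}. All the remaining claims are either soft consequences of part \emph{(i)} or explicit shape-function computations.
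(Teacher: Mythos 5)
Your proposal is correct and follows essentially the same route as the paper: the paper's proof consists precisely of checking that flatness and the boundedness of $\norm{M}$ make the hypotheses of \cite{1804.07752} applicable, translating notation ($\SS,\BO,\cQ,V_\mathrm{l},V_\mathrm{r}$ to $S,B,Q,l/\scalar{l}{b},b$), and then citing the corresponding results there for each part (the eigenvalue and inverse bounds from Lemma~5.1/Eq.~(5.15), the density shape from Remark~7.3 together with \cite[Corollary A.1]{1506.05095}, the comparison relations \eqref{beta asymp}--\eqref{Vl Vr inner prod asymp} from Proposition~6.1, and \eqref{prop mde wt xi} from Remark~10.4). Your exact citation targets within \cite{1804.07752} differ in places, but the strategy and the substance are the same.
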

\begin{proof} We first explain how to translate the notations from the present paper to the notations in \cite{1804.07752}: The operators $\SS,\BO,\cQ$  are simply denoted by $S,B,Q$ in \cite{1804.07752}; the matrices $V_l,V_r$ here are denoted by $l/\scalar{l}{b},b$ there.
 The bound on $\BO^{-1}\cQ$ in \eqref{bounded eigendirections} follows directly from \cite[Eq.~(5.15)]{1804.07752}. The bounds on $V_\mathrm{l},V_\mathrm{r}$ in \eqref{bounded eigendirections} follow from the definition of the stability operator \eqref{stability operator} together with the fact that $\norm{M} \lesssim 1$ (by Assumption \ref{bdd m}) and $\norm{\mathcal{S}}_{\mathrm{hs} \to \norm{\cdot}} \lesssim 1$, following from the upper bound in  flatness \eqref{flatness of cal S}. The asymptotic expansion of $\rho$ in \eqref{rho tilde} follows from \cite[{Remark 7.3}]{1804.07752} and \cite[Corollary A.1]{1506.05095}. The claims in \eqref{prop mde beta comp rel} follow directly from \cite[Proposition 6.1]{1804.07752}. Finally, the claims in \eqref{prop mde wt xi} follow directly from \cite[{Remark 10.4}]{1804.07752}. 
\end{proof}
The following lemma establishes simplified lower bounds on $\wt\xi_1,\wt\xi_2$ whenever $\eta$ is much larger than the fluctuation scale $\eta_\mathrm{f}$. We defer the proof of the technical lemma which differentiates various regimes to the appendix. 
\begin{lemma}\label{lemma tilde xi}
Under the assumptions of Proposition \ref{prop mde} we have uniformly in $z=\tau_0+\omega+\ii\eta\in\DD_\cusp$ with $\eta\ge  \eta_\mathrm{f}$ that
\[ \wt\xi_2\gtrsim \frac{1}{N\eta}+\Bigl(\frac{\rho}{N\eta}\Bigr)^{1/2}, \qquad \wt\xi_1\gtrsim \wt\xi_2\Bigl(\rho+\frac{1}{N\eta}\Bigr).\]
\end{lemma}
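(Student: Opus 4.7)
The plan is to carry out a direct case analysis over the three shape types admitted by Proposition \ref{prop mde}, in each case reducing the two desired inequalities to elementary comparisons between powers of $|\omega|+\eta$, $\Delta$, $\rho(\tau_0)$ and $N^{-1}$. The principal input, used throughout, is the defining relation of the fluctuation scale $\eta_\mathrm{f}$: by Definition \ref{def:sc fluctuation scale} together with \eqref{rho tilde}, the scale $\eta_\mathrm{f}$ is precisely the one at which $\rho\cdot\eta_\mathrm{f}\sim 1/N$ (or its natural analogue in the presence of a spectral gap). Since Proposition \ref{prop mde} also asserts the monotonicity of $\wt\xi_1,\wt\xi_2$ in $\eta$, it is enough to verify both bounds at $\eta=\eta_\mathrm{f}$, which reduces the lemma to a deterministic power-counting exercise.

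In the internal-minimum and exact-cusp cases (i) and (iii) of Proposition \ref{prop mde}, the identifications $\wt\xi_2\sim\rho$ and $\wt\xi_1\sim\rho^2$ collapse both inequalities into the single statement $\rho\gtrsim 1/(N\eta)$; the factor $(\rho/(N\eta))^{1/2}$ in the first bound is automatically dominated by $\rho$ once that is shown. I would verify $\rho\gtrsim 1/(N\eta)$ by splitting according to whether $\rho(\tau_0)$ or $(|\omega|+\eta)^{1/3}$ dominates in the asymptotic $\rho\sim\rho(\tau_0)+(|\omega|+\eta)^{1/3}$. In the former sub-regime $\eta_\mathrm{f}\sim 1/(N\rho(\tau_0))$, whence $\rho\eta\gtrsim\rho(\tau_0)\eta_\mathrm{f}\sim 1/N$; in the latter $\eta_\mathrm{f}\sim N^{-3/4}$ and $\rho\eta\gtrsim\eta^{4/3}\gtrsim 1/N$.

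The edge case (ii) is the most delicate since the formulas for $\rho$ and for $\wt\xi_1,\wt\xi_2$ bifurcate across $\omega=0$ and further across the threshold $\omega\sim c_\ast\Delta$ introduced in Proposition \ref{prop mde}. I would treat separately the sub-regimes $\omega\in[-c,0]$, $\omega\in[0,c_\ast\Delta]$ and $\omega\in[c_\ast\Delta,\Delta/2]$, each one further split according to the two branches of \eqref{eta* in gap} ($\Delta>N^{-3/4}$ versus $\Delta\le N^{-3/4}$). In the outermost regime $\omega\in[c_\ast\Delta,\Delta/2]$, where only $\rho+|\sigma|\lesssim\wt\xi_2$ is available, $\rho$ itself is tiny, the term $1/(N\eta)$ dominates $\rho$ on the right-hand side of both inequalities, and the crude bound $\wt\xi_2\sim(|\omega|+\eta+\Delta)^{1/3}\gtrsim\Delta^{1/3}$ is enough. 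In the remaining two sub-regimes, substituting \eqref{rho tilde}--\eqref{xi tilde def} and the matching branch of $\eta_\mathrm{f}$ reduces both bounds to explicit algebraic inequalities that can be checked directly.

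The main technical obstacle is the second bound $\wt\xi_1\gtrsim\wt\xi_2(\rho+1/(N\eta))$ just outside the gap, where the factor $(|\omega|+\eta+\Delta)^{1/6}$ appearing in $\wt\xi_1$ must be balanced against $(|\omega|+\eta+\Delta)^{1/3}$ in $\wt\xi_2$, and the crossover value $\Delta\sim N^{-3/4}$ in \eqref{eta* in gap} must match the crossover in the asymptotics of the $\wt\xi_j$. No new conceptual input beyond careful case work and the quantitative content of Proposition \ref{prop mde} is needed; the difficulty is purely the bookkeeping across the roughly half-dozen sub-cases, which is presumably the reason the proof is deferred to the appendix.
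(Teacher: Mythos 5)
Your overall strategy coincides with the paper's: the appendix proof simply records the explicit asymptotics of $\eta_\mathrm{f}$ in every regime (the content of your "defining relation") and then reduces both inequalities to power counting in $\abs{\omega}+\eta$, $\Delta$, $\rho(\tau_0)$ and $N^{-1}$, distinguishing the same sub-cases you list. Your treatment of cases (i) and (iii) and of the gap regime is in substance what the paper does.

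There is, however, one step in your reduction that would fail as stated. You propose to use the monotonicity of $\wt\xi_1,\wt\xi_2$ in $\eta$ to verify both bounds only at $\eta=\eta_\mathrm{f}$. This is fine for the first inequality, whose right-hand side $1/(N\eta)+(\rho/(N\eta))^{1/2}$ is decreasing in $\eta$ (since $\rho/\eta$ is decreasing in all regimes of \eqref{rho tilde}). It is not fine for the second: the term $\wt\xi_2\rho$ on its right-hand side is \emph{increasing} in $\eta$, so establishing $\wt\xi_1\gtrsim\wt\xi_2(\rho+1/(N\eta))$ at $\eta=\eta_\mathrm{f}$ says nothing about larger $\eta$. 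The repair is to split the second bound: $\wt\xi_1\gtrsim\wt\xi_2\wt\rho$ holds identically for all $\eta$ by direct inspection of \eqref{rho tilde} and \eqref{xi tilde def} (one has $\wt\xi_1\sim\wt\xi_2\wt\rho$ except inside the gap, where $\wt\xi_2\wt\rho\lesssim\wt\xi_1$ because $\eta\le\abs{\omega}+\eta$), and only the remaining piece $\wt\xi_1\gtrsim\wt\xi_2/(N\eta)$ uses $\eta\ge\eta_\mathrm{f}$, where a reduction to $\eta=\eta_\mathrm{f}$ is legitimate since $\wt\xi_1/\wt\xi_2$ is increasing. Two smaller slips in the same spirit: when $(\abs{\omega}+\eta)^{1/3}$ dominates in cases (i)/(iii) one does not always have $\eta_\mathrm{f}\sim N^{-3/4}$ (if $\abs{\omega}^{1/3}>N^{-1/4}$ then $\eta_\mathrm{f}\sim 1/(N\abs{\omega}^{1/3})$, and there one should conclude via $\rho\gtrsim\abs{\omega}^{1/3}$ rather than via $\rho\eta\gtrsim\eta^{4/3}$); and in the regime $\omega\in[c_\ast\Delta,\Delta/2]$ the claim that $1/(N\eta)$ dominates $\rho$ is false for large $\eta$ (there $\rho\sim\eta^{1/3}$), although the conclusion survives through the pointwise bound $\wt\xi_1\gtrsim\wt\xi_2\wt\rho$ just described.
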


We now define an appropriate matrix norm in which we will measure the distance between $G$ and $M$. The $\norm{\cdot}_\ast$-norm is defined exactly as in \cite{1804.07744} and similar to the one first introduced in \cite{MR3941370}.
It is a norm comparing matrix elements on a large but finite set of vectors with a hierarchical structure. To define this set we introduce some notations. 
 For second order cumulants of matrix elements $\kappa(w_{ab},w_{cd})\defeq \E w_{ab}w_{cd}$
 we use the short-hand notation $\kappa(ab,cd)$. We also use the short-hand notation $\kappa(\vx b,cd)$ for the $\vx=(x_a)_{a\in[N]}$-weighted linear combination $\sum_a x_a \kappa(ab,cd)$ of such cumulants. We use the notation that replacing an index in a scalar quantity by a dot ($\cdot$) refers to the corresponding vector, e.g.~$A_{a\cdot}$ is a short-hand notation for the vector $(A_{ab})_{b\in[N]}$. Matrices $R_{\vx\vy}$ with vector subscripts $\vx,\vy$ are understood as short-hand notations for $\braket{\vx,R\vy}$, and matrices $R_{\vx a}$ with mixed vector and index subscripts are understood as $\braket{\vx,R e_a}$ with $e_a$ being the $a$-th normalized $\norm{e_a}=1$ standard basis vector.  We fix two vectors $\vx,\vy$ and some large integer $K$ and define the sets of vectors
\[
\begin{split}
I_0 &\defeq  \{\vx,\vy\} \cup\Set{\delta_{a\cdot}, (V_\mathrm{l}^*)_{a\cdot}| a \in [N]},
\\
I_{k+1} &\defeq  I_k\cup \Set{M{\vu}|{\vu}\in I_k}\cup \Set{\kappa_\mathrm{c}((M{\vu})a,b\cdot),\kappa_\mathrm{d}((M{\vu})a,\cdot b)|{\vu}\in I_k,a,b\in [N]}.
\end{split}
\]
Here the cross and the direct part $\kappa_\mathrm{c},\kappa_\mathrm{d}$ of the 2-cumulants $\kappa(\cdot,\cdot)$ refer to the natural splitting dictated by the Hermitian symmetry. In the specific case of \eqref{cal S def} we simply have $\kappa_\mathrm{c}(ab,cd)=\delta_{ad}\delta_{bc}s_{ab}$ and $\kappa_\mathrm{d}(ab,cd)=\delta_{ac}\delta_{bd}t_{ab}$.
Then the $\norm{\cdot}_\ast$-norm is given by
\[
\norm{R}_\ast = \norm{R}_\ast^{K,{\vx},{\vy}} \defeq  \sum_{0\le k< K}N^{-k/2K} \norm{R}_{I_k} + N^{-1/2} \max_{{\vu}\in I_K}\frac{\norm{R_{\cdot{\vu}}}}{\norm{{\vu}}},\qquad \norm{R}_I \defeq  \max_{{\vu},{\vv}\in I} \frac{\abs{R_{{\vu}{\vv}}}}{\norm{{\vu}}\norm{{\vv}}}. 
\]
We remark that the set $I_k$ hence also $\norm{\cdot}_\ast$ depend on $z$ via $M=M(z)$. We omit this dependence from the notation as it plays no role in the estimates. 

In terms of this norm we obtain the following estimate on $G-M$ in terms of its projection $\Theta=\braket{V_\mathrm{l},G-M}$ onto the unstable direction of the stability operator $\BO$. It is a direct consequence of a general expansion of approximate quadratic matrix equations whose linear stability operators have a single eigenvalue close to $0$, as given in Lemma \ref{lmm:cubic equation}.
\begin{proposition}[Cubic equation for $\Theta$] \label{prp:Cubic equation for Theta} 
\begin{subequations}
Fix $K\in\N$, $\vx,\vy\in\C^N$ and use $\norm{\cdot}_\ast=\norm{\cdot}_\ast^{K,\vx,\vy}$. For fixed $z \in \DD_{\mathrm{cusp}}$  and on the event that $\norm{G-M}_\ast +\norm{D}_\ast \lesssim N^{-10/K}$ the difference $G-M$ admits the expansion
\begin{equation} \label{G-M expansion} 
\begin{split}
G-M&= \Theta  V_\mathrm{r} -\mathcal{B}^{-1}\mathcal{Q}[MD]+\Theta^2\mathcal{B}^{-1}\mathcal{Q}[M\mathcal{S}[V_\mathrm{r}]V_\mathrm{r}]  + E,\\
\norm{E}_\ast&\lesssim N^{5/K}(\abs{\Theta}^3 +\abs{\Theta}\norm{D}_\ast+ \norm{D}_\ast^2), \end{split}\end{equation}
with an error matrix $E$ and the scalar $\Theta\defeq \braket{V_\mathrm{l}, G-M}$ that satisfies the approximate cubic equation
\begin{equation} \label{Theta cubic equation}   \Theta^3+ \xi_2 \Theta^2 +\xi_1 \Theta= \epsilon_\ast. \end{equation}
Here, the error $\epsilon_\ast$ satisfies the upper bound
\begin{equation} \label{bound on eps} \abs{\epsilon_\ast}\lesssim N^{20/K}(\norm{D}_\ast^3+\abs{\braket{R, D}}^{3/2})+ \abs{\braket{V_\mathrm{l}, MD}}  + \abs{\braket{V_\mathrm{l},M(\mathcal{S}\mathcal{B}^{-1}\mathcal{Q}[MD])(\mathcal{B}^{-1}\mathcal{Q}[MD])}}, \end{equation}
where $R$ is a deterministic matrix with $\norm{R}\lesssim 1$
and the coefficients of the cubic equation satisfy the comparison relations
\begin{equation} \label{xi comparison relations} \abs{\xi_1} \sim \frac{\eta}{\rho}+\rho(\rho +\abs{\sigma}),\qquad \abs{\xi_2} \sim \rho +\abs{\sigma}. \end{equation}
\end{subequations}
\end{proposition}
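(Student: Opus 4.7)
The plan is to treat this proposition as a direct corollary of the abstract expansion Lemma~\ref{lmm:cubic equation} applied to the quadratic stability equation
\[
\mathcal{B}[G-M] = -MD + M\mathcal{S}[G-M](G-M),
\]
obtained by subtracting the MDE~\eqref{MDE} from the identity $G^{-1}+z-A+\mathcal{S}[G]=D\,G^{-1}$ after multiplying on both sides by $G$ and $M$. The abstract lemma reduces the proposition to (a) verifying its quantitative inputs on $\mathcal{B}$, and (b) identifying the coefficients of the resulting scalar cubic equation using the explicit estimates collected in Proposition~\ref{prop mde}.

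For the matrix expansion~\eqref{G-M expansion}, I would decompose $G-M = \Theta V_\mathrm{r} + Q$ with $Q=\mathcal{Q}[G-M]$. Applying $\mathcal{Q}$ to the stability equation and using that $\mathcal{B}^{-1}\mathcal{Q}$ is well-defined and bounded on $\mathrm{Ran}\,\mathcal{Q}$ (by \eqref{bounded eigendirections}) yields
\[
Q = -\mathcal{B}^{-1}\mathcal{Q}[MD] + \mathcal{B}^{-1}\mathcal{Q}\bigl[M\mathcal{S}[\Theta V_\mathrm{r}+Q](\Theta V_\mathrm{r}+Q)\bigr].
\]
Iterating once, i.e.~substituting back the leading approximation $Q\approx -\mathcal{B}^{-1}\mathcal{Q}[MD]+\Theta^2\mathcal{B}^{-1}\mathcal{Q}[M\mathcal{S}[V_\mathrm{r}]V_\mathrm{r}]$ inside the nonlinear term, produces the claimed decomposition. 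The error $E$ collects the remaining cross terms of types $\Theta\cdot \mathcal{B}^{-1}\mathcal{Q}[M\mathcal{S}[V_\mathrm{r}]\mathcal{B}^{-1}\mathcal{Q}[MD]]$, $\mathcal{B}^{-1}\mathcal{Q}[M\mathcal{S}[\mathcal{B}^{-1}\mathcal{Q}[MD]]\mathcal{B}^{-1}\mathcal{Q}[MD]]$, and higher $\Theta$ iterations, which produce contributions of order $|\Theta|\norm{D}_\ast$, $\norm{D}_\ast^2$, and $|\Theta|^3$. The $N^{5/K}$ prefactor accumulates from converting between the hierarchical $\norm{\cdot}_\ast$-norm and Hilbert--Schmidt norm when passing $Q$ through $\mathcal{B}^{-1}\mathcal{Q}$; this is precisely why the hierarchy $I_k$ was built to be closed under $\vu\mapsto M\vu$ and under the 2-cumulant operations appearing in $\mathcal{S}$.

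For the cubic equation~\eqref{Theta cubic equation}, I would take the $\langle V_\mathrm{l},\cdot\rangle$ pairing of the stability equation. Since $\mathcal{B}^\ast V_\mathrm{l}=\bar\beta V_\mathrm{l}$ and $\langle V_\mathrm{l},V_\mathrm{r}\rangle=1$, this yields
\[
\beta\,\Theta = -\langle V_\mathrm{l},MD\rangle + \langle V_\mathrm{l}, M\mathcal{S}[G-M](G-M)\rangle.
\]
Substituting $G-M = \Theta V_\mathrm{r} + Q$ with $Q$ from the expansion above and collecting the result by powers of $\Theta$, the coefficient of $\Theta^2$ equals $\langle V_\mathrm{l},M\mathcal{S}[V_\mathrm{r}]V_\mathrm{r}\rangle$, while the coefficient of $\Theta^3$ equals $\langle V_\mathrm{l},M\mathcal{S}[V_\mathrm{r}]\mathcal{B}^{-1}\mathcal{Q}[M\mathcal{S}[V_\mathrm{r}]V_\mathrm{r}] + M\mathcal{S}[\mathcal{B}^{-1}\mathcal{Q}[M\mathcal{S}[V_\mathrm{r}]V_\mathrm{r}]]V_\mathrm{r}\rangle$; by \eqref{Vl Vr inner prod asymp} these are of size $\rho+|\sigma|$ and $\sim 1$ respectively, and the coefficient of $\Theta$ is $\beta\sim\eta/\rho+\rho(\rho+|\sigma|)$ by \eqref{beta asymp}. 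Dividing through by the nonzero leading coefficient gives the normalized cubic with the comparison relations \eqref{xi comparison relations}.

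The error $\epsilon_\ast$ then contains the linear-in-$D$ term $\langle V_\mathrm{l},MD\rangle$ and the explicit quadratic $\langle V_\mathrm{l}, M(\mathcal{S}\mathcal{B}^{-1}\mathcal{Q}[MD])(\mathcal{B}^{-1}\mathcal{Q}[MD])\rangle$ arising from the $\mathcal{S}[Q]Q$ term with both $Q$'s replaced by $-\mathcal{B}^{-1}\mathcal{Q}[MD]$; these two quantities are kept un-estimated since they will only later be bounded via the cusp fluctuation averaging mechanism. All mixed contributions of the type $\Theta\cdot \langle R,D\rangle$ arising in the expansion (with deterministic bounded $R$) are handled by Young's inequality $|\Theta\cdot\langle R,D\rangle|\le \epsilon|\Theta|^3 + C_\epsilon|\langle R,D\rangle|^{3/2}$, absorbing the cubic part into the left-hand side of \eqref{Theta cubic equation} and leaving the $|\langle R,D\rangle|^{3/2}$ residue in $\epsilon_\ast$; the remaining higher-order cross terms are bounded by $\norm{D}_\ast^3$. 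The $N^{20/K}$ prefactor results from stacking a bounded number of $\mathcal{B}^{-1}\mathcal{Q}$ steps, each paying $N^{O(1/K)}$ due to the hierarchy mismatch between $\norm{\cdot}_\ast$ and $\norm{\cdot}_\mathrm{hs}$. The main obstacle is precisely this bookkeeping: controlling all substituted terms uniformly in the $\norm{\cdot}_\ast$-norm while only $\mathcal{B}^{-1}\mathcal{Q}$ is bounded on Hilbert--Schmidt, which is the reason the hierarchy $I_k$ is built the way it is and why the $N^{O(1)/K}$ losses appear.
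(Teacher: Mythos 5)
Your proposal is correct and follows essentially the same route as the paper: the paper's proof also applies Lemma~\ref{lmm:cubic equation} to the stability equation with $\mathcal{A}[R_1,R_2]=M\mathcal{S}[R_1]R_2$, $X=MD$, $Y=G-M$, verifies the $\norm{\cdot}_\ast$-norm bounds on $\mathcal{B}^{-1}\mathcal{Q}$ and $M\mathcal{S}[\cdot]\,\cdot$ with $\lambda=N^{1/2K}$, identifies the coefficients via \eqref{beta asymp}--\eqref{Vl Vr inner prod asymp}, and handles the $\Theta\braket{R,D}$ cross term by the same Young inequality before dividing by the $\sim 1$ cubic coefficient. The only cosmetic difference is that you partially re-derive the internal $\Theta V_\mathrm{r}+Q$ decomposition of the lemma instead of just invoking its conclusions \eqref{R leading order} and \eqref{cubic with mu-coefficients}.
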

\begin{proof} 
We first establish some important bounds involving the $\norm{\cdot}_\ast$-norm.  We claim that for any matrices $R,R_1,R_2$
\begin{equation}\begin{split} \label{bounds on star norm} &\norm{M\mathcal{S}[R_1]R_2}_\ast\lesssim  N^{1/2K}\norm{R_1}_\ast\norm{R_2}_\ast,\quad 
\norm{MR}_\ast\lesssim  N^{1/2K}\norm{R}_\ast,
\\
&\norm{\mathcal{Q}}_{\ast \to \ast}\lesssim 1,\quad\norm{\mathcal{B}^{-1}\mathcal{Q}}_{\ast \to \ast}\lesssim 1,\quad \abs{\braket{V_\mathrm{l}, R}} \lesssim \norm{R}_\ast. \end{split}\end{equation}
The proof of \eqref{bounds on star norm} follows verbatim as in \cite[Lemma 3.4]{1804.07744} with \eqref{bounded eigendirections} as an input. Moreover, the bound on $\braket{V_\mathrm{l}, \cdot}$ follows directly from the bound on $\mathcal{Q}$. Obviously, we also have $\norm{\cdot}_\ast\le 2 \norm{\cdot}$. 

Next, we apply Lemma~\ref{lmm:cubic equation} from the appendix with the choices 
\[
 \mathcal{A}[R_1,R_2]\defeq M\mathcal{S}[R_1]R_2,\qquad {X}\defeq  MD, \qquad Y\defeq  G-M. 
\]
The operator $\mathcal{B}$ in Lemma~\ref{lmm:cubic equation} is chosen as the stability operator \eqref{stability operator}. Then \eqref{upper bound in cubic lemma} is satisfied with $\lambda\defeq  N^{1/2K}$ according to \eqref{bounds on star norm} and \eqref{bounded eigendirections}. With $\delta\defeq N^{-25/4K}$ we verify \eqref{G-M expansion} directly from \eqref{R leading order}, where $\Theta= \braket{V_\mathrm{l}, G-M}$ satisfies
\begin{equation} \mu_3 \Theta^3+\mu_2 \Theta^2 -\beta \Theta = -\mu_0 + \braket{R, D} \Theta+\landauO{N^{-1/4K} \abs{\Theta}^3 + N^{20/K}\norm{D}_\ast^3}. \label{mu Theta cubic eq}\end{equation}
Here we used $\abs{\Theta}\le \norm{G-M}_\ast \lesssim N^{-10/K}$  and  $\norm{MD}_\ast \lesssim N^{1/2K}\norm{D}_\ast$. The coefficients $\mu_0, \mu_2, \mu_3$ are defined through \eqref{coefficients cubic lemma} and $R$ is given by 
\[
R \defeq  M^*(\mathcal{B}^{-1}\mathcal{Q})^*[\mathcal{S}[M^*V_\mathrm{l} V_\mathrm{r}^*]+\mathcal{S}[V_\mathrm{r}^*]M^*V_\mathrm{l}].
\]
Now we bound $ \abs{\braket{R, D} \Theta} \le N^{-1/4K} \abs{\Theta}^3 + N^{1/8K} \abs{\braket{R, D}}^{3/2}$ by Young's inequality, absorb the error terms bounded by  $N^{-1/4K} \abs{\Theta}^3$ into the cubic term, $\mu_3 \Theta^3 + \ord (N^{-1/4K} \abs{\Theta}^3) = \wt{\mu}_3 \Theta^3$, by introducing a modified coefficient $\wt{\mu}_3$ and use that $\abs{\mu_3} \sim \abs{\wt{\mu}_3} \sim 1$ for any $z \in \DD_{\mathrm{cusp}}$. Finally, we safely divide \eqref{mu Theta cubic eq} by $\wt\mu_3$ to verify \eqref{Theta cubic equation} with $\xi_1\defeq  -\beta / \wt{\mu}_3$ and $\xi_2 \defeq  \mu_2 / \wt{\mu}_3$. For the fact $\abs{\mu_3} \sim 1$ on $\DD_{\mathrm{cusp}}$ and the comparison relations \eqref{xi comparison relations} we refer to \eqref{beta asymp}--\eqref{Vl Vr inner prod asymp}. 
\end{proof}

\subsection{Probabilistic bound} We now collect bounds on the error matrix $D$ from \cite[Theorem 4.1]{MR3941370} and Section \ref{sec:Cusp fluctuation averaging}. We first  introduce the notion of \emph{stochastic domination}.
\begin{definition}[Stochastic domination]
Let $X=X^{(N)}, Y=Y^{(N)}$ be sequences of non-negative random variables. We say that $X$ is stochastically dominated by $Y$ (and use the notation $X \prec Y$) if  
\[
\P\big[X > N^\epsilon Y\big]\le C(\epsilon,\nu)N^{-\nu},\qquad N \in \N,
\]
for any $\epsilon>0, \nu \in \N$ and some  family of positive constants $C(\epsilon,\nu)$ that is uniform in $N$ and other underlying parameters (e.g.~the spectral parameter $z$ in the domain under consideration). 
\end{definition}
It can be checked (see \cite[Lemma 4.4]{MR3068390}) that $\prec$ satisfies the usual arithmetic properties, e.g.~if $X_1\prec Y_1$ and $X_2\prec Y_2$, then also  $X_1+X_2\prec Y_1 +Y_2$ and  $X_1X_2\prec Y_1 Y_2$. Furthermore, to formulate bounds on a random matrix $R$ compactly, we introduce the notations
\begin{alignat}{3}\nonumber
\abs{R}\prec \Lambda \quad &\Longleftrightarrow \quad  &\abs{R_{\vx\vy}}&\prec \Lambda \norm{\vx}\norm{\vy}&\quad  &\text{uniformly for all } {\vx},{\vy} \in \C^N,\\\nonumber
\abs{R}_{\mathrm{av}}\prec \Lambda \quad &\Longleftrightarrow \quad &\abs{\braket{ BR}}&\prec \Lambda \norm{B}&  &\text{uniformly for all } B \in \C^{N \times N}
\end{alignat}
for random matrices $R$ and a deterministic control parameter $\Lambda=\Lambda(z)$. We also introduce high moment norms 
\[ 
\norm{X}_p\defeq \Big(\E\abs{X}^p\Big)^{1/p},\qquad \norm{R}_p \defeq \sup_{\vx,\vy} \frac{\norm{\braket{\vx, R\vy}}_p}{\norm{\vx}\norm{\vy}} 
 \]
for $p\ge 1$, scalar valued random variables $X$ and random matrices $R$. To translate high moment bounds into high probability bounds and vice versa we have the following easy lemma \cite[Lemma 3.7]{1804.07744}.
\begin{lemma}\label{prec p conversion}
Let $R$ be a random matrix, $\Phi$ a deterministic control parameter such that $\Phi\ge N^{-C}$ and $\norm{R}\le N^C$ for some $C>0$, and let $K\in\N$ be a fixed integer. Then we have the equivalences
\[ \norm{R}_\ast^{K,\vx,\vy}\prec \Phi\text{ uniformly in }\vx,\vy\quad\Longleftrightarrow\quad \abs{R}\prec \Phi \quad\Longleftrightarrow\quad \norm{R}_p\le_{p,\epsilon} N^{\epsilon}\Phi\text{ for all }\epsilon>0,\,p\ge 1.\]
\end{lemma}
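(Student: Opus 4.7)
The three stated quantities — $\norm{R}_\ast^{K,\vx,\vy}\prec\Phi$ uniformly in $\vx,\vy$, $\abs{R}\prec\Phi$, and $\norm{R}_p\le_{p,\epsilon}N^\epsilon\Phi$ for all $\epsilon>0,\,p\ge 1$ — are all standard tail/moment reformulations of one another. The plan is to establish the two equivalences in turn, using throughout the same toolbox: Markov's inequality, an $\epsilon$-net on the unit ball of $\C^N$ together with the Lipschitz continuity of $(\vx,\vy)\mapsto R_{\vx\vy}$ controlled via $\norm{R}\le N^C$, and the assumption $\Phi\ge N^{-C}$ used to absorb discretisation errors.

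For the equivalence $\abs{R}\prec\Phi\Leftrightarrow\norm{R}_p\le_{p,\epsilon}N^\epsilon\Phi$, the forward direction is a splitting argument: for unit $\vx,\vy$ decompose $\E|R_{\vx\vy}|^p$ as the contribution of $\{|R_{\vx\vy}|\le N^{\epsilon/2}\Phi\}$, bounded by $N^{\epsilon p/2}\Phi^p$, plus the contribution of its complement, bounded by $N^{Cp}\,\P(\cdot)\le N^{Cp}C(\epsilon,\nu)N^{-\nu}$ using $|R_{\vx\vy}|\le\norm{R}\le N^C$ and choosing $\nu$ large in terms of $p,C,\epsilon$. For the reverse direction, pointwise Markov yields $\P(|R_{\vx\vy}|>N^\epsilon\Phi\norm{\vx}\norm{\vy})\le C(p,\epsilon/2)^p N^{-p\epsilon/2}$ for each individual pair, and this is lifted to uniformity in $\vx,\vy$ via an $N^{-D}$-net $\Omega$ of the unit ball of $\C^N$ (of cardinality $e^{O(N\log N)}$). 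A union bound over $\Omega\times\Omega$ handles net points, while off-net vectors are covered by the continuity estimate $|R_{\vx\vy}-R_{\vx'\vy'}|\le 3N^C\max(\norm{\vx-\vx'},\norm{\vy-\vy'})$; choosing $D$ large (so $N^{C-D}\ll\Phi$, which is possible because $\Phi\ge N^{-C}$) and $p$ as a sufficiently large power of $N$ makes the union bound tight enough for any prescribed $\nu$.

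For the equivalence $\norm{R}_\ast^{K,\vx,\vy}\prec\Phi$ uniformly $\Leftrightarrow\abs{R}\prec\Phi$, first observe by induction that $|I_k|\le C_k N^{2k+1}$, since each step multiplies the cardinality by at most $2+2N^2$. Given $\abs{R}\prec\Phi$ — which, by its uniform formulation, already encodes the operator-norm bound $\norm{R}\prec\Phi$ — a polynomial union bound over each $I_k\times I_k$ controls every summand $N^{-k/2K}\norm{R}_{I_k}$ of $\norm{R}_\ast^{K,\vx,\vy}$, while the residual term $N^{-1/2}\max_{\vu\in I_K}\norm{R_{\cdot\vu}}/\norm{\vu}\le N^{-1/2}\norm{R}$ is absorbed similarly. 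Conversely, since $\vx,\vy\in I_0$ by construction, $|R_{\vx\vy}|/(\norm{\vx}\norm{\vy})\le\norm{R}_{I_0}\le\norm{R}_\ast^{K,\vx,\vy}\prec\Phi$ holds pointwise in $\vx,\vy$, and the passage to uniformity in $\vx,\vy$ is again by the same net-plus-continuity argument as in the previous paragraph.

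The only mild technical point, which must be monitored carefully but is otherwise routine, is the balancing in the $\epsilon$-net argument among the net spacing $N^{-D}$, the operator-norm bound $N^C$, and the growth in $p$ of the moment constants $C(p,\epsilon)$. This balance closes precisely because $\Phi\ge N^{-C}$ bars us from ever having to resolve $R$ below the deterministic Lipschitz scale, and because the definition of $\prec$ lets us choose $\nu$ arbitrarily large while the moment bound lets us choose $p$ arbitrarily large; no delicate combinatorial input beyond these estimates is needed.
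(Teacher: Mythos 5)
The paper does not actually prove this lemma --- it cites it from the companion work \cite{1804.07744} --- so I am judging your argument on its own terms. Your skeleton is the standard one and is largely right: Markov's inequality plus the indicator splitting of the $p$-th moment gives the equivalence of the isotropic bound with the moment bound, and polynomial union bounds over the hierarchical sets $I_k$ (whose cardinality you correctly bound by $C_kN^{2k+1}$) give most of the first equivalence. Two steps, however, are genuinely broken.

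First, the residual term of the $\norm{\cdot}_\ast$-norm. You dispose of $N^{-1/2}\max_{\vu\in I_K}\norm{R_{\cdot\vu}}/\norm{\vu}$ by asserting that $\abs{R}\prec\Phi$ ``already encodes'' $\norm{R}\prec\Phi$. That implication is false: an isotropic bound with constants uniform in the test vectors does not control the supremum over all test vectors. For instance, $R=\sqrt{N}\,\vw\vw^\ast$ with $\vw$ a Haar-random unit vector satisfies $\abs{\braket{\vx,R\vy}}\prec N^{-1/2}\norm{\vx}\norm{\vy}$ for every fixed pair, while $\norm{R}=\sqrt{N}$. Nor does the deterministic input rescue you: $N^{-1/2}\norm{R}\le N^{C-1/2}$ need not be below $N^{\epsilon}\Phi$ when $\Phi$ is as small as $N^{-C}$. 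The correct treatment is to write $\norm{R_{\cdot\vu}}^2=\sum_a\abs{R_{a\vu}}^2$, apply the hypothesis $\abs{R}\prec\Phi$ with $\vx=e_a$, take a union bound over the polynomially many pairs $(a,\vu)\in[N]\times I_K$, and note that the resulting loss of $N^{1/2}$ from summing $N$ entries is exactly cancelled by the prefactor $N^{-1/2}$ in the definition of $\norm{\cdot}_\ast$ --- that prefactor exists for precisely this purpose.

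Second, the $\epsilon$-net. In the convention of this paper, ``$\abs{R_{\vx\vy}}\prec\Phi\norm{\vx}\norm{\vy}$ uniformly in $\vx,\vy$'' means that the constants $C(\epsilon,\nu)$ in the definition of $\prec$ can be chosen independently of $\vx,\vy$; it is a statement about $\sup_{\vx,\vy}\P[\cdots]$, not about $\P[\sup_{\vx,\vy}\cdots]$. Hence the direction from the moment bound to $\abs{R}\prec\Phi$ is a one-line Markov estimate with no net needed. The net argument you substitute cannot close as written: a union bound over $e^{cN\log N}$ net points is not beaten by the polynomial decay $C(p,\epsilon)^pN^{-p\epsilon/2}$ for any fixed $p$, and you may not take $p$ to be a power of $N$, because the hypothesis only supplies a constant $C(p,\epsilon)$ for each fixed $p$ with no control on its growth in $p$. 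Since the simultaneous-event statement you were aiming at is stronger than what the lemma asserts, the cure is simply to delete this detour; combined with the fix above, the remaining argument is a correct proof along the standard lines.
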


Expressed in terms of the $\norm{\cdot}_p$-norm we have the following high-moment bounds on the error matrix $D$. The bounds \eqref{isotropic bound on D}--\eqref{averaged bound on D} have  already been established  in \cite[Theorem 4.1]{MR3941370};
we just list them for completeness. The bounds \eqref{eq pf D moment bound}--\eqref{TGG bound}, however, are new and they
 capture the additional cancellation at the cusp and are the core novelty of the present paper. The additional smallness comes from averaging against specific weights $\vp,\vf$ from \eqref{definition of sigma}.

\begin{theorem}[High moment bound on $D$ with cusp fluctuation averaging]\label{thm pfD bound moments}
Under the assumptions of Theorem \ref{thr:Local law} for any compact set $\DD\subset\Set{z\in\C|\Im z\ge N^{-1}}$ there exists a constant $C$ such that for any $p\ge 1,\epsilon>0$, $z\in \DD$ and matrices/vectors $B,\vx,\vy$ it holds that
\begin{subequations}
\begin{alignat}{1}
\norm{\braket{\vx,D\vy}}_p &\le_{\epsilon,p} \norm{\vx}\norm{\vy} N^{\epsilon} \psi_q'\Big(1+\norm{G}_q \Big)^C \bigg(1+\frac{\norm{G}_q}{\sqrt N}\bigg)^{Cp},\label{isotropic bound on D}\\
\norm{\braket{BD}}_p &\le_{\epsilon,p} \norm{B} N^{\epsilon} \Big[\psi_q'\Big]^2\Big(1+\norm{G}_q \Big)^C \bigg(1+\frac{\norm{G}_q}{\sqrt N}\bigg)^{Cp}.\label{averaged bound on D}\\
\intertext{Moreover, for the specific weight matrix $B=\diag(\vp\vf)$ we have the improved bound}
\norm{\braket{\diag(\vp\vf)D}}_p &\le_{\epsilon,p}N^{\epsilon}  \sigma_q \Big[ \psi+\psi'_q \Big]^2 \Big(1+\norm{G}_q \Big)^C \bigg(1+\frac{\norm{G}_q}{\sqrt N}\bigg)^{Cp}, \label{eq pf D moment bound}\\
\intertext{and the improved bound on the off-diagonal component}
\label{TGG bound}
\norm{\braket{\diag(\vp\vf) [T\odot G^t]G}}_p &\le_{\epsilon,p} N^{\epsilon}\sigma_q \Big[ \psi+\psi'_q \Big]^2 \Big(1+\norm{G}_q\Big)^C\bigg(1+\frac{\norm{G}_q}{\sqrt N}\bigg)^{Cp},
\end{alignat}
\end{subequations} 
where we defined the following $z$-dependent quantities
\[ 
\psi \defeq \sqrt{\frac{\rho}{N\eta}},\quad\psi'_q \defeq \sqrt{\frac{\norm{\Im G}_q}{N\eta}},\quad \psi''_q \defeq \norm{G-M}_q,\quad \sigma_q\defeq\abs{\sigma}+\rho+\psi+\sqrt{\eta/\rho}+\psi_q'+\psi_q''
  \]
 and $q=Cp^3/\epsilon$.
\end{theorem}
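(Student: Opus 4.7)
The plan is to expand moments of the form $\E\abs{\braket{X,D}}^p$ via the cumulant expansion of $\E[W g(W)]$ and to organize the resulting contributions as Feynman diagrams whose vertices are joint cumulants of $W$ and whose edges are either resolvent entries or deterministic $M$-entries. For the isotropic bound \eqref{isotropic bound on D} and the generic averaged bound \eqref{averaged bound on D}, I would invoke directly the machinery of \cite[Theorem~4.1]{MR3941370}: each closed chain of resolvents produces a Ward factor $\braket{\Im G}/\eta$, each additional cumulant edge saves $N^{-1/2}$, and the power-counting over all admissible topologies yields exactly the stated $\psi'_q$ factors. No cusp-specific structure enters here.

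The novelty lies in \eqref{eq pf D moment bound}. The essential observation is that the weight $\diag(\vp\vf)$ is, up to leading order, aligned with the unstable eigenvector of the stability operator $\BO$: in the Wigner-type setting of \eqref{cal S def}, the definition \eqref{definition of sigma} says exactly that $V_\mathrm{l}$ is proportional to $M^\ast\diag(\vp\vf)$ plus $\landauO{\eta/\rho}$. I would identify, inside the Feynman expansion of $\braket{\diag(\vp\vf)D}$, the local subdiagrams where the weight vertex is attached to an incident $\kappa_\mathrm{c}$-edge (variance $s_{ij}$) closed by a short chain of two resolvent entries. After substituting $G\to M$ on those two resolvents such a subdiagram evaluates to a sum of the schematic form
\[ \frac{1}{N}\sum_{i,j} p_i f_i\, s_{ij}\,\abs{m_j}^2 \cdot (\text{rest}) \;=\; \sigma\cdot(\text{rest}) + \landauO{(\rho+\psi_q'+\psi_q'')(\text{rest})}, \]
where the leading value is exactly $\sigma$ by \eqref{definition of sigma}. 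These are the \emph{$\sigma$-cells} announced in the introduction, and every leading graph topology contributing to $\braket{\diag(\vp\vf)D}$ carries at least one such cell at its weight vertex. Extracting this cell therefore converts the $[\psi'_q]^2$ from \eqref{averaged bound on D} into the claimed $[\psi+\psi'_q]^2\sigma_q$, once the error in the $\sigma$-cell evaluation is absorbed into the definition of $\sigma_q$.

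The proof of \eqref{TGG bound} is structurally parallel: expanding $\braket{\diag(\vp\vf)[T\odot G^t]G}=N^{-1}\sum_{ij}(\vp\vf)_i t_{ij}G_{ji}G_{ij}$ and replacing the two resolvent entries by their deterministic skeleton gives a sum of exactly the same algebraic type as the $\sigma$-cell above, with $t_{ij}$ in place of $s_{ij}$; the $T$-dependent piece is precisely the off-diagonal contribution that the matricial definition \eqref{definition of sigma mde} is designed to accommodate in the complex Hermitian case. A further cumulant expansion of the fluctuations around this skeleton recovers the same diagrammatic framework as for \eqref{eq pf D moment bound}, and the identical $\sigma_q$ gain appears.

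The main obstacle is not the mechanism at a single vertex but its systematic extraction from the high-moment Feynman expansion: in $\E\abs{\braket{\diag(\vp\vf)D}}^{2p}$ there are $2p$ copies of the weight vertex, each of which must contribute an independent $\sigma$-cell so that the gain is multiplicative rather than additive, while simultaneously the remaining subgraphs must still be estimated by the Ward-identity power-counting of \cite{MR3941370}. This forces a careful combinatorial bookkeeping: one organizes graphs into equivalence classes by the location of their $\sigma$-cells, shows that the extraction of a cell leaves a well-defined residual subdiagram of the type already handled in \cite{MR3941370}, and closes the estimate using Young's inequality to balance the $\psi_q''$-errors folded into $\sigma_q$ against the surviving $\psi+\psi_q'$ factors. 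This is the technical content of Section~\ref{sec:Cusp fluctuation averaging}.
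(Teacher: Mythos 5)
Your overall architecture --- reduce \eqref{isotropic bound on D}--\eqref{averaged bound on D} to \cite{MR3941370}, then identify local $\sigma$-cells in the diagrammatic expansion of the $\diag(\vp\vf)$-weighted average and extract a multiplicative gain of $\sigma_q$ per cell --- is exactly the paper's strategy. But the step where you actually produce the factor $\sigma$ has a genuine gap. First, the algebraic identity you propose for the cell value is wrong: $N^{-1}\sum_{i,j}p_if_i\,s_{ij}|m_j|^2=\braket{\vp\vf\,(S|\vm|^2)}$ is not $\sigma+\landauO{\rho}$ (already for the Wigner case $s_{ij}=1/N$ one has $(S|\vm|^2)_i=|m|^2$ while $f_i^2=\pi^2/|m|^2$, so the two contractions are different functions of $\vm$). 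The cancellation that yields $\sigma=\braket{\vp\vf^3}$ requires the approximate eigenvector relation $S(|\vm|\vf)\approx\norm{F}\vf/|\vm|$ for the saturated self-energy $F=|M|S|M|$, i.e.\ the contraction must be $\braket{(\vp\vf)\,|\vm|\vf\,S(|\vm|\vf)}$ with $|\vm|\vf$ on \emph{both} sides of $S$; these vectors enter as the left and right eigenvectors $\vl^{(B)},\vb^{(B)}$ of the stability operator $B=1-\diag(\vm^{\#_1}\vm^{\#_2})R$, not through a naive $G\to M$ substitution (see Lemma~\ref{lmm:expansion of coefficient}).

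Second, ``substituting $G\to M$ on those two resolvents'' is not a legitimate operation, and the difference cannot simply be folded into $\sigma_q$: the two $G$-edges at the cell are off-diagonal, so their deterministic part vanishes, and it is precisely these entries that carry the Ward gains you need elsewhere. The paper instead inserts, at the degree-two vertex of the cell, the identity $1=P_B+BB^{-1}Q_B$ obtained from the local self-consistent equation \eqref{BGGast exp}. The $Q_B$-branches are controlled by showing that the insertion of $B$ by itself improves the Ward estimate by a factor $\sigma_q$ (Lemma~\ref{B insertion Lemma}, a case analysis over all terms of the cumulant expansion, including higher cumulants and $(g-m)$-loops), while the rank-one $P_B\cdots P_{B'}$ branch factorizes into the explicit scalar of Lemma~\ref{lmm:expansion of coefficient}, which is where $\sigma$ finally appears. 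You also need the complementary counting statement (Lemma~\ref{lemma sigma cell counting}) that graphs \emph{without} $\sigma$-cells are already small enough, so that the deficit in the naive power counting equals the number of cells; your claim that ``every leading topology carries a cell at its weight vertex'' does not by itself close the argument. These missing ingredients are the actual content of the proof, not bookkeeping.
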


Theorem \ref{thm pfD bound moments} will be proved in Section \ref{sec:Cusp fluctuation averaging}. We now translate the high moment bounds of Theorem \ref{thm pfD bound moments} into high probability bounds via Lemma \ref{prec p conversion} and use those to establish bounds on $G-M$ and the error in the cubic equation for $\Theta$. To simplify the expressions we formulate the bounds in the domain
\begin{equation} \DD_\zeta\defeq \Set{z \in \DD_\cusp| \Im z \ge N^{-1+\zeta}}. \label{D zeta def}\end{equation}
\begin{lemma}[High probability error bounds]\label{lmm:Bounds on errors}
\begin{subequations}
Fix $\zeta,c>0$ sufficiently small and suppose that $\abs{G-M}\prec \Lambda$, $\abs{\Im (G-M)} \prec \Xi$ and $\abs{\Theta}\prec \theta$ hold at fixed $z \in  \DD_\zeta$, and assume that the deterministic control parameters $\Lambda, \Xi,\theta$ satisfy $\Lambda+\Xi+\theta\lesssim N^{-c}$. Then for any sufficiently small $\epsilon>0$ it holds that
\begin{equation} \label{bound on cubic in Theta} \abs{\Theta^3+ \xi_2 \Theta^2 +\xi_1 \Theta} \prec  N^{2\epsilon}\bigg(\rho + \abs{\sigma} +\frac{\eta^{1/2}}{\rho^{1/2}}+\bigg(\frac{\rho+\Xi}{N \eta}\bigg)^{1/2}\bigg)
 \frac{\rho+\Xi}{N \eta}
+N^{-\epsilon}\theta^3, \end{equation}
as well as
\begin{equation} \label{first G-M bounds} \abs{G-M}\prec \theta + \sqrt{\frac{\rho+\Xi}{N \eta}},\qquad \abs{G-M}_{\mathrm{av}}\prec \theta + {\frac{\rho+\Xi}{N \eta}},  \end{equation}
where the coefficients $\xi_1,\xi_2$ are those from Proposition \ref{prp:Cubic equation for Theta}, and we recall that $\Theta=\braket{V_l,G-M}$.
\end{subequations} 
\end{lemma}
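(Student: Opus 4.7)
The plan is to combine the cubic equation from Proposition \ref{prp:Cubic equation for Theta} with the high-moment bounds from Theorem \ref{thm pfD bound moments}, convert everything into stochastic domination via Lemma \ref{prec p conversion}, and then substitute back into the expansion \eqref{G-M expansion}. First I would establish preparatory bounds on $D$. The assumption $\abs{G-M}\prec\Lambda\lesssim N^{-c}$ together with $\norm{M}\lesssim 1$ give $\norm{G}_q\prec 1$ for moderate $q$, while $\Im G=\Im M+\Im(G-M)$ and $\abs{\Im(G-M)}\prec\Xi$ yield $\norm{\Im G}_q\prec \rho+\Xi$, so $\psi_q'\prec\sqrt{(\rho+\Xi)/(N\eta)}$ and $\psi_q''\prec\Lambda$. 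Consequently the random quantity $\sigma_q$ is dominated by its deterministic counterpart
\[
  \sigma_{\mathrm{det}}\defeq \abs{\sigma}+\rho+\sqrt{\eta/\rho}+\sqrt{(\rho+\Xi)/(N\eta)},
\]
which is exactly the prefactor on the right of \eqref{bound on cubic in Theta}. Lemma \ref{prec p conversion} then turns Theorem \ref{thm pfD bound moments} into the stochastic-domination bounds $\abs{D}\prec\sqrt{(\rho+\Xi)/(N\eta)}$, $\abs{D}_{\mathrm{av}}\prec (\rho+\Xi)/(N\eta)$, and crucially $\abs{\braket{\diag(\vp\vf) D}}\prec \sigma_{\mathrm{det}}\cdot(\rho+\Xi)/(N\eta)$.

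Next I would estimate $\epsilon_\ast$ from \eqref{bound on eps} term by term. The generic contributions $N^{20/K}\norm{D}_\ast^3$ and $N^{20/K}\abs{\braket{R,D}}^{3/2}$ both give $\prec N^{20/K}((\rho+\Xi)/(N\eta))^{3/2}$, which equals $\sqrt{(\rho+\Xi)/(N\eta)}\cdot (\rho+\Xi)/(N\eta)$ and is absorbed into $\sigma_{\mathrm{det}}\cdot (\rho+\Xi)/(N\eta)$. The decisive term $\abs{\braket{V_\mathrm{l},MD}}$ is where the cusp fluctuation averaging enters: in the Wigner-type setting the unstable left eigenvector $V_\mathrm{l}$ of $\BO$ is diagonal and, up to a bounded scalar, equal to $\diag(\vp\vf)$ (an identification that can be read off the shape analysis of $\BO$ following \cite{1804.07752}), so the improved bound \eqref{eq pf D moment bound} directly yields $\abs{\braket{V_\mathrm{l},MD}}\prec\sigma_{\mathrm{det}}(\rho+\Xi)/(N\eta)$. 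The quadratic error term in \eqref{bound on eps} is bilinear in $D$; applying the stability identity $M\SS[G-M](G-M)=\BO[G-M]+MD$ to $X=\BO^{-1}\cQ[MD]$ in place of $G-M$ rewrites it as a combination of $\braket{V_\mathrm{l},MD}$ and an off-diagonal $T$-contribution controlled by \eqref{TGG bound}, yielding the same $\prec\sigma_{\mathrm{det}}(\rho+\Xi)/(N\eta)$. Choosing $K$ large enough that $N^{20/K}\le N^\epsilon$, and keeping $N^{-\epsilon}\theta^3$ on the right to absorb the cubic perturbation $N^{-1/4K}\abs{\Theta}^3$ in \eqref{Theta cubic equation}, proves \eqref{bound on cubic in Theta}.

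For \eqref{first G-M bounds} I would substitute the $D$-bounds above and the hypothesis $\abs\Theta\prec\theta$ into the expansion \eqref{G-M expansion}. The leading contribution $\Theta V_\mathrm{r}$ is $\prec\theta$ both isotropically and in average since $\norm{V_\mathrm{r}}\lesssim 1$ by \eqref{bounded eigendirections}. The linear-response term $\BO^{-1}\cQ[MD]$ is $\prec\sqrt{(\rho+\Xi)/(N\eta)}$ isotropically and $\prec(\rho+\Xi)/(N\eta)$ in average, using that $\BO^{-1}\cQ$ preserves both forms of bounds by \eqref{bounds on star norm} and \eqref{bounded eigendirections}. The term $\Theta^2\BO^{-1}\cQ[M\SS[V_\mathrm{r}]V_\mathrm{r}]$ is $\prec\theta^2\ll\theta$, and the error $E$, which by \eqref{G-M expansion} satisfies $\norm{E}_\ast\lesssim N^{5/K}(\theta^3+\theta\norm{D}_\ast+\norm{D}_\ast^2)$, is absorbed by choosing $K$ large and using $\theta,\Lambda,\Xi\lesssim N^{-c}$.

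The main obstacle is the cusp-averaged term $\abs{\braket{V_\mathrm{l},MD}}$: without the cusp fluctuation averaging improvement \eqref{eq pf D moment bound}, only the generic bound $\abs{D}_{\mathrm{av}}\prec(\rho+\Xi)/(N\eta)$ would be available, producing no $\sigma_{\mathrm{det}}$ prefactor and consequently failing to close the stability analysis near a cusp where both $\rho$ and $\abs\sigma$ are small. The key subtle point is therefore the identification of $V_\mathrm{l}$ with a bounded scalar multiple of $\diag(\vp\vf)$, which is what makes the weight-specific bounds \eqref{eq pf D moment bound} and \eqref{TGG bound} applicable; everything else is routine propagation of the isotropic and averaged bounds through the stability operator and the expansion \eqref{G-M expansion}.
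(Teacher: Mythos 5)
Your overall strategy coincides with the paper's: translate the moment bounds of Theorem \ref{thm pfD bound moments} into stochastic domination, identify the unstable direction with the weight $\vp\vf$ so that the cusp fluctuation averaging bounds apply to $\epsilon_\ast$, and feed everything back through \eqref{G-M expansion}. However, two steps are not justified as written.

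First, the claim that $\sigma_q$ is dominated by $\sigma_{\mathrm{det}}=\abs{\sigma}+\rho+\sqrt{\eta/\rho}+\sqrt{(\rho+\Xi)/(N\eta)}$ is false as stated: $\sigma_q$ contains $\psi_q''=\norm{G-M}_q$, which under your hypotheses is only controlled by $\Lambda$, and $\Lambda$ is not dominated by the other terms in $\sigma_{\mathrm{det}}$. The paper keeps the $\Lambda$-term in the intermediate bound (see \eqref{cusp FA}) and removes it only at the very end, by first proving the isotropic bound in \eqref{first G-M bounds} and then redefining $\Lambda\defeq\theta+((\rho+\Xi)/N\eta)^{1/2}$, after which the $\theta$-dependent pieces are absorbed into $N^{-\epsilon}\theta^3$ by Young's inequality. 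Since you do prove \eqref{first G-M bounds}, this is repairable by reordering, but the step must be made explicit: without it the prefactor in \eqref{bound on cubic in Theta} does not come out $\Lambda$-free.

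Second, your treatment of the quadratic error term $\braket{V_\mathrm{l},M(\SS\BO^{-1}\cQ[MD])(\BO^{-1}\cQ[MD])}$ is too loose to close. One cannot ``apply the stability identity to $X=\BO^{-1}\cQ[MD]$ in place of $G-M$''; that identity holds for $G-M$, not for an arbitrary matrix. What is actually needed is (i) the splitting $D=D_\mathrm{d}+D_\mathrm{o}$ into diagonal and off-diagonal parts, using that $\BO$ and $\SS$ preserve both subspaces so the cross terms vanish; (ii) an entrywise estimate of the purely diagonal contribution, giving $((\rho+\Xi)/N\eta)^{3/2}$; and (iii) for the off-diagonal part, the off-diagonal component of the already-established expansion \eqref{G-M expansion}, which yields $\BO^{-1}\cQ[MD_\mathrm{o}]=-G_\mathrm{o}$ up to controllable errors because $M$, $V_\mathrm{r}$ and $\BO^{-1}\cQ[M\SS[V_\mathrm{r}]V_\mathrm{r}]$ are all diagonal. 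Only then does the term reduce to $\braket{\diag(\vp\vf)[T\odot G^t]G}$ and become amenable to \eqref{TGG bound}; without the diagonal/off-diagonal decomposition, \eqref{TGG bound} (which only controls the $T\odot G^t$ piece) cannot be invoked. A cosmetic point in the same vein: $V_\mathrm{l}$ itself is $c\,\diag(\vf/\abs{\vm})+\landauO{\rho+\eta/\rho}$, not a bounded multiple of $\diag(\vp\vf)$; the weight $\vp\vf$ emerges only after combining with $M=\diag(\vp\abs{\vm})+\landauO{\rho}$, and the $\landauO{\rho+\eta/\rho}$ error must be paired with the generic averaged bound on $D$.
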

\begin{proof}
We translate the high moment bounds \eqref{isotropic bound on D}--\eqref{averaged bound on D} into high probability bounds using Lemma \ref{prec p conversion} and $\abs{G}\prec \norm{M} + \Lambda\lesssim 1$ to find
\begin{equation} \label{Bound on error matrix} \abs{D}\prec \sqrt{\frac{\rho+\Xi}{N \eta}} ,\qquad \abs{D}_{\mathrm{av}}\prec {\frac{\rho+\Xi}{N \eta}}. \end{equation}
In particular, these bounds together with the assumed bounds on $G-M$ guarantee the applicability of Proposition \ref{prp:Cubic equation for Theta}. Now we use \eqref{Bound on error matrix} and~\eqref{bounds on star norm} in~\eqref{G-M expansion} to get \eqref{first G-M bounds}. Here we used \eqref{bounds on star norm}, translated \smash{$\norm{\cdot}_p$}-bounds into $\prec$-bounds on \smash{$\norm{\cdot}_\ast$} and vice versa via Lemma \ref{prec p conversion}, and absorbed the \smash{$N^{1/K}$} factors into $\prec$ by using that $K$ can be chosen arbitrarily large. It remains to verify \eqref{bound on cubic in Theta}. In order to do so, we first claim that 
\begin{equation}\begin{split} \label{cusp FA} &\abs{\braket{V_\mathrm{l}, MD}}  + \abs{\braket{V_\mathrm{l},M(\mathcal{S}\mathcal{B}^{-1}\mathcal{Q}[MD])(\mathcal{B}^{-1}\mathcal{Q}[MD])}}
\\
&\prec  N^\epsilon\bigg(\abs{\sigma}+\rho +{\frac{\eta^{1/2}}{\rho^{1/2}}}+ \Lambda +\bigg(\frac{\rho+\Xi}{N \eta}\bigg)^{1/2}\bigg)\frac{\rho+\Xi}{N \eta}
+\theta^2\bigg(N^{-\epsilon}\Lambda +\bigg(\frac{\rho+\Xi}{N \eta}\bigg)^{1/2}\bigg)
\end{split}\end{equation}
for any sufficiently small $\epsilon>0$.
\begin{proof}[Proof of \eqref{cusp FA}]
We first collect two additional ingredients from \cite{1804.07752} specific to the vector case. 
\begin{enumerate}[(a)]
\item \label{prop m comp} The imaginary part $\Im\vm$ of the solution $\vm$ is comparable $\Im \vm\sim \braket{\Im\vm}=\pi\rho$ to its average in the sense \(c \braket{\Im\vm}\le \Im m_i\le C \braket{\Im \vm}\) for all \(i\) and some \(c,C>0\), and, in particular, $\vm=\Re\vm+\landauO{\rho}$.
\item \label{prop eigendirections BQ} The eigendirections $V_\mathrm{l},V_\mathrm{r}$ are diagonal and  are approximately given by
\begin{equation}\label{Vl Vr expansion}
V_\mathrm{l} = c\diag(\vf/\abs{\vm}) + \landauO{\rho+\eta/\rho},\qquad V_\mathrm{r}=c' \diag(\vf\abs{\vm})+ \landauO{\rho+\eta/\rho}
\end{equation}
for some constants $c,c'\sim1$.
\end{enumerate}
Indeed, \eqref{prop m comp} follows directly from \cite[Proposition 3.5]{1804.07752} and the approximations in \eqref{Vl Vr expansion} follow directly from \cite[Corollary 5.2]{1804.07752}. The fact that $V_\mathrm{l},V_\mathrm{r}$ are diagonal follows from simplicity of the eigendirections in the matrix case, and the fact that $M=\diag (\vm)$ is diagonal and that $\mathcal{B}$ preserves the space of diagonal matrices as well as the space of off-diagonal matrices.  On the latter $\mathcal{B}$ acts stably as $1+\ord_{\mathrm{hs}\to \mathrm{hs}}(N^{-1})$. Thus the unstable directions lie inside the space of diagonal matrices.  

We now turn to the proof of \eqref{cusp FA} and first note that, according to \eqref{prop m comp} and \eqref{prop eigendirections BQ} we have
\begin{equation} \label{Expanding M and Vl} M= \diag(\vp\abs{\vm})+\landauO{\rho}, \qquad V_\mathrm{l} = c\diag(\vf/\abs{\vm})+\landauO{\rho +\eta /\rho} \end{equation}
with errors in \(\norm{\cdot}\)-norm-sense, for some constant $c \sim1$ to see
\[
\braket{V_\mathrm{l}, MD}= c\braket{\diag(\vp \vf)D} + \landauO{\rho +\eta /\rho}\braket{\diag(\vw_1)D},
\]
where $\vw_1\in \C^N$ is a deterministic vector with uniformly bounded entries.  Since $\abs{\braket{\diag(\vw_1)D}} \prec (\rho+\Xi)/N\eta$ by \eqref{Bound on error matrix}, the bound on the first term in \eqref{cusp FA} follows together with \eqref{eq pf D moment bound} via Lemma \ref{prec p conversion}. Now we consider the second term in \eqref{cusp FA}. We split $D = D_\mathrm{d} + D_\mathrm{o}$ into its diagonal and off-diagonal components. Since $\mathcal{B}$ and $\mathcal{S}$ preserve the space of diagonal and the space of off-diagonal matrices we find
\begin{equation}\label{DD splitting}
\begin{split}
&\braket{V_\mathrm{l},M(\mathcal{S}\mathcal{B}^{-1}\mathcal{Q}[MD])(\mathcal{B}^{-1}\mathcal{Q}[MD])}\\
&\quad=  \frac{1}{N^2}\sum_{i,j} u_{ij} d_{ii} d_{jj} +\braket{V_\mathrm{l},M(\mathcal{S}\mathcal{B}^{-1}\mathcal{Q}[MD_\mathrm{o}])(\mathcal{B}^{-1}\mathcal{Q}[MD_\mathrm{o}])},
\end{split}
\end{equation}
with an appropriate deterministic matrix $u_{ij}$ having bounded entries. In particular, the cross terms vanish and the first term is bounded by
\begin{equation}\label{DD diag bound}
\abs[2]{\frac{1}{N^2}\sum_{i,j} u_{ij} d_{ii} d_{jj}} \le \max_{i}\abs{d_{ii}}\abs[2]{\frac{1}{N}\sum_j u_{ij}d_{jj}}
\prec \Bigl(\frac{\rho+\Xi}{N \eta}\Bigr)^{3/2}
\end{equation}
according to \eqref{Bound on error matrix}. By taking the off-diagonal part of \eqref{G-M expansion} and using the fact that $M$ and $V_\mathrm{r}$ and therefore also $\BO^{-1}\cQ[M\SS[V_\mathrm{r}]V_\mathrm{r}]$ are diagonal (cf.~\eqref{prop eigendirections BQ} above) we have 
\[\abs{\mathcal{B}^{-1}\mathcal{Q}[MD_\mathrm{o}]+G_\mathrm{o}} \prec \theta^3+\theta\Bigl(\frac{\rho+\Xi}{N \eta}\Bigr)^{1/2}+\frac{\rho+\Xi}{N \eta}\lesssim N^{-\epsilon}\theta^2+N^{\epsilon}\frac{\rho+\Xi}{N \eta}\]
for any $\epsilon$ such that $\theta\lesssim N^{-\epsilon}$ by Young's inequality in the last step.
Together with \eqref{Expanding M and Vl}, \eqref{Bound on error matrix} and the assumption that $\abs{G_\mathrm{o}}=\abs{(G-M)_\mathrm{o}}\prec \Lambda$ we then compute
\begin{equation*}
\begin{split}
&\braket{V_\mathrm{l},M(\mathcal{S}\mathcal{B}^{-1}\mathcal{Q}[MD_\mathrm{o}])(\mathcal{B}^{-1}\mathcal{Q}[MD_\mathrm{o}])} \\
& = c \braket{\diag(\vp\vf) (\mathcal{S}\mathcal{B}^{-1}\mathcal{Q}[MD_\mathrm{o}])(\mathcal{B}^{-1}\mathcal{Q}[MD_\mathrm{o}])} + \landauO{ \Bigl(\rho+\frac{\eta}{\rho}\Bigr)\frac{\rho+\Xi}{N\eta} }\\
& = c\braket{\diag(\vp \vf)\mathcal{S}[G_\mathrm{o}]G_\mathrm{o}} + \landauO{ \Bigl(\rho+\frac{\eta}{\rho}\Bigr)\frac{\rho+\Xi}{N\eta} + \Bigl(\Bigl(\frac{\rho+\Xi}{N\eta}\Bigr)^{1/2}+\Lambda\Bigr)\Bigl[N^{-\epsilon}\theta^2+N^{\epsilon}\frac{\rho+\Xi}{N \eta}\Bigr]}.
\end{split}
\end{equation*}
Thus the bound on the second term on the lhs.~in \eqref{cusp FA} follows together with \eqref{DD splitting}--\eqref{DD diag bound} by $\mathcal{S}[G_\mathrm{o}] = T \odot G^t$ and \eqref{TGG bound} via Lemma \ref{prec p conversion}. This completes the proof of \eqref{cusp FA}.
\end{proof}
With \eqref{Bound on error matrix} and \eqref{cusp FA} the upper bound \eqref{bound on eps} on the error $\epsilon_\ast$ of the cubic equation \eqref{Theta cubic equation} takes the same form as the rhs.~of \eqref{cusp FA} if $K$ is sufficiently large depending on $\epsilon$. By the first estimate in \eqref{first G-M bounds} we can redefine the control parameter $\Lambda$ on $\abs{G-M}$ as $\Lambda\defeq \theta +((\rho+\Xi)/N \eta)^{1/2}$ and the claim \eqref{bound on cubic in Theta} follows directly with \eqref{cusp FA}, thus completing the proof of Lemma~\ref{lmm:Bounds on errors}. 
\end{proof}

\subsection{Bootstrapping}
Now we will show that the difference $G-M$ converges to zero uniformly for all spectral parameters $z \in \DD_\zeta$ as defined in \eqref{D zeta def}. For convenience we refer to existing bounds on $G-M$ far away from the real line to establish a rough bound on $G-M$ in, say, $\DD_1$. We then iteratively lower the threshold on $\eta$ by appealing to Proposition \ref{prp:Cubic equation for Theta} and Lemma \ref{lmm:Bounds on errors} until we establish the rough bound in all of $\DD_\zeta$. As a second step we then improve the rough bound iteratively until we obtain Theorem \ref{thr:Local law}.

\begin{lemma}[Rough bound] For any $\zeta>0$ there exists a constant $c>0$ such that on the domain $\DD_\zeta$
we have the rough bound 
\begin{equation} \label{Rough bound on G-M} \abs{G-M} \prec N^{-c}. \end{equation}
\end{lemma}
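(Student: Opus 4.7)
The plan is to run a standard continuity bootstrap in $\eta = \Im z$, starting from a regime where $\abs{G - M}$ is trivially small and iteratively decreasing $\eta$ down to the threshold $N^{-1+\zeta}$. At $z_0 = \tau_0 + \ii N^{100}$ the deterministic bound $\norm{G(z_0)} + \norm{M(z_0)} \le 2 N^{-100}$ furnishes the base case. Combined with the deterministic Lipschitz estimate $\norm{\partial_z G(z)} \le \norm{G(z)}^2 \le N^{2(1-\zeta)}$ on $\DD_\zeta$ and the analogous bound on $\partial_z M$, it suffices to verify \eqref{Rough bound on G-M} on a sufficiently fine grid $\Gamma \subset \DD_\zeta$ of mesh size $N^{-10}$; the Lipschitz estimate absorbs any interpolation error between grid points.

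At each $z \in \Gamma$ I would propagate the bound along a discrete vertical path $s_k = N^{100} - k N^{-10}$ down to $s_k = \Im z$. Suppose inductively that $\abs{G - M} \prec \Lambda$ at the preceding step, with $\Lambda \le N^{-c_0}$ for some fixed small $c_0 > 0$. Since $\abs{\Theta} = \abs{\braket{V_\mathrm{l}, G - M}} \lesssim \Lambda$ by \eqref{bounded eigendirections} and trivially $\abs{\Im (G - M)} \prec \Lambda$, Lemma~\ref{lmm:Bounds on errors} applies with $\Xi = \theta = \Lambda$ and produces the cubic bound \eqref{bound on cubic in Theta}. Combined with the comparison $\abs{\xi_1} \gtrsim \eta / \rho$ from \eqref{xi comparison relations}, elementary root analysis of the cubic \eqref{Theta cubic equation} shows that its three complex solutions split into a small branch of size $\lesssim \abs{\epsilon_\ast}/\abs{\xi_1}$ and two large branches of size $\gtrsim \max(\abs{\xi_1}^{1/2}, \abs{\xi_2})$, separated by a quantitative gap. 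Continuity of $\Theta$ in $\eta$, together with $\Theta \to 0$ as $\eta \to \infty$, forces $\Theta$ to remain on the small branch throughout. Using $\eta \ge N^{-1+\zeta}$ to extract a power of $N^{-\zeta}$ from the factor $1/(N\eta)$ in \eqref{bound on cubic in Theta}, I obtain $\abs{\Theta} \prec N^{-c_1}$ for some fixed $c_1 > 0$, and combining this with \eqref{first G-M bounds} produces the self-improvement $\abs{G - M} \prec N^{-c}$ with $c \defeq \min(c_0, c_1)/2$, closing the bootstrap step.

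The main obstacle lies in the cubic analysis at spectral parameters closest to a physical cusp, where $\xi_1$ and $\xi_2$ are simultaneously small and the three terms in \eqref{Theta cubic equation} are of comparable magnitude. Here the precise comparison relations of \eqref{xi comparison relations} are essential to verify that the small branch $\abs{\Theta} \lesssim \abs{\epsilon_\ast}/\abs{\xi_1}$ is quantitatively separated from the two large branches by more than the continuity/mesh error, so that the continuity-in-$\eta$ argument robustly selects the correct root at every grid step. A minor technical point is ensuring uniformity of implicit constants over $z \in \DD_\zeta$, which is why the iteration is carried out on a fixed polynomial grid with a deterministic Lipschitz estimate rather than as a continuous deformation.
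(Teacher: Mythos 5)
Your overall architecture (propagate an a priori bound to slightly smaller $\eta$, then self-improve via the cubic for $\Theta$) matches the paper's, and the grid-plus-Lipschitz propagation is an acceptable substitute for the paper's combination of the monotonicity of $\eta \mapsto \eta\norm{G(\tau+\ii\eta)}_p$ with a stopping-time argument. But the core analytic step — your root analysis of the cubic — contains a genuine error. You claim the three roots of $\Theta^3+\xi_2\Theta^2+\xi_1\Theta=\epsilon_\ast$ split into a small branch of size $\lesssim\abs{\epsilon_\ast}/\abs{\xi_1}$ and two large branches, separated by a gap. In the cusp regime this is false, and moreover the "small" branch is not small: at an exact cusp with $\eta=N^{-1+\zeta}$ one has $\rho\sim\eta^{1/3}$, hence $\abs{\xi_1}\sim\eta/\rho+\rho^2\sim N^{-2/3+2\zeta/3}$, while the best available bound on the error at the first bootstrap step is only $\abs{\epsilon_\ast}\prec N^{-\zeta/3}$ (coming from $\norm{D}_\ast\prec N^{-\zeta/3}$), so $\abs{\epsilon_\ast}/\abs{\xi_1}\gg 1$. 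In the regime $d\defeq\abs{\epsilon_\ast}\gg\abs{\xi_1}^{3/2}$, $d\gg\abs{\xi_2}^3$ all three roots have modulus $\sim d^{1/3}$ and there is no gap whatsoever; the only available conclusion is $\abs{\Theta}\lesssim d^{1/3}$. This is exactly what the paper's Lemma~\ref{lmm:bootstrap cubic} encodes via the trichotomy $\min\{d^{1/3},d^{1/2}/\wt\xi_2^{1/2},d/\wt\xi_1\}$, with monotonicity hypotheses on $\wt\xi_2^3/d$, $\wt\xi_1^3/d^2$, $\wt\xi_1^2/(d\wt\xi_2)$ replacing your (nonexistent) root separation. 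Your "main obstacle" paragraph correctly identifies the dangerous regime but proposes to resolve it by verifying a separation that does not exist there.

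A consequence you do not address is that the self-improvement map is then roughly $\Lambda\mapsto\Lambda^{1/3}$ rather than $\Lambda\mapsto\Lambda$: each application of the cubic near the cusp returns only the cube root of the input bound. Your claim that a single fixed exponent $c=\min(c_0,c_1)/2$ "closes the bootstrap step" therefore does not go through; the paper instead tracks the geometric deterioration explicitly, proving $\abs{G-M}\prec N^{-4^{-k}\zeta}$ on $\DD_{\max\{1-k\zeta_{\mathrm{s}},\zeta\}}$ over $O(1/\zeta_{\mathrm{s}})$ steps, which still yields a (very small but positive) final exponent $c$. Two smaller points: Lemma~\ref{lmm:Bounds on errors} already presupposes $\Lambda+\Xi+\theta\lesssim N^{-c}$ and is formulated on $\DD_\zeta\subset\DD_{\mathrm{cusp}}$, so your induction must supply that hypothesis at each new grid point (your Lipschitz step does this, but it should be said that the whole cubic machinery of Proposition~\ref{prp:Cubic equation for Theta} is only available for $z\in\DD_{\mathrm{cusp}}$, i.e.\ $\abs{z-\tau_0}\le c$); and the base case should be taken from the known local law away from the self-consistent spectrum at $\eta\sim 1$ rather than from $\Im z=N^{100}$, since your descent from there passes outside the domain where that machinery applies.
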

\begin{proof} 
The rough bound \eqref{Rough bound on G-M} in a neighbourhood of a cusp has first been established for Wigner-type random matrices in \cite{MR3719056}. For the convenience of the reader we present a streamlined proof that is adapted to the current setting.  
The lemma is an immediate  consequence of the following statement.
Let $\zeta_\mathrm{s}>0$ be a sufficiently small \emph{step size}, depending on $\zeta$. Then  for any $\N_0\ni k\le 1/\zeta_\mathrm{s}$ on the domain $\DD_{\max\{1-k\zeta_\mathrm{s}, \zeta\}}$ we have
\begin{equation} \label{Rough bound} \abs{G-M}\prec N^{-4^{-k}\zeta}. \end{equation}
We prove \eqref{Rough bound} by induction over $k$. For sufficiently small $\zeta$ the induction start $k=0$ holds due to the local law away from the self-consistent spectrum, e.g.~\cite[Theorem 2.1]{MR3941370}. 

Now as induction hypothesis suppose that \eqref{Rough bound} holds on $\wt{\DD}_{k} \defeq  \DD_{\max\{1-k\zeta_\mathrm{s}, \zeta\}}$,  and in particular, $\abs{G}\prec 1$, $\norm{G}_p\le_{\epsilon,p}N^{\epsilon}$ for any $\epsilon,p$ according to Lemma \ref{prec p conversion}. The monotonicity of the function $\eta \mapsto \eta \norm{G(\tau+\ii\eta)}_p$ (see e.g.~\cite[proof of Prop.~5.5]{MR3941370}) implies $\norm{G}_p\le_{\epsilon,p} N^{\epsilon+\zeta_\mathrm{s}}\le N^{2 \zeta_\mathrm{s}}$ and therefore, according to Lemma \ref{prec p conversion}, that $\abs{G} \prec N^{2\zeta_\mathrm{s}}$ on $\wt{\DD}_{k+1}$. This, in turn, implies $\abs{D}\prec N^{-\zeta/3}$ on $\wt{\DD}_{k+1}$ by \eqref{isotropic bound on D} and Lemma \ref{prec p conversion}, provided $\zeta_\mathrm{s}$ is chosen small enough. We now fix $\vx,\vy$ and a large integer $K$ as the parameters of $\norm{\cdot}_\ast=\norm{\cdot}_\ast^{\vx,\vy,K}$ for the rest of the proof and omit them
from the notation but we stress that all estimates will be uniform in $\vx,\vy$. We find $\sup_{z \in \widetilde\DD_{k+1}}\norm{D(z)}_\ast\prec N^{-\zeta/3}$, by using a simple union bound and $\norm{\partial_z D}\le N^C$ for some $C>0$. Thus, for $K$ large enough, we can use \eqref{G-M expansion}, \eqref{Theta cubic equation}, \eqref{bound on eps} and \eqref{bounds on star norm} to infer
\begin{equation} \label{rough bound on cubic} 
\begin{split}
  \abs{\Theta^3+ \xi_2 \Theta^2 +\xi_1 \Theta} &\lesssim N^{1/2K}\norm{D}_\ast\prec N^{1/2K-\zeta/3},\\
\norm{G-M}_\ast&\lesssim  \abs{\Theta} + N^{1/K}\norm{D}_\ast\prec \abs{\Theta}+N^{1/K-\zeta/3} ,
\end{split}
 \end{equation}
on the event $\norm{G-M}_\ast +\norm{D}_\ast \lesssim N^{-10/K}$, and on $\wt{\DD}_{k+1}$. Now we use the following lemma \cite[Lemma 10.3]{1804.07752} to translate the first estimate in  \eqref{rough bound on cubic} into a bound on $\abs{\Theta}$. For the rest of the proof we keep $\tau=\Re z$ fixed and consider the coefficients $\xi_1,\xi_2$ and $\Theta$ as functions of $\eta$.
\begin{lemma}[Bootstrapping cubic inequality] 
\label{lmm:bootstrap cubic}
For $0<\eta_\ast<\eta^\ast<\infty$ let $\xi_1,\xi_2\colon[\eta_\ast,\eta^\ast] \to \C$ be complex valued functions and $\wt{\xi}_1,\wt{\xi}_2, d\colon[\eta_\ast,\eta^\ast]  \to \R^+ $ be continuous functions such that at least one of the following holds true:
\begin{enumerate}[(i)]
\item $\abs{{\xi}_1}\sim \wt{\xi}_1$, $\abs{{\xi}_2}\sim \wt{\xi}_2$, and $\wt{\xi}_2^3/d,\wt{\xi}_1^3/d^2,\wt{\xi}_1^2/d\wt{\xi}_2$ are monotonically increasing, and $d^2/\wt{\xi}_1^3+d\wt{\xi}_2/\wt{\xi}_1^2\ll 1$ at $\eta^\ast$,
\item \label{Bootstrapping 2} $\abs{{\xi}_1}\sim \wt{\xi}_1$, $\abs{{\xi}_2}\lesssim \wt{\xi}_1^{1/2}$, and $\wt{\xi}_1^3/d^2$ is monotonically increasing. 
\end{enumerate}
Then any continuous function $\Theta\colon [\eta_\ast,\eta^\ast]  \to \C$ that satisfies the cubic inequality $\abs[0]{\Theta^3 + {\xi}_2 \Theta^2  + {\xi}_1 \Theta} \lesssim d$ on $[\eta_\ast,\eta^\ast]$, has the property
\begin{equation}\label{cubic bootstrapping eq}
\text{If}  \quad \abs{\Theta}  \lesssim \min\bigg\{d^{1/3}, \frac{d^{1/2}}{{\wt{\xi}_2^{1/2}}},\frac{d}{\wt{\xi}_1}\bigg\}  \text{ at } \eta^\ast,
\quad\text{then}
\quad \abs{\Theta}  \lesssim \min\bigg\{d^{1/3}, \frac{d^{1/2}}{{\wt{\xi}_2^{1/2}}},\frac{d}{\wt{\xi}_1}\bigg\} \text{ on } [\eta_\ast,\eta^\ast].
\end{equation}
\end{lemma}
With direct arithmetics we can now verify that the coefficients $\xi_1,\xi_2$ in \eqref{Theta cubic equation} and the auxiliary coefficients $\wt\xi_1,\wt\xi_2$ defined in \eqref{xi tilde def} satisfy the assumptions in Lemma~\ref{lmm:bootstrap cubic} with the choice of the constant function $d=N^{-4^{-k}\zeta+\delta}$ for any $\delta>0$, by using only the information on $\xi_1,\xi_2$ given by the comparison relations \eqref{xi comparison relations}. As an example, in the regime where $\tau_0$ is a right edge and $\omega \sim \Delta$, we have $\wt{\xi}_1 \sim (\eta +\Delta)^{2/3}$ and $\wt{\xi}_2 \sim (\eta +\Delta)^{1/3}$ and both functions are monotonically increasing in $\eta$. Then Assumption \eqref{Bootstrapping 2} of Lemma~\ref{lmm:bootstrap cubic} is satisfied. All other regimes are handled similarly.

We now set $\eta^\ast\defeq N^{-k\zeta_\mathrm{s}}$ and \[\eta_\ast\defeq \inf \Set{\eta \in [N^{-(k+1) \zeta_\mathrm{s}},\eta^\ast]|\sup_{\eta'\ge \eta}\norm{G(\tau+\ii\eta')-M(\tau+\ii\eta')}_\ast \le N^{-10/K}/2}.\]
By the induction hypothesis we have $\abs{\Theta(\eta^\ast)} \lesssim d \lesssim \min\{ d^{1/3}, d^{1/2}\wt\xi_2^{-1/2},d \wt\xi_1^{-1}\}$ with overwhelming probability, so that the condition in \eqref{cubic bootstrapping eq} holds, and conclude $\abs{\Theta(\eta)} \prec d^{1/3}=N^{-(4^{-k}\zeta-\delta)/3}$ for $\eta \in [\eta_\ast,\eta^\ast]$.    For small enough $\delta>0$ the second bound in \eqref{rough bound on cubic} implies $\norm{G-M}_\ast \prec N^{-4^{k+1}\zeta}$.  By continuity and the definition of $\eta_\ast$ we conclude $\eta_\ast =N^{-(k+1) \zeta_\mathrm{s}}$,  finishing the proof of \eqref{Rough bound}. 
\end{proof}
\begin{proof}[Proof of Theorem~\ref{thr:Local law}]
The bounds within the proof hold true uniformly for $z\in\DD_\zeta$, unless explicitly specified otherwise. We therefore suppress this qualifier in the following statements. First we apply Lemma~\ref{lmm:Bounds on errors} with the choice $\Xi=\Lambda$, i.e.\ we do not treat the imaginary part of the resolvent separately. With this choice the first inequality in \eqref{first G-M bounds} becomes self-improving and after iteration shows that
\begin{equation} \label{G-M bound inside} \abs{G-M} \prec \theta + \sqrt{\frac{\rho}{N \eta}}+ \frac{1}{N \eta}, \end{equation}
and, in other words, \eqref{bound on cubic in Theta} holds with $\Xi =\theta + (\rho/N\eta)^{1/2}+ 1/N\eta$. This implies that if $\abs{\Theta}\prec\theta\lesssim N^{-c}$ for some arbitrarily small $c>0$, then 
\begin{equation} \label{Theta self-improving bound 2} \abs{\Theta^3+ \xi_2 \Theta^2 +\xi_1 \Theta} \lesssim   N^{{5}\wt{\epsilon}}d_\ast
+N^{-\wt{\epsilon}}(\theta^3 + \wt{\xi}_2\theta^2)\end{equation}
holds for all sufficiently small $\wt\epsilon$ with overwhelming probability, where we defined  
\begin{equation}\label{d ast}
d_\ast\defeq\wt{\xi}_2 \bigg(\frac{\wt \rho }{N \eta} +\frac{1}{(N \eta)^2}\bigg)  +\frac{1}{(N \eta)^3} + \bigg( \frac{\wt \rho}{N \eta}\bigg)^{3/2} .
\end{equation}
For this conclusion we used the comparison relations \eqref{xi comparison relations}, Proposition \ref{prop mde}\eqref{prop mde wt xi} as well as \eqref{rho tilde}, and the bound $\sqrt{\eta/\rho}\sim\sqrt{\eta/\wt\rho}\lesssim \wt\xi_2$. 

The bound \eqref{Theta self-improving bound 2} is a self-improving estimate on $\abs{\Theta}$ in the following sense.  For $k \in \N$ and $l \in \N \cup\{\ast\}$ let
\[
d_k \defeq \max\{N^{-k\wt{\epsilon}},N^{6\wt{\epsilon}}d_\ast\}, \qquad \theta_l\defeq  \min\bigg\{d_l^{1/3}, \frac{d_l^{1/2}}{\wt{\xi}_2^{1/2}},\frac{d_l}{\wt{\xi}_1}\bigg\}.
\]
Then \eqref{Theta self-improving bound 2} with $\abs{\Theta} \prec \theta_k$ implies that $\abs{\Theta^3+ \xi_2 \Theta^2 +\xi_1 \Theta}\lesssim N^{-\wt{\epsilon}} d_k$.
Applying Lemma~\ref{lmm:bootstrap cubic} with $d=N^{-\wt\epsilon}{d_k}$, $\eta^\ast\sim1$, $\eta_\ast=N^{\zeta-1}$ yields the improvement $\abs{\Theta} \prec \theta_{k+1}$. Here we needed to check the condition in \eqref{cubic bootstrapping eq} but at $\eta^\ast\sim 1$ we have $\wt\xi_1\sim1$, so $\abs{\Theta}\lesssim N^{-\wt\epsilon}d_k\le d_{k+1}\sim\theta_{k+1}$. After a $k$-step iteration until $N^{-k\wt\epsilon}$ becomes smaller than $N^{6\wt\epsilon}d_\ast$, we find
 $\abs{\Theta} \prec \theta_\ast$, 
where we used that $\wt{\epsilon}$ can be chosen arbitrarily small. We are now ready to prove the following bound which we, for convenience, record as a proposition.  
\begin{proposition}\label{local law uniform} For any $\zeta>0$ we have the bounds
\begin{equation} \label{Final bound} \abs{G-M}\prec \theta_\ast+ \sqrt{\frac{\rho}{N \eta}}+ \frac{1}{N \eta},\qquad \abs{G-M}_{\mathrm{av}}\prec \theta_\ast+\frac{\rho}{N \eta}+\frac{1}{(N \eta)^2} \quad\text{in}\quad\DD_\zeta,\end{equation}
where $\theta_\ast\defeq\min\{d_\ast^{1/3},d_\ast^{1/2}/\wt\xi_2^{1/2},d_\ast/\wt\xi_1\}$, and $d_\ast,\wt\rho,\wt\xi_1,\wt\xi_2$ are given in \eqref{d ast}, \eqref{rho tilde} and \eqref{xi tilde def}, respectively.
\end{proposition}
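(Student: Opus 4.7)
The starting point is the bound $\abs{\Theta}\prec \theta_\ast$ which has been established in the paragraph immediately preceding the proposition by iterating the self-improving cubic inequality \eqref{Theta self-improving bound 2} via Lemma~\ref{lmm:bootstrap cubic}. With this in hand, the proposition reduces to inserting this information back into Lemma~\ref{lmm:Bounds on errors} and carrying out some book-keeping.

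\textbf{Isotropic bound.} I will apply \eqref{G-M bound inside} (which is itself the outcome of iterating the first part of \eqref{first G-M bounds} under the self-improving choice $\Xi=\Lambda$) with the now-established control parameter $\theta=\theta_\ast$. This directly yields
\[
\abs{G-M}\prec \theta_\ast + \sqrt{\frac{\rho}{N\eta}}+\frac{1}{N\eta},
\]
which is the first claim and, in particular, gives the same bound on $\abs{\Im(G-M)}$.

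\textbf{Averaged bound.} Here I intend to feed the isotropic control parameter back into the second estimate of \eqref{first G-M bounds}. Taking $\theta=\theta_\ast$ and, using what was just proved, $\Xi=\theta_\ast+\sqrt{\rho/N\eta}+1/N\eta$ in that estimate yields
\[
\abs{G-M}_{\mathrm{av}}\prec \theta_\ast + \frac{\rho}{N\eta}+ \frac{\theta_\ast}{N\eta}+\frac{1}{N\eta}\sqrt{\frac{\rho}{N\eta}}+\frac{1}{(N\eta)^2}.
\]
Since $z\in\DD_\zeta$ implies $N\eta\ge N^\zeta\gg 1$, the term $\theta_\ast/(N\eta)$ is absorbed into $\theta_\ast$. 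The mixed term is handled by the elementary AM-GM inequality
\[
\frac{1}{N\eta}\sqrt{\frac{\rho}{N\eta}}=\sqrt{\frac{\rho}{N\eta}\cdot\frac{1}{(N\eta)^2}}\le \frac{1}{2}\Bigl(\frac{\rho}{N\eta}+\frac{1}{(N\eta)^2}\Bigr),
\]
and the result collapses to $\theta_\ast+\rho/N\eta+1/(N\eta)^2$, matching the claim.

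\textbf{Anticipated difficulty.} The genuine work has already been deposited in the preceding paragraph: verifying that the auxiliary coefficients $\wt\xi_1,\wt\xi_2$ from Proposition~\ref{prop mde}\eqref{prop mde wt xi} satisfy the monotonicity hypotheses of Lemma~\ref{lmm:bootstrap cubic} in each of the three shape regimes (cusp, small gap, non-zero local minimum, and across the transition $\omega\sim c_\ast\Delta$ for the edge) and that the bootstrap can be initiated from the rough bound \eqref{Rough bound on G-M} established earlier. Once $\abs{\Theta}\prec\theta_\ast$ is available, the proposition itself follows from Lemma~\ref{lmm:Bounds on errors} by the short arithmetic manipulation above; the only care needed is to ensure that the absorption of $\theta_\ast/(N\eta)$ and the AM-GM step are valid uniformly on $\DD_\zeta$, both of which hold thanks to $N\eta\ge N^\zeta$.
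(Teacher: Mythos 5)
Your argument is correct and follows the paper's own proof essentially verbatim: both rely on the previously established bound $\abs{\Theta}\prec\theta_\ast$, then feed $\theta=\theta_\ast$ into \eqref{G-M bound inside} for the isotropic estimate and into the averaged part of \eqref{first G-M bounds} for the second claim. The only cosmetic difference is that the paper takes $\Xi=(\rho/N\eta)^{1/2}+1/N\eta$ directly, whereas you keep the $\theta_\ast$ term in $\Xi$ and then absorb the resulting $\theta_\ast/(N\eta)$ and cross terms; your version is, if anything, slightly more careful.
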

\begin{proof}
Using $\abs{\Theta}\prec\theta_\ast$ proven above, we apply \eqref{G-M bound inside} with $\theta=\theta_\ast$ to conclude the first inequality in \eqref{Final bound}. For the second inequality in \eqref{Final bound} we use the estimate on $\abs{G-M}_{\mathrm{av}}$ from \eqref{first G-M bounds} with $\theta = \theta_\ast$ and $\Xi= (\rho/N\eta)^{1/2}+ 1/N\eta$.
\end{proof}

The bound on $\abs{G-M}$ from \eqref{Final bound} implies a complete delocalisation of eigenvectors uniformly at singularities of the scDOS. The following corollary was established already in \cite[Corollary 1.14]{MR3719056} and, given \eqref{Final bound}, the proof follows the same line of reasoning.
\begin{corollary}[Eigenvector delocalisation] \label{crl:Eigenvector delocalisation}
Let $\vu \in \C^N$ be an eigenvector of $H$ corresponding to an eigenvalue $\lambda \in \tau_0+(-c,c)$ for some sufficiently small positive constant $c \sim 1$. Then for any deterministic $\vx \in \C^N$ we have
\[
\abs{\braket{\vu, \vx}} \prec \frac{1}{\sqrt{N}} \norm{\vu}\norm{\vx}.
\]
\end{corollary}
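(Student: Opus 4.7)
Without loss of generality assume $\norm{\vu}=1$. The plan is the standard spectral-averaging argument: I extract $\abs{\braket{\vu,\vx}}^2$ from the imaginary part of the resolvent at a spectral parameter sitting exactly at the eigenvalue $\lambda$, then control the resolvent isotropically via the local law of Proposition~\ref{local law uniform}.

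Writing $H=\sum_k \lambda_k\vu_k\vu_k^*$ with orthonormal eigenvectors $\vu_k$ and setting $\vu=\vu_{k_0}$ with $\lambda_{k_0}=\lambda$, for any $\eta>0$ one has the identity
\begin{equation*}
\eta\,\braket{\vx,\Im G(\lambda+\ii\eta)\vx}
=\sum_k \frac{\eta^2}{(\lambda_k-\lambda)^2+\eta^2}\,\abs{\braket{\vu_k,\vx}}^2
\ge \abs{\braket{\vu,\vx}}^2,
\end{equation*}
where I have kept only the $k=k_0$ term (which contributes $1$ to the geometric factor). Hence it suffices to show that $\braket{\vx,\Im G(\lambda+\ii\eta)\vx}\prec \norm{\vx}^2$ at a scale $\eta$ which is only slightly larger than $N^{-1}$.

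To this end, fix an arbitrarily small $\zeta>0$ and put $z\defeq\lambda+\ii\eta$ with $\eta\defeq N^{-1+\zeta}$. Since $\lambda\in\tau_0+(-c,c)$, the point $z$ lies in the domain $\DD_\zeta$ of Proposition~\ref{local law uniform}, so the isotropic local law yields
\begin{equation*}
\abs[1]{\braket{\vx,(G(z)-M(z))\vx}}\prec \norm{\vx}^2\Bigl(\theta_\ast+\sqrt{\tfrac{\rho}{N\eta}}+\tfrac{1}{N\eta}\Bigr),
\end{equation*}
and in particular $\braket{\vx,\Im G(z)\vx}\le \braket{\vx,\Im M(z)\vx}+o_\prec(\norm{\vx}^2)$. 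Because $M=\diag(\vm)$ with $\norm{\vm}_\infty\lesssim 1$ by Assumption~\ref{bdd m}, we have $\braket{\vx,\Im M(z)\vx}\lesssim \norm{\vx}^2$. Combining with the lower bound above,
\begin{equation*}
\abs{\braket{\vu,\vx}}^2 \;\le\; \eta\,\braket{\vx,\Im G(\lambda+\ii\eta)\vx}\;\prec\; \eta\,\norm{\vx}^2 \;=\; N^{-1+\zeta}\norm{\vx}^2.
\end{equation*}
Since $\zeta>0$ was arbitrary, this is precisely the statement $\abs{\braket{\vu,\vx}}\prec N^{-1/2}\norm{\vx}$, and the general unnormalized case follows upon restoring the factor $\norm{\vu}$.

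There is no real obstacle here once Proposition~\ref{local law uniform} is in hand; the only point requiring a mild check is that the chosen spectral parameter $z=\lambda+\ii\eta$ with $\eta=N^{-1+\zeta}$ indeed belongs to $\DD_\zeta$, which holds automatically from the assumption $\lambda\in\tau_0+(-c,c)$ by shrinking $c$ if necessary so that $\lambda\in\tau_0+(-c_\cusp,c_\cusp)$ for the constant $c_\cusp$ defining $\DD_\cusp$. The delocalisation bound is then genuinely a soft corollary of the optimal isotropic local law.
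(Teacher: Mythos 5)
Your proof is correct and is exactly the standard spectral-averaging argument the paper has in mind: the paper gives no independent proof of this corollary but simply notes that, given the uniform local law \eqref{Final bound}, it follows "by the same line of reasoning" as \cite[Corollary~1.14]{MR3719056}, which is precisely the bound $\abs{\braket{\vu,\vx}}^2\le\eta\braket{\vx,\Im G(\lambda+\ii\eta)\vx}$ at $\eta=N^{-1+\zeta}$ combined with the isotropic local law and the boundedness of $M$. The only point worth making explicit is that the local law must be used uniformly (or via a grid plus Lipschitz continuity) in $z\in\DD_\zeta$ since $\lambda$ is random, but this is standard and already built into Proposition~\ref{local law uniform}.
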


The bounds \eqref{Final bound} simplify in the regime $\eta \ge N^\zeta \eta_\mathrm{f}$ above the typical eigenvalue spacing to
 \begin{equation} \label{G-M bound inside2} \abs{G-M}\prec \sqrt{\frac{\rho}{N \eta}}+ \frac{1}{N \eta},\qquad \abs{G-M}_{\mathrm{av}}\prec \frac{1}{N \eta},
\qquad \text{for} \quad \eta \ge N^\zeta \eta_\mathrm{f} \end{equation}
using Lemma \ref{lemma tilde xi} which implies $\theta_\ast \le d_\ast/\wt{\xi}_1\le 1/N\eta$. The bound on $\abs{G-M}_{\mathrm{av}}$ is further improved in the case when $\tau_0=\ed_-$ is an edge and, in addition to $\eta\ge N^\zeta \eta_\mathrm{f}$, we assume $N^{\delta}\eta\le \omega\le \Delta/2$ for some $\delta>0$, i.e.\ if $\omega$ is well inside a gap of size $\Delta\ge N^{\delta+\zeta}\eta_\mathrm{f}$. Then we find $\Delta>N^{-3/4}$ by the definition of $\eta_\mathrm{f}=\Delta^{1/9}/N^{2/3}$ in \eqref{eta* in gap} and use Lemma \ref{lemma tilde xi} and \eqref{rho tilde}, \eqref{xi tilde def} to conclude
\begin{equation} \label{last estimate} \theta_\ast+\frac{\wt{\rho}}{N \eta}+\frac{1}{(N \eta)^2}\lesssim \frac{\wt{\xi}_2}{\wt{\xi}_1} \bigg(\frac{\wt{\rho}}{N \eta}+ \frac{1}{(N \eta)^2}\bigg)\sim
 \frac{\Delta^{1/6}}{\omega^{1/2}}\bigg(\frac{\eta}{\Delta^{1/6}\omega^{1/2}}+\frac{1}{N \eta}\bigg)\frac{1}{N \eta}\lesssim 
 \frac{N^{-\delta/2}}{N \eta} . \end{equation}
In the last bound we used $1/N\omega\le N^{-\delta}/N\eta$ and $\Delta^{1/6}/(N\eta\omega^{1/2})\le N^{-\delta/2}$. Using \eqref{last estimate} in \eqref{Final bound} yields the improvement 
 \begin{equation} \label{G-M outside}  \abs{G-M}_{\mathrm{av}}\prec \frac{N^{-\delta/2}}{N \eta},
\qquad \text{for} \quad \tau=\ed_-+\omega,\quad \Delta/2\ge\omega \ge N^{\delta} \eta \ge N^{\zeta+\delta}\eta_\mathrm{f}. \end{equation}

The bounds on $\abs{G-M}_{\mathrm{av}}$ from \eqref{G-M bound inside2} and \eqref{G-M outside}, inside and outside the self-consistent spectrum, allow us to show the uniform rigidity, Corollary~\ref{crl:Uniform rigidity}. We postpone these arguments until after we finish the proof of Theorem~\ref{thr:Local law}. The uniform rigidity implies that for $\dist(z, \supp \rho) \ge N^{\zeta}\eta_\mathrm{f}$ we can estimate the imaginary part of the resolvent via
\begin{equation} \label{bound on im G} \Im \braket{\vx, G\vx} = \sum_{\lambda}\frac{\eta\abs{\braket{\vu_\lambda, \vx}}^2}{\eta^2 + (\tau_0+\omega-\lambda)^2}
\prec \eta + \frac{1}{N}\sum_{\abs{\lambda-\tau_0}\le c}\frac{\eta}{\eta^2 + (\tau_0+\omega-\lambda)^2}\prec \rho(z), \end{equation}
for any normalised $\vx \in \C^{N}$, where $\vu_\lambda$ denotes the normalised eigenvector corresponding to $\lambda$. For the first inequality in \eqref{bound on im G} we used Corollary~\ref{crl:Eigenvector delocalisation} and for the second we applied  Corollary~\ref{crl:Uniform rigidity} that allows us to replace the Riemann sum with an integral as $[\eta^2+(\tau_0+\omega-\lambda)^2]^{1/2}=\abs{z-\lambda}\ge N^\zeta\eta_\mathrm{f}$.

Using with \eqref{bound on im G}, we apply Lemma~\ref{lmm:Bounds on errors}, repeating the strategy from the beginning of the proof. But this time we can choose the control parameter $\Xi=\rho$. In this way we find 
\begin{equation}\label{G-M outside2}
\abs{G-M} \prec \theta_{\#} + \sqrt{\frac{\rho}{N \eta}},\qquad  \abs{G-M}_{\mathrm{av}}\prec \theta_{\#}+\frac{\rho}{N \eta}, \qquad\text{for} \quad \dist(z, \supp \rho) \ge N^{\zeta}\eta_\mathrm{f},
\end{equation}
where we defined 
\[
\theta_{\#} \defeq\min\bigg\{\frac{d_\#}{\wt{\xi}_1}, \frac{d_\#^{1/2}}{\wt{\xi}_2^{1/2}}, d_\#^{1/3}\bigg\}, \qquad  d_{\#}\defeq \wt{\xi}_2\frac{\wt \rho }{N \eta}  + \bigg( \frac{\wt \rho}{N \eta}\bigg)^{3/2} .
\]
Note that the estimates in \eqref{G-M outside2} are simpler than those in \eqref{Final bound}. The reason is that the additional terms $1/N\eta$, $1/(N\eta)^2$ and $1/(N\eta)^3$ in \eqref{Final bound} are a consequence of the presence of $\Xi$ in \eqref{bound on cubic in Theta}, \eqref{first G-M bounds}. With $\Xi=\rho$ these are 
immediately absorbed into $\rho$ and not present any more.
The second term in the definition of $d_\#$ can be dropped since we still have $\wt\xi_2\gtrsim (\rho/N\eta)^{1/2}$ (this follows from Lemma \ref{lemma tilde xi} if $\eta\ge N^\zeta\eta_\mathrm{f}$, and directly from \eqref{rho tilde}, \eqref{xi tilde def} if $\omega\ge N^\zeta\eta_\mathrm{f}$). This implies $\theta_\#\lesssim d_\#^{1/2}/\wt\xi_2^{1/2}\lesssim (\rho/N\eta)^{1/2}$, so the first bound in \eqref{G-M outside2} proves \eqref{local law inside spectrum}. 

Now we turn to the proof of \eqref{average local law inside spectrum}. Given the second bound in \eqref{G-M bound inside2}, it is sufficient to consider the case when $\tau=\ed_-+\omega$ and $\eta\le\omega\le\Delta/2$ with $\omega\ge N^\zeta \eta_\mathrm{f}$. In this case Proposition \ref{prop mde} yields $\wt\xi_2\wt\rho/\wt\xi_1+\wt\rho\lesssim \eta/\omega\sim \eta/\dist(z,\supp\rho)$. Thus we have 
\[ \theta_\#+\frac{\rho}{N\eta}\lesssim \frac{d_\#}{\wt\xi_1}+\frac{\wt\rho}{N\eta}\lesssim \frac{1}{N\dist(z,\supp\rho)} \]
and therefore the second bound in \eqref{G-M outside2} implies \eqref{average local law inside spectrum}.
 This completes the proof of Theorem \ref{thr:Local law}.
\end{proof}

\subsection{Rigidity and absence of eigenvalues}
The proofs of Corollaries~\ref{crl:Uniform rigidity} and \ref{cor no eigenvalues outside} rely on the bounds on $\abs{G-M}_{\mathrm{av}}$ from \eqref{G-M bound inside2} and \eqref{G-M outside}. As before, we may restrict ourselves to the neighbourhood of a local minimum $\tau_0 \in \supp \rho$ of the scDOS which is either an internal minimum with a small value of $\rho(\tau_0)>0$, a cusp location or a right edge adjacent to a small gap of length $\Delta>0$. All other cases, namely the bulk regime and regular edges adjacent to large gaps, have been treated prior to this work \cite{MR3719056,1804.07744}. 

\begin{proof}[Proof of Corollary \ref{cor no eigenvalues outside}]
Let us denote the empirical eigenvalue distribution of $H$ by $\rho_H = \frac{1}{N} \sum_{i=1}^N \delta_{\lambda_i}$ and consider the case when $\tau_0=\ed_-$ is a right edge, $\Delta \ge N^{\delta} \eta_\mathrm{f}$ for any $\delta>0$ and $\eta_\mathrm{f}=\eta_\mathrm{f}(\ed_-)\sim \Delta^{1/9}N^{-2/3}$. Then we show that there are no eigenvalues in $\ed_-+[N^{\delta}\eta_\mathrm{f}, \Delta/2]$  with overwhelming probability. 
We apply \cite[Lemma~5.1]{MR3719056} with the choices 
\[
\nu_1\defeq\rho,\quad \nu_2\defeq\rho_H, \quad \eta_1\defeq\eta_2\defeq\epsilon\defeq N^{\zeta}\eta_\mathrm{f},\quad \tau_1\defeq\ed_-+\omega,\quad \tau_2 \defeq \ed_-+\omega +N^{\zeta}\eta_\mathrm{f},
\]
for any $\omega \in [N^{\delta}\eta_\mathrm{f}, \Delta/2]$ and some $\zeta \in (0,\delta/4)$. We use \eqref{G-M outside} to estimate the error terms $J_1, J_2$ and $J_3$ from \cite[Eq.~(5.2)]{MR3719056} by $N^{2\zeta-\delta/2-1}$ and  see that $(\rho_H-\rho)([\tau_1,\tau_2]) =\rho_H([\tau_1,\tau_2]) \prec N^{2\zeta-\delta/2-1}$, showing that with overwhelming probability the interval $[\tau_1,\tau_2]$ does not contain any eigenvalues.  A simple union bound finishes the proof of Corollary~\ref{cor no eigenvalues outside}.
\end{proof}
\begin{proof}[Proof of Corollary~\ref{crl:Uniform rigidity}]
Now we establish Corollary~\ref{crl:Uniform rigidity} around a local minimum $\tau_0 \in \supp \rho$ of the scDOS. Its proof has two ingredients. First we follow the strategy of the proof of \cite[Corollary~1.10]{MR3719056} to see that 
\begin{equation}
\label{IDOS difference}
\abs{(\rho-\rho_H)((-\infty,\tau_0+\omega]) } \prec \frac{1}{N},
\end{equation}
for any $\abs{\omega} \le c $, i.e.~we have a very precise control on $\rho_H$.
In contrast to the statement in that corollary we have a local law \eqref{G-M bound inside2} with uniform $1/N\eta$ error and thus the bound \eqref{IDOS difference} does not deteriorate close to $\tau_0$. 
We warn the reader that the standard argument inside the proof of \cite[Corollary~1.10]{MR3719056} has to be adjusted slightly to arrive at \eqref{IDOS difference}. In fact, when inside that proof the auxiliary result \cite[Lemma~5.1]{MR3719056} is used with the choice $\tau_1=-10$, $\tau_2 =\tau$, $\eta_1=\eta_2=N^{\zeta-1}$ for some $\zeta>0$, this choice should be changed to $\tau_1=-C$, $\tau_2 =\tau$, $\eta_1=N^{\zeta-1}$ and $\eta_2=N^{\zeta}\eta_{\mathrm{f}}(\tau)$, where $C>0$ is chosen sufficiently large such that $\tau_1$ lies far to the left of the self-consistent spectrum. 

The control \eqref{IDOS difference} suffices to prove Corollary~\ref{crl:Uniform rigidity} for all $\tau=\tau_0 +\omega$ except for the case when $\tau_0=\ed_-$ is an edge at a gap of length $\Delta \ge N^\zeta \eta_\mathrm{f}$ and $\omega \in [- N^\zeta \eta_\mathrm{f},0]$ for some fixed $\zeta>0$ and $\eta_\mathrm{f} = \eta_\mathrm{f}(\ed_-) \sim \Delta^{1/9}/N^{2/3}$, i.e.~except for some $N^\zeta$ eigenvalues close to the edge with arbitrarily small $\zeta>0$. In all other cases, the proof follows the same argument as the proof of \cite[Corollary~1.11]{MR3719056} using the uniform $1/N$-bound from \eqref{IDOS difference} and we omit the details here. 

The reason for having to treat the eigenvalues very close to the edge $\ed_-$ separately is that \eqref{IDOS difference} does not give information on which side of the gap these $N^\zeta$ eigenvalues are found. To get this information requires the second ingredient, the \emph{band rigidity},
\begin{equation}\label{band rigidity}
\P\big[ \rho((-\infty, \ed_-+\omega])= \rho_H((-\infty, \ed_-+\omega]) \big]\ge 1-N^{-\nu},
\end{equation}
for any $\nu \in \N$, $\Delta\ge\omega \ge N^\zeta \eta_\mathrm{f}$ and large enough $N$.  The combination of \eqref{band rigidity} and \eqref{IDOS difference} finishes the proof of Corollary~\ref{crl:Uniform rigidity}.

Band rigidity has been shown in case $\Delta$ is bounded from below in \cite{1804.07744} as part of the proof of Corollary~2.5. We will now adapt this proof to the case of small gap sizes $\Delta \ge N^{\zeta-3/4}$. Since by Corollary~\ref{cor no eigenvalues outside} with overwhelming probability there are no eigenvalues in $\ed_-+[N^\zeta \eta_\mathrm{f},\Delta/2]$, it suffices to show \eqref{band rigidity} for $\omega = \Delta/2$. As in the proof of \cite[Corollary 2.5]{1804.07744} we consider the interpolation
\[
H_t\defeq\sqrt{1-t}W+A-t\mathcal{S}M(\tau),\qquad t \in [0,1],
\]
between the original random matrix $H=H_0$ and the deterministic matrix $H_1=A-\mathcal{S}M(\tau)$, for $\tau=\ed_- +\Delta/2$. The interpolation is designed such that the solution $M_t$ of the MDE corresponding to $H_t$ is constant at spectral parameter $\tau$, i.e.~$M_t(\tau)=M(\tau)$. Let $\rho_t$ denote the scDOS of $H_t$. Exactly as in the proof from \cite{1804.07744} it suffices to show that no eigenvalue crosses the gap along the interpolation with overwhelming probability, i.e.~that for any $\nu \in \N$ we have 
\begin{equation}
\label{No ev crossing}
\P\big[ \mathfrak{a}_t \in \Spec(H_t) \text{ for some } t \in [0,1]\big] \le \frac{C(\nu)}{N^{\nu}}.
\end{equation}
Here $t \to \mathfrak{a}_t \in \R\setminus \supp \rho_t$ is some spectral parameter inside the gap, continuously depending on $t$, such that $\mathfrak{a}_0 =\tau$. In \cite{1804.07744} $\mathfrak{a}_t$ was chosen independent of $t$, but the argument remains valid with any other choice of $\mathfrak{a}_t$. We call $I_t$ the connected component of $\R\setminus \supp \rho_t$ that contains $\mathfrak{a}_t$ and denote $\Delta_t = \abs{I_t}$ the gap length. In particular, $\Delta_0=\Delta$ and $\tau \in I_t$ for all $t \in[0,1]$ by \cite[Lemma~8.1(ii)]{1804.07752}.  
For concreteness we choose $\mathfrak{a}_t$ to be the spectral parameter lying exactly in the middle of $I_t$. The $1/3$-H\"older continuity of $\rho_t$, hence $I_t$ and $\mathfrak{a}_t$ in $t$ follows from \cite[Proposition 10.1(a)]{1804.07752}. Via a simple union bound it suffices to show that for any fixed $t \in [0,1]$ we have no eigenvalue in $\mathfrak{a}_t+[-N^{-100},N^{-100}]$. 

Since $\norm{W} \lesssim 1$ with overwhelming probability, in the regime $t \ge 1-\epsilon$ for some small constant $\epsilon>0$, the matrix $H_t$ is a small perturbation of the deterministic matrix $H_1$ whose resolvent $(H_1-\tau)^{-1} =M(\tau)$ at spectral parameter $\tau$ is bounded by Assumption \ref{bdd m}, in particular $\Delta_1\gtrsim 1$. By $1/3$-H\"older continuity hence $\Delta_t\gtrsim 1$, and $\Spec(H_t) \subset \Spec(H_1)+[-C\epsilon^{1/3},C\epsilon^{1/3}]$ for some $C\sim 1$ in this regime with very high probability. Since $\Spec(H_1) \subset \supp \rho_t+[-C\epsilon^{1/3},C\epsilon^{1/3}]$ by \cite[Proposition 10.1(a)]{1804.07752} there are no eigenvalues of $H_t$ in a neighbourhood of $\mathfrak{a}_t$, proving \eqref{No ev crossing} for $t\ge 1-\epsilon$.

For $t \in [\epsilon, 1-\epsilon]$ we will now show that $\Delta_t \sim_\epsilon 1$ for any $\epsilon >0$. 
In fact, we have $\dist(\tau, \supp \rho_t)~\gtrsim_\epsilon~1$.  This is a consequence of \cite[Lemma~D.1]{1804.07752}. More precisely, we use the equivalence of (iii) and (v) of that lemma. We check (iii) and conclude the uniform distance to the self-consistent spectrum by (v). Since $M_t(\tau)=M(\tau)$ and $\norm{M(\tau)} \lesssim 1$ we only need to check that the stability operator $\mathcal{B}_t = t+(1-t)\mathcal{B}$ of $H_t$ has a bounded inverse. We write $\mathcal{B}_t = \mathcal{C}(1-(1-t) \wt{\mathcal{C}}\mathcal{F})\mathcal{C}^{-1}$ in terms of the saturated self-energy operator $\mathcal{F} = \mathcal{C}\mathcal{S}\mathcal{C}$, where $\mathcal{C}[R]\defeq\abs{M(\tau)}^{1/2}R\abs{M(\tau)}^{1/2}$ and $\wt{\mathcal{C}}[R]\defeq(\sgn M(\tau))R(\sgn M(\tau))$. Afterwards we use that $\norm{\mathcal{F}}_{\mathrm{hs} \to \mathrm{hs}}\le 1$ (cf.~\cite[Eq.~(4.24)]{MR3916109}) and $\norm[0]{\wt{\mathcal{C}}}_{\mathrm{hs}\to\mathrm{hs}}=1$ to first show the uniform bound $\norm{\mathcal{B}_t}_{\mathrm{hs} \to \mathrm{hs}} \lesssim 1/t$ and then improve the bound to $\norm{\mathcal{B}_t} \lesssim 1/t$ using the trick of expanding in a geometric series from \cite[Eqs.~(4.60)--(4.63)]{MR3916109}. This completes the argument that $\Delta_t\sim_\epsilon 1$. Now we apply \cite[Corollary~2.3]{MR3941370} to see that there are no eigenvalues of $H_t$ around $\mathfrak{a}_t$ as long as $t$ is bounded away from zero and one, proving \eqref{No ev crossing} for this regime.

Finally, we are left with the regime $t \in [0,\epsilon]$ for some sufficiently small $\epsilon>0$.  By \cite[Proposition 10.1(a)]{1804.07752} the self-consistent Green's function $M_t$ corresponding to $H_t$ is bounded even in a neighbourhood of $\tau$, whose size only depends on model parameters. In particular, Assumptions \ref{bdd moments}--\ref{bdd m} are satisfied for $H_t$ and Corollary~\ref{cor no eigenvalues outside}, which was already proved above, is applicable. Thus it suffices to show that the size $\Delta_t$ of the gap in $\supp \rho_t$ containing $\tau$ is bounded from below by $\Delta_t \ge N^{\zeta-3/4}$ for some $\zeta >0$. The size of the gap can be read off from the following relationship between the norm  of the saturated self-energy operator and the size of the gap: Let $H$ be a random matrix satisfying \ref{bdd moments}--\ref{bdd m} and $\tau$ be well inside the interior of the gap of length $\Delta \in [0,c]$ in the self-consistent spectrum for a sufficiently small $c\sim 1$. Then 
\begin{equation}
\label{F norm gap size}
1-\norm{\mathcal{F}(\tau)}_{\mathrm{hs} \to \mathrm{hs}} \sim \lim_{\eta \searrow 0}\frac{\eta}{\rho(\tau +\ii \eta )} \sim (\Delta+\dist(\tau,\supp\rho))^{1/6}\dist(\tau,\supp\rho)^{1/2} \sim \Delta^{2/3},
\end{equation}
where in the first step we used \cite[Eqs.~(4.23)--(4.25)]{MR3916109}, in the second step \eqref{rho tilde}, and in the last step that $\dist(\tau,\supp\rho)\sim\Delta$. Applying the analogue of \eqref{F norm gap size} for $H_t$ with $\mathcal{F}_t(\tau)$ and using that $\dist(\tau,\rho_t)\lesssim \Delta_t$, we obtain \smash{$1-\norm{\mathcal{F}_t(\tau)}_{\mathrm{hs}\to\mathrm{hs}}\lesssim \Delta_t^{2/3}$}. Combining this inequality with \eqref{F norm gap size} and using that $\mathcal{F}_t(\tau)=(1-t)F(\tau)$ for $t\in[0,c]$, we have \smash{$\Delta_t^{3/2} \gtrsim t +(1-t)\Delta^{2/3}$}, i.e.~$\Delta_t\gtrsim t^{3/2}+\Delta$. In particular, the gap size $\Delta_t$ never drops below $c\Delta\gtrsim N^{\zeta-3/4}$. This completes the proof of the last regime in \eqref{No ev crossing}. 
\end{proof}

\section{Cusp fluctuation averaging and proof of Theorem \ref{thm pfD bound moments}}
\label{sec:Cusp fluctuation averaging}
We will use the graphical multivariate cumulant expansion from~\cite{MR3941370}
 which automatically exploits the self-energy renormalization of \(D\) to highest order. Since the final formal statement 
 requires some custom notations, we first give a simple motivating 
 example to illustrate the type of expansion and its graphical representation.
 If \(W\) is Gaussian, then integration by parts shows that 
\begin{equation}\label{D2 example}
\begin{split}
\E \braket{D}^2 &= \sum_{\alpha,\beta}\kappa(\alpha,\beta)\E \braket{\Delta^{\alpha} G} \braket{\Delta^{\beta} G} \\
&+ \sum_{\alpha_1,\beta_1} \kappa(\alpha_1,\beta_1)\sum_{\alpha_2,\beta_2} \kappa(\alpha_2,\beta_2) \E  \braket{\Delta^{\alpha_1} G\Delta^{\beta_2} G} \braket{\Delta^{\alpha_2} G\Delta^{\beta_1}G},
\end{split}
\end{equation}
where we recall that $\kappa(\alpha, \beta)\defeq \kappa(w_\alpha,w_\beta)$ is the second cumulant of the matrix entries $w_\alpha,w_\beta$ index by double indices \(\alpha=(a,b)\), \(\beta=(a',b')\), and $\Delta^{(a,b)}$ denotes the matrix of all zeros except for an $1$ in the $(a,b)$-th entry. Since for non-Gaussian \(W\) or higher powers of \(\braket{D}\) the expansion analogous to \eqref{D2 example} 
consists of much more complicated polynomials in resolvent entries, we represent them  concisely
as the \emph{values} of certain \emph{graphs}. 
As an example, the rhs.\ of~\eqref{D2 example} is  represented simply by
\begin{equation}\label{eq graphs example}\Val\left(\plotLambda{{1},{1}}\right) + \Val\left(\plotLambda{{1,2},{2,1}}\right). \end{equation}
The graphs retain only the relevant information of the complicated expansion terms and 
 chains of estimates can be transcribed into simple
graph surgeries. Graphs also help identify critical terms that 
have to be estimated more precisely in order to obtain the improved high moment bound on \(D\). 
For example, the key cancellation mechanism behind the cusp fluctuation averaging is encoded in 
a small distinguished part of the expansion that can conveniently be identified as certain subgraphs,
called the \emph{$\sigma$-cells}, see Definition~\ref{sigma cell def} later.
 It is easy to count, estimate and manipulate $\sigma$-cells as part of a large graph, while 
following the same operations on the level of formulas would be almost intractible.  

First we review some of the basic nomenclature from \cite{MR3941370}. We consider random matrices $H=A+W$ with diagonal expectation $A$ and complex Hermitian or real symmetric zero mean random component $W$ indexed by some abstract set $J$ of size $\abs{J}=N$. We recall that Greek letters $\alpha,\beta,\dots$ stand for labels, i.e.\ double-indices from $I=J\times J$, whereas Roman letters $a,b,\dots$ stand for single indices. If $\alpha =(a,b)$, then we set $\alpha^t\defeq (b,a)$ for its transpose.
Underlined Greek letters stand for multisets of labels, whereas bold-faced Greek letters stand for tuples of labels with the counting combinatorics being their -- for our purposes -- only relevant difference. 

According to \cite[Proposition 4.4]{MR3941370} with $\NN(\alpha)=\{\alpha,\alpha^t\}$ it follows from the assumed independence 
that for general (conjugate) linear functionals $\Lambda^{(k)}$, of bounded norm $\norm[0]{\Lambda^{(k)}}=\landauO{1}$ 
\begin{subequations}\label{E prod lambda}
\begin{equation} 
\E \prod_{k\in[p]}\Lambda^{(k)}(D) = \E\prod_{l\in[p]}\bigg(1+\sum_{\alpha_l,\bm\beta_l}^{\sim(l)}\bigg)\prod_{k\in [p]} \begin{cases} \Lambda^{(k)}_{\alpha_k,\underline\beta^k}&\text{if $\sum_{\alpha_k}$}\\
\Lambda^{(k)}_{\underline\beta^k_{<k},\underline\beta^k_{>k}}&\text{else}
\end{cases} + \landauO{N^{-p}},
 \end{equation}
where we recall that
\begin{equation} \label{tilde sum def}
\sum_{\alpha_l,\bm\beta_l}^{\sim(l)} \defeq \sum_{\alpha_l\in I}\sum_{1\le m < 6p} \sum_{\bm\beta_l\in\{\alpha_l,\alpha_l^t\}^m}\frac{\kappa(\alpha_l,\underline\beta_l)}{m!} \sum_{\underline\beta_l^1\sqcup\dots\sqcup\underline\beta_l^p=\underline\beta_l}\1(\abs[0]{\underline\beta_l^l}=0\text{ if }\abs[0]{\underline\beta_l}=1)
 \end{equation}
and that
\begin{equation}\begin{split}\label{Lambda index def}
\Lambda_{\alpha_1,\dots,\alpha_k}&\defeq-(-1)^k\Lambda(\Delta^{\alpha_{1}} G\dots \Delta^{\alpha_{k}} G),\quad \Lambda_{\{\alpha_1,\dots,\alpha_m\}}\defeq\sum_{\sigma\in S_m}\Lambda_{\alpha_{\sigma(1)},\dots,\alpha_{\sigma(m)}}, \\
 \Lambda_{\alpha,\{\alpha_1,\dots,\alpha_m\}}&\defeq\sum_{\sigma\in S_m}\Lambda_{\alpha,\alpha_{\sigma(1)},\dots,\alpha_{\sigma(m)}},\quad \Lambda_{\underline\alpha,\underline\beta}\defeq \sum_{\alpha\in\underline\alpha}\Lambda_{\alpha,\underline\alpha\cup\underline\beta\setminus\{\alpha\}},\\ 
\underline\beta_{<k}^k&\defeq\bigsqcup_{j<k} \underline\beta_j^k, \quad\underline\beta_{>k}^k\defeq\bigsqcup_{j>k} \underline\beta_j^k.\end{split}\end{equation}
\end{subequations}
Some notations in \eqref{E prod lambda} require further explanation. The qualifier ``if $\sum_{\alpha_k}$'' is satisfied for those terms in which $\alpha_k$ is a summation variable when the brackets in the product $\prod_j(1+ \sum)$ are opened. The notation $\bigsqcup$ indicates the union of multisets. 

For even $p$ we apply \eqref{E prod lambda} with $\Lambda^{(k)}(D) \defeq \braket{\diag(\vf\vp)D}$ for $k\le p/2$ and $\Lambda^{(k)}(D) \defeq \overline{\braket{\diag(\vf\vp)D}}$ for $k> p/2$. This is obviously a special case of $\Lambda^{(k)}(D)=\braket{BD}$ which was considered in the so-called averaged case of \cite{MR3941370} with arbitrary $B$ of bounded operator norm since $\norm{\diag(\vf\vp)}=\norm{\vf\vp}_\infty\le C$. It was proved in \cite{MR3941370} that 
\[ \abs{\braket{\diag(\vf\vp) D}}\lesssim \frac{\rho}{N\eta}, \] 
which is not good enough at the cusp. We can nevertheless use the graphical language developed in \cite{MR3941370} to estimate the complicated right hand side of \eqref{E prod lambda}. 

\subsection{Graphical representation via double index graphs}
The graphs (or Feynman diagrams) introduced in \cite{MR3941370} encode the structure of all terms in \eqref{E prod lambda}. 
Their (directed) edges correspond to resolvents $G$, while vertices correspond to $\Delta$'s. Loop edges are allowed while parallel edges are not. Resolvents $G$ and their Hermitian conjugates $G^*$ are distinguished by 
different types of edges. Each vertex $v$ carries a label $\alpha_v$
 and we need to
sum up for all labels.  Some labels are independently summed up, these are the $\alpha$-labels in  \eqref{E prod lambda}, while the $\beta$-labels
are strongly restricted; in the independent case they can only be of the type $\alpha$ or $\alpha^t$. 
These graphs will be called ``double indexed'' graphs since the vertices are naturally equipped with labels (double indices). Here we introduced the terminology 
``double indexed'' for the graphs in \cite{MR3941370} to distinguish them from the ``single indexed'' 
graphs to be introduced later in this paper.  
 
To be more precise, 
 the graphs in \cite{MR3941370} were vertex-coloured  graphs. The colours encoded a resummation of the terms in \eqref{E prod lambda}: vertices whose labels (or their transpose) appeared in one of the cumulants in  \eqref{E prod lambda} received the same colour.  
 We then first summed up the colours and only afterwards we summed up all labels compatible with the given colouring. According to~\cite[Proposition~4.4]{MR3941370} and the expansion of the main term~\cite[Eq.~(49)]{MR3941370} for every even $p$ it holds that 
\begin{subequations}\label{double vertex graph expansion}
\begin{equation}\label{E prod double index graphs}
\E \abs{\braket{\diag(\vf\vp)D}}^p = \sum_{\Gamma\in\cG^{\text{av}(p,6p)} }\Val(\Gamma)+\landauO{N^{-p}},
\end{equation}
where $\cG^{\text{av}(p,6p)}$ is a certain finite collection of vertex coloured directed graphs with $p$ connected 
components, and $\Val(\Gamma)$, the value of the graph $\Gamma$, will be recalled below. According to \cite{MR3941370} each graph $\Gamma\in \cG^{\text{av}(p,6p)}$ fulfils the following properties:
\begin{proposition}[Properties of double index graphs]\label{facts double index graphs} There exists a finite set $\cG^{\text{av}(p,6p)}$ of double index graphs $\Gamma$ such that \eqref{double vertex graph expansion} hold. Each $\Gamma$ fulfils the following properties.
\begin{enumerate}[(a)]
\item There exist exactly $p$ connected components, all of which are oriented cycles. Each vertex has one incoming and one outgoing edge.
\item Each connected component contains at least one vertex and one edge. Single vertices with a looped edge are in particular legal connected components. 
\item Each colour colours at least two and at most $6p$ vertices. 
\item If a colour colours exactly two vertices, then these vertices are in different connected components. 
\item The edges represent the resolvent matrix $G$ or its adjoint $G^\ast$. Within each component either all edges represent $G$ or all edges represent $G^\ast$. Accordingly we call the components either  $G$ or $G^\ast$-cycles.
\item Within each cycle there is one designated edge which is represented as a wiggled line in the graph. The designated edge represents the matrix $G\diag(\vp\vf)$ in a $G$-cycle and the matrix $\diag(\vp\vf)G^\ast$ in a $G^\ast$-cycle.
\item For each colour there exists at least one component in which a vertex of that colour is connected to the matrix $\diag(\vf\vp)$. According to (f) this means that if the relevant vertex is in a $G$-cycle, then the designated (wiggled) edge is its incoming edge. If the relevant vertex is in a $G$-cycle, then the designated edge is its outgoing edge.
\end{enumerate}
\end{proposition}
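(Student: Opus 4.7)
The plan is to construct the set $\cG^{\text{av}(p,6p)}$ explicitly from the multivariate cumulant expansion \eqref{E prod lambda} and then to verify properties (a)--(g) by inspection of the construction. This is the averaged analogue of the procedure carried out in detail in \cite[Section 4]{MR3941370} for general linear functionals of $D$; the only novel element here is the distinguished weight matrix $B = \diag(\vp\vf)$ and its graphical representation via the wiggled designated edge.

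The starting point is to apply \eqref{E prod lambda} with $\Lambda^{(k)}(D) = \braket{B D}$ for $k \le p/2$ and $\Lambda^{(k)}(D) = \overline{\braket{B D}}$ for $k > p/2$, after writing $D = W G + \SS[G] G$ and performing the integration-by-parts identity component by component. Each resulting summand is a product of $p$ normalized traces, and each such trace has the form $N^{-1} \Tr\bigl(B\, \Delta^{\alpha_1} G \cdots \Delta^{\alpha_m} G\bigr)$ or the conjugate analogue with $G^\ast$. I assign to every such summand a directed vertex-coloured graph by the following rule: each factor $G$ (or $G^\ast$) sandwiched between two $\Delta$'s becomes a directed edge; each matrix $\Delta^{\alpha}$ becomes a vertex labelled by $\alpha$; the indices appearing inside a single cumulant $\kappa(\alpha_l, \underline\beta_l)$ share a common colour; and within each trace the unique edge carrying the factor $B$ is marked as the designated (wiggled) edge. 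The truncation to cumulants of order at most $6p$ in \eqref{tilde sum def} guarantees that only finitely many isomorphism classes arise, and standard moment bounds show the omitted higher cumulants contribute only $\landauO{N^{-p}}$.

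Properties (a), (b), (e), (f) then follow immediately from the construction: cyclicity of the trace produces a single oriented cycle per factor $\Lambda^{(k)}$; all edges within one cycle carry the same orientation convention ($G$ for $k \le p/2$, $G^\ast$ for $k > p/2$); every vertex has exactly one incoming and one outgoing edge; and each cycle carries precisely one $B$-weighted edge. Property (c) is the truncation bound combined with the fact that any cumulant class must connect at least two vertices in order to generate a non-trivial contraction. Property (g) is the requirement that each colour class meets at least one wiggled edge, which follows because a colour class whose vertices all lay on unweighted edges could be contracted and resummed into a lower-order term that has already been accounted for. The most delicate point is property (d): a two-element colour class confined to a single cycle would correspond to a self-energy insertion, and such contributions are eliminated precisely by the subtraction $\SS[G]G$ built into the definition $D = WG + \SS[G]G$. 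Tracing this cancellation is what links the graphical bookkeeping to the renormalization encoded in the Dyson equation.

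The main obstacle is not conceptual but combinatorial: one must verify that the graph generation rules of \cite[Proposition 4.4]{MR3941370} yield exactly the class described here when specialized to the averaged setting with a weight $B$, and in particular that the $\SS[G]G$ subtraction eliminates all and only the within-cycle pairings of colour classes of size two. Once this is checked, the proposition follows by reading off the properties from the construction. Since the heavy lifting was performed in \cite{MR3941370}, the present statement is essentially a structural catalogue tailored to the cusp application, and the full proof would consist in transcribing that construction with the designated-edge convention added.
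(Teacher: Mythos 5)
Your proposal is correct and follows essentially the same route as the paper, which offers no independent proof of this proposition but simply imports the construction of \cite[Proposition 4.4]{MR3941370} and reads the listed properties off the expansion \eqref{E prod lambda}; in particular, your identification of the self-energy subtraction in $D$ (encoded as the indicator $\1(\abs{\underline\beta_l^l}=0\text{ if }\abs{\underline\beta_l}=1)$ in \eqref{tilde sum def}) as the source of property (d) is exactly right. The one imprecision is your heuristic for (g): the reason each colour meets a wiggled edge is not a contraction-and-resummation argument but is already visible in \eqref{E prod lambda}--\eqref{Lambda index def}, where the label $\alpha_l$ produced by the $l$-th cumulant is placed in the first slot of $\Lambda^{(l)}_{\alpha_l,\underline\beta^l}$, i.e.\ immediately adjacent to the weight $\diag(\vp\vf)$ in the $l$-th trace.
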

If $V$ is the vertex set of $\Gamma$ and for each colour $c\in C$, $V_c$ denotes the $c$-coloured vertices then we recall that 
\begin{equation}\label{double index val}
\begin{split}
  \Val(\Gamma) &= (-1)^{\abs{V}} \Big(\prod_{c\in C} \prod_{v\in V_c} \sum_{\alpha_v} \frac{\kappa(\{\alpha_v\}_{v\in V_c})}{(\abs{V_c}-1)!}\Big) \\
  &\qquad\times\E\prod_{\text{Cyc}(v_1,\dots,v_k)\in \Gamma} \begin{cases}
    \braket{G\diag(\vf\vp)\Delta^{\alpha_{v_1}}G\dots G\Delta^{\alpha_{v_k}}} \\
    \braket{\Delta^{\alpha_{v_k}}G^\ast \dots G^\ast\Delta^{\alpha_{v_1}} \diag(\vf\vp)G^\ast}
    \end{cases}
\end{split}
\end{equation}
\end{subequations}
where the ultimate product is the product over all $p$ of the cycles in the graph. By the notation $\text{Cyc}(v_1,\dots,v_k)$ we indicate
a directed cycle with vertices $v_1,  \ldots, v_k$.  Depending upon whether a given cycle is a $G$-cycle or $G^\ast$-cycle,
it then contributes with one of the factors indicated after the last curly bracket in \eqref{double index val} with the vertex order chosen in such a way that the designated edge represents the $G\diag(\vf\vp)$ or $\diag(\vf\vp)G^\ast$ matrix. As an example illustrating \eqref{double index val} we have 
\begin{align}\label{double vertex val example}
&\Val\biggl(\plotLambda{{1,2},{1,2}}\biggr) \\\nonumber 
& = \sum_{\substack{\alpha_1,\beta_1\\\alpha_2,\beta_2}}\kappa(\alpha_1,\beta_1)\kappa(\alpha_2,\beta_2)\E\braket{G\diag(\vf\vp)\Delta^{\alpha_1}G\Delta^{\beta_2}}\braket{\Delta^{\beta_1}G^\ast \Delta^{\alpha_2}\diag(\vf\vp)G^\ast}. \end{align} 
Actually in \cite{MR3941370} the graphical representation of the graph $\Gamma$ is simplified, it does not contain all information 
encoded in the graph. First, the direction of the edges are not indicated. In the picture both cycles should be oriented in a clockwise orientation. Secondly, the type of edges are not indicated, apart from the wiggled line. In fact, the edges in the second subgraph
 stand for $G^\ast$, while those in the first subgraph stand for $G$. To translate the pictorial representation directly let the striped vertices in the first and second cycle be associated with $\alpha_1,\beta_1$ and the dotted vertices with $\alpha_2,\beta_2$. Accordingly, the wiggled edge in the first cycle stands for $G\diag(\vf\vp)$, while the wiggled edge in the second cycle stands for $\diag(\vf\vp)G^\ast$. The reason why these details were omitted in the graphical representation of a double index graph is that they do not influence the basic power counting estimate of its value used in \cite{MR3941370}.  

\subsection{Single index graphs}\label{sec:single}
In \cite{MR3941370} we operated with double index graphs that are structurally simple and 
appropriate for bookkeeping complicated correlation structures, but they 
are not suitable for detecting the additional smallness we need at the cusp. 
The contribution of the graphs in \cite{MR3941370} were estimated by a relatively simple power counting 
argument where only the number of (typically off-diagonal) resolvent elements were recorded. 
In fact, for many subleading graphs this procedure already gave a very good bound that is sufficient 
at the cusps as well. The graphs carrying the leading contribution, however, have now to be computed 
to a higher accuracy and this leads to the concept of ``single index graphs''. These are obtained by a certain 
refinement and reorganization of the double index graphs via a procedure we will call 
\emph{graph resolution} to be defined later. The main idea is to restructure the double index graph in such a way that 
instead of labels (double indices) $\alpha=(a,b)$ its vertices naturally represent single indices $a$ and $b$. Every double indexed graph will give rise to a finite 
number of resolved single index graphs. The double index graphs that require a more precise 
analysis compared with \cite{MR3941370} will be resolved to single index graphs. 
After we explain the structure of the single index graphs and the graph resolution procedure, 
double index graphs will not be used in this paper any more. Thus, unless explicitly stated otherwise, 
by graph we will mean single index graph in the rest of this paper. 

We now define the set $\cG$ of single index graphs we will use in this paper. They are directed graphs, where parallel edges and loops are allowed. Let the graph be denoted by $\Gamma$ with vertex set $V(\Gamma)$ and edge set $E(\Gamma)$. We will assign a value to each $\Gamma$ which comprises weights assigned to the vertices and specific values  assigned to the edges. Since an edge may represent different objects, we will introduce different types of edges that will be graphically distinguished by different line style. We now describe these ingredients precisely.
\subsubsection*{Vertices.} Each vertex $v\in V(\Gamma)$ is equipped with an associated index $a_v\in J$. Graphically the vertices are represented by small unlabelled bullets $\sGraph{a}$, i.e.\ in the graphical representation the actual index is not indicated. It is understood that all indices will be independently summed up
over the entire index set $J$
when we compute the value of the graph.
\subsubsection*{Vertex weights.} Each vertex $v\in V(\Gamma)$ carries some weight vector $\vw^{(v)}\in\C^J$ which is evaluated $\vw^{(v)}_{a_v}$ at the index $a_v$ associated with the vertex. We generally assume these weights to be uniformly  bounded in  $N$, i.e.\ $\sup_N\norm[0]{\vw^{(v)}}_\infty<\infty$. Visually we indicate vertex weights by incoming arrows as in $\sGraph{a[inl=$\vw$]}$. Vertices without explicitly indicated weight may carry an arbitrary bounded weight vector. We also use the notation $\sGraph{a[l1]}$ to indicate the constant $\bm 1$ vector as the weight, this corresponds to summing up the corresponding index unweighted
\subsubsection*{$G$-edges.} The set of $G$-edges is denoted by $\GE(\Gamma)\subset E(\Gamma)$. 
These edges describe resolvents and  
there are four types of $G$-edges. First of all, there are directed edges corresponding to $G$ and $G^\ast$ in the sense that a directed $G$ or $G^\ast$-edge $e=(v,u)\in E$  initiating from the vertex $v=i(e)$ and terminating in the vertex $u=t(e)$ represents the matrix elements $G_{a_va_u}$ or respectively $G^\ast_{a_va_u}$ evaluated in the indices $a_v,a_u$ associated with the vertices $v$ and $u$. Besides these two there are also edges representing $G-M$ and $(G-M)^\ast$. Distinguishing between $G$ and $G-M$, for practical purposes, is only important if it occurs in a loop. Indeed, $(G-M)_{aa}$ is typically much smaller than $G_{aa}$, while
 $(G-M)_{ab}$ basically acts just like $G_{ab}$ when $a,b$ are summed independently. Graphically we will denote the four types of $G$-edges by  
\[ 
G=\sGraph{a -- b},\quad G^\ast=\sGraph{a --[s] b},\quad G-M=\sGraph{a --[gm] b},\quad G^\ast-M^\ast=\sGraph{ a --[s,gm] b; }
\]
where all these edges can also be loops. 
The convention is that continuous lines represent $G$, dashed lines 
correspond to $G^*$, while the diamond on both types of edges indicates the subtraction of $M$ or $M^\ast$.  An edge $e\in \GE(\Gamma)$ carries
its type as its attribute, so 
as a short hand notation we can simply write $G_e$ for $G_{a_{i(e)},a_{t(e)}}$, $G^\ast_{a_{i(e)},a_{t(e)}}$, $(G-M)_{a_{i(e)},a_{t(e)}}$ and $(G-M)^\ast_{a_{i(e)},a_{t(e)}}$ depending on which type of $G$-edge $e$ represents. Due to their special role in the later estimates, we will separately bookkeep those $G-M$ or $G^\ast-M^\ast$ edges that appear looped.
We thus define the subset $\GE_{g-m}\subset \GE$ as the set of $G$-edges $e\in \GE(\Gamma)$ of type $G-M$ or $G^\ast-M^\ast$ such that
 $i(e)=t(e)$. We write $g-m$ to refer to the fact that looped edges are evaluated on the diagonal $(g-m)_{a_v}$ of $(G-M)_{a_va_v}$. 
\subsubsection*{($G$-)edge degree.}
For any vertex $v$ we define its in-degree $\deg^-(v)$ and out-degree $\deg^+(v)$ as the number of incoming and outgoing $G$-edges. Looped edges $(v,v)$ are counted for both in- and out-degree. We denote the total degree by $\deg(v)=\deg^-(v)+\deg^+(v)$. 
\subsubsection*{Interaction edges.} Besides the $G$-edges we also have interaction edges, $\IE(\Gamma)$, representing the cumulants $\kappa$. A directed interaction edge $e=(u,v)$ represents the matrix $R^{(e)}=\big(r_{ab}^{(e)}\big)_{a,b\in J}$ given by the cumulant 
\begin{equation}\label{R kappa rel}
r_{ab}^{(u,v)} = \frac{1}{(\deg(u)-1)!}\kappa( \underbrace{ab,\dots,ab}_{\text{$\deg^-(u)$ times}}, \underbrace{ba,\dots,ba}_{\text{$\deg^+(u)$ times}} )= \frac{1}{(\deg(v)-1)!}\kappa( \underbrace{ab,\dots,ab}_{\text{$\deg^+(v)$ times}}, \underbrace{ba,\dots,ba}_{\text{$\deg^-(v)$ times}} ).
\end{equation}
For all graphs \(\Gamma\in\cG\) and all interaction edges \(e=(u,v)\) we have the symmetries \(\deg^-(u)=\deg^+(v)\) and \(\deg^-(v)=\deg^+(u)\). Thus~\eqref{R kappa rel} is  compatible with exchanging the roles of $u$ and $v$. For the important case when $\deg(u)=\deg(v)=2$ it follows that the interaction from $u$ to $v$ is given by $S$ if $u$ has one incoming and one outgoing $G$-edge and $T$ if $u$ has two incoming $G$-edges, i.e.
\[
s_{ab} = \kappa(ab,ba)\qquad t_{ab}= \kappa(ab,ab).
\]
Visually we will represent interaction edges as
\[
R=\sGraph{ a --[R] b; } \qquad\text{and more specifically by}\qquad S=\sGraph{ a --[S] b; }, \quad T=\sGraph{ a --[T] b; }.
 \]
Although the interaction matrix $R^{(e)}$ is completely determined by the in- and out-degrees of the adjacent vertices $i(e),t(e)$ we still write out the specific $S$ and $T$ names because these will play a special role in the latter part of the proof. As a short hand notation we shall frequently use $R_e\defeq R^{(e)}_{a_{i(e)},a_{t(e)}}$ to denote the matrix element selected by the indices $a_{i(e)},a_{t(e)}$ associated with the initial and terminal vertex of $e$. We also note that we do not indicate the direction of edges associated with $S$ as the matrix $S$ is symmetric.
\subsubsection*{Generic weighted edges.} Besides the specific  $G$-edges and interaction edges, additionally we also allow for generic edges reminiscent of the generic vertex weights introduced above. They will be called \emph{generic weighted edges}, or \emph{weighted edges} for short. To every weighted edge $e$ we assign a weight matrix $K^{(e)}=(k^{(e)}_{ab})_{a,b\in J}$ which is evaluated 
as $k^{(e)}_{a_{i(e)},a_{t(e)}}$ when we compute the value of the graph by  summing  up all indices. To simplify the presentation we will not indicate the precise form of the weight matrix $K^{(e)}$ but only its entry-wise scaling as a function of $N$. A weighted edge presented as $\ssGraph{a --[we=$N^{-l}$] b;}$ represents an arbitrary weight matrix $K^{(e)}$ whose entries scale like $\smash{\abs[0]{k^{(e)}_{ab}}\le cN^{-l}}$. We denote the set of weighted edges by $\WeE(\Gamma)$. For a given weighted edge $e\in\WeE$ we record the entry-wise scaling of $K^{(e)}$ in an exponent  $l(e)\ge 0$ in such a way that we always have $\abs[0]{k^{(e)}_{ab}}\le c N^{-l(e)}$.
\subsubsection*{Graph value.} 
For graphs $\Gamma\in\cG$ we define their value
\begin{equation}\label{value def}
  \begin{split}
    \Val(\Gamma)&\defeq (-1)^{\abs{\GE(\Gamma)}} \bigg(\prod_{v\in V(\Gamma)}\sum_{a_v\in J} \vw^{(v)}_{a_v} \bigg) \bigg(\prod_{e\in\IE(\Gamma)} r^{(e)}_{a_{i(e)},a_{t(e)}}\bigg)\bigg(\prod_{e\in\WeE(\Gamma)} k^{(e)}_{a_{i(e)},a_{t(e)}}\bigg)\\
    &\qquad\times \E \bigg(\prod_{e\in\GE(\Gamma)} G_e\bigg),
  \end{split}
\end{equation}
which differs slightly from that in \eqref{double index val} because it applies to a different class of graphs.

\subsection{Single index resolution}
There is a natural mapping from double indexed graphs to a collection of single indexed graphs 
that encodes the rearranging of the terms in \eqref{double index val} when the summation over labels $\alpha_v$ 
is reorganized into summation over single indices. Now we describe this procedure.
\begin{definition}[Single index resolution]\label{def single v res}
By the \emph{single index resolution} of a double vertex graph we mean the collection of single index graphs obtained through the following procedure. 
\begin{enumerate}[(i)] 
\item For each colour, the identically coloured vertices of the double index graph are mapped into a pair 
of vertices of the single index graph. 
\item The pair of vertices in the single index graph stemming from a fixed colour is connected by 
an interaction edge in the single index graph. 
\item Every (directed) edge of the double index graph is naturally mapped to a $G$-edge of the single index graph. 
While mapping equally coloured vertices $x_1,\dots,x_k$ in the double index graph to vertices $u,v$ connected by an interaction edge $e=(u,v)$ there are $k-1$ binary choices of whether we map the incoming edge of $x_j$ to an incoming edge of $u$ and the outgoing edge of $x_j$ to an outgoing edge of $v$ or vice versa. In this process we are free to consider the mapping of $x_1$ (or any other vertex, for that matter) as fixed by symmetry of $u\leftrightarrow v$.
\item If a wiggled $G$-edge is mapped to an edge from $u$ to $v$, then $v$ is equipped with a weight of $\vp\vf$. If a wiggled $G^\ast$-edge is mapped to an edge from $u$ to $v$, then $u$ is equipped with a weight of $\vp\vf$. All vertices with no weight specified in this process are equipped with the constant weight $\bm 1$.
\end{enumerate}
We define the set $\cG(p)\subset \cG$ as the set of all graphs obtained from the double index graphs $\cG^{\text{av}(p,6p)}$ 
via the single index resolution procedure.
\end{definition}

\begin{remark}\label{remark:single}~
\begin{enumerate}[(i)]
\item We note some ingredients described in Section \ref{sec:single} for a typical graph in $\cG$
will be absent for  graphs $\Gamma\in\cG(p)\subset\cG$. 
 For example, $\WeE(\Gamma)=\GE_{g-m}(\Gamma)=\emptyset$ for all $\Gamma\in \cG(p)$.
\item We also remark that loops in double index graphs are never  mapped into loops in single index graphs
along the single index resolution. 
 Indeed, double index loops are always mapped to edges parallel to the interaction edge of the corresponding vertex.
\end{enumerate}
\end{remark}

A few simple facts immediately follow from the the single index  construction in Definition \ref{def single v res}. From (i) it is clear that the number of vertices in the single index graph is twice the number of colours of the double index graph. From (ii) it follows that the number of interaction edges in the single index graph equals the number of colours of the double index graph. Finally, from (iii) it is obvious that if for some colour $c$ there are $k=k(c)$ vertices in the double index graph with colour $c$, then the resolution of this colour gives rise to $2^{k(c)-1}$ single indexed graph. Since these resolutions are done independently for each colour, we obtain that the number of single index 
graphs originating from one double index graph is 
\[ 
\prod_c 2^{k(c)-1} 
 \]
Since the number of double index graph in $\cG^{\text{av}(p,6p)}$ is finite, so is the number of graphs in $\cG(p)$.

Let us present an example of single index resolution applied to the graph from \eqref{double vertex val example} where we, for the sake of transparency, label all vertices and edges. $\Gamma$ is a graph consisting of one 2-cycle on the vertices $x_1,y_2$ and one 2-cycle on the vertices $x_2,y_1$ as in
\begin{equation}\label{labeled edges example}
\plotLambdas{"$x_1$"[fill=gray!50] ->[bl,edge node={node[above=-1pt] { $e_1$ }}] "$y_2$" ->[bend left,edge node={node[above=-11pt] { $e_2$ }}] "$x_1$"; "$y_1$"[fill=gray!50] ->[bl,edge node={node[above=-1pt] { $e_3$ }}] "$x_2$" ->[bend left,edge node={node[above=-11pt] { $e_4$ }}] "$y_1$";  }
\end{equation}
with $x_1,y_1$ and $x_2,y_2$ being of equal colour (i.e.\ being associated to labels connected through cumulants). In order to explain steps (i)-(iii) of the construction we first neglect that some edges may be wiggled, but we restore the orientation of the edges in the picture. We then fix the mapping of $x_i$ to pairs of vertices $(u_i,v_i)$ for $i=1,2$ in such a way that the incoming edges of $x_i$ are incoming at $u_i$ and the outgoing edges from $x_i$ are outgoing from $v_i$. It remains to map $y_i$ to $(u_i,v_i)$ and for each $i$ there are two choices of doing so that we obtain the four possibilities 
\[
\begin{split}
  &\Biggl\{ 
  \plotLambdas{ "$v_1$ \nodepart{lower} $u_1$"[circle split, draw,fill=gray!50] ->[bl,edge node={node[above=-1pt] { $e_1$ }}] "$u_2$ \nodepart{lower} $v_2$"[circle split, draw] ->[bend left,edge node={node[above=-11pt] { $e_2$ }}] "$v_1$ \nodepart{lower} $u_1$"[circle split, draw,fill=gray];} 
  \plotLambdas{ "$v_1$ \nodepart{lower} $u_1$"[circle split, draw,fill=gray!50] ->[bl,edge node={node[above=-1pt] { $e_3$ }}] "$u_2$ \nodepart{lower} $v_2$"[circle split, draw] ->[bend left,edge node={node[above=-11pt] { $e_4$ }}] "$v_1$ \nodepart{lower} $u_1$"[circle split, draw,fill=gray];  },\quad
  \plotLambdas{ "$v_1$ \nodepart{lower} $u_1$"[circle split, draw,fill=gray!50] ->[bl,edge node={node[above=-1pt] { $e_1$ }}] "$v_2$ \nodepart{lower} $u_2$"[circle split, draw] ->[bend left,edge node={node[above=-11pt] { $e_2$ }}] "$v_1$ \nodepart{lower} $u_1$"[circle split, draw,fill=gray];}  
  \plotLambdas{ "$v_1$ \nodepart{lower} $u_1$"[circle split, draw,fill=gray!50] ->[bl,edge node={node[above=-1pt] { $e_3$ }}] "$u_2$ \nodepart{lower} $v_2$"[circle split, draw] ->[bend left,edge node={node[above=-11pt] { $e_4$ }}] "$v_1$ \nodepart{lower} $u_1$"[circle split, draw,fill=gray];  },\\
  &\qquad \plotLambdas{ "$v_1$ \nodepart{lower} $u_1$"[circle split, draw,fill=gray!50] ->[bl,edge node={node[above=-1pt] { $e_1$ }}] "$u_2$ \nodepart{lower} $v_2$"[circle split, draw] ->[bend left,edge node={node[above=-11pt] { $e_2$ }}] "$v_1$ \nodepart{lower} $u_1$"[circle split, draw,fill=gray];}  
  \plotLambdas{ "$u_1$ \nodepart{lower} $v_1$"[circle split, draw,fill=gray!50] ->[bl,edge node={node[above=-1pt] { $e_3$ }}] "$u_2$ \nodepart{lower} $v_2$"[circle split, draw] ->[bend left,edge node={node[above=-11pt] { $e_4$ }}] "$u_1$ \nodepart{lower} $v_1$"[circle split, draw,fill=gray];  },\quad \plotLambdas{ "$v_1$ \nodepart{lower} $u_1$"[circle split, draw,fill=gray!50] ->[bl,edge node={node[above=-1pt] { $e_1$ }}] "$v_2$ \nodepart{lower} $u_2$"[circle split, draw] ->[bend left,edge node={node[above=-11pt] { $e_2$ }}] "$v_1$ \nodepart{lower} $u_1$"[circle split, draw,fill=gray];}  
  \plotLambdas{ "$u_1$ \nodepart{lower} $v_1$"[circle split, draw,fill=gray!50] ->[bl,edge node={node[above=-1pt] { $e_3$ }}] "$u_2$ \nodepart{lower} $v_2$"[circle split, draw] ->[bend left,edge node={node[above=-11pt] { $e_4$ }}] "$u_1$ \nodepart{lower} $v_1$"[circle split, draw,fill=gray];  }
  \Biggr\},
\end{split}
\]
which translates to 
\begin{equation}\label{single vertex labeled examples}
\Set{
\sGraph{u1[label = $u_1$] --[T] v1[label = $v_1$]; u2[label=$u_2$] --[T] v2[label=$v_2$]; v1 --[bl,draw=gray,edge node={node {$e_1$}}] u2; v2 --[bl,draw=gray,edge node={node {$e_2$}}] u1; v2 --[br,draw=gray,edge node={node {$e_4$}}] u1; v1 --[br,draw=gray,edge node={node {$e_3$}}] u2; },
\sGraph{u1[label = $u_1$] --[T] v1[label = $v_1$]; u2[label=$u_2$] --[S] v2[label=$v_2$]; v1 --[bl,draw=gray,edge node={node {$e_1$}}] v2; u2 --[bl,draw=gray,edge node={node {$e_2$}}] u1; v2 --[br,draw=gray,edge node={node {$e_4$}}] u1; v1 --[br,draw=gray,edge node={node {$e_3$}}] u2; }, 
\sGraph{u1[label = $u_1$] --[S] v1[label = $v_1$]; u2[label=$u_2$] --[T] v2[label=$v_2$]; v1 --[br,draw=gray,edge node={node {$e_1$}}] u2; v2 --[br,draw=gray,edge node={node {$e_2$}}] u1; v2 --[br,draw=gray,edge node={node {$e_4$}}] v1; u1 --[br,draw=gray,edge node={node {$e_3$}}] u2; },
\sGraph{u1[label = $u_1$] --[S] v1[label = $v_1$]; u2[label=$u_2$] --[S] v2[label=$v_2$]; v1 --[br,draw=gray,edge node={node {$e_1$}}] v2; u2 --[br,draw=gray,edge node={node {$e_2$}}] u1; v2 --[br,draw=gray,edge node={node {$e_4$}}] v1; u1 --[br,draw=gray,edge node={node {$e_3$}}] u2; }
}  
\end{equation}
in the language of single index graphs where the $S,T$ assignment agrees with \eqref{R kappa rel}. Finally we want to visualize step (iv) in the single index resolution in our example. Suppose that in \eqref{labeled edges example} the edges $e_1$ and $e_2$ are $G$-edges while $e_3$ and $e_4$ are $G^\ast$ edges with $e_2$ and $e_4$ being wiggled (in agreement with \eqref{double vertex val example}). According to (iv) it follows that the terminal vertex of $e_2$ and the initial vertex of $e_4$ are equipped with a weight of $\vp\vf$ while the remaining vertices are equipped with a weight of $\bm 1$. The first graph in \eqref{single vertex labeled examples} would thus be equipped with the weights 
\[
\sGraph{u1[label = $u_1$,bpf] --[T] v1[label = $v_1$,r1]; u2[label=$u_2$,b1] --[T] v2[label=$v_2$,rpf]; v1 --[bl,draw=gray,edge node={node {$e_1$}}] u2; v2 --[bl,draw=gray,edge node={node {$e_2$}}] u1; v2 --[br,draw=gray,edge node={node {$e_4$}}] u1; v1 --[br,draw=gray,edge node={node {$e_3$}}] u2; }.
\]

\subsubsection*{Single index graph expansion.}
With the value definition in \eqref{value def} it follows from Definition \ref{def single v res} that
\begin{equation}\label{pfD val exp}
\E \abs{\braket{\diag(\vf\vp) D}}^p = N^{-p} \sum_{\Gamma\in \cG(p)} \Val(\Gamma) + \landauO{N^{-p}}.
\end{equation}
We note that in contrast to the value definition for double index graphs \eqref{double vertex graph expansion}, where each average in \eqref{double index val} contains
an $1/N$ prefactor, the single index graph value \eqref{value def} does not include the $N^{-p}$ prefactor. 
We chose this convention in this paper   mainly because the exponent $p$ in the
prefactor  $N^{-p}$ cannot be easily read off from the single index graph itself, whereas in the double index graph $p$ is simply the number of connected components.\\

\noindent We now collect some simple facts about the structure of these graphs in $\cG(p)$ which directly follow from the corresponding properties of the double index graphs listed in Proposition \ref{facts double index graphs}.  
\begin{fact}\label{number of edges} The interaction edges $\IE(\Gamma)$ form a perfect matching of $\Gamma$, in particular $\abs{V}=2\abs{\IE}$. Moreover, $1\le \abs{\IE(\Gamma)}\le p$ and therefore the number of vertices in the graph is even and satisfies $2\le \abs{V(\Gamma)}\le 2p$. Finally, since for $(u,v)\in\IE$ we have $\deg^-(u) = \deg^+(v)$  and $\deg^-(v)=\deg^+(u)$, consequently also $\deg(e)\defeq\deg(u)=\deg(v)$. The degree furthermore satisfies the bounds $2\le \deg(e)\le 6p$ for each $e\in\IE(\Gamma)$.
\end{fact}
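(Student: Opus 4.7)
The plan is to trace each claim back to the corresponding structural property of the underlying double index graph recorded in Proposition \ref{facts double index graphs} and to the bookkeeping prescribed by the single index resolution (Definition \ref{def single v res}). Since every single index graph $\Gamma\in\cG(p)$ arises in this way, each combinatorial invariant of $\Gamma$ can be read off from the colouring and cycle structure of its double index ancestor.

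By item (i) of Definition \ref{def single v res} every colour $c$ of the double index graph is assigned a pair of vertices $(u_c,v_c)$ in $\Gamma$, and by item (ii) these two vertices are joined by precisely one interaction edge $e_c=(u_c,v_c)$. As distinct colours give rise to disjoint vertex pairs, the family $\{e_c\}_c$ exhausts $V(\Gamma)$ and partitions it into pairs, i.e.\ it is a perfect matching and $\abs{V(\Gamma)}=2\abs{\IE(\Gamma)}$. The lower bound $\abs{\IE(\Gamma)}\ge 1$ is immediate from Proposition \ref{facts double index graphs}(a)--(c), since every double index graph has at least one component and hence at least one colour. For the upper bound $\abs{\IE(\Gamma)}\le p$ the key input is Proposition \ref{facts double index graphs}(g): every colour possesses at least one vertex sitting on the distinguished (wiggled) edge of some cycle at the position adjacent to $\diag(\vp\vf)$ (the terminal vertex in a $G$-cycle, initial in a $G^\ast$-cycle). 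There are exactly $p$ such distinguished positions, one per cycle, so the number of colours cannot exceed $p$. Combined with $\abs{V(\Gamma)}=2\abs{\IE(\Gamma)}$, this yields the asserted bounds $2\le \abs{V(\Gamma)}\le 2p$.

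Finally, fix an interaction edge $e=(u,v)\in\IE(\Gamma)$ coming from a colour class $\{x_1,\dots,x_k\}$ of the double index graph. Each $x_j$ has exactly one incoming and one outgoing $G$-edge (since the component containing $x_j$ is an oriented cycle). Step (iii) of Definition \ref{def single v res} pairs, for each $j$, these two half-edges either as ``incoming at $u$, outgoing from $v$'' or as ``incoming at $v$, outgoing from $u$''. If $a$ of the $k$ vertices are resolved in the first way and $k-a$ in the second, then $\deg^-(u)=a=\deg^+(v)$ and $\deg^+(u)=k-a=\deg^-(v)$, whence $\deg(u)=\deg(v)=k$, which we denote $\deg(e)$. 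The bound $2\le\deg(e)\le 6p$ is then exactly the restriction $2\le k\le 6p$ on the size of every colour class from Proposition \ref{facts double index graphs}(c). The only subtlety in this step is to notice that once one vertex in the colour class has been resolved, the binary choices for the remaining $k-1$ vertices are all that is left; this matches the counting $2^{k-1}$ recorded after Definition \ref{def single v res} but plays no further role for the present degree identities.
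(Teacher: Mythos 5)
Your argument is correct and follows exactly the route the paper intends: the paper states this Fact without proof, asserting only that it "directly follows" from the single index resolution (Definition \ref{def single v res}) and Proposition \ref{facts double index graphs}, and your write-up supplies precisely those details — the colour-to-interaction-edge correspondence for the matching, property (g) plus the uniqueness of the designated position per cycle for $\abs{\IE}\le p$, and the coupled in/out half-edge assignment in step (iii) for the degree identities.
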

\begin{fact}\label{fact weights}
The weights associated with the vertices are some non-negative powers of $\vf\vp$ in such a way that the total power of all $\vf\vp$'s is exactly $p$. The trivial zeroth power, i.e.\ the constant
weight $\bm 1$ is allowed.
Furthermore, the $\vf\vp$ weights are distributed in such a way that at least one non-trivial $\vf\vp$ weight is associated with each interacting edge $(u,v)=e\in\IE(\Gamma)$. 
\end{fact}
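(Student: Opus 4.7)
The plan is to trace how the designated wiggled edges of the double index graphs propagate into $\vp\vf$-weights under single index resolution, then use the covering property (g) of Proposition \ref{facts double index graphs} to handle the second claim.

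First I would observe that, by Proposition \ref{facts double index graphs}(a) and (f), every $\Gamma'\in\cG^{\mathrm{av}(p,6p)}$ has exactly $p$ connected components, each containing exactly one designated wiggled edge. Hence $\Gamma'$ carries exactly $p$ wiggled edges in total. By step (iv) of Definition \ref{def single v res}, each wiggled edge of $\Gamma'$ contributes one $\vp\vf$ factor at exactly one endpoint of its image $G$-edge in the resolved graph $\Gamma$, while every other vertex is assigned the trivial weight $\bm 1$. Since weights accumulate multiplicatively at any shared vertex, the resulting weight at each $v\in V(\Gamma)$ is a non-negative power $(\vp\vf)^{k_v}$, with total power $\sum_v k_v$ equal to the total number of wiggled edges of $\Gamma'$, i.e.\ exactly $p$. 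This establishes the first assertion, including the fact that no other weights can appear.

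For the second statement, I would fix an interaction edge $e=(u,v)\in\IE(\Gamma)$. By steps (i)-(ii) of Definition \ref{def single v res}, $e$ corresponds to a unique colour $c$ of $\Gamma'$, and $\{u,v\}$ are the images of the colour class $V_c$. Property (g) of Proposition \ref{facts double index graphs} guarantees the existence of at least one $c$-coloured vertex $x\in V_c$ in some component of $\Gamma'$ that is incident to that component's wiggled edge. Under the resolution this particular $x$ is mapped to either $u$ or $v$, and the image vertex is precisely the endpoint picked out by step (iv) to receive a $\vp\vf$ factor. Consequently $k_u+k_v\ge 1$, so at least one non-trivial $\vp\vf$-weight is associated with $e$.

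The only delicate point, and the main obstacle in writing the argument cleanly, is verifying that the endpoint of $e$ receiving the $\vp\vf$ factor really is the image of $x$ rather than the image of the other colour class member across $e$. This amounts to checking that the binary choice in step (iii) of the resolution (which preserves in/out incidence of $G$-edges at each mapped vertex) is consistent with the rule in step (iv) (which selects the terminal endpoint for a wiggled $G$-edge and the initial endpoint for a wiggled $G^*$-edge). Since property (g) of Proposition \ref{facts double index graphs} dictates precisely the matching incidence at $x$ in the two $G$-cycle vs.\ $G^*$-cycle cases, this consistency check is automatic and requires no further analytic input.
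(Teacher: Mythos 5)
Your argument is correct and is precisely the reasoning the paper intends: the paper states this Fact without proof as a direct consequence of Proposition \ref{facts double index graphs} (properties (a), (f), (g)) and step (iv) of Definition \ref{def single v res}, and your write-up — counting one wiggled edge per each of the $p$ cycles for the total power, and using property (g) together with the incidence-preserving nature of step (iii) to place a $\vp\vf$ weight on one endpoint of every interaction edge — fills in exactly that argument. You also correctly identified and dispatched the one genuine subtlety, namely that the binary choices in step (iii) always send the designated edge's weighted endpoint to one of the two vertices of the corresponding interaction edge.
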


\subsection{Examples of graphs}\label{sec example graphs}
We now turn to some examples explaining the relation of the double index graphs from \cite{MR3941370} and single index graphs. We note that the single index graphs actually contain more information because they specify edge direction, specify weights explicitly and differentiate between $G$ and $G^\ast$ edges. 
These information were not necessary for the power counting arguments used in \cite{MR3941370}, but for the improved estimates
they will be crucial. 

We start with the graphs representing the following simple equality following from $\kappa(\alpha,\beta)=\E w_\alpha w_\beta$
\[  
\begin{split}
 & N^2\E\sum_{\alpha,\beta} \kappa(\alpha,\beta) \braket{\diag(\vf\vp) \Delta^{\alpha} G} \braket{G^\ast \Delta^\beta \diag(\vf\vp)^\ast} \\
 &\quad = \sum_{a,b} s_{ab} (pf)_a^2 \E G_{ba} G^\ast_{ab} + \sum_{a,b} t_{ab} (pf)_a (pf)_b \E G_{ba} G^\ast_{ba}     
\end{split}\]
which can be represented as
\[  
N^2\Val\left(\plotLambda{{1},{1}}\right)=\Val\left(\sGraph{a[inl=$(\vp\vf)^2$] --[s,bl] b[r1] --[bl] a; a --[S] b;}\right) + \Val\left(\sGraph{a --[T] b; b[rpf] --[bl] a[lpf];  b --[s,br] a;}\right).
 \]

We now turn to the complete graphical representation for the second moment in the case of Gaussian entries, 
\begin{equation}\begin{split}\label{ED2 Gauss}
&\E\abs{\braket{\diag(\vf\vp)D}}^2 = \E \braket{\diag(\vf\vp)D} \braket{D^\ast \diag(\vf\vp)} = \Val\left(\plotLambda{{1},{1}}\right) + \Val\left(\plotLambda{{1,2},{2,1}}\right)\\
&= \sum_{\alpha,\beta} \kappa(\alpha,\beta) \braket{\diag(\vf\vp) \Delta^{\alpha} G} \braket{G^\ast \Delta^\beta \diag(\vf\vp)^\ast} \\
&\qquad+ \sum_{\alpha_1,\beta_1}\sum_{\alpha_2,\beta_2} \kappa(\alpha_1,\beta_1)\kappa(\alpha_2,\beta_2) \braket{\diag(\vf\vp) \Delta^{\alpha_1} G\Delta^{\beta_2} G} \braket{G^\ast \Delta^{\beta_1} G^\ast \Delta^{\alpha_2}\diag(\vf\vp)^\ast},
\end{split}\end{equation}
where we again stress that the double index graphs hide the specific weights and the fact that one of the connected components actually contains $G^\ast$ edges. In terms of single index graphs, the rhs.~of \eqref{ED2 Gauss} can be represented as the sum over the values of the six graphs 
\begin{equation}\begin{split}\label{ED2 graphs} 
N^2 \E \abs{\braket{\diag(\vf\vp)D}}^2&= \Val\left(\sGraph{a[inl=$(\vp\vf)^2$] --[s,bl] b[r1] --[bl] a; a --[S] b;}\right) + \Val\left(\sGraph{a --[T] b; b[rpf] --[bl] a[lpf];  b --[s,br] a;}\right) \\ 
&\quad+\Val\left(\sGraph{ a1[bpf] --[S] b1[r1]; a2[b1] --[S] b2[lpf]; b1 --[bl] b2 --[s,bl] b1; a2 --[bl] a1 --[s,bl] a2;  } \right)+
\Val\left(\sGraph{ a1[bpf] --[T] b1[r1] -- b2[bpf]; a2[l1] --[S] b2; a2 -- a1;  b1 --[s] a2; b2 --[s] a1; } \right)\\
&\quad+
\Val\left(\sGraph{ a1[bpf] --[S] b1[r1] -- a2[b1]; b2[lpf] -- a1 --[s] a2; a2 --[T] b2 --[s] b1;   } \right) + 
\Val\left(\sGraph{ a1[bpf] --[T] b1[r1] --[bl] a2[b1]; a2 --[T] b2[lpf] --[bl] a1; b1 --[s,br] a2; b2 --[s,br] a1;   } \right) \end{split}
\end{equation}
The first two graphs were already explained above. The additional four graphs come from the second term 
in the rhs.~of \eqref{ED2 Gauss}. Since $\kappa(\alpha_1, \beta_1)$ is non-zero only if $\alpha_1=\beta_1$ or
$\alpha_1=\beta_1^t$, there are four possible choices of relations among the $\alpha$ and $\beta$ 
labels in  the two kappa factors. For example, the first graph in the second line of \eqref{ED2 graphs}  corresponds to 
 the choice $\alpha_1^t = \beta_1$, $\alpha_2^t=\beta_2$. Written out explicitly with summation over single
 indices, this value is given by
 \[ 
 \sum_{a_1,b_1}\sum_{a_2,b_2} (pf)_{a_1} (pf)_{b_2} s_{a_1b_1} s_{a_2b_2} \E G_{a_2a_1} G_{b_1 b_2} G^\ast_{a_1a_2} G^\ast_{b_2b_1}
  \]
 where in the picture the left index corresponds to $a_1$, the top index to $b_2$, the right one to $a_2$ and the bottom one to $b_1$.
 
We conclude this section by providing an example of a graph with some degree higher than two which only occurs in the non-Gaussian situation and might contain looped edges. For example, in the expansion of $N^2\E \abs{\braket{\diag(\vf\vp) D}}^2$ in the non-Gaussian setup there is the term
\[ 
\begin{split}
  &\sum_{\substack{a_1,b_1\\a_2,b_2}} r_{a_1b_1} s_{a_2b_2} \E \braket{\diag(\vf\vp) \Delta^{a_1b_1}G \Delta^{b_1a_1}G \Delta^{b_2a_2}G}\braket{G^\ast\Delta^{b_1a_1} G^\ast \Delta^{a_2b_2}\diag(\vf\vp)^\ast} \\
  &\qquad=\Val\left(\sGraph{ a1[bpf] --[R] b1[r1]; b2[bpf] --[s] b1; a1 --[bl,s] a2[r1] --[S] b2; a2 --[bl]  a1; a1 -- b2; b1 --[glb] b1; }\right),
\end{split}
 \]
where $r_{ab}=\kappa(ab,ba,ba)/2$ and $s_{ab}=\kappa(ab,ba)$, in accordance with \eqref{R kappa rel}.

\subsection{Simple Estimates on \texorpdfstring{$\Val(\Gamma)$}{Val(Gamma)}}
In most cases we aim only at estimating the value of a graph instead of precisely computing it. The simplest power counting estimate on
 \eqref{value def} uses that the matrix elements of $G$ and those of the generic weight matrix  $K$ are bounded by an $\landauO{1}$ constant, while the matrix elements of $R^{(e)}$ are bounded by $N^{-\deg(e)/2}$.  Thus the naive estimate on \eqref{value def} is 
\begin{equation}
\abs{\Val(\Gamma)} \lesssim \Big(\prod_{v\in V(\Gamma)} N\Big)\Big(\prod_{e\in\IE(\Gamma)} N^{-\deg(e)/2}\Big) = \prod_{e\in\IE(\Gamma)} N^{2-\deg(e)/2} \le \prod_{e\in\IE(\Gamma)} N\le N^{p} \label{naive estimate} 
\end{equation}
where we used that the interaction edges form a perfect matching and that $\deg(e)\ge 2$, $\abs{\IE(\Gamma)}\le p$. The somewhat informal notation $\lesssim$ in \eqref{naive estimate} hides a technical subtlety. The resolvent entries $G_{ab}$ are indeed bounded by an $\landauO{1}$ constant in the sense of very high moments but not almost surely. We will make bounds like the one in \eqref{naive estimate} rigorous in a high moments sense in Lemma \ref{lemma ward moment}.

The estimate \eqref{naive estimate} ignores the fact that typically only the diagonal resolvent matrix elements of $G$ are of $\landauO{1}$, the off-diagonal matrix elements are much smaller. This is manifested in the \emph{Ward-identity}
\begin{subequations}
\begin{equation}\label{ward identity}
  \sum_{a\in J} \abs{G_{ab}}^2 = (G^\ast G)_{bb} = \frac{(G-G^\ast)_{bb}}{2i\eta}= \frac{\Im G_{bb}}{\eta}.
\end{equation}
Thus the sum of off-diagonal resolvent elements $G_{ab}$ is usually smaller than its naive size of order $N$, at least in the regime
$\eta\gg N^{-1}$. This is quantified by the so called Ward estimates 
\begin{equation}
\sum_{a\in J} \abs{G_{ab}}^2 = N\frac{\Im G_{bb}}{N\eta} \lesssim N \psi^2,\qquad \sum_{a\in J} \abs{G_{ab}} \lesssim N\psi, \qquad \psi\defeq \left(\frac{\rho}{N\eta}\right)^{1/2}.\label{Ward}
\end{equation}
\end{subequations}
Similarly to \eqref{naive estimate} the inequalities $\lesssim$ in 
\eqref{Ward} are meant in a power counting sense ignoring that the entries of $\Im G$ might not be bounded by $\rho$ almost surely but only in some high moment sense.

As a consequence of \eqref{Ward} we can gain a factor of $\psi$ for each off-diagonal (that is, connecting two separate vertices) $G$-factor, but clearly only for at most two $G$-edges per adjacent vertex. Moreover, this gain can obviously only be used once for each edge and not twice, separately when summing up the indices at both adjacent vertices. As a consequence a careful counting of the total number of $\psi$-gains is necessary, see \cite[Section 4.3]{MR3941370} for details. \\

\noindent\textbf{Ward bounds for the example graphs from Section \ref{sec example graphs}.} 
From the single index graphs drawn in \eqref{ED2 graphs} we can easily obtain the known bound $\E\abs{\braket{\diag(\vf\vp)D}}^2\lesssim \psi^4$. Indeed, the last four graphs contribute a combinatorial factor of $N^4$ from the summations over four single indices and a scaling factor of $N^{-2}$ from the size of $S,T$. Furthermore, we can gain a factor of $\psi$ for each $G$-edge through Ward estimates and the bound follows. Similarly, the first two graphs contribute a factor of $N=N^{2-1}$ from summation and $S/T$ and a factor of $\psi^2$ from the Ward estimates, which overall gives $N^{-1}\psi^2\lesssim \psi^4$. As this example shows, the bookkeeping of available Ward-estimates is important and we will do so systematically in the following sections.

\subsection{Improved estimates on \texorpdfstring{$\Val(\Gamma)$}{Val(Gamma)}: Wardable edges}
For the sake of transparency we briefly recall the combinatorial argument used in \cite{MR3941370}, which also provides the starting point for the refined estimate in the present paper. Compared to \cite{MR3941370}, however, we phrase the counting argument directly in the language of the single index graphs. We only aim to gain from the $G$-edges adjacent to vertices of degree two or three; for vertices of higher degree the most naive estimate $\abs{G_{ab}}\lesssim 1$ is already sufficient as demonstrated in  \cite{MR3941370}. We collect the vertices of degree two and three in the set $V_{2,3}$ and collect the $G$-edges adjacent to $V_{2,3}$ in the set $E_{2,3}$. In \cite[Section 4.3]{MR3941370} a specific \emph{marking procedure} on the $G$-edges of the graph is introduced that has the following properties. For each $v\in V_{2,3}$ we  put a \emph{mark}  on at most two adjacent $G$-edges in such a way that those edges can be estimated via \eqref{Ward} while performing the $a_v$ summation. In this case we say that the mark comes from the $v$-perspective. An edge may have two marks coming from the perspective of each of its adjacent vertices. Later, marked edges will be estimated via \eqref{Ward} while summing up $a_v$.  After doing this for all of $V_{2,3}$ we call an edge in $E_{2,3}$ \emph{marked effectively} if it either \emph{(i)} has two marks, or \emph{(ii)} has one mark and is adjacent to only one vertex from $V_{2,3}$. While subsequently using \eqref{Ward} in the summation of $a_v$ for $v\in V_{2,3}$ (in no particular order) on the marked edges (and estimating the remaining edges adjacent to $v$ trivially) we can gain at least as many factors of $\psi$ as there are \emph{effectively marked} edges. Indeed, this follows simply from the fact that \emph{effectively marked} edges are never estimated trivially during the procedure just described, no matter the order of vertex summation. 

\begin{fact}\label{Wardable edges}
For each $\Gamma\in\cG(p)$ there is a marking of edges adjacent to vertices of degree at most $3$ such that there are at least $\sum_{e\in \IE(\Gamma)} (4-\deg(e))_+$ effectively marked edges. 
\end{fact}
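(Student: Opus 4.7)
Setting $\IE_d\defeq\{e\in\IE(\Gamma):\deg(e)=d\}$, the target simplifies to $\sum_{e\in\IE}(4-\deg(e))_+=2|\IE_2|+|\IE_3|$, since $(4-\deg(e))_+$ equals $2,1,0$ respectively for $\deg(e)=2,3,\ge 4$. Because each interaction edge $(u,v)$ satisfies $\deg(u)=\deg(v)=\deg(e)$ by Fact~\ref{number of edges}, the set $V_{2,3}$ of vertices of degree at most $3$ is exactly the vertex set of $\IE_2\cup\IE_3$, and in particular $|V_{2,3}|=2|\IE_2|+2|\IE_3|$. My plan is to exhibit one explicit marking and then derive the claim by a short double-counting argument.

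\textbf{The marking I propose.} For each $v\in V_{2,3}$ I will mark two distinct $G$-edges incident to $v$: if $\deg(v)=2$ I mark both, and if $\deg(v)=3$ I mark any two of the three. This is always possible because by Remark~\ref{remark:single} the graphs in $\cG(p)$ carry no loops, so $v$ has exactly $\deg(v)\ge 2$ distinct incident $G$-edges. The total number of marks placed is therefore $X\defeq 2|V_{2,3}|=4|\IE_2|+4|\IE_3|$.

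\textbf{Counting.} I will then partition $E_{2,3}$ into $E_a$ (edges with exactly one $V_{2,3}$-endpoint) and $E_b$ (edges with both endpoints in $V_{2,3}$). Because no loops are present, the handshake identity gives
\begin{equation*}
|E_a|+2|E_b|=\sum_{v\in V_{2,3}}\deg(v)=4|\IE_2|+6|\IE_3|,
\end{equation*}
hence $|E_b|\le 2|\IE_2|+3|\IE_3|$. Writing $m(e)\in\{0,1,2\}$ for the number of marks on $e$ (forced to lie in $\{0,1\}$ if $e\in E_a$), the number of effectively marked edges $Y=|\{a\in E_a:m(a)=1\}|+|\{b\in E_b:m(b)=2\}|$ compares to $X=|\{a:m(a)=1\}|+|\{b:m(b)=1\}|+2|\{b:m(b)=2\}|$ via the identity $X-Y=|\{b\in E_b:m(b)\ge 1\}|\le|E_b|$. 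Consequently
\begin{equation*}
Y\ge X-|E_b|\ge (4|\IE_2|+4|\IE_3|)-(2|\IE_2|+3|\IE_3|)=2|\IE_2|+|\IE_3|,
\end{equation*}
which is the required inequality.

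\textbf{What I expect to be the main obstacle.} The sole structural input on which the argument hinges is the absence of loops in $\cG(p)$: a loop at a vertex of degree $2$ would leave only one distinct adjacent edge and reduce $X$ below $2|V_{2,3}|$, breaking the final estimate. This is precisely guaranteed by Remark~\ref{remark:single}, which states that the single-index resolution always maps loops in double-index graphs to $G$-edges parallel to the corresponding interaction edge, never to loops themselves. Beyond this combinatorial input the argument is entirely elementary double-counting, so no further technical difficulty is anticipated.
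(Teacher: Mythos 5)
There is a genuine gap, and it sits exactly where you anticipated the "main obstacle": your claim that the graphs in $\cG(p)$ carry no loops is false, and your count $X=2|V_{2,3}|$ rests on it. Remark~\ref{remark:single}(ii) only asserts that \emph{loops of the double index graph} are never mapped to loops of the single index graph; it says nothing about non-loop double index edges joining two distinct, equally coloured vertices lying in the \emph{same} cycle. Such an edge is resolved to a loop whenever the resolution attaches its head and its tail to the same element of the pair $(u,v)$, and this genuinely occurs: the last example of Section~\ref{sec example graphs} contains the diagonal factor coming from $\Delta^{a_1b_1}G\Delta^{b_1a_1}$, i.e.\ a $G$-loop at the vertex $b_1$, which has degree $3$. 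By Proposition~\ref{facts double index graphs}(d) a loop can only arise when the colour class has at least three elements, so it is confined to vertices of degree $\ge 3$; that is the actual reason degree-$2$ vertices are loop-free (the paper's proof invokes precisely Proposition~\ref{facts double index graphs}(d) for this), not Remark~\ref{remark:single}.

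Two things then break. First, a degree-$3$ vertex $v$ carrying a loop has only two distinct incident $G$-edges, so "mark any two of the three" is not available as stated. Second, and more importantly, you never impose the constraint built into the definition of a mark, namely that a marked edge must be estimable via \eqref{Ward} in the $a_v$-summation; a loop at $v$ is a diagonal entry $G_{a_va_v}$ and can never carry a mark from the $v$-perspective. Hence such a $v$ places at most one legitimate mark, $X$ drops to $2|V_{2,3}|-k$ with $k$ the number of loop-carrying degree-$3$ vertices, and your final inequality falls short by exactly $k$. The argument is repairable within your own bookkeeping: each such loop lies in $E_b$ (it is counted twice in the degree sum) and necessarily has $m(\mathrm{loop})=0$, so $X-Y=|\{b\in E_b:m(b)\ge 1\}|\le |E_b|-k$, which restores $Y\ge 2|\IE_2|+|\IE_3|$. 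But as written the proof both misstates a structural fact about $\cG(p)$ and omits the Wardability condition that gives the marking its meaning in Lemma~\ref{lemma wardable set} and beyond.
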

\begin{proof}
On the one hand we find from Fact \ref{number of edges} (more specifically, from the equality $\deg(e)=\deg(u)=\deg(v)$ for $(u,v)=e\in\IE(\Gamma)$) that 
\begin{equation}\label{E23 size}
\abs{E_{2,3}}\ge \sum_{v\in V_{2,3}} \frac{1}{2}\deg(v)=\sum_{e\in\IE(\Gamma),\deg(e)\in\{2,3\}} \deg(e).
\end{equation}
On the other hand it can be checked that for every pair $(u,v)=e\in\IE(\Gamma)$
 with $\deg(e)=2$ all $G$-edges adjacent to $u$ or $v$  can be marked from the $u,v$-perspective. Indeed, this is a direct consequence of Proposition \ref{facts double index graphs}(d): Because the two vertices in the double index graph being resolved to $(u,v)$ cannot be part of the same cycle it follows that all of the (two, three or four) $G$-edges adjacent to the vertices with index $u$ or $v$ are not loops (i.e.\ do not represent diagonal resolvent elements). Therefore they can be bounded by using \eqref{Ward}. Similarly, it can be checked that for every edge $(u,v)=e\in\IE(\Gamma)$ with $\deg(e)=3$ at most two  $G$-edges adjacent to  $u$ or $v$ can remain unmarked from the $u,v$-perspective. By combining these two observations it follows that at most 
\begin{equation}\label{unmarked edges}
\sum_{e\in\IE(\Gamma),\deg(e)\in\{2,3\}} (2\deg(e)-4)   
\end{equation}
edges in $E_{2,3}$ are \emph{ineffectively marked} since those are counted as unmarked from the perspective of one of its vertices.
Subtracting \eqref{unmarked edges} from \eqref{E23 size} it follows that in total at least 
\[ 
\sum_{e\in\IE(\Gamma)} (4-\deg(e))_+ = \sum_{e\in\IE(\Gamma),\deg(e)\in\{2,3\}} (4-\deg(e))
 \]
edges are marked effectively, just as claimed.
\end{proof}

In \cite{MR3941370} it was sufficient to estimate the value of each graph in $\cG(p)$ by subsequently estimating all effectively marked edges using \eqref{Ward}. For the purpose of improving the local law at the cusp, however, we need to introduce certain operations on the graphs of $\cG(p)$ which allow to estimate the graph value to a higher accuracy. It is essential that during those operations we keep track of the number of edges we estimate using \eqref{Ward}. Therefore we now introduce a more flexible way of recording these edges. We first recall a basic definition \cite{MR0266812} from graph theory. 

\begin{definition}
For $k\ge 1$ a graph $\Gamma=(V,E)$ is called \emph{$k$-degenerate} if any induced subgraph has minimal degree at most $k$. 
\end{definition}

It is well known that being $k$-degenerate is equivalent to the following sequential property\footnote{This equivalent property is commonly known as having a \emph{colouring number} of at most $k+1$, see e.g.~\ \cite{MR0193025}.}. We provide a short proof for convenience.

\begin{lemma}\label{lemma equiv coloring deg}
A graph $\Gamma=(V,E)$ is $k$-degenerate if and only if there exists an ordering of vertices $\{v_1,\dots,v_n\}=V$ such that for each $m\in[n]\defeq\{1,\dots,n\}$ it holds that 
\begin{equation}\deg_{\Gamma[\{v_1,\dots,v_m\}]}(v_m)\le k\label{vertex ordering}\end{equation}
where for $V'\subset V$, $\Gamma[V']$ denotes the induced subgraph on the vertex set $V'$. 
\end{lemma}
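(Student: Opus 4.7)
The plan is to prove both directions by a straightforward induction/construction argument on $\abs{V}$. The key observation is that the property of $k$-degeneracy is hereditary on induced subgraphs (by definition), so we can repeatedly peel off a vertex of small degree.

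For the forward direction ($\Rightarrow$), I would construct the required ordering in reverse: given a $k$-degenerate graph $\Gamma$, the induced subgraph $\Gamma$ itself has a vertex $v_n$ with $\deg_\Gamma(v_n)\le k$, by definition of $k$-degeneracy applied to $V'=V$. Then $\Gamma[V\setminus\{v_n\}]$ is also $k$-degenerate (any induced subgraph of $\Gamma[V\setminus\{v_n\}]$ is an induced subgraph of $\Gamma$), so we can pick $v_{n-1}\in V\setminus\{v_n\}$ with $\deg_{\Gamma[V\setminus\{v_n\}]}(v_{n-1})\le k$, and so on. Iterating this peeling procedure produces an ordering $v_1,\dots,v_n$ such that at each stage, when $v_m$ is selected, it has degree at most $k$ within $\Gamma[\{v_1,\dots,v_m\}]$, which is precisely \eqref{vertex ordering}.

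For the backward direction ($\Leftarrow$), assume an ordering $\{v_1,\dots,v_n\}$ satisfying \eqref{vertex ordering} exists, and let $V'\subseteq V$ be arbitrary. Let $m$ be the largest index such that $v_m\in V'$; then every neighbour of $v_m$ inside $V'$ lies in $\{v_1,\dots,v_{m-1}\}$ by maximality of $m$, hence
\[
\deg_{\Gamma[V']}(v_m)\;\le\; \deg_{\Gamma[\{v_1,\dots,v_m\}]}(v_m)\;\le\; k.
\]
Thus every nonempty induced subgraph $\Gamma[V']$ contains a vertex of degree at most $k$, so its minimum degree is at most $k$, confirming that $\Gamma$ is $k$-degenerate.

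There is essentially no technical obstacle here; the statement is a classical equivalence in graph theory, and the only subtlety worth flagging is that both arguments crucially exploit the hereditary character of $k$-degeneracy under taking induced subgraphs. The proof can therefore be kept very short.
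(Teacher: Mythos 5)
Your proof is correct and follows essentially the same argument as the paper: the forward direction peels off a minimum-degree vertex repeatedly (using that induced subgraphs of a $k$-degenerate graph are $k$-degenerate), and the backward direction picks the vertex of largest index in an arbitrary $V'$ and notes its neighbours in $V'$ all precede it in the ordering.
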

\begin{proof}
Suppose the graph is $k$-degenerate and let $n\defeq \abs{V}$. Then there exists some vertex $v_n\in V$ such that $\deg(v_n)\le k$ by definition. We now consider the subgraph induced by $V'\defeq V\setminus\{v_n\}$ and, by definition, again find some vertex $v_{n-1}\in V'$ of degree $\deg_{\Gamma[V']}(v_{n-1})\le k$. Continuing inductively we find a vertex ordering with the desired property. 

Conversely, assume there exists a vertex ordering such that \eqref{vertex ordering} holds for each $m$. Let $V'\subset V$ be an arbitrary subset and let $m\defeq \max\Set{l\in[n]|v_l\in V'}$. Then it holds that 
\[ 
\deg_{\Gamma[V']}(v_m)\le \deg_{\Gamma[\{v_1,\dots,v_m\}]}(v_m)\le k
 \]
and the proof is complete.
\end{proof}

The reason for introducing this graph theoretical notion is that it is equivalent to the possibility of estimating edges effectively using \eqref{Ward}. A subset $\GE'$ of $G$-edges in $\Gamma\in\cG$ can be fully estimated using \eqref{Ward} if and only if there exists a vertex ordering such that we can subsequently remove vertices in such a way that in each step at most two edges from $\GE'$ are removed. Due to Lemma \ref{lemma equiv coloring deg} this is the case if and only if $\Gamma'=(V,\GE')$ is $2$-degenerate. For example, the graph $\Gamma_{\text{eff}}=(V,\GE_{\text{eff}})$ induced by the effectively marked $G$-edges $\GE_{\text{eff}}$ is a $2$-degenerate graph. Indeed, each effectively marked edge is adjacent to at least one vertex which has degree at most $2$ in $\Gamma_{\text{eff}}$: Vertices of degree 2 in $(V,\GE)$ are trivially at most of degree $2$ in $\Gamma_{\text{eff}}$, and vertices of degree $3$ in $(V,\GE)$ are also at most of degree $2$ in $\Gamma_{\text{eff}}$ as they can only be adjacent to $2$ effectively marked edges. Consequently any induced subgraph of $\Gamma_{\text{eff}}$ has to contain some vertex of degree at most $2$ and thereby $\Gamma_{\text{eff}}$ is $2$-degenerate.

\begin{definition}
For a graph $\Gamma=(V,\GE\cup\IE\cup\WeE)\in\cG$ we call a subset of $G$-edges $\WE\subset\GE$ \emph{Wardable} if the subgraph $(V,\WE)$ is $2$-degenerate. 
\end{definition}

\begin{lemma}\label{lemma wardable set}
For each $\Gamma\in\cG(p)$ there exists  a Wardable subset $\WE\subset\GE$ of size 
\begin{equation}\abs{\GE_W}= \sum_{e\in\IE} (4-\deg(e))_+.\label{wardable edge eq}\end{equation} 
\end{lemma}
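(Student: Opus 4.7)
The strategy is to take $\WE$ to be (a subset of) the effectively marked edges produced by the marking procedure of Fact \ref{Wardable edges}, and then verify via Lemma \ref{lemma equiv coloring deg} that this edge set induces a 2-degenerate subgraph. First I would apply Fact \ref{Wardable edges} to the graph $\Gamma \in \cG(p)$ to obtain a marking whose set $\widetilde\WE \subset \GE(\Gamma)$ of effectively marked edges satisfies $\abs[0]{\widetilde\WE} \ge \sum_{e\in\IE(\Gamma)}(4-\deg(e))_+$. Since 2-degeneracy is preserved under deletion of edges (any induced subgraph of $(V,\WE)$ becomes an induced subgraph of $(V,\widetilde\WE)$ with the same or fewer edges at each vertex), it will suffice to prove Wardability for $\widetilde\WE$ itself; the required cardinality is then achieved by removing extra edges.

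The heart of the argument is the following structural observation about $\widetilde\WE$: every edge $e \in \widetilde\WE$ is incident to at least one vertex of $V_{2,3}$, and every vertex $v \in V_{2,3}$ is incident to at most two edges of $\widetilde\WE$. The first assertion holds because marks are produced only from the perspective of vertices in $V_{2,3}$, and an effectively marked edge carries at least one mark (case (i) requires two marks, forcing both endpoints into $V_{2,3}$, while case (ii) demands one endpoint in $V_{2,3}$). The second assertion holds because whenever $v \in V_{2,3}$ is incident to an effectively marked edge $e$, the edge $e$ must carry a mark from $v$'s own perspective: in case (ii) the single mark on $e$ comes from the unique $V_{2,3}$-endpoint, namely $v$; in case (i) both endpoints, including $v$, mark $e$. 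Since the marking procedure places at most two marks from each perspective, $v$ is incident to at most two edges of $\widetilde\WE$.

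With these two properties in hand, I would construct a vertex ordering witnessing 2-degeneracy in the sense of Lemma \ref{lemma equiv coloring deg}: list the vertices of $V \setminus V_{2,3}$ first (in any order), followed by those of $V_{2,3}$ (in any order). If $v_m \in V \setminus V_{2,3}$, then all earlier vertices also lie outside $V_{2,3}$, and so the induced subgraph $(\widetilde\WE)[\{v_1,\dots,v_m\}]$ contains no edge at all (every edge of $\widetilde\WE$ needs a $V_{2,3}$-endpoint), giving $\deg_{(\widetilde\WE)[\{v_1,\dots,v_m\}]}(v_m) = 0$. If $v_m \in V_{2,3}$, the number of $\widetilde\WE$-neighbors of $v_m$ among $\{v_1,\dots,v_{m-1}\}$ is bounded by the total degree of $v_m$ in $(V,\widetilde\WE)$, which is at most $2$. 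Hence $(V,\widetilde\WE)$ is 2-degenerate, so $\widetilde\WE$ is Wardable; trimming to the exact cardinality completes the proof.

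There is no real obstacle here: all of the combinatorial content lies in Fact \ref{Wardable edges}, and the present lemma is essentially a bookkeeping upgrade that re-expresses the ``effective marking'' condition in the cleaner language of 2-degeneracy. The only point requiring attention is ensuring that the degree bound at $V_{2,3}$-vertices is truly a bound on the full degree in $(V,\widetilde\WE)$ rather than merely on the number of $v$-marked incident edges, which is why the case analysis of (i) and (ii) above is needed.
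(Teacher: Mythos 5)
Your proposal is correct and follows essentially the same route as the paper: both take the effectively marked edges from Fact \ref{Wardable edges}, verify $2$-degeneracy from the facts that every such edge touches $V_{2,3}$ and every $V_{2,3}$-vertex carries at most two of them, and then trim to the exact cardinality using that subgraphs of $2$-degenerate graphs are $2$-degenerate. The only cosmetic difference is that you certify $2$-degeneracy via the vertex-ordering characterization of Lemma \ref{lemma equiv coloring deg}, whereas the paper checks the minimum-degree condition on induced subgraphs directly.
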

\begin{proof}
This follows immediately from Fact \ref{Wardable edges}, the observation that $(V,\GE_{\text{eff}})$ is $2$-degenerate and the fact that sub-graphs of $2$-degenerate graphs remain $2$-degenerate.
\end{proof}
For each  $\Gamma\in\cG(p)$ we choose a Wardable subset $\WE(\Gamma)\subset\GE(\Gamma)$ satisfying \eqref{wardable edge eq}.  At least one such set is guaranteed to exist by the lemma.
 For graphs with several possible such sets, we arbitrarily choose one, and consider it permanently assigned to $\Gamma$. Later we will introduce certain operations on graphs $\Gamma\in\cG(p)$ which produce families of derived graphs $\Gamma'\in\cG\supset\cG(p)$. During those operations the chosen Wardable subset $\WE(\Gamma)$ will be modified in order to produce eligible sets of Wardable edges $\WE(\Gamma')$ and we will select one among those to define the Wardable subset of $\Gamma'$. We stress that the relation
 \eqref{wardable edge eq} on the Wardable set is required only for $\Gamma\in\cG(p)$ but not for the derived graphs $\Gamma'$. 

We now give a precise meaning to the vague bounds of \eqref{naive estimate}, \eqref{Ward}. We define the $N$-exponent, $\Nexp(\Gamma)$, of a graph $\Gamma=(V,\GE\cup\IE\cup\WeE)$ as the effective $N$-exponent in its value-definition, i.e.\ as \[ 
\Nexp(\Gamma)\defeq \abs{V} - \sum_{e\in\IE} \frac{\deg(e)}{2} - \sum_{e\in\WeE} l(e).
\]
We defer the proof of the following technical lemma to the appendix.
\begin{lemma}\label{lemma ward moment} For any \(c>0\) there exists some \(C>0\) such that the following holds. Let $\Gamma=(V,\GE\cup\IE\cup\WeE)\in\cG$ be a graph with Wardable edge set $\WE\subset\GE$ and at most $\abs{V}\le c p$ vertices and at most $\abs{\GE}\le c p^2$ $G$-edges. Then for each $0<\epsilon<1$ it holds that 
\begin{subequations}\label{eqs Val WEst bound}
\begin{equation}\label{eq ward moment}
\abs{\Val(\Gamma)}\le_\epsilon N^{\epsilon p} \big(1+\norm{G}_q\big)^{Cp^2} \WEst(\Gamma),
\end{equation}
where
\begin{equation}\label{Ward est}
\WEst(\Gamma)\defeq N^{\Nexp(\Gamma)} \big(\psi+\psi_q'\big)^{\abs{\WE}} \big(\psi+\psi'_q+\psi_q''\big)^{\abs{\GE_{g-m}}}, \qquad q\defeq C p^3/\epsilon.
\end{equation}
\end{subequations}
\end{lemma}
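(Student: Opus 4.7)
The plan is to combine a deterministic power-counting bound, repeated applications of the Ward identity \eqref{ward identity}, and H\"older's inequality in high moments, essentially following the strategy developed in \cite[Section~4.3]{MR3941370} but tracking all random factors in the $L^q$ sense throughout. The main task is to control $\sum_{\{a_v\}} \E \prod_{e \in \GE(\Gamma)} \abs{G_e}$, since the non-random factors in \eqref{value def} are immediately bounded: vertex weights are $O(1)$ by assumption; interaction entries satisfy $\abs{r^{(e)}_{ab}} \le C N^{-\deg(e)/2}$ by \eqref{R kappa rel} together with Assumption \ref{bdd moments} on the scaling of cumulants; and generic weighted entries satisfy $\abs{k^{(e)}_{ab}} \le C N^{-l(e)}$ by definition. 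Pulling these out, together with the $N^{\abs{V}}$ from the free index summations, already reconstitutes the $N^{\Nexp(\Gamma)}$ prefactor in $\WEst(\Gamma)$.

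To extract the Ward-induced gains I would invoke Lemma \ref{lemma equiv coloring deg} on the $2$-degenerate graph $(V,\WE)$, which yields an ordering $v_1, \ldots, v_n$ of the vertices such that each $v_m$ is incident to at most two Wardable edges going to $\{v_1, \ldots, v_{m-1}\}$ (loops are absent from $\WE$ since the marking procedure produces only off-diagonal edges). Summing in reverse order $a_{v_n}, a_{v_{n-1}}, \ldots$ and applying Cauchy--Schwarz at step $m$ to the at most two Wardable edges $e = (v_m, v_{l(e)})$ with $l(e)<m$ produces, via \eqref{ward identity}, a factor $(\Im G_{a_{v_{l(e)}} a_{v_{l(e)}}}/\eta)^{1/2}$ per Wardable edge, while the remaining $G$-factors adjacent to $v_m$ are bounded pointwise by their $L^\infty$-in-$a_{v_m}$ values at the cost of at most $N^{1/q}$ per such factor once passed through the $L^q$ norm.

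Taking expectations and applying H\"older's inequality to the resulting at most $O(p^2)$ random factors reduces the problem to individual $L^q$-bounds: $\norm{(\Im G_{aa}/\eta)^{1/2}}_q \lesssim \sqrt{N}(\psi + \psi_q')$ (using $\eta^{-1}\rho = N\psi^2$ together with $\eta^{-1}\norm{\Im G}_q = N(\psi_q')^2$), $\norm{(G-M)_{aa}}_q \le \psi_q''$, and $\norm{G_e}_q \le 1 + \norm{G}_q$ for a generic entry. Accumulating one $\sqrt{N}(\psi + \psi_q')$ per Wardable edge, one $(\psi + \psi_q' + \psi_q'')$ per looped $g{-}m$ edge, and one $(1 + \norm{G}_q)$ per remaining $G$-edge, and merging the $\sqrt{N}^{\abs{\WE}}$ factors with $N^{\abs{V}}$ and the interaction/weighted contributions to reconstitute $N^{\Nexp(\Gamma)}$, yields exactly $N^{\Nexp(\Gamma)} \WEst(\Gamma) (1+\norm{G}_q)^{Cp^2}$. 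The choice $q = Cp^3/\epsilon$ provides ample slack to absorb the $O(p^2)$ H\"older constants, the $N^{\abs{V}/q}$ loss from the $L^\infty$-in-$a_v$ bounds, and the $\deg(e)$-dependent constants from \eqref{R kappa rel} (recall $\deg(e) \le 6p$) into the $N^{\epsilon p}$ slack.

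The sole difficulty is organizational: one must ensure that at each vertex $v_m$ the non-Wardable factors (loops, non-Wardable off-diagonals to lower-indexed vertices, and $(\Im G/\eta)^{1/2}$ factors left over from previous Ward applications) are cleanly separated from the Wardable pairs, with no $G$-edge being double-counted in a way that would spoil the Ward gain. The $2$-degeneracy of $(V,\WE)$ is precisely the combinatorial property that guarantees the existence of a sequential summation order in which each Wardable edge is processed from exactly one of its two endpoints. Once this bookkeeping is in place, what remains is a straightforward, though notationally heavy, calculation.
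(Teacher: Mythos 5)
Your proposal is correct and follows essentially the same route as the paper's proof: pull out the deterministic vertex/interaction/weight factors to produce $N^{\Nexp(\Gamma)}$, use the $2$-degeneracy ordering from Lemma~\ref{lemma equiv coloring deg} to sum the vertex indices sequentially so that each step meets at most two Wardable edges, apply the Ward identity \eqref{ward identity} to those, and distribute all remaining random factors via H\"older in $L^q$ with $q$ large enough to absorb the $O(p^2)$ losses into $N^{\epsilon p}$. The bookkeeping concern you flag at the end is exactly what the paper's partition of $\GE$ at each step into edges away from $v_m$, non-Wardable edges at $v_m$, and Wardable edges at $v_m$ resolves.
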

\begin{remark}~
\begin{enumerate}[(i)]
\item We consider $\epsilon$ and $p$ as fixed within the proof of Theorem \ref{thm pfD bound moments} and therefore do not explicitly carry the dependence of them in quantities like $\WEst$.
\item We recall that the factors involving $\GE_{g-m}$ and $\WeE$ do not play any role for graphs $\Gamma\in\cG(p)$ as those sets are empty in this restricted class of graphs (see Remark \ref{remark:single}).
\item Ignoring the difference between $\psi$ and $\psi_q'$, $\psi_q''$ and the irrelevant order $\landauO{N^{p\epsilon }}$ factor in \eqref{eqs Val WEst bound}, the reader should think of \eqref{eqs Val WEst bound} as the heuristic inequality
\[
\abs{\Val(\Gamma)}\lesssim N^{\Nexp(\Gamma)} \psi^{\abs{\WE}+\abs{\GE_{g-m}}}.
 \]
Using Lemma \ref{lemma wardable set}, $N^{-1/2}\lesssim \psi\lesssim 1$, $\abs{V}=2\abs{\IE}\le 2p$ and $\deg(e)\ge 2$ (from Fact \ref{number of edges}) we thus find 
\begin{equation}\label{non-improved moment bound heuristic}
\begin{split}
  N^{-p} \abs{\Val(\Gamma)} &\lesssim N^{\abs{\IE}-p} \prod_{e\in\IE} N^{1-\deg(e)/2} \psi^{(4-\deg(e))_+}\\
  &\lesssim \psi^{2\abs{\IE}-2p} \prod_{e\in\IE} \psi^{\deg(e)-2+(4-\deg(e))_+}\le \psi^{2p}
\end{split}  
\end{equation}
for any $\Gamma=(V,\GE\cup\IE)\in\cG(p)$.
\end{enumerate}
\end{remark}

\subsection{Improved estimates on \texorpdfstring{$\Val(\Gamma)$}{Val(Gamma)} at the cusp: \texorpdfstring{$\sigma$}{sigma}-cells}
\begin{definition}\label{sigma cell def}
For $\Gamma\in\cG$ we call an interaction edge $(u,v)=e\in\IE(\Gamma)$ a $\sigma$-cell if the following four properties 
hold: (i) $\deg(e)=2$, (ii) there are no $G$-loops adjacent to $u$ or $v$, (iii) precisely one of $u,v$ carries a weight of $\vp\vf$ while the other carries a weight of $\bm 1$, and (iv), $e$ is not adjacent to any other non $\GE$-edges. Pictorially, possible $\sigma$-cells are given by
\[ 
\ssGraph{ u[lb=$u$,lpf] --[R] v[lb=$v$,r1]; x[o] -- u -- y[o]; z[o] -- v --w[o]; },\qquad
\ssGraph{ u[lb=$u$,t1] --[R] v[lb=$v$,right=5pt,tpf]; u --[bl] v; z[o] -- u; w[o,right=5pt] -- v; },\qquad
\ssGraph{ u[lb=$u$,tpf] --[R] v[lb=$v$,t1]; u --[bl] v; u --[br] v; }\qquad\text{but not by}\qquad
\ssGraph{ u[lb=$u$,t1] --[R] v[lb=$v$,rpf]; u --[gll] u; z[o] -- v -- w[o]; }.
 \]
For $\Gamma\in\cG$ we denote the number of $\sigma$-cells in $\Gamma$ by $\sigma(\Gamma)$.
\end{definition}
Next, we state a simple lemma, estimating $\WEst(\Gamma)$ of the graphs in the restricted class $\Gamma\in\cG(p)$.

\begin{lemma}\label{lemma sigma cell counting}
For each $\Gamma=(V,\IE\cup\GE)\in\cG(p)$ it holds that 
\[ 
N^{-p}\abs{\WEst(\Gamma)} \le_p \Big(\sqrt{\eta/\rho}\Big)^{p-\sigma(\Gamma)}(\psi+\psi'_q)^{2p} \prod_{\substack{e\in\IE\\\deg(e)\ge 4}} N^{2-\deg(e)/2}.
 \]
\end{lemma}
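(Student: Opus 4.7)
I would begin by unpacking $\WEst(\Gamma)$ for $\Gamma\in\cG(p)$. By Remark \ref{remark:single}(ii), single-index graphs derived from $\cG(p)$ carry no $G$-loops at all, so $\GE_{g-m}(\Gamma)=\emptyset$, and by Definition \ref{def single v res} no generic weighted edges are ever introduced, so $\WeE(\Gamma)=\emptyset$. Thus $\WEst(\Gamma)=N^{\Nexp(\Gamma)}(\psi+\psi_q')^{\abs{\WE(\Gamma)}}$. Setting $a\defeq\abs{\IE_2(\Gamma)}$ and $b\defeq\abs{\IE_3(\Gamma)}$, Lemma \ref{lemma wardable set} gives $\abs{\WE(\Gamma)}=2a+b$, and since $\abs{V}=2\abs{\IE}$ by Fact \ref{number of edges} we compute $\Nexp(\Gamma)=a+b/2+\sum_{d_e\ge 5}(2-d_e/2)$. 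Factoring out the product $\prod_{d_e\ge 4}N^{2-d_e/2}$ (which equals $\prod_{d_e\ge 5}N^{2-d_e/2}$) from both sides, the target inequality reduces to
\[
N^{-p+a+b/2}(\psi+\psi_q')^{2a+b} \le_p (\sqrt{\eta/\rho})^{p-\sigma(\Gamma)}(\psi+\psi_q')^{2p}.
\]

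The crux of the proof is the combinatorial identity $2a+b\le p+\sigma(\Gamma)$, which I would extract by bookkeeping $\vp\vf$ weights. For $\Gamma\in\cG(p)$ conditions (ii) and (iv) of Definition \ref{sigma cell def} are automatic: single-index loops are forbidden by Remark \ref{remark:single}(ii), and by Fact \ref{number of edges} the interaction edges form a perfect matching, so each vertex is incident to exactly one non-$\GE$ edge (namely its own interaction edge). Hence, within $\cG(p)$, a $\sigma$-cell is precisely a degree-$2$ interaction edge $(u,v)$ with weights $\bm 1$ and $\vp\vf$ on the two endpoints in some order. By Fact \ref{fact weights}, each interaction edge carries at least one $\vp\vf$ weight and the sum of all these weights is $p$; consequently, every non-$\sigma$-cell in $\IE_2$ must carry two $\vp\vf$'s. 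Summing edge by edge,
\[
p \;=\; \sum_{e\in\IE}w(e) \;\ge\; \sigma(\Gamma)+2\bigl(a-\sigma(\Gamma)\bigr)+b+\abs{\IE_{\ge 4}} \;\ge\; 2a+b-\sigma(\Gamma),
\]
as required. I expect this to be the main conceptual step of the proof; everything else is exponent arithmetic.

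Finally, I would combine these ingredients via $\psi+\psi_q'\ge\psi=(\sqrt N\sqrt{\eta/\rho})^{-1}$. Using $2a+b-2p\le 0$ (which follows from $a+b+\abs{\IE_{\ge 4}}=\abs{\IE}\le p$, a consequence of Facts \ref{number of edges} and \ref{fact weights}), one may estimate $(\psi+\psi_q')^{2a+b-2p}\le(\sqrt N\sqrt{\eta/\rho})^{2p-2a-b}$ and substitute this into the reduced inequality. The powers of $N$ cancel exactly, and the whole bound collapses to $(\sqrt{\eta/\rho})^{p+\sigma(\Gamma)-2a-b}\le_p 1$. The exponent is non-negative by the previous paragraph and bounded by $2p$ (since $\sigma(\Gamma)\le a\le p$). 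Boundedness of $\sqrt{\eta/\rho}$ on $\DD_\cusp$ is a direct inspection of the three regimes in \eqref{rho tilde}: in each case $\eta/\rho$ is dominated by a positive power of $\eta+\abs{\omega}+\Delta\le c$. Hence $(\sqrt{\eta/\rho})^{p+\sigma(\Gamma)-2a-b}$ is bounded by a constant depending only on $p$ and the diameter of $\DD_\cusp$, closing the argument.
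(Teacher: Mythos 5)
Your proof is correct and takes essentially the same route as the paper's: the count $\abs{\WE}=2\abs{\IE_2}+\abs{\IE_3}$, the weight bookkeeping showing each non-$\sigma$-cell degree-two interaction edge carries at least two powers of $\vp\vf$ (equivalently $\abs{\IE_2}-\sigma(\Gamma)\le p-\abs{\IE}$), and the conversions $N^{-1/2}=\psi\sqrt{\eta/\rho}$ together with $\eta/\rho\lesssim 1$. The only difference is presentational — you track the exponents globally, whereas the paper distributes the factors edge by edge.
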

\begin{proof} We introduce the short-hand notations $\IE_k\defeq\set{e\in\IE|\deg(e)=k}$ and $\IE_{\ge k}\defeq\bigcup_{l\ge k}\IE_l$. Starting from \eqref{Ward est} and Lemma \ref{lemma wardable set} we find 
\[ 
\begin{split}
  &N^{-p} \abs{\WEst(\Gamma)} \\
  &\le N^{-(p-\abs{\IE})}\Bigg(\prod_{e\in\IE_2} (\psi+\psi'_q)^2\Bigg) \Bigg(\prod_{e\in\IE_3} \frac{\psi+\psi'_q}{\sqrt N}\Bigg) \Bigg(\prod_{e\in\IE_{\ge 4}} \frac{1}{N}\Bigg) \Bigg(\prod_{e\in\IE_{\ge 4}} N^{2-\deg(e)/2}\Bigg).
\end{split}
 \]
Using $N^{-1/2}= \psi\sqrt{\eta/\rho} \le C\psi$ it then follows that 
\begin{equation}\label{WEst sigma cell est 1}
\begin{split}
 & N^{-p}\abs{\WEst(\Gamma)} \\
 &\le_p \bigg[\frac{\eta}{\rho} \psi^2\bigg]^{p-\abs{\IE}}\Bigg(\prod_{e\in\IE_2} (\psi+\psi_q')^2\Bigg) \Bigg(\prod_{e\in\IE_{\ge 3}} \sqrt{\frac{\eta}{\rho}}(\psi+\psi_q')^2\Bigg)\Bigg(\prod_{e\in\IE_{\ge 4}} N^{2-\deg(e)/2}\Bigg).
\end{split}
\end{equation}
It remains to relate \eqref{WEst sigma cell est 1} to the number $\sigma(\Gamma)$ of $\sigma$-cells in $\Gamma$. Since each interaction edge of degree two which is not a $\sigma$-cell has an additional weight $\vp\vf$ attached to it, it follows from Fact \ref{fact weights} that $\abs{\IE_2}-\sigma(\Gamma)\le p - \abs{\IE}$. Therefore, from \eqref{WEst sigma cell est 1} and $\eta/\rho\le C$ we have that 
\[ 
\begin{split}
& N^{-p} \abs{\WEst(\Gamma)} \\
&\le_p \Big[\sqrt{\eta/\rho}(\psi+\psi'_q)^2\Big]^{p-\abs{\IE}+\abs{\IE_{\ge 3}}+\abs{\IE_2}-\sigma(\Gamma)} \Big[(\psi+\psi'_q)^2\Big]^{\sigma(\Gamma)} \Bigg(\prod_{e\in\IE_{\ge 4}} N^{2-\deg(e)/2}\Bigg),
\end{split}
 \]
proving the claim.
\end{proof}

Using Lemma \ref{lemma ward moment} and $\sqrt{\eta/\rho}\le \sigma_q$, the estimate in Lemma \ref{lemma sigma cell counting} has improved the previous bound \eqref{non-improved moment bound heuristic} by a factor $\sigma_q^{p-\sigma(\Gamma)}$ 
(ignoring the irrelevant factors). In order to prove \eqref{eq pf D moment bound}, we thus need to remove the $-\sigma(\Gamma)$ 
from this exponent, in other words, we need to show that from each $\sigma$-cell we can multiplicatively 
gain a factor of $\sigma_q$. This is the content of the following proposition. 
\begin{proposition}\label{sigma cell prop}
Let \(c>0\) be any constant and $\Gamma\in \cG$ be a single index graph with at most $cp$ vertices and $cp^2$ edges with a $\sigma$-cell $(u,v)=e\in\IE(\Gamma)$. Then there exists a finite collection of graphs $\{\Gamma_\sigma\}\sqcup \cG_\Gamma$ with at most one additional vertex and at most $6p$ additional $G$-edges such that
\begin{equation}
\begin{split}
\label{Ward est sigma cell removal}
\Val(\Gamma)&= \sigma\Val(\Gamma_\sigma)+\sum_{\Gamma'\in\cG_\Gamma}\Val(\Gamma') + \landauO{N^{-p}},\\
\WEst(\Gamma_\sigma) &= \WEst(\Gamma),\qquad \WEst(\Gamma')\le_p \sigma_q\WEst(\Gamma), \quad \Gamma'\in\cG_\Gamma 
\end{split}
\end{equation}
and all graphs $\Gamma_\sigma$ and $\Gamma'\in\cG_\Gamma$ have exactly one $\sigma$-cell less than $\Gamma$.
\end{proposition}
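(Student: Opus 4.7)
The plan is to expand the four $G$-edges adjacent to the $\sigma$-cell via $G=M+(G-M)$ and to use a saturated-self-energy identity for $\vf$ to extract the factor $\sigma$ from the leading contribution. Let the $\sigma$-cell be $(u,v)=e\in\IE(\Gamma)$ with $\deg(e)=2$, $r^{(e)}\in\{S,T\}$, and weights $\vp\vf,\bm 1$ distributed on $\{u,v\}$; WLOG $u$ carries $\vp\vf$. The local factor of $\Val(\Gamma)$ contributed by the cell is the sum over $a_u,a_v$ of $r_{a_u a_v}^{(e)}(\vp\vf)_{a_u}$, the four adjacent $G$-entries, and the $\E$ over the remaining $G$-edges.

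First, I would expand each of the four adjacent $G$-edges as $G=M+(G-M)$, producing $2^4=16$ contributions. Each $M=\diag(\vm)$ factor $m_a\delta_{ab}$ enforces an index identification that contracts the corresponding edge into a loop, effectively merging its external endpoint with $u$ or $v$; every surviving $(G-M)$-factor keeps the original edge topology but inherits a diamond mark, earning a gain of $\psi+\psi_q'$ off-diagonally or $\psi+\psi_q'+\psi_q''$ looped in the $\WEst$ bound of Lemma \ref{lemma ward moment}. In the distinguished all-$M$ contribution, the induced identifications collapse the adjacency at $u,v$ into a sum of the form \smash{$\sum_{a_u,a_v} r_{a_u a_v}^{(e)}(\vp\vf)_{a_u}\vm_{a_u}^{n_u^+}\vm_{a_v}^{n_v^+}(\vm^*)_{a_u}^{n_u^-}(\vm^*)_{a_v}^{n_v^-}$} with $n_u^++n_u^-+n_v^++n_v^-=4$ depending on the adjacency pattern and on the $G/G^*$ types. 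Combining the decomposition $\vm=\vp|\vm|+\ii\rho\vf|\vm|/|\vm|$ with the approximate eigenrelation $S(\vf|\vm|)=\vf/|\vm|+\landauO{\eta/\rho}$ (equivalently $F\vf\approx\vf$, cf.~\eqref{Vl Vr expansion}) and the MDE identities of the form $m_u(S\vm)_u=-1-(z-a_u)m_u$, this double sum evaluates to $\sigma=\braket{\vp\vf^3}$ times a bounded combinatorial factor, modulo an error of order $\rho+\eta/\rho$. Define $\Gamma_\sigma$ by performing the corresponding graph contractions and extracting the global $\sigma$-factor; inspection of \eqref{Ward est} then shows $\WEst(\Gamma_\sigma)=\WEst(\Gamma)$, since only local indices near $u,v$ are affected and no $G$-edge type in the untouched part changes.

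Second, the remaining $15$ resolvent-expansion terms, together with the subleading $\landauO{\rho+\eta/\rho}$ corrections from the leading-term algebra, are collected into $\cG_\Gamma$. Each $\Gamma'\in\cG_\Gamma$ either inherits at least one diamond edge that can be added to a valid $2$-degenerate Wardable subset (gaining $\psi+\psi_q'\le\sigma_q$) or is explicitly weighted by $\rho+\eta/\rho\le\sigma_q$; in either case $\WEst(\Gamma')\le_p\sigma_q\WEst(\Gamma)$. After the expansion, the defining properties of a $\sigma$-cell at $(u,v)$ in Definition \ref{sigma cell def} are broken in every $\Gamma'$ (by a diamond edge, a loop, or an explicit weight), so the $\sigma$-cell count strictly decreases by one in both $\Gamma_\sigma$ and every $\Gamma'$. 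The allowed extra vertex and up to $6p$ additional $G$-edges in $\cG_\Gamma$ are absorbed by the higher-order cumulant and MDE expansions invoked to clean up the leading-term extraction.

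The main obstacle is the case analysis for the adjacency pattern of the $\sigma$-cell: Definition \ref{sigma cell def} lists three patterns (zero, one, or two $G$-edges parallel to the interaction edge), and each further splits according to the $G/G^*$ type and orientation of the adjacent edges. In the parallel-edge cases the $M$-substitutions create extra identifications between $u$ and $v$, producing different monomials in $\vm,\vp,\vf$; each must be separately verified to yield $\sigma$ up to $\landauO{\sigma_q}$. Equally delicate is the bookkeeping of $\WE(\Gamma')$, since local contractions can change the degrees of neighbouring vertices and the $2$-degeneracy of the original $\WE(\Gamma)$ must be restored after local repair. Both difficulties are strictly local to the cell and resolved by a finite enumeration, after which the claimed identities follow from Lemma \ref{lemma ward moment} and direct counting.
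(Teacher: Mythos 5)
Your approach diverges fundamentally from the paper's and has a gap I do not see how to repair. The paper never expands the four adjacent $G$-edges. Instead it inserts the resolution of identity $1=P_B+BB^{-1}Q_B$ (and its primed analogue) on the two sides of the interaction edge, where $B=1-\diag(\vm^{\#_1}\vm^{\#_2})R$ is the stability operator determined by the types and orientations of the adjacent edges. The factor $\sigma$ is extracted from the rank-one piece $P_B^t\diag(\vp\vf)RP_{B'}$: its scalar coefficient involves $\braket{\vb^{(B)}\vp\vf(R\vb^{(B')})}$, and since $\vb^{(B)}\approx\abs{\vm}\vf/\norm{\vf}$ by perturbation theory for the saturated self-energy operator, this equals $c\,\sigma\norm{F}\braket{\abs{\vm}^{-2}\vf^2}+\landauO{\rho+\eta/\rho}$ (Lemma \ref{lmm:expansion of coefficient}). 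Crucially, $\Gamma_\sigma$ retains all four original $G$-edges; only the interaction edge is replaced by a generic $N^{-1}$-weighted edge, which is exactly why $\WEst(\Gamma_\sigma)=\WEst(\Gamma)$ holds. The complementary $Q_B$-pieces are handled by Lemma \ref{B insertion Lemma}: inserting $B$ at a degree-two vertex and expanding self-consistently via the MDE and a cumulant expansion produces graphs whose Ward estimates gain a factor $\rho+\psi+\eta/\rho+\psi_q'+\psi_q''\le_p\sigma_q$. Your argument invokes neither the stability operator nor this self-consistent expansion, and these are the two essential ingredients.

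Two concrete failures in your route. First, in the all-$M$ term the Kronecker deltas from $M_{xa_u}=m_{a_u}\delta_{xa_u}$ identify the external vertices with $u,v$, but those vertices carry further $G$-edges from the rest of $\Gamma$; the double sum over $a_u,a_v$ therefore does not decouple from the remaining graph and cannot be evaluated as a scalar times $\Val(\Gamma_\sigma)$. Even if it did decouple, a sum of the form $\sum_{a,b}r_{ab}(\vp\vf)_a m_a^{\#}m_b^{\#}\cdots$ yields $\vp$- and $\abs{\vm}$-monomials at leading order in $\rho$; the three factors of $\vf$ needed for $\sigma=\braket{\vp\vf^3}$ would only appear at order $\rho^3$, whereas in the paper they come from the left and right eigenvectors of $B$, not from $M$-approximations of the adjacent resolvents. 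Moreover your contracted $\Gamma_\sigma$ would have four fewer $G$-edges and fewer vertices, so $\WEst(\Gamma_\sigma)=\WEst(\Gamma)$ would fail. Second, the claimed gain from the fifteen $(G-M)$-terms is illusory: since $M$ is diagonal, $(G-M)_{ab}=G_{ab}$ for $a\ne b$, so an off-diagonal diamond edge is estimated exactly as the original edge and gives no improvement over $\WEst(\Gamma)$, while the $M$-contractions can even remove edges from the Wardable set and worsen the estimate.
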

Using Lemma \ref{lemma ward moment} and Lemma \ref{lemma sigma cell counting} 
together with the repeated application of Proposition \ref{sigma cell prop} we are ready to present the proof of Theorem \ref{thm pfD bound moments}.

\begin{proof}[Proof of Theorem \ref{thm pfD bound moments}] We remark that the isotropic local law \eqref{isotropic bound on D} and the averaged local law \eqref{averaged bound on D} are verbatim as in \cite[Theorem 4.1]{MR3941370}. We therefore only prove the improved bound \eqref{eq pf D moment bound}--\eqref{TGG bound} in the remainder of the section. We recall \eqref{pfD val exp} and partition the set of graphs $\cG(p)=\cG_0(p)\cup\cG_{\ge1}(p)$ into those graphs $\cG_0(p)$ with no $\sigma$-cells and those graphs $\cG_{\ge 1}(p)$ with at least one $\sigma$-cell. For the latter group we then use Proposition \ref{sigma cell prop} for some $\sigma$-cell to find 
\begin{equation}\label{eq pfd expansion first step}
\begin{split}
\E\abs{\braket{\diag(\vp\vf)D}}^p &= N^{-p}\sum_{\Gamma\in\cG_0(p)}\Val(\Gamma) +\landauO{N^{-2p}}\\
&\quad +N^{-p}\sum_{\Gamma\in\cG_{\ge 1}(p)}\left(\sigma\Val(\Gamma_\sigma)+ \sum_{\Gamma'\in\cG_\Gamma}\Val(\Gamma') \right),
\end{split}
\end{equation}
where the number of $\sigma$-cells is reduced by $1$ for $\Gamma_\sigma$ and each $\Gamma'\in\cG_\Gamma$ as compared to $\Gamma$. 
We note that the Ward-estimate $\WEst(\Gamma)$ from Lemma \ref{lemma sigma cell counting} together with Lemma \ref{lemma ward moment} is already sufficient for the graphs in $\cG_0(p)$. For those graphs $\cG_1(p)$ with exactly one $\sigma$-cell the expansion in \eqref{eq pfd expansion first step} is sufficient because $\sigma\le\sigma_q$ and, according to \eqref{Ward est sigma cell removal}, each $\Gamma'\in\cG_{\Gamma}$ has a Ward estimate which is already improved by $\sigma_q$. For the other graphs we iterate the expansion from Proposition \ref{sigma cell prop} until no sigma cells are left. 

It only remains to count the number of $G$-edges and vertices in the successively derived graphs to make sure that Lemma \ref{lemma ward moment} and Proposition \ref{sigma cell prop} are applicable and that the last two factors in \eqref{eq pf D moment bound} come out as claimed. Since every of the $\sigma(\Gamma)\le p$ applications of Proposition \ref{sigma cell prop} creates at most $6p$ additional $G$-edges and one additional vertex, it follows that $\abs{\GE(\Gamma)}\le C'p^2$, $\abs{V}\le C'p$ also in any successively derived graph. Finally, it follows from the last factor in Lemma \ref{lemma sigma cell counting} that for each $e\in\IE$ with $\deg(e)\ge 5$ we gain additional factors of $N^{-1/2}$. Since $\abs{\IE}\le p$, we easily conclude that if there are more than $4p$  $G$-edges, then each of them comes with an additional gain of $N^{-1/2}$. 
Now \eqref{eq pf D moment bound} follows immediately after taking the $p$-th root. 

We turn to the proof of \eqref{TGG bound}. We first write out
\[ 
\braket{\diag(\vp\vf) [T\odot G^t]G} = \frac{1}{N} \sum_{a,b} (pf)_a t_{ab} G_{ba} G_{ba}
 \]
and therefore can, for even $p$, write the $p$-th moment as the value
\[ 
\E\abs{\braket{\diag(\vp\vf) [T\odot G^t]G}}^p = N^{-p}\Val(\Gamma_0)
 \]
of the graph $\Gamma_0=(V,\GE\cup\IE)\in\cG$ which is given by $p$ disjoint $2$-cycles as 
\[ 
\Gamma_0=\ssGraph{ u[tpf] --[T] v[t1]; v --[bl,g] u; v --[br,g] u; }\ssGraph{ u[tpf] --[T] v[t1]; v --[bl,g] u; v --[br,g] u; } \quad\cdots\quad \ssGraph{ u[t1] --[T] v[tpf]; v --[bl,s] u; v --[br,s] u; } \ssGraph{ u[t1] --[T] v[tpf]; v --[bl,s] u; v --[br,s] u; },
 \]
where there are $p/2$ cycles of $G$-edges and $p/2$ cycles of $G^\ast$ edges. It is clear that $(V,\GE)$ is $2$-degenerate and since $\abs{\GE}=2p$ it follows that 
\[ 
\WEst(\Gamma_0) \le N^p(\psi+\psi_q')^{2p}.
 \]
On the other hand each of the $p$ interaction edges in $\Gamma_0$ is a $\sigma$-cell and we can use Proposition \ref{sigma cell prop} $p$ times to obtain \eqref{TGG bound} just as in the proof of \eqref{eq pf D moment bound}.
\end{proof}

\subsection{Proof of Proposition \ref{sigma cell prop}}
It follows from the MDE that 
\[
G=M-M\SS[M]G- MWG=M-G\SS[M]M-GWM,
\]
which we use to locally expand a term of the form $G_{xa}G^\ast_{ay}$ for fixed $a,x,y$ further. To make the computation local we allow for an arbitrary random function $f=f(W)$, which in practice encodes the remaining $G$-edges in the graph. A simple cumulant expansion shows
\begin{equation} \begin{split} \label{BGGast exp}
\sum_{b}B_{ab}\E G_{x b} G^{\ast}_{b y} f &=\E M_{xa} G^\ast_{ay} f - \sum_{k=2}^{6p} \sum_{b}\sum_{\bm\beta\in I^k} \kappa(ba,\underline\beta)m_a \E\partial_{\bm\beta}\Big[ G_{xb} G^{\ast}_{a y} f \Big]  + \landauO{N^{-p}} \\
& \; + \sum_b s_{ba} m_a \E\Big[G_{xa}(g-m)_{b} G^\ast_{ay}+G_{xb} \overline{(g-m)}_a G^\ast_{by}- G_{xb} G^\ast_{ay}\partial_{ab} \Big]f\\
&\; +\sum_b t_{ba}m_a \E\Big[G_{xb}(G-M)_{ab} G^\ast_{ay}+ G_{xb} G^\ast_{ab}G^\ast_{ay} - G_{xb} G^\ast_{ay}\partial_{ba}\Big]f
\end{split}\end{equation}
where $\partial_\alpha\defeq \partial_{w_\alpha}$ and introduced the stability operator $B\defeq 1-\diag(\abs{\vm}^2) S$. The stability operator $B$ appears from rearranging the equation obtained from the cumulant expansion to express
the quantity $\E G_{x b} G^{\ast}_{b y} f$.  In our graphical representation, the stability operator is 
a special edge that we can also express as  
\begin{equation}\label{B graphical rep}
\val{\ssGraph{ a[o,lb=$x$] ->[B=$B$] b[o,lb=$y$] }} = \val{\ssGraph{ a[o,lb=$x$] --[eq] b[o,lb=$y$]; }} - \val{\ssGraph{ a[inl=$\abs{\vm}^2$,o,lb=$x$] --[S] b[o,lb=$y$]; }}. 
\end{equation}
An equality like \eqref{B graphical rep} is meant locally in the sense that the pictures only represent subgraphs of the whole graph with the empty, labelled vertices symbolizing those vertices which connect the subgraph to its complement. Thus \eqref{B graphical rep} holds true for every fixed graph extending $x,y$ consistently in all three graphs. The doubly drawn edge in \eqref{B graphical rep} means that the external vertices $x,y$ are identified with each other and the associated indices are set equal via a $\delta_{a_x,a_y}$ function. Thus \eqref{B graphical rep} should be understood as the equality
\begin{equation}\label{subgraph picture}
\val{\ssGraph{ x ->[B=$B$] y; x -- a[wh]; x -- b[wh] ; y -- c[wh]; inner[draw,starburst,starburst point height=2pt] //{x,y};  }} 
= \val{\ssGraph{ x; x -- a[wh]; x -- b[wh] ; x -- c[wh]; inner[draw,starburst,starburst point height=2pt] //{x};  }} 
-\val{\ssGraph{ x[inl=] --[S] y; x -- a[wh]; x -- b[wh] ; y -- c[wh]; inner[draw,starburst,starburst point height=2pt] //{x,y};  }} 
\end{equation}
where the outside edges incident at the merged vertices $x,y$ are reconnected to one common vertex in the middle graph. For example, in the picture \eqref{subgraph picture} the vertex $x$ is connected to the rest of the graph by two edges, and the vertex $y$ by one. 

In order to represent \eqref{BGGast exp} in terms of graphs we have to define a notion of \emph{differential edge}. First, we define a \emph{targeted differential edge} represented by an interaction edge with a red $\partial$-sign written on top and a red-coloured \emph{target $G$-edge} to denote the collection of graphs 
\begin{equation}\label{pd def}
\ssGraph{u[label=$u$,o] --[pdr] v[label=$v$,o]; x[lb=$x$,o] --[g,r] y[lb=$y$,o];} \defeq \Set{\ssGraph{x[label=below:$x$,o] --[g] u[label=$u$,o] --[R] v[label=$v$,o] --[g] y[lb=$y$,o];},  \ssGraph{u[label=$u$,o] --[R] v[label=$v$,o]; u --[g] y[lb=$y$,o]; x[lb=$x$,o] --[g] v;}}, \qquad \ssGraph{u[label=$u$,o] --[pdr] v[label=$v$,o]; x[lb=$x$,o] --[g,glr,r] x; } \defeq \Set{\ssGraph{u[label=$u$,o] --[R] v[label=$v$,o]; x[lb=$x$,o] --[g] v; u --[g] x;},\ssGraph{u[label=$u$,o] --[R] v[label=$v$,o]; x[lb=$x$,o] --[g] u; v --[g] x; }}.
\end{equation}
The second picture in \eqref{pd def} shows that the target $G$-edge may be a loop; the definition remains the same.
This definition extends naturally to $G^\ast$ edges and is exactly the same for $G-M$ edges (note that this is compatible with the usual notion of derivative as $M$ does not depend on $W$). Graphs with the differential signs should be viewed only 
as an intermediate simplifying picture but they really mean the collection of graphs indicated in the right hand side of \eqref{pd def}. They represent the identities 
\[ 
\begin{split}
  \sum_{\alpha} \kappa(uv,\alpha)\partial_{uv} G_{xy} &= - s_{uv} G_{xv} G_{uy} - t_{uv} G_{xu} G_{vy}, \\
  \sum_{\alpha} \kappa(uv,\alpha) \partial_{uv} G_{xx} &= - s_{uv} G_{xv} G_{ux} - t_{uv} G_{xu} G_{vx}
\end{split}
 \]
In other words we introduced these graphs only to temporary encode
expressions with derivatives (e.g.~second term in the rhs.~of \eqref{BGGast exp}) \emph{before} the differentiation is 
actually performed. We can then further define the action of an \emph{untargeted differential edge} according the Leibniz rule as the collection of graphs with the differential edge being targeted on all $G$-edges of the graph one by one (in particular not only those in the displayed subgraph), i.e.\ for example 
\begin{equation}\label{der edge Leibniz}
\ssGraph{u[label=$u$,o] --[pd] v[label=$v$,o]; x[label=below:$x$,o] --[g] y[lb=$y$,o] --[g] z[lb=$z$,o];}  \defeq \ssGraph{u[label=$u$,o] --[pdr] v[label=$v$,o]; x[label=below:$x$,o] --[g,r] y[lb=$y$,o] --[g] z[lb=$z$,o];}  \bigsqcup \ssGraph{u[label=$u$,o] --[pdr] v[label=$v$,o]; x[label=below:$x$,o] --[g] y[lb=$y$,o] --[g,r] z[lb=$z$,o];} \bigsqcup \dots.
\end{equation}
Here the union is a union in the sense of multisets, i.e.\ allows for repetitions in the resulting set (note that also this is compatible with the usual action of derivative operations). The $\sqcup\dots$ symbol on the rhs.~of \eqref{der edge Leibniz} indicates that the targeted edge cycles through \emph{all} $G$-edges in the graph, not only the ones in the subgraph.  
For example, if there are $k$ $G$-edges in the graph, then the picture \eqref{der edge Leibniz} represents a collection of $2k$ graphs arising from performing the differentiation
\begin{equation*}
\begin{split}
&\sum_\alpha \kappa(uv,\alpha) \partial_{uv} \big[G_{xy}G_{yz} f\big]\\ 
&= \sum_\alpha \kappa(uv,\alpha)  \big[\partial_{uv} G_{xy}\big]G_{yz}f+\sum_\alpha \kappa(uv,\alpha)  G_{xy}\big[\partial_{uv}G_{yz} \big]f+\sum_\alpha \kappa(uv,\alpha)  G_{xy}G_{yz}\big[\partial_{uv}f \big] \\
&= - s_{uv} \big[G_{xv}G_{uy}G_{yz}f + G_{xy} G_{yv}G_{uz}f + G_{xy} G_{yz} (\partial_{vu}f)\big]\\
&\quad - t_{uv} \big[G_{xu}G_{vy}G_{yz}f + G_{xy} G_{yu}G_{vz}f+G_{xy} G_{yz} (\partial_{uv}f)\big],
\end{split}
\end{equation*}
where $f=f(W)$ represents the value of the $G$-edges outside the displayed subgraph. 

Finally we introduce the notation that a differential edge which is targeted on all $G$-vertices except for those in the displayed subgraph. This differential edge targeted on the outside will be denoted by $\widehat \partial$. 

Regarding the value of the graph, we define the value of a collection of graphs as the sum of their values. We note that this definition is for the collection of graphs encoded by the differential edges also consistent with the usual differentiation.

Written in a graphical form \eqref{BGGast exp} reads 
\begin{equation} \begin{split}
\label{BGGast graph exp}
&\val{\ssGraph{x[o,lb=$x$] --[g] b[r1] --[s] y[o,lr=$y$]; a[o,lr=$a$] ->[B=$B$] b;}} = \val{\ssGraph{ x[o,lb=$x$] --[eq] a[tm,o,lb=$a$] --[s] y[o,lb=$y$]; }} - \sum_{k=2}^{6p}\val{\ssGraph{x[o,lb=$x$] --[g] b[t1] --[pdn=k] a[tm,o,lb=$a$] --[s] y[o,lb=$y$]; }} + \landauO{N^{-p}}   \\
&\quad+ \val{\ssGraph{ x[o,lt=$x$] --[g] a[tm,o,lr=$a$] --[s] y[lr=$y$,o]; a --[S] b[l1] --[g,glb,gm] b; }} + \val{\ssGraph{ x[o,lt=$x$] --[g] b --[s] y[lr=$y$]; b[r1] --[S] a[lm,o,lr=$a$] --[s,glb,gm] a; }} + \val{\ssGraph{ x[lb=$x$,o] --[g] b[t1]; a[tm,o,lb=$a$] --[g,bl,gm] b; b --[bl,T] a; a --[s] y[lb=$y$,o];  }}\\
&\quad + \val{\ssGraph{ x[o,lb=$x$] --[g] b[t1]; a[tm,o,lb=$a$] --[s,bl] b; b --[bl,T] a; a --[s] y[lb=$y$,o];  }}- \val{\ssGraph{x[o,lb=$x$] --[g] b[t1] --[pdh] a[tm,lb=$a$,o] --[s] y[lb=$y$,o]; }},
\end{split} 
\end{equation}
where the ultimate graph encodes the ultimate terms in the last two lines of \eqref{BGGast exp}.

We worked out the example for  the resolution of the quantity $\E G_{x a} G^{\ast}_{a y} f$, but very similar formulas hold
if the order of the fixed indices $(x,y)$ and the summation index $a$ changes in the resolvents, as well as 
for other combinations of the complex conjugates. In graphical language this corresponds to changing the
arrows of the two  $G$-edges adjacent to $a$, as well as their types.   In other words, 
equalities like the one in \eqref{BGGast graph exp} hold true for other any degree two vertex but the stability operator changes slightly: In total there are $16$ possibilities, four for whether the two edges are incoming or outgoing at $a$ and another four for whether the edges are of type $G$ or of type $G^\ast$. The general form for the stability operator is 
\begin{equation}\label{B def}
B\defeq 1-\diag(\vm^{\#_1}\vm^{\#_2})R,
\end{equation}
where $R=S$ if there is one incoming and one outgoing edge, $R=T$ if there are two outgoing edges and $R=T^t$ otherwise, and where $\#_1,\#_2$ represent complex conjugations if the corresponding edges are of $G^\ast$ type. Thus for, for example, the stability operator in $a$ for $G_{xa}^\ast G_{ya}^\ast$ is $1-\diag(\overline{\vm}^2)T^t$.  Note that the stability operator at vertex with degree two
is exclusively determined by the type and orientation of the two $G$-edges adjacent to $a$. In the sequel the letter $B$ will refer to the appropriate stability operator, we will not distinguish their 9 possibilities ($R=S,T,T^t$ and $\vm^{\#_1}\vm^{\#_2}=\abs{\vm}^2,\vm^2,\overline{\vm}^2$) in the notation. 
\begin{lemma}\label{B insertion Lemma}
Let \(c>0\) be any constant, $\Gamma\in \mathcal{G}$ be a single index graph with at most $cp$ vertices and $cp^2$ edges and let $a\in V(\Gamma)$ be a vertex of degree $\deg(a)=2$ not adjacent to a \(G\)-loop. The insertion of the stability operator 
$B$ \eqref{B def} at $a$ as in \eqref{BGGast graph exp} produces a finite set of graphs with at most one additional vertex and $6p$ additional edges, denoted by  $\cG_\Gamma$, 
such that 
\[ 
\Val(\Gamma)=\sum_{\Gamma'\in\cG_\Gamma} \val{\Gamma'} +\landauO{N^{-p}}, 
 \]
and all of them have a Ward estimate 
\[ \WEst(\Gamma')\le_p \big(\rho+\psi+\eta/\rho+\psi_q'+\psi_q''\big)\WEst(\Gamma)\le_p \sigma_q \WEst(\Gamma),\qquad \Gamma'\in\cG_\Gamma. \]
Moreover all $\sigma$-cells  in $\Gamma$, except possibly a $\sigma$-cell adjacent to $a$, remain $\sigma$-cells also in each $\Gamma'$.
\end{lemma}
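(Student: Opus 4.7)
The plan is to obtain the decomposition directly from the cumulant expansion identity~\eqref{BGGast exp}. In $\Gamma$ the vertex $a$ of degree two carries two $G$-edges, say $G^{\#_1}_{xa}$ and $G^{\#_2}_{ay}$, together with an interaction edge of type $S$ or $T$ connecting $a$ to a partner vertex; tracking the natural $m_a$-weights arising at~$a$, the product of these three local factors is exactly the LHS of~\eqref{BGGast exp} after the algebraic rewriting $s = \tfrac{1}{|m|^2}(\mathbf{1} - B)$ (with the obvious adaptation for the other combinations of conjugations and orientations listed after~\eqref{B def}). Isolating the $\mathbf{1}$-contribution, which produces the double edge of~\eqref{B graphical rep} that collapses $a$ with its interaction partner, and substituting the RHS of~\eqref{BGGast exp} for the $B$-contribution, I obtain $\Val(\Gamma)$ as a finite sum over graphs $\Gamma'\in\cG_\Gamma$ plus the truncation error $\landauO{N^{-p}}$ coming from cutting the cumulant expansion at order $6p$.

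The structural claim will then follow by direct inspection of each term on the right-hand side of~\eqref{BGGast graph exp}. The leading $M$-term and both degree-two correction terms (producing $g-m$ loops or the off-diagonal $T$-insertion) do not introduce new vertices; the derivative $\partial^k$ terms with $k\in[3,6p]$ introduce at most one new vertex (the summation index $b$ in~\eqref{BGGast exp}, since $\underline{\bm\beta}\in\{\alpha,\alpha^t\}^m$ forces every further index to equal $a$ or~$b$) together with at most $6p$ additional $G$-edges from the Leibniz rule~\eqref{der edge Leibniz}. Because the expansion is entirely localized near~$a$, every $\sigma$-cell of~$\Gamma$ not adjacent to~$a$ survives unchanged in each $\Gamma'$.

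The Ward-estimate improvement will be verified term-by-term against the decomposition $\sigma_q = |\sigma|+\rho+\psi+\sqrt{\eta/\rho}+\psi'_q+\psi''_q$:
\begin{itemize}
\item The leading $M_{xa}G^{\#_2}_{ay}f$ term produces the factor $m_a\delta_{xa}$, which fuses two internal vertices, drops one $G$-edge, and hence reduces $\Nexp$ by~$1$ and $|\WE|$ by at most~$1$. Balancing the extra $N^{-1}\sim \psi^2 \eta/\rho$ against the lost Wardable factor yields a net gain of $\sqrt{\eta/\rho}$.
\item The degree-two $g-m$ correction terms convert a Wardable $G$-edge into a looped $g-m$-edge, which by~\eqref{Ward est} contributes precisely the factor $(\psi+\psi'_q+\psi''_q)$ relative to a Wardable edge.
\item The remaining degree-two correction terms and all higher-cumulant derivative terms carry an additional $S$ or $T$ interaction edge (value $N^{-k/2}$) against a controlled number of new $G$-edges that can be added to the Wardable set via $2$-degeneracy; naive power counting as in~\eqref{non-improved moment bound heuristic} yields a surplus of at least $\psi$ or $\rho$.
\end{itemize}
In each case the factor is bounded by $\sigma_q$, and the explicit conservation of $\WEst$ on the $M$-term in the lemma statement corresponds to pairing this term with its $\sigma$ prefactor in Proposition~\ref{sigma cell prop}.

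The main obstacle I expect is the bookkeeping of the Wardable set $\WE$ across the insertion: the fused, added, or relabelled $G$-edges must be assigned to a new Wardable subset in a way that preserves the $2$-degeneracy condition of Lemma~\ref{lemma equiv coloring deg}. This is delicate but structural: since the surgery is localized at~$a$, a valid vertex ordering on $\Gamma$ witnessing the $2$-degeneracy of the restriction $(V,\WE(\Gamma))$ can always be extended to each $\Gamma'$ by inserting the new vertex (if any) immediately before~$a$ and by re-routing the modified edges at~$a$ to respect the two-edge budget at each vertex — exactly as in the derivation of Fact~\ref{Wardable edges} and Lemma~\ref{lemma wardable set}.
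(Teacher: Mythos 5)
Your overall architecture — take the identity \eqref{BGGast exp} as the source of the decomposition and then verify the Ward estimate term by term — is the paper's architecture, and your treatment of the leading $M$-term and of the $(g-m)$-loop terms is essentially right (though the $M$-term loses up to \emph{two} Wardable edges, namely both edges in $N(a)$, giving the factor $1/(N\psi^2)=\eta/\rho$ rather than your $\sqrt{\eta/\rho}$; either is $\le\sigma_q$). However, there are two genuine gaps.

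First, your derivation of the decomposition is wrong. The identity \eqref{BGGast exp} is \emph{not} obtained by rewriting the interaction edge as $s=|m|^{-2}(\mathbf 1-B)$; it is a self-consistent expansion coming from $G=M-M\SS[M]G-MWG$ followed by a cumulant expansion of the $WG$ term, and the $B$ it features connects $a$ to a \emph{fresh} summation index $b$ carrying the two $G$-edges (this $B$ is supplied externally, via $1=P_B+BB^{-1}Q_B$ in the proof of Proposition~\ref{sigma cell prop}, not extracted from the $R$-edge of the $\sigma$-cell). If you carry out your proposed rewriting literally, the $\delta_{ab}$ part of $s_{ab}=|m_a|^{-2}(\delta_{ab}-B_{ab})$ fuses $a$ with its interaction partner into a single vertex of $G$-degree four. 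That graph keeps the same $N$-exponent (one vertex is lost but so is the $N^{-1}$ interaction weight) while the merged vertex can retain at most two Wardable edges instead of four, so its $\WEst$ is \emph{larger} than $\WEst(\Gamma)$ by a factor $(\psi+\psi_q')^{-2}$. Your decomposition therefore contains a term that violates the bound you are trying to prove; the smallness of every term on the right-hand side of \eqref{BGGast graph exp} is a consequence of the MDE structure (the leading term carries $M_{xa}=m_a\delta_{xa}$, identifying $a$ with the \emph{external} vertex $x$), not of linear algebra on $S$.

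Second, "naive power counting as in \eqref{non-improved moment bound heuristic}" does not close the argument for the higher-cumulant terms $\partial^k$, $2\le k\le 6p$, nor for the $\widehat\partial$ term. Each derivative can split a Wardable edge or destroy a $(g-m)$-loop, decreasing $|\WE|+|\GE_{g-m}|$ by one, while the degree increase of the interaction edge only supplies $N^{-1/2}\le\psi\sqrt{\eta/\rho}$ per derivative; these exactly offset, so naive counting yields no net gain (and can even yield a loss). The actual argument requires checking that the newly created edges can be re-inserted into a $2$-degenerate Wardable set so that $|\WE|+|\GE_{g-m}|$ does not drop, and in the one case where this fails — the first derivative producing a $G$-loop $G_{bb}$ at the new vertex, which is neither Wardable nor a $(g-m)$-loop — one must expand a \emph{second} derivative explicitly and run an induction on $k$. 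This case analysis is the technical core of the lemma and is absent from your proposal. (A smaller point: the $\widehat\partial$ term acts on edges far from $a$, so "the expansion is entirely localized near $a$" is not literally true; the preservation of distant $\sigma$-cells has to be checked against conditions (i)--(iv) of Definition~\ref{sigma cell def}, though it does hold.)
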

\begin{proof}
As the proofs for all of the 9 cases of $B$-operators are almost identical we prove the lemma for the case \eqref{BGGast graph exp} for definiteness. Now we compare the value of the graph 
\[ \Gamma\defeq\ssGraph{x[lb=$x$,o] --[g] a[lb=$a$,o] --[s] y[lb=$y$,o]; } \] 
with the graph in the lhs.~of \eqref{BGGast graph exp}, i.e.
when the stability operator $B$ is attached to the vertex $a$. We remind the reader that the displayed graphs only show a certain subgraph of the whole graph.  The goal is to show that $\WEst\left(\Gamma' \right) \le \big(\rho+\psi+\eta/\rho+\psi_q'+\psi_q''\big) \WEst (\Gamma)$ for each graph $\Gamma'$ occurring on the rhs.~of \eqref{BGGast graph exp}. The forthcoming reasoning is based on comparing the quantities $\abs{V}$, $\abs{\WE}$, $\abs{\GE_{g-m}}$ and $\sum_{e\in\IE} \deg(e)/2$ defining the Ward estimate $\WEst$ from \eqref{Ward est} of the graph $\Gamma$ and the various graphs $\Gamma'$ occurring on the rhs.~of \eqref{BGGast graph exp}.
\begin{enumerate}[(a)]
\item We begin with the first graph and claim that 
\[
\WEst\left(\ssGraph{ x[o,lb=$x$] --[eq] a[tm,o,lb=$a$] --[s] y[o,lb=$y$]; }\right) \le \frac{1}{N\psi^2} \WEst(\Gamma) =\frac{\eta}{\rho}\WEst(\Gamma).
 \]
Due to the double edge which identifies the $x$ and $a$ vertices it follows that $\abs{V(\Gamma')}=\abs{V(\Gamma)}-1$. The degrees of all interaction edges remain unchanged when going from $\Gamma$ to $\Gamma'$. As the $2$-degenerate set of Wardable edges $\WE(\Gamma')$ we choose $\WE(\Gamma)\setminus N(a)$, i.e.\ the $2$-degenerate edge set in the original graph except for the edge-neighbourhood $N(a)$ of $a$, i.e.\ those edges adjacent to $a$. As a subgraph of $(V,\WE(\Gamma))$ it follows that $(V\setminus\{a\},\WE(\Gamma'))$ is again $2$-degenerate. Thus $\abs{\WE(\Gamma)}\ge\abs{\WE(\Gamma')}\ge \abs{\WE(\Gamma)}-2$ and the claimed bound follows since $\abs{\GE_{g-m}(\Gamma')}=\abs{\GE_{g-m}(\Gamma)}$ and 
\[ 
\frac{\WEst(\Gamma')}{\WEst(\Gamma)} = \frac{1 }{N (\psi+\psi_q')^{\abs{\WE(\Gamma)}-\abs{\WE(\Gamma')}} }\le \frac{1}{N\psi^2}.
\]
\item Next, we consider the third and fourth graph and claim that 
\[
\WEst\left(\ssGraph{ x[o,lt=$x$] --[g] a[tm,o,lr=$a$] --[s] y[lr=$y$,o]; a --[S] b[l1] --[g,glb,gm] b; }\right)+\WEst\left(\ssGraph{ x[o,lt=$x$] --[g] b[lt=$b$] --[s] y[lr=$y$,o]; b[r1] --[S] a[lm,o,lr=$a$] --[s,glb,gm] a; }\right) =2(\psi+\psi_q'+\psi''_q) \WEst(\Gamma) .
 \]
Here there is one more vertex (corresponding to an additional summation index), $\abs{V(\Gamma')}=\abs{V(\Gamma)}+1$, whose effect in \eqref{Ward est} is compensated by one additional interaction edge $e$ of degree $2$. Hence the $N$-exponent $n(\Gamma)$ remains unchanged. In the first graph we can simply choose $\WE(\Gamma')=\WE(\Gamma)$, whereas in the second graph we choose $\WE(\Gamma')=\WE(\Gamma)\setminus\{(x,a),(a,y)\}\cup\{(x,b),(b,y)\}$ which is $2$-degenerate as a subgraph of a $2$-degenerate graph together with an additional vertex of degree $2$. Thus in both cases we can choose $\WE(\Gamma')$ (if necessary, by removing excess edges from $\WE(\Gamma')$ again) in such a way that $\abs{\WE(\Gamma')}=\abs{\WE(\Gamma)}$ but the number of $(g-m)$-loops is increased by $1$, i.e.\ $\abs{\GE_{g-m}(\Gamma')}=\abs{\GE_{g-m}(\Gamma)}+1$. 
\item Similarly, we claim for the fifth and sixth graph that
\[ 
\WEst\left(\ssGraph{ x[lb=$x$,o] --[g] b[t1,lb=$b$]; a[tm,o,lb=$a$] --[g,bl,gm] b; b --[bl,T] a; a --[s] y[lb=$y$,o];  }\right)+\WEst\left(\ssGraph{ x[o,lb=$x$] --[g] b[t1,lb=$b$]; a[tm,o,lb=$a$] --[s,bl] b; b --[bl,T] a; a --[s] y[lb=$y$,o];  }\right) = 2(\psi+\psi_q') \WEst(\Gamma).
 \]
There is one more vertex whose effect in \eqref{Ward est} is compensated by one more interaction edge of degree $2$, whence the number $N$-exponent remains unchanged. The number of Wardable edges can be increased by one by setting $\WE(\Gamma')$ to be a suitable subset of $\WE(\Gamma)\setminus\{(x,a),(a,y)\}\cup\{(x,b),(a,b),(a,y)\}$ which is $2$-degenerate as the subset of a $2$-degenerate graph together with two vertices of degree $2$. The number of $(g-m)$-loops remains unchanged.
\item For the last graph in \eqref{BGGast graph exp}, i.e.\ where the derivative targets an outside edge, we claim that 
\[ 
\WEst\left(\ssGraph{X[o,lb=$x$] --[g] b[t1] --[pdh] a[tm,lb=$a$,o] --[s] y[lb=$y$,o]; }\right) \le_p (\psi+\psi'_q+\psi''_q) \WEst(\Gamma).
 \]
Here the argument on the lhs., $\Gamma'$, stands for a whole collection of graphs but we essentially only have to consider two types: The derivative edge either hits a $G$-edge or a $(g-m)$-loop, i.e.\ 
\[ \ssGraph{u[lt=$u$,o] --[r,g] v[lt=$v$,o]; x[o,lb=$x$] --[g] b --[pdr] a[lb=$a$,o] --[s] y[lb=$y$,o];}\qquad \text{or}\qquad \ssGraph{u[lt=$u$,o] --[r,glr,g,gm] u; x[o,lb=$x$] --[g] b --[pdr] a[lb=$a$,o] --[s] y[lb=$y$,o];}\]
which encodes the graphs
\[ \ssGraph{b --[R] a[lt=$a$,o]; x[o,ll=$x$] --[g] b --[g] v[ll=$v$,o]; u[lr=$u$,o] --[g] a --[s] y[lr=$y$,o]; }\qquad \text{and}\qquad\ssGraph{b --[R] a[lb=$a$,o]; x[o,ll=$x$] --[g] b --[g] u[lt=$u$,o] --[g] a --[s] y[lr=$y$,o]; }\]
as well as the corresponding transpositions (as in \eqref{pd def}). In both cases the $N$-size of $\WEst$ remains constant since the additional vertex is balanced by the additional degree two interaction edge. In both cases all four displayed edges can be included in $\WE(\Gamma')$. So $\abs{\WE}$ can be increased by $1$ in the first case and by $2$ in the second case while the number of $(g-m)$-loops remains constant in the first case is decreased by $1$ in the second case. The claim follows directly in the first case and from
\[ 
\frac{\WEst(\Gamma')}{\WEst(\Gamma)}=\frac{(\psi+\psi'_q)^2}{\psi+\psi'_q+\psi''_q} \le \psi+\psi'_q +\psi''_q
 \]
in the second case.

\item It remains to consider the second graph in the rhs.~of \eqref{BGGast graph exp} with the higher derivative edge. We claim that for each $k\ge 2$ it holds that
\[ 
\WEst\left(\ssGraph{x[o,lb=$x$] --[g] b[t1] --[pdn=k] a[tm,o,lb=$a$] --[s] y[o,lb=$y$]; }\right) \le_p (\psi+\psi_q') \WEst(\Gamma).
 \]
We prove the claim by induction on $k$ starting from $k=2$. For any $k\ge 2$ we write $\partial^k = \partial^{k-1}\partial$. 
For the action of the last derivative we distinguish three cases: 
(i) action on an edge adjacent to the derivative edge, (ii) action on a non-adjacent $G$-edge and 
(iii) an action on a non-adjacent $(g-m)$-loop. Graphically this means 
\begin{equation}\label{k-1 red graphs}
\ssGraph{x[o,lb=$x$] --[r,g] b --[pdkr=k-1] a[lb=$a$,o,right=10pt] --[s] y[lb=$y$,o,right=10pt];},\quad 
\ssGraph{u[lt=$u$,o] --[r,g] v[lt=$v$,o]; x[o,lb=$x$] --[g] b --[pdkr=k-1] a[lb=$a$,o,right=10pt] --[s] y[lb=$y$,o,right=10pt];}\quad \text{or}\quad 
\ssGraph{u[lt=$u$,o] --[r,glr,g,gm] u; x[o,lb=$x$] --[g] b --[pdkr=k-1] a[lb=$a$,o,right=10pt] --[s] y[lb=$y$,o,right=10pt];}.\end{equation}
We ignored the case where the derivative acts on $(a,y)$ since it is estimated identically to the first graph. 
We also neglected the possibility that the derivative acts on a $g$-loop, as this is estimated exactly as the last 
graph and the result is even better since no $(g-m)$-loop is destroyed. After performing the last derivative in \eqref{k-1 red graphs} 
we obtain the following graphs $\Gamma'$ 
\begin{equation}\label{k-1 graphs} 
\ssGraph{b[lt=$b$] --[pdk=k-1] a[lr=$a$,o,right=10pt] --[s] y[o,ll=$y$,right=10pt]; b --[gll,g] b; x[o,ll=$x$,right=10pt] --[g] a},\quad 
\ssGraph{x[o,lb=$x$] --[g] b[lb=$b$] --[pdk=k-1] a[lb=$a$,o,right=10pt] --[s] y[o,lb=$y$,right=10pt]; a --[g,bl] b; },\quad 
\ssGraph{b --[pdk=k-1] a[lr=$a$,o,right=10pt]; x[o,ll=$x$] --[g] b[lb=$b$] --[g] v[ll=$v$,o]; u[lr=$u$,o,right=10pt] --[g] a --[s] y[lr=$y$,o,right=10pt]; }\quad \text{and}\quad
\ssGraph{b[lt=$b$] --[pdk=k-1] a[lt=$a$,o,right=10pt]; x[o,lt=$x$] --[g] b --[g] u[lt=$u$,o,above right=5pt and 5pt] --[g] a --[s] y[lt=$y$,o,right=10pt]; }\end{equation}
where we neglected the transposition of the third graph with $u,v$ exchanged because this is equivalent with regard to the counting argument. First, we handle the second, third and fourth graphs in \eqref{k-1 graphs}. In all these cases the set $\WE(\Gamma')$ 
is defined simply by adding all edges drawn in \eqref{k-1 graphs} to the set $\WE(\Gamma)\setminus \{(x,a), (a,y)\}$. 
The new set remains $2$-degenerate since all these new edges are adjacent to vertices of degree $2$. Compared to the original graph, $\Gamma$, we thus have increased $\abs{\WE}+\abs{\GE_{g-m}}$ by at least $1$. 

We now continue with the first graph in \eqref{k-1 graphs}, where we explicitly expand the action of another derivative (notice that this is the only graph where $k\ge 2$ is essentially used). We distinguish four cases, depending on whether the derivative acts on (i) the $b$-loop, (ii) an adjacent edge, (iii) a non-adjacent edge or (iv) a non-adjacent $(g-m)$-loop, i.e.\ graphically we have 
\begin{equation}\label{k-2 graphs red}
\ssGraph{b[lt=$b$] --[pdkr=k-2] a[lr=$a$,o,right=20pt of b] --[s] y[o,ll=$y$,below=10pt of a]; b --[gll,g,r] b; x[o,ll=$x$] --[g] a},\quad 
\ssGraph{b[lt=$b$] --[pdkr=k-2] a[lr=$a$,o,right=20pt of b] --[s] y[o,ll=$y$,below=10pt of a]; b --[gll,g] b; x[o,ll=$x$,right=11pt] --[g,r] a},\quad 
\ssGraph{u[lt=$u$,o] --[r,g] v[lt=$v$,o]; b[lt=$b$] --[pdkr=k-2] a[lr=$a$,o,right=20pt of b] --[s] y[o,ll=$y$,below=10pt of a]; b --[gll,g] b; x[o,ll=$x$,right=11pt] --[g] a}\quad\text{and}\quad
\ssGraph{u[lr=$u$,o,above=10pt] --[r,glt,g,gm] u; b[lb=$b$] --[pdkr=k-2] a[lr=$a$,o,right=7pt of b] --[s] y[o,lr=$y$,below=10pt of a]; b --[gll,g] b; x[o,lr=$x$,above=7pt of a] --[g] a}.
\end{equation}
After performing the indicated derivative, the encoded graphs $\Gamma'$ are 
\begin{equation}\label{k-2 graphs}
\ssGraph{b[lt=$b$] --[pdk=k-2] a[lr=$a$,o,right=12pt] --[s] y[o,ll=$y$,right=12pt]; b --[gll,g] b; x[o,ll=$x$,right=12pt] --[g] a; b --[br,g] a;},\quad 
\ssGraph{b[lb=$b$] --[pdk=k-2] a[lb=$a$,o,right=11pt] --[s] y[o,lt=$y$,right=11pt]; b --[glt,g] b; a --[glt,g] a; x[o,lt=$x$] --[g] b;},\quad 
\ssGraph{ b[lt=$b$] --[pdk=k-2] a[lr=$a$,o,right=11pt] ; b --[glb,g] b; a --[g] v[right=8pt,o,ll=$v$]; u[o,lt=$u$] --[g] b; a --[s] y[left=4pt,o,ll=$y$]; x[o,lt=$x$,above right=15pt and 8pt] --[g] a}\quad\text{and}\quad
\ssGraph{b[lt=$b$] --[pdk=k-2] a[lt=$a$,o,right=12pt] --[s] y[o,lr=$y$,right=12pt]; b --[glb,g] b; x[o,lr=$x$,right=12pt] --[g] a; b --[g] u[lb=$u$,o,above right=4pt and 10pt] -- a;},
\end{equation}
where we again neglected the version of the third graph with $u,v$ exchanged. We note that both the first and the second graph in \eqref{k-2 graphs red} produce the first graph in \eqref{k-2 graphs}. Now we define how to get the set $\WE(\Gamma')$ from $\WE(\Gamma)\setminus\{ (x,a), (a,y)\}$ for each case. In the first graph of \eqref{k-2 graphs} we add all three non-loop edges to $\WE(\Gamma')$, in the second graph we add both non-loop edges, in the third and fourth graph we add the non-looped edge adjacent to $b$ as well as any two non-looped edges adjacent to $a$. Thus, compared to the original graph the number $\abs{\WE}+\abs{\GE_{g-m}}$ is at least preserved. On the other hand the $N$-power counting is improved by $N^{-1/2}$. Indeed, there is one additional vertex $b$,  yielding a factor $N$, which is compensated by the scaling factor $N^{-3/2}$ from the interaction edge of degree $3$. 

To conclude the inductive step we note that additional derivatives (i.e.\ the action of $\partial^{k-2}$) can only decrease the Ward-value of a graph. Indeed, any single derivative can at most decrease the number $\abs{\WE(\Gamma)}+\abs{\GE_{g-m}}$ by $1$ by either differentiating a $(g-m)$-loop or differentiating an edge from $\WE$. Thus the number $\abs{\WE}+\abs{\GE_{g-m}}$ is decreased by at most $k-2$ while the number $\abs{\GE_{g-m}}$ is not increased. In particular, by choosing a suitable subset of Wardable edges, we can define $\WE(\Gamma')$ in such a way that $\abs{\WE}+\abs{\GE_{g-m}}$ is decreased by exactly $k-2$. But at the same time each derivative provides a gain of $cN^{-1/2}\le \psi\le \psi+\psi_q'$ since the degree of the interaction edge is increased by one. Thus we have 
\[ 
\frac{\WEst(\Gamma')}{\WEst(\Gamma)} \le_p (\psi+\psi_q')^{k-1 +\abs{\WE(\Gamma')}+\abs{\GE_{g-m}(\Gamma')}-\abs{\WE(\Gamma)}-\abs{\GE_{g-m}(\Gamma)} } = \psi+\psi_q',
 \]
just as claimed. \qedhere
\end{enumerate}
\end{proof}

Lemma \ref{B insertion Lemma} shows that the insertion of the $B$-operator reduces the Ward-estimate by at least $\rho$.
However,  this insertion does not come for free since the inverse 
\[ 
B^{-1} = (1-\diag(\vm^{\#_1}\vm^{\#_2})R)^{-1}
 \]
is generally not a uniformly bounded operator. For example, it follows from  \eqref{Dyson equation} that
\[
\Im \vm =  \eta \abs{\vm}^2 + \abs{\vm}^2 S\Im \vm
\]
and therefore $(1-\diag(\abs{\vm}^2) S)^{-1}$ is singular for small $\eta$ with $\Im \vm$ being the unstable direction. It turns out, however, that $B$ is invertible on the subspace complementary to some bad direction $\vb^{(B)}$. 
At this point we distinguish two cases. If $B$ has a uniformly bounded inverse, i.e.\ if $\norm{B^{-1}}_{\infty \to \infty}\le C$ for some constant $C>0$, then we set $P_B\defeq 0$. Otherwise we define $P_B$ as the spectral projection operator onto the eigenvector $\vb^{(B)}$ of $B$ corresponding to the eigenvalue $\beta$ with smallest modulus:
\begin{equation}\label{PB QB defs}
P_B\defeq \frac{\braket{\vl^{(B)},\cdot}}{\braket{\vl^{(B)},\vb^{(B)}}} \vb^{(B)},\qquad Q_B\defeq 1-P_B,
\end{equation}
where $\braket{\vv,\vw}\defeq N^{-1} \sum_a \overline{v_a} w_a$ denotes the normalized inner product and $\vl^{(B)}$ is the corresponding left eigenvector,  $(B^*-\beta)\vl^{(B)} = 0$. 
\begin{lemma}\label{lemma B inverse}
For all 9 possible $B$-operators in \eqref{B def} it holds that
\begin{equation}
\label{upper bound on B inverse}
\norm{B^{-1} Q_B}_{\infty\to\infty} \le C < \infty
\end{equation}
for some constant $C>0$, depending only on model parameters.
\end{lemma}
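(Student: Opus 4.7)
The plan is to reduce each of the nine variants of $B$ to an operator whose spectral bound is provided by the saturated self-energy analysis of \cite{1804.07752}, and then to upgrade from an $\ell^2$ operator-norm bound to the required $\ell^\infty$ bound via a smoothing resolvent step that exploits flatness.

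First I would observe that Assumption~\ref{bdd m} together with the Dyson equation \eqref{Dyson equation} yield a two-sided bound $c \le |m_i(z)| \le C$ uniformly in $z \in \DD_\cusp$, so that similarity transforms by bounded diagonal matrices are freely available in both directions in $\norm{\cdot}_{\infty\to\infty}$. In the self-consistent case $R=S$ and $\vm^{\#_1}\vm^{\#_2}=\vm^2$, the operator $B$ is precisely the restriction of the matrix stability operator $\mathcal{B}$ of \eqref{stability operator} to the subspace of diagonal matrices (using $\mathcal{S}[\diag(\vr)]=\diag(S\vr)$ from \eqref{cal S def}), identified with vectors via $\diag(\vr)\mapsto\vr$. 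The $\ell^2$-operator-norm bound $\norm{B^{-1}Q_B}_{2\to 2}\lesssim 1$ therefore follows directly from Proposition~\ref{prop mde}(i). The remaining eight cases reduce to this model case by (i) invoking the entrywise bound $|t_{ij}|\le s_{ij}$, which is an immediate consequence of the positivity of the covariance matrix of $(\Re w_{ij},\Im w_{ij})$, to handle the variants with $R=T$ or $T^t$; and (ii) using bounded diagonal rescalings to convert the symbol $\vm^2$ or $\overline{\vm}^2$ into one comparable with $|\vm|^2 S$. In each case the spectral gap of the saturated self-energy operator persists and produces a single unstable direction spanned by $\vb^{(B)}$, captured by the projection $P_B$ from \eqref{PB QB defs}.

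Next I would transfer the bound from $\ell^2$ to $\ell^\infty$. Setting $\vy\defeq B^{-1}Q_B\vx$ and writing the resolvent identity $\vy=Q_B\vx+\diag(\vm^{\#_1}\vm^{\#_2})R\vy$, the elementary smoothing estimate
\[
\norm{R\vy}_\infty \le \max_i\Bigl(\sum_j \abs{r_{ij}}^2\Bigr)^{1/2}\Bigl(\sum_j\abs{y_j}^2\Bigr)^{1/2}\lesssim \norm{\vy}_2,
\]
which uses $|r_{ij}|\le s_{ij}\le C/N$ from flatness, together with $\norm{\diag(\vm^{\#_1}\vm^{\#_2})}_\infty\lesssim 1$ and the $\ell^2$ bound $\norm{\vy}_2\le \norm{B^{-1}Q_B}_{2\to 2}\norm{Q_B}_{2\to 2}\norm{\vx}_2\lesssim \norm{\vx}_\infty$ established in the previous step, yields
\[
\norm{\vy}_\infty \le \norm{Q_B\vx}_\infty + C\norm{\vy}_2\lesssim \norm{Q_B}_{\infty\to\infty}\norm{\vx}_\infty.
\]
Finally, $\norm{Q_B}_{\infty\to\infty}\lesssim 1$ reduces via \eqref{PB QB defs} to the uniform bounds $\norm{\vb^{(B)}}_\infty,\norm{\vl^{(B)}}_\infty\lesssim 1$ together with $|\braket{\vl^{(B)},\vb^{(B)}}|\gtrsim 1$. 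For the self-consistent case these are exactly \eqref{bounded eigendirections}, since $V_l,V_r$ are diagonal (as recalled in the proof of \eqref{cusp FA}); for the remaining variants they follow by comparison with the symmetric entrywise positive operator $\diag(|\vm|)S\diag(|\vm|)$, whose Perron--Frobenius eigenvector is uniformly bounded above and below as a consequence of flatness.

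The main obstacle I would expect is the uniform treatment of the six non-self-adjoint variants ($R\in\{T,T^t\}$ or $\vm^{\#_1}\vm^{\#_2}\in\{\vm^2,\overline{\vm}^2\}$): here $B$ is not even complex-symmetric in general, the left and right eigenvectors at the smallest eigenvalue differ, and one must ensure both that the spectral gap from the symmetric model $1-\diag(|\vm|)S\diag(|\vm|)$ is inherited uniformly in $z\in\DD_\cusp$ and that the normalization $|\braket{\vl^{(B)},\vb^{(B)}}|\gtrsim 1$ does not degenerate along the similarity transformations. The cleanest route seems to be to appeal directly to the operator-valued saturated self-energy formalism of \cite[Sec.~3--5]{1804.07752}, which was designed to accommodate exactly such non-self-adjoint analogues, rather than to construct bespoke arguments for each of the eight non-canonical cases.
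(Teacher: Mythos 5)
Your overall architecture matches the paper's: reduce the $\norm{\cdot}_{\infty\to\infty}$ bound to the Euclidean operator-norm bound via the smoothing identity $\vy=Q_B\vx+\diag(\vm^{\#_1}\vm^{\#_2})R\vy$ with $\abs{r_{ij}}\lesssim N^{-1}$ (the paper cites \cite[Proof of (5.28) and (5.40a)]{1506.05095} for exactly this step), and then invoke \cite[Lemma~5.1]{1804.07752} for the case $R=S$, where $B$ is the restriction to diagonal matrices of one of the operators $\mathrm{Id}-M^{\#_1}\mathcal{S}[\cdot]M^{\#_2}$. Your closing remark that one should appeal directly to the saturated self-energy formalism of \cite{1804.07752} rather than build bespoke similarity arguments is also the right instinct, and is what the paper does for all three conjugation patterns at once.

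However, your treatment of the cases $R=T,T^t$ contains a genuine gap. You propose to use the entrywise bound $\abs{t_{ij}}\le s_{ij}$ and then to extract an unstable direction $\vb^{(B)}$ by Perron--Frobenius comparison with $\diag(\abs{\vm})S\diag(\abs{\vm})$. This fails on two counts. First, the non-strict inequality $\abs{t_{ij}}\le s_{ij}$ (which holds trivially by Cauchy--Schwarz) only gives $\norm[0]{F^{(T)}}\le\norm[0]{F^{(S)}}\le 1$, which does not suffice to invert $1-\diag(\vm^{\#_1}\vm^{\#_2})T$; the essential input is the fullness Assumption~\ref{fullness}, whose quantitative form \eqref{fullness cplx} yields the \emph{strict} bound $\abs{t_{ij}}\le(1-c)s_{ij}$ and hence $\norm[0]{F^{(T)}}\le 1-c$. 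Second, and structurally, in the complex Hermitian class with $R=T$ or $T^t$ the operator $B$ is therefore uniformly invertible with \emph{no} unstable direction at all --- by the convention in \eqref{PB QB defs} one sets $P_B=0$ and $Q_B=1$ --- so there is no eigenvector $\vb^{(B)}$ to compare with a Perron--Frobenius vector; indeed $T$ is not entrywise nonnegative in general, so no such comparison is available. (In the real symmetric class $t_{ij}=s_{ij}$ off the diagonal and the $T$-cases collapse into the $R=S$ case, which is why the paper only needs the dichotomy ``$R=T,T^t$ in the complex Hermitian class: bounded inverse via fullness'' versus ``$R=S$ in either class: \cite[Lemma~5.1]{1804.07752}''.) Your proof as written would therefore stall precisely on the six cases you flag as the main obstacle.
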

\begin{proof} First we remark that it is sufficient to prove the bound \eqref{upper bound on B inverse} on $B^{-1}Q_B$ as an operator on $\C^N$ with the Euclidean norm, i.e.\ $\norm{B^{-1} Q_B} \le C$. For this insight we refer to \cite[Proof of (5.28) and (5.40a)]{1506.05095}. Recall that $R =S$, $R=T$ or $R=T^t$, depending on which stability operator we consider (cf.~\eqref{B def}).
We begin by considering the complex hermitian symmetry class and the cases $R=T$ and $R=T^t$. We will now see that in this case $B$ has a bounded inverse and thus $Q_B =1$. Indeed, we have  
\[
 \norm{B^{-1}} \lesssim \frac{1}{1-\norm[0]{F^{(R)}}},
\]
where $F^{(R)}\vw\defeq \abs{\vm} R (\abs{\vm}\vw)$. The fullness Assumption \ref{fullness} in \eqref{fullness cplx} implies that $\abs{t_{ij}} \le (1-c)s_{ij}$ for some constant $c>0$ and thus $\norm[0]{F^{(R)}} \le (1-c)\norm{F^{(S)}}\le 1-c$ for $R=T,T^t$. Here we used $\norm{F^{(S)}}\le 1$, a general property of the saturated self-energy matrix $F^{(S)}$ that was first established in \cite[Lemma 4.3]{MR3684307}  (see also \cite[Eq.~(4.24)]{MR3916109} and \cite[Eq.~(4.5)]{1804.07752}). Now we turn to the case $R=S$ for both the  real symmetric and complex hermitian symmetry classes. In this case $B$ is the restriction to diagonal matrices of an operator $\mathcal{T}: \C^{N \times N} \to \C^{N \times N}$, where $\mathcal{T} \in \{\mathrm{Id}-M^*\mathcal{S}[\cdot]M,\mathrm{Id}-M\mathcal{S}[\cdot]M,\mathrm{Id}-M^*\mathcal{S}[\cdot]M^*\}$. All of these operators were covered in \cite[Lemma 5.1]{1804.07752} and thus \eqref{upper bound on B inverse} is a consequence of that lemma.  Recall that the flatness \eqref{flatness of cal S} of $\mathcal{S}$ ensured the applicability of the lemma.
\end{proof}
We will insert the identity $1= P_B+BB^{-1}Q_B$,  and we will perform an explicit calculation for the $P_B$ component,
while using the boundedness of $B^{-1}Q_B$ in the other component. We are thus left with studying the  effect of inserting $B$-operators
 and suitable projections into a $\sigma$-cell. To include all possible cases with regard to edge-direction and edge-type (i.e.\ $G$ or $G^\ast$), in the pictures below we neither indicate directions of the $G$-edges nor their type but implicitly allow all possible assignments. We recall that both the $R$-interaction edge as well as the \emph{relevant} $B$-operators (cf.~\eqref{B def}) are completely determined by the type of the four $G$-edges as well as their directions. To record the type of the inserted $B$, $P_B$, $Q_B$ operators we call those inserted on the rhs.~of the $R$-edge $B'$, $P_B'$ and $Q_B'$ in the following graphical representations.
 Pictorially we start first decompose the $\sigma$-cell subgraph of some graph $\Gamma$ as
\begin{equation}\label{sigma cell resolution PQ}
\begin{split}
  \Val(\Gamma)&=\val{\ssGraph{ a[lpf] --[R] b[r1]; x[o,lr=$x$] -- a -- y[o,lr=$y$]; z[o,ll=$z$] -- b -- w[o,ll=$w$]; }}\\ 
  &= \val{\ssGraph{ c[tpf] --[R] b[r1]; c ->[B=$P_B$] a[l1]; x[o,lr=$x$] -- a -- y[o,lr=$y$]; z[o,ll=$z$] -- b -- w[o,ll=$w$]; }} + \val{\ssGraph{ c[tpf] --[R] b[r1]; c ->[B=$Q_B$] a[l1]; x[o,lr=$x$] -- a -- y[o,lr=$y$]; z[o,ll=$z$] -- b -- w[o,ll=$w$]; }},
\end{split}
\end{equation}
where we allow the vertices $x,y$ to agree with $z$ or $w$. 
With formulas, the insertion  in \eqref{sigma cell resolution PQ} means the following identity
\[ 
\sum_{ab} (pf)_a G_{ya} G_{xa} R_{ab}  G_{bw} G_{bz} = \sum_{abc} (pf)_c G_{ya} G_{xa} \big(P_{ac} + Q_{ac} \big) R_{cb}  G_{bw} G_{bz}
 \]
since $P_{ac} + Q_{ac} =\delta_{ac}$.
We first consider with the second graph in \eqref{sigma cell resolution PQ}, whose treatment is independent of the specific weights,
so we already removed the weight information. We insert the $B$ operator as 
\[ 
 \val{\ssGraph{ c --[R] b; c ->[B=$Q_B$] a; x[o,lr=$x$] -- a -- y[o,lr=$y$]; z[o,ll=$z$] -- b -- w[o,ll=$w$]; }} =  
 \val{\ssGraph{ e --[R] b; e ->[B=$Q_B$] d[left=10pt] ->[B=$B^{-1}$] c[left=20pt] ->[B=$B$] a[left=20pt]; x[o,lr=$x$,left=20pt] -- a -- y[o,lr=$y$,left=20pt]; z[o,ll=$z$] -- b -- w[o,ll=$w$]; }} =  
 \val{\ssGraph{ c --[we=$N^{-1}$] b[right=10pt]; c ->[B=$B$] a; x[o,lr=$x$] -- a -- y[o,lr=$y$]; z[o,ll=$z$,right=10pt] -- b -- w[o,ll=$w$,right=10pt]; }}
 \]  
and notice that due to Lemma \ref{lemma B inverse} the matrix $K=(B^{-1})^t Q_B^t R$, 
assigned to the weighted edge in the last graph, is entry-wise $\abs{k_{ab}}\le cN^{-1}$ bounded (the transpositions compensate for the opposite orientation
 of the participating edges). It follows from Lemma \ref{B insertion Lemma} that 
\begin{equation}\label{Q resolution}
\val{\ssGraph{ c --[R] b; c ->[B=$Q_B$] a; x[o,lr=$x$] -- a -- y[o,lr=$y$]; z[o,ll=$z$] -- b -- w[o,ll=$w$]; }}=
\val{\ssGraph{ c --[we=$N^{-1}$] b[right=4pt of c]; c ->[B=$B$] a; x[o,lr=$x$] -- a -- y[o,lr=$y$]; z[o,ll=$z$,right=4pt] -- b -- w[o,ll=$w$,right=4pt]; }} = \sum_{\Gamma'\in \cG_\Gamma} \Val(\Gamma') + \landauO{N^{-p}}, 
\end{equation}
where all $\Gamma'\in\cG_\Gamma$ satisfy $\WEst(\Gamma')\le_p \sigma_q \WEst(\Gamma)$ and all $\sigma$-cells in $\Gamma$ except for the currently expanded one remain $\sigma$-cells in $\Gamma'$. We note that it is legitimate to compare the Ward estimate of $\Gamma'$ with that of $\Gamma$ because with respect to the Ward-estimate there is no difference between $\Gamma$ and the modification of $\Gamma$ in which the $R$-edge is replaced by a generic $N^{-1}$-weighted edge.

We now consider the first graph in \eqref{sigma cell resolution PQ} and repeat the process of inserting projections $P_B'+Q_B'$
to the other side of the $R$-edge to find 
\begin{equation}\label{sigma cell res second PQ}
\val{\ssGraph{ c[tpf] --[R] b[r1]; c ->[B=$P_B$] a[l1]; x[o,lr=$x$] -- a -- y[o,lr=$y$]; z[o,ll=$z$] -- b -- w[o,ll=$w$]; }}= \val{\ssGraph{ c[tpf] --[R] d[t1] ->[B=$P_B'$] b; c ->[B=$P_B$] a; x[o,lr=$x$] -- a -- y[o,lr=$y$]; z[o,ll=$z$] -- b -- w[o,ll=$w$]; }} + \val{\ssGraph{ c --[R] d ->[B=$Q_B'$] b; c ->[B=$P_B$] a; x[o,lr=$x$] -- a -- y[o,lr=$y$]; z[o,ll=$z$] -- b -- w[o,ll=$w$]; }}, 
\end{equation}
where we already neglected those weights which are of no importance to the bound. The argument for the second graph in \eqref{sigma cell res second PQ} is identical to the one we used in \eqref{Q resolution} and we find 
another finite collection of graphs $\cG'_\Gamma$ such that
\begin{equation}
\val{\ssGraph{ c --[R] d ->[B=$Q_B'$] b; c ->[B=$P_B$] a; x[o,lr=$x$] -- a -- y[o,lr=$y$]; z[o,ll=$z$] -- b -- w[o,ll=$w$]; }} = \val{\ssGraph{ a --[we=$N^{-1}$] c[right=4pt] ->[B=$B'$] b[right=4pt]; x[o,lr=$x$] -- a -- y[o,lr=$y$]; z[o,ll=$z$,right=4pt] -- b -- w[o,ll=$w$,right=4pt]; }} = \sum_{\Gamma'\in\cG_\Gamma'} \val{\Gamma'} + \landauO{N^{-p}},
\end{equation}
where the weighted edge carries the weight matrix $K=P_{B}^t R Q_{B'} B'^{-1}$,   which is according to Lemma \ref{lemma B inverse} indeed scales like $\abs{k_{ab}}\le cN^{-1}$. The graphs $\Gamma'\in\cG_\Gamma'$ also satisfy $\WEst(\Gamma')\le_p \sigma_q \WEst(\Gamma)$ and all $\sigma$-cells in $\Gamma$ except for the currently expanded one remain $\sigma$-cells in $\Gamma'$. 

It remains to consider the first graph in \eqref{sigma cell res second PQ}  in the situation where $B$ does not have a bounded inverse. We compute the weight matrix of the $P_B^t R P_B'$ 
interaction edge as  
\[
\begin{split}
  P_B^t \diag(\vp\vf) R P_B' &= \left(\frac{\braket{\overline{\vb^{(B)}},\cdot}}{\braket{\overline{\vb^{(B)}},\overline{\vl^{(B)}}}}\overline{\vl^{(B)}}\right)\left[ \diag(\vp\vf) R \frac{\braket{\vl^{(B')},\cdot}}{\braket{\vl^{(B')},\vb^{(B')}}}\vb^{(B')} \right]\\
  &= \frac{\braket{\vb^{(B)}\vp\vf (R \vb^{(B')}) }}{\braket{\overline{\vb^{(B)}},\overline{\vl^{(B)}}}}\frac{\braket{\vl^{(B')},\cdot}\overline{\vl^{(B)}}}{\braket{\vl^{(B')},\vb^{(B')}}}
\end{split}
\]
which we separate into the scalar factor 
\[\frac{\braket{\vb^{(B)}\vp\vf (R \vb^{(B')}) }\braket{\vl^{(B')},\overline{\vl^{(B)}}}}{\braket{\overline{\vb^{(B)}},\overline{\vl^{(B)}}}\braket{\vl^{(B')},\vb^{(B')}}}\]
and the weighted edge 
\begin{equation}
\label{definition of K}
K=\frac{\braket{\vl^{(B')},\cdot}\overline{\vl^{(B)}}}{\braket{\vl^{(B')},\overline{\vl^{(B)}}}}\end{equation} 
which scales like $\abs{k_{ab}}\le cN^{-1}$ since \(\vl\) is \(\ell^2\)-normalised and delocalised. Thus we can write 
\begin{equation}\label{PP resolution}
\val{\ssGraph{ c[tpf] --[R] d[t1] ->[B=$P_B$] b; c ->[B=$P_B$] a; x[o,lr=$x$] -- a -- y[o,lr=$y$]; z[o,ll=$z$] -- b -- w[o,ll=$w$]; }} = \frac{\braket{\vb^{(B)}\vp\vf (R \vb^{(B')}) }\braket{\vl^{(B')},\overline{\vl^{(B)}}}}{\braket{\overline{\vb^{(B)}},\overline{\vl^{(B)}}}\braket{\vl^{(B')},\vb^{(B')}}} 
\val{\ssGraph{ a --[we=$N^{-1}$] b[right=5pt]; x[o,lr=$x$] -- a -- y[o,lr=$y$]; z[o,ll=$z$,right=5pt] -- b -- w[o,ll=$w$,right=5pt]; }}.
\end{equation}
Note that the $B$ and $B'$ operators are not completely independent: According to Fact \ref{number of edges} it follows that for an interaction edge $e=(u,v)$ associated with the matrix $R$ the number of incoming $G$-edges in $u$ is the same as the number of outgoing $G$-edges from $v$, and vice versa. Thus, according to \eqref{B def}, the $B$-operator at $u$ comes with an $S$ if and only if the $B'$-operator at $v$ comes also with an $S$. Furthermore, if the $B$-operator comes with an $T$, then the $B'$-operator comes with an $T^t$, and vice versa. The distribution of the conjugation operators to $B,B'$ in \eqref{B def}, however, can be arbitrary. We now use the fact that the scalar factor in \eqref{PP resolution} can be estimated by $\abs{\sigma}+\rho+\eta/\rho$ (cf.~Lemma~\ref{lmm:expansion of coefficient}). 
Summarising the above arguments, from \eqref{sigma cell resolution PQ}--\eqref{PP resolution}, the proof of Proposition \ref{sigma cell prop} is complete. 

\section{Cusp universality}\label{sec: univ}
The goal of this section is the proof of cusp universality in the sense of Theorem \ref{thr:cusp universality}. Let $H$ be the original Wigner-type random matrix with expectation $A\defeq \E H$ and variance matrix $S=(s_{ij})$ with  $s_{ij}\defeq\E \abs{h_{ij}-a_{ij}}^2$ and $T=(t_{ij})$ with $t_{ij}\defeq \E (h_{ij}-a_{ij})^2$. We consider the Ornstein Uhlenbeck process $\set{\widetilde H_t|   t\ge 0}$ starting from $\wt H_0=H$, i.e.
\begin{equation} \label{OU flow}
\diff \wt H_t = - \frac{1}{2}(\wt H_t-A)\diff t+\Sigma^{1/2}[\diff B_t], \qquad \Sigma[R]\defeq \E W\Tr (W R)
\end{equation}
which preserves expectation and variance.
In our setting of deformed Wigner-type matrices the covariance operator $\Sigma : \C^{N \times N} \to \C^{N \times N}$ is given by
\[
\Sigma[R] \defeq S \odot R+ T\odot R^t.
\]
 The OU process effectively adds a small Gaussian component to $\wt H_t$ along the flow in the sense that $\tilde H_t=A+e^{-t/2} (H-A) + \wt U_t$ in distribution with $\wt U_t$ being and independent centred Gaussian matrix with covariance $\Cov{\wt U}= (1-e^{-t/2})\Sigma$. Due to the fullness Assumption \ref{fullness} there exist small $c,t_\ast$ such that $\wt U_t$ can be decomposed as $\wt U_t=\sqrt{ct} U+U'_t$ with $U\sim \mathrm{GUE}$ and $U_t'$ Gaussian and independent of $U$ for $t\le t_\ast$. Thus there exists a Wigner-type matrix   $ H_t$  such that 
\begin{equation}\label{repr}
\begin{split}
\wt H_t &= H_t + \sqrt{ct} U, \qquad \SS_t=  \SS - ct\SS^{\mathrm{GUE}}, \qquad \E  H_t = A, \\ 
U&\sim\text{GUE}, \qquad \SS^{\mathrm{GUE}}[R] \defeq \braket{R} = \frac{1}{N}\Tr R
\end{split}
\end{equation} 
with $U$ independent of $ H_t$. Note that we do not define $H_t$ as a stochastic process and we will use
the representation \eqref{repr}  only for one carefully chosen $t=N^{-1/2+\epsilon}$.
We note that $H_t$ satisfies the assumption of our local law from Theorem \ref{thr:Local law}. It thus follows that $G_t\defeq (H_t-z)^{-1}$ 
is well approximated by the solution $M_t=\diag(M_t)$ to the MDE
\begin{equation*}
-M_t^{-1} = z-A + \SS_t[M_t]. \qquad \rho_t(E)\defeq \lim_{\eta\searrow 0}\frac{\Im\braket{M_t(E+i\eta)}}{\pi}.
\end{equation*}
In particular, by setting $t=0$, $M_0$ well approximates the resolvent of the original matrix $H$ and $\rho_0=\rho$ is its self-consistent density.
Note that the Dyson equation of $\wt H_t$ and hence its solution as well  are independent of $t$, since they are entirely determined by the first
and second moments of $\wt H_t$ that are the same  $A$ and $S$ for any $t$. Thus the resolvent of $\wt H_t$
 is well approximated by the same $M_{0}$ and the self-consistent density of $\wt H_t$ is given by $\rho_0=\rho$ for any $t$.
While $H$ and $\wt H_t$ have  identical self-consistent data, structurally they differ in a key point: $\wt H_t$ has a small 
Gaussian component. Thus the  correlation kernel of the local eigenvalue statistics has a contour integral representation
using a version of the Br\'ezin-Hikami formulas, see Section~\ref{sec: contour integral}.

The contour integration analysis requires a Gaussian component of size at least $ct\gg N^{-1/2}$
and  a very precise description of the eigenvalues of $H_t$
just above the scale of the eigenvalue spacing. 
This  information will come from the optimal rigidity, Corollary~\ref{crl:Uniform rigidity}, and the precise
shape of the self-consistent density of states of $H_t$.
 The latter will be analysed in Section \ref{sec: free conv} 
where we describe the evolution of the density near the cusp under  an additive GUE perturbation  $\sqrt{s} U$. 
We need to construct $H_t$ with a small gap carefully so that after a relatively long time $s=ct$  the matrix $H_t+ \sqrt{ct} U$ 
develops a cusp exactly at the right location.
In fact, we the process has two  scales in the shifted variable $\nu = s-ct$ that indicates the time relative to  the cusp formation.
It turns out that
the \emph{locations} of the edges typically  move linearly with $\nu$,
while the \emph{length of the gap} itself scales like $(-\nu)_+^{3/2}$, i.e.~it varies much slower
and we need to fine-tune the evolution of both.

To understand this tuning process, we fix $ t= N^{-1/2+\epsilon}$ and we consider the matrix flow $s\to H_t(s)\defeq H_t + \sqrt{s}U$ for any $s\ge 0$ and
not just for $s=ct$.  It is well known that the corresponding self-consistent densities are given by the 
semicircular flow.
Equivalently,  these densities  can be described by the 
 free convolution of $\rho_t$ with a scaled semicircular distribution $\rho_{\textrm{sc}}$. In short, the self-consistent
 density of $H_t(s)$ is given by   $\rho^\fc_s \defeq \rho_t \boxplus \sqrt s \rho_{\textrm{sc}}$, where  we omitted $t$ from the notation  $\rho^\fc_s$
since  we consider $t$ fixed.
  In particular we have $\rho^\fc_0=\rho_t$, the density of $H_t$  and $\rho^\fc_{ct}=\rho$, the density of  $\wt H_t= H_t+\sqrt{ct} U$ as well as that of $H$.  Hence, as a preparation to the contour integration, in Section~\ref{sec: free conv} we need to  describe the cusp formation along the semicircular flow.
 Before going into details, we describe the  strategy.

Since in the sequel the densities $\rho^\fc_s$ and their local minima and gaps will play an important role, we introduce the convention that properties of the original density $\rho$ will always carry $\rho$ as a superscript for the remainder of Section \ref{sec: univ}. In particular, the points $\cu,\ed_\pm,\mi$ and the gap size $\Delta$ from \eqref{gamma def eqs} and Theorem \ref{thr:cusp universality} will from now on be denoted by $\cu^\rho,\ed_\pm^\rho, \mi^\rho$ and $\Delta^\rho$. In particular a superscript of $\rho$ never denotes a power. 

\subsubsection*{Proof strategy.} First we consider case (i) when   
 $\rho$, the self-consistent density associated with $H$, has an exact cusp
at the point $\cu^\rho\in\R$. Note that  $\cu^\rho$ is also a cusp point of the self-consistent density of $\wt H_t$ for any $t$.

 We set $t\defeq N^{-1/2+\epsilon}$. Define the functions
 \[\Delta(\nu) \defeq(2\gamma)^2 (\nu/3)^{3/2}\qquad\text{and}\qquad \rho^{\min}(\nu)\defeq \gamma^2\sqrt{\nu}/\pi\] 
 for any $\nu\ge 0$. For $s<ct$ denote the gap in the support of $\rho^\fc_s$ close to $\cu^\rho$ by $[\ed_s^-,\ed_s^+]$ and its length by $\Delta_s\defeq \ed_s^+-\ed_s^-$. In Section~\ref{sec: free conv} we will prove that if $\rho$ has an exact cusp in $\cu^\rho$ as in \eqref{gamma def}, then $\rho^\fc_s$ has a gap of size $\Delta_s\approx \Delta(ct-s)$, and, in particular, $\rho_t=\rho^\fc_0$ has a gap of size $\Delta_0\approx\Delta(ct)\sim t^{3/2}$, 
  only depending on $c,t$ and $\gamma$. The distance of $\cu^\rho$ from the gap is $\approx \text{const}\cdot t$. This
 overall shift will be relatively  easy to handle, but notice that it must be tracked very precisely
since the gap changes much slower than its location. For $s>ct$ with $s-ct=\landauo{1}$ we will similarly prove that $\rho^\fc_s$ has no gap anymore close to $\cu^\rho$ but a unique local minimum in $\mi_s$ of size $\rho^\fc_s(\mi_s)\approx\rho^{\min}(s-ct)$.

Now we consider the case where $\rho$ has no exact cusp but a small gap of size $\Delta^\rho>0$. We parametrize this gap length via a parameter $t^\rho>0$ defined by $\Delta^\rho=\Delta(t^\rho)$. It follows from the associativity \eqref{fc assoc} of the free convolution that $\rho_t$ has a gap of size $\Delta_0\approx \Delta(ct+t^\rho)$. 
  
  Finally, the third case is  where $\rho$ has a local minimum of size $\rho(\mi^\rho)$. We parametrize it as $\rho(\mi^\rho)=\rho^{\min}(t^\rho)$ with $0<t^\rho<ct$ then it follows that $\rho_t$ has a gap of size $\Delta_0\approx \Delta(ct-t^\rho)$.  

   Note that these conclusions follow purely from the considerations in Section \ref{sec: free conv} for exact cusps and the associativity of the free convolution. We note that in both almost cusp cases $t^\rho$ should be interpreted as a time (or reverse time) to the cusp formation.

In the final part of the proof in Sections \ref{sec: contour integral}--\ref{sec: contour deformation} we will write the correlation kernel of $ H_t+\sqrt{ct} U$ as a contour integral purely in terms of the mesoscopic shape parameter $\gamma$ and the gap size $\Delta_0$ of the density $\rho_t$ associated with $H_t$. If $\Delta_0\approx\Delta(ct)$, then the gap closes after time $s\approx ct$ and we obtain a
 Pearcey kernel with parameter $\alpha=0$. If
$\Delta_0\approx \Delta(ct+t^\rho)$ and $t^\rho\sim N^{-1/2}$, then the gap does not quite close at time $s=ct$ 
and we obtain a Pearcey kernel with $\alpha>0$, while for $\Delta_0\approx \Delta(ct-t^\rho)$ with $t^\rho\sim N^{-1/2}$ the gap
 after time $s=ct$  is transformed into a tiny local minimum and we obtain a Pearcey kernel with $\alpha<0$. The precise value of $\alpha$ in terms of $\Delta^\rho$ and $\rho(\mi^\rho)$ are given in \eqref{eq pearcey param choice}.
 Note that as an input to the contour integral analysis, 
 in all three cases we use the local law only for $H_t$, i.e.~in a situation when there is a small gap in the support of $\rho_t$, given by $\Delta_0$ defined as above in each case.

\subsection{Free convolution near the cusp}\label{sec: free conv}
In this section we quantitatively investigate the free semi-circular flow before and after the formation of cusp. We first establish the exact rate at which a gap closes to form a cusp, and the rate at which the cusp is transformed into a non-zero local minimum. We now suppose that $\rho^\ast$ is a general density with a small spectral gap $[\ed_-^\ast,\ed_+^\ast]$ whose Stieltjes transform $m^\ast$ can be obtained from solving a Dyson equation. Let $\rho_{\mathrm{sc}}(x)\defeq \sqrt{(4-x^2)_+}/2\pi$ be the density of the semicircular distribution and let $s\ge 0$ be a time parameter. The free semicircular convolution $\rho_s^\fc$ of $\rho^\ast$ with $\sqrt{s}\rho_\mathrm{sc}$ is then defined implicitly via its Stieltjes transform
\begin{subequations}
\begin{equation}\label{free conv def}
m_s^\fc(z) = m^\ast(\xi_s(z))= m^\ast(z+ s m_s^\fc(z)),\qquad \xi_s(z)\defeq z + s m_s^\fc(z), \qquad z,m_s^\fc(z)\in\HC.
\end{equation}
It follows directly from the definition that $s\mapsto m_s^\fc$ is \emph{associative} in the sense that
\begin{equation}\label{fc assoc}
m_{s+s'}^\fc(z) = m_s(z+ s' m_{s+s'}^\fc(z)), \qquad s,s'\ge 0.
\end{equation}
\end{subequations}
Figure \ref{rhos deltas plot} illustrates the quantities in the following lemma. 
We state the lemma for scDOSs from arbitrary data pairs $(A_\ast,\SS_\ast)$ satisfying the conditions in \cite{1804.07752}, i.e.
\begin{equation} \label{general flatness}
\norm{A_\ast}\le C, \qquad c\braket{R} \le \SS_\ast[R]\le C \braket{R} 
\end{equation}
for any self-adjoint $R=R^\ast$ and some constants $c,C>0$.
\begin{figure}
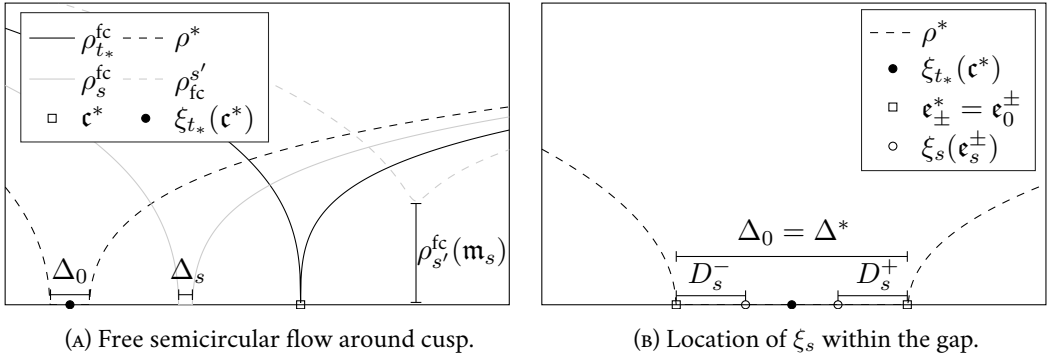

\centering
\begin{subfigure}[t]{.45\textwidth}
\setlength{\figurewidth}{\textwidth}
\setlength{\figureheight}{4cm}
\input{rhos_deltas}
\caption{Free semicircular flow around cusp.}
\label{rhos deltas plot}
\end{subfigure}  
\begin{subfigure}[t]{.45\textwidth}
\setlength{\figurewidth}{\textwidth}
\setlength{\figureheight}{4cm}
\input{xis}   
\caption{Location of $\xi_s$ within the gap.}
\label{rhos xi plot}
\end{subfigure}  
\caption{Figure \ref{rhos deltas plot} illustrates the evolution of $\rho^\fc_s$ along the semicircular flow at two times $0<s<t_\ast<s'$ before and after the cusp. We recall that $\rho^\ast=\rho^\fc_0$ and $\rho=\rho^\fc_{t_\ast}$. Figure \ref{rhos xi plot} shows the points $\xi_s(\ed_s^\pm)$ as well as their distances to the edges $\ed_0^\pm$.}
\label{rho xi figure} 
\end{figure} 
\begin{lemma}\label{general lemma fc 1}
Let $\rho^\ast$ be the density of a Stieltjes transform $m^\ast=\braket{M_\ast}$ associated with some Dyson equation 
\[-1=(z-A_\ast+\SS_\ast[M_\ast])M_\ast,\]
with $(A_\ast,\SS_\ast)$ satisfying \eqref{general flatness}. Then there exists a small constant $c$, depending only on the constants in Assumptions \eqref{general flatness} such that the following statements hold true. Suppose that $\rho^\ast$ has an initial gap $[\ed_-^\ast,\ed_+^\ast]$ of size $\Delta^\ast = \ed_+^\ast-\ed_-^\ast \le c$. Then there exists some critical time $t_\ast \lesssim (\Delta^\ast)^{2/3}$ such that $m_{t_\ast}^\fc$ has exactly one exact cusp in some point $\cu^\ast$ with $\abs[0]{\cu^\ast-\ed_\pm^\ast}\lesssim t_\ast$, and that $\rho_{t_\ast}^\fc$ is locally around $\cu^\ast$ given by \eqref{gamma def} for some $\gamma>0$. Considering the time evolution $[0,2t_\ast]\ni s\mapsto m_s^\fc$ we then have the following asymptotics.
\begin{subequations} 
\begin{enumerate}[(i)]
\item \emph{After the cusp.}\label{lemma fc 1 min case} For $t_\ast<s\le 2t_\ast$, $\rho_s^\fc$ has a unique non-zero local minimum in some point $\mi_s$ such that
\begin{equation}\label{eq min case}
\rho_s^\fc(\mi_s) = \frac{\sqrt{s-t_\ast}\gamma^2}{\pi}[1+\landauO[0]{(s-t_\ast)^{1/2}}], \qquad \abs{\mi_s-\cu^\ast+(s-t_\ast)\Re m_s^\fc(\mi_s)} \lesssim (s-t_\ast)^{3/2+1/4}.
\end{equation}
Furthermore, $\mi_s$ can approximately be found by solving a simple equation, namely there exists $\wt\mi_s$ such that
\begin{equation}\label{eq min case b}
\wt\mi_s -\cu^\rho +(s-t_\ast )\Re m^\fc_s(\wt\mi_s)=0,\qquad \abs[0]{\mi_s-\wt\mi_s}\lesssim (s-t_\ast)^{3/2+1/4},\qquad \rho^\fc_{s}(\wt\mi_s)\sim \sqrt{s-t_\ast}.
\end{equation}
\item \emph{Before the cusp.} For $0\le s<t_\ast$, the support of $\rho_s^\fc$ has a spectral gap $[\ed_s^-,\ed_s^+]$ of size $\Delta_{s}\defeq \ed_s^+-\ed_s^-$ near $\cu^\ast$ which satisfies
\begin{equation} \label{Delta approx eq}
\Delta_s = (2\gamma)^2 \Big(\frac{t_\ast-s}{3}\Big)^{3/2}[1+\landauO[0]{(t_\ast-s)^{1/3}}].
\end{equation} 
In particular we find that the initial gap $\Delta^\ast=\Delta_0$ is related to $t_\ast$ via $\Delta^\ast= (2\gamma)^2 (t_\ast/3)^{3/2}[1+\landauO[0]{(t_\ast-s)^{1/3}}]$.\label{lemma fc 1 gap case}
\end{enumerate}
\end{subequations}
\end{lemma}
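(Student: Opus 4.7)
Proof proposal: The plan is to analyze the inverse map $z_s(\xi) \defeq \xi - s m^\ast(\xi)$ rather than the free-convolution equation \eqref{free conv def} directly. Writing $F(\xi) \defeq (m^\ast)'(\xi) = \int (\tau-\xi)^{-2}\rho^\ast(\dd\tau)$, the edges of $\rho_s^\fc$ correspond precisely to critical points $\xi \in [\ed_-^\ast,\ed_+^\ast]$ of $z_s$, i.e.\ to real solutions of $sF(\xi) = 1$. The function $F$ is smooth and strictly convex on the gap $(\ed_-^\ast,\ed_+^\ast)$, and because $\rho^\ast$ has a nondegenerate edge singularity at $\ed_\pm^\ast$ of the form \eqref{gamma def edge} (by the shape theorems in \cite{1804.07752}), $F$ blows up at both endpoints. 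Hence $F$ attains a unique interior minimum at some $\xi_\ast \in (\ed_-^\ast,\ed_+^\ast)$ with $F'(\xi_\ast) = 0$ and $F''(\xi_\ast) > 0$. This identifies the critical time and cusp location as $t_\ast \defeq 1/F(\xi_\ast)$ and $\cu^\ast \defeq z_{t_\ast}(\xi_\ast)$, and a direct inspection of the edge shape of $\rho^\ast$ gives $F(\xi_\ast)\sim (\Delta^\ast)^{-2/3}$, which yields $t_\ast \sim (\Delta^\ast)^{2/3}$ and $\abs{\cu^\ast-\ed_\pm^\ast}\lesssim t_\ast$.

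I would then derive the cubic Taylor expansion
\[
z_s(\xi) - z_s(\xi_\ast) = (1-sF(\xi_\ast))(\xi-\xi_\ast) - \frac{sF''(\xi_\ast)}{6}(\xi-\xi_\ast)^3 + \landauO{s(\xi-\xi_\ast)^5},
\]
using $F'(\xi_\ast)=0$. At $s=t_\ast$ the linear term vanishes and comparison of the resulting cubic local shape of $\rho_{t_\ast}^\fc$ at $\cu^\ast$ with \eqref{gamma def} identifies the cusp constant $\gamma$ as an explicit function of $F''(\xi_\ast)$. For $s<t_\ast$ the two real critical points $\xi^\pm = \xi_\ast \pm \epsilon_s$ with $\epsilon_s = \sqrt{2(1-s/t_\ast)/(sF''(\xi_\ast))}\sim (t_\ast-s)^{1/2}$ give the edges $\ed_s^\pm = z_s(\xi^\pm)$, and subtracting yields
\[
\Delta_s = z_s(\xi^+) - z_s(\xi^-) = \frac{s F''(\xi_\ast)}{3}\epsilon_s^3\bigl(1+\landauO{\epsilon_s^2}\bigr),
\]
which upon substituting the expression for $\gamma$ in terms of $F''(\xi_\ast)$ is exactly \eqref{Delta approx eq}. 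The error $\landauO{(t_\ast-s)^{1/3}}$ quantifies the next-order Taylor correction together with the smooth dependence of $\xi_\ast$ and $F''$ on $s$.

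For $s>t_\ast$ the equation $sF(\xi)=1$ has no real roots in the gap; solving it perturbatively one obtains a complex-conjugate pair $\xi_\ast\pm \ii\,\wt\epsilon_s$ with $\wt\epsilon_s=\sqrt{2(s/t_\ast-1)/(sF''(\xi_\ast))}\sim(s-t_\ast)^{1/2}$. Tracking the image of this pair under $z_s$ shows that along the curve $E\mapsto \xi_s(E+\ii 0^+)\in\HC$ the imaginary part $\Im \xi_s(E+\ii 0^+) = \pi s\rho_s^\fc(E)$ achieves its local minimum $\sim\wt\epsilon_s$ at $E=\mi_s\approx \Re z_s(\xi_\ast+\ii\wt\epsilon_s)$. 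Pulling this minimum back through the cubic expansion produces the asymptotics \eqref{eq min case}, with the \emph{same} $\gamma$ that appears before the cusp. The approximate minimizer $\wt\mi_s$ of \eqref{eq min case b} is defined as the root of the displayed fixed-point equation, which is contractive since $\Re m_s^\fc$ is locally smooth and $\rho_s^\fc(\wt\mi_s)\sim\sqrt{s-t_\ast}$; the $\landauO{(s-t_\ast)^{3/2+1/4}}$ error in $\abs{\mi_s-\wt\mi_s}$ is then obtained by substituting the cubic expansion of $z_s$ and estimating its quartic remainder against $\rho_s^\fc(\wt\mi_s)$.

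The main obstacle is the post-cusp analysis: one must track the critical equation $sF(\xi)=1$ in a complex neighbourhood of $\xi_\ast$ and convert the resulting complex-valued information back into shape asymptotics of $\rho_s^\fc$ on the real axis, while matching constants so that a \emph{single} $\gamma$ works uniformly before, at, and after the cusp. Moreover, controlling the sharp subleading $(s-t_\ast)^{3/2+1/4}$ error in \eqref{eq min case b} requires a more delicate bookkeeping than the leading-order terms; this is where the analysis goes genuinely beyond the soft existence argument and where I expect most of the technical effort to be concentrated.
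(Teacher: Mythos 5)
Your critical-point approach (edges of $\rho_s^\fc$ as real solutions of $sF(\xi)=1$ with $F=(m^\ast)'$, the cusp as the degenerate critical point of $z_s(\xi)=\xi-sm^\ast(\xi)$, complex critical points after the cusp) is a genuinely different route from the paper, which never Taylor-expands around an interior point of the gap: for (ii) the paper derives the ODE $\dot\Delta_s=-\gamma^{4/3}(2\Delta_s)^{1/3}[1+\landauO{(t_\ast-s)^{1/3}+\Delta_s^{1/3}}]$ from $\dot\ed_s^\pm=-m_s^\fc(\ed_s^\pm)$ and the explicit edge profile $\Psi_{\mathrm{edge}}$, then integrates from $\Delta_{t_\ast}=0$; for (i) it plugs the exact cusp shape \eqref{gamma def} of $\rho_{t_\ast}^\fc$ into the free-convolution identity and bootstraps. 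Your framework is cleaner near the cusp, but as written it has a gap that prevents it from covering the full range $s\in[0,t_\ast)$.

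The problem is the validity of the cubic Taylor expansion. The quartic term $\tfrac{s}{24}F'''(\xi_\ast)(\xi-\xi_\ast)^4$ does not vanish (your stated remainder $\landauO{s(\xi-\xi_\ast)^5}$ is wrong), and since $\rho^\ast$ has square-root edges one computes $F^{(k)}(\xi_\ast)\sim(\Delta^\ast)^{-2/3-k}$, so the quartic term has relative size $\abs{\xi-\xi_\ast}/\Delta^\ast$ against the cubic one. The critical points sit at distance $\epsilon_s\sim\bigl((t_\ast-s)/t_\ast\bigr)^{1/2}\Delta^\ast$ from $\xi_\ast$, so the relative error in your formula for $\Delta_s$ is $\epsilon_s^2/(\Delta^\ast)^2\sim(t_\ast-s)/t_\ast$, which is $\landauO{1}$ at $s=0$ (where the critical points are the actual edges $\ed_\pm^\ast$, at distance $\sim\Delta^\ast$ from $\xi_\ast$). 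Hence your argument proves \eqref{Delta approx eq} only for $t_\ast-s\ll t_\ast$ and in particular does not yield the key relation $\Delta^\ast=(2\gamma)^2(t_\ast/3)^{3/2}[1+\landauO{t_\ast^{1/3}}]$ at $s=0$; to recover the full range you must use the global structure of $F$ on the gap, computed from the edge shape of $\rho^\ast$ — which is essentially the paper's ODE computation. Two further slips: the linear term contributes at the \emph{same} order as the cubic one, since $2(1-sF(\xi_\ast))\epsilon_s=sF''(\xi_\ast)\epsilon_s^3$, so the correct leading constant is $\Delta_s=\tfrac{2}{3}sF''(\xi_\ast)\epsilon_s^3$ rather than $\tfrac{1}{3}sF''(\xi_\ast)\epsilon_s^3$ (this propagates into your identification of $\gamma$, which must be checked to match the post-cusp constant); and the sharp errors $(s-t_\ast)^{3/2+1/4}$ in \eqref{eq min case}--\eqref{eq min case b}, which the paper obtains from the self-consistent equation for $\Im m_s^\fc$ together with the monotonicity $b_\nu'>0$ of $\Re m_s^\fc$ near the cusp, are only asserted, not derived, in your sketch.
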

\begin{proof}
Within the proof of the lemma we rely on the extensive shape analysis from \cite{1804.07752}. We are doing so not only for the density $\rho^\ast=\rho_0^\fc$ and its Stieltjes transform, but also for $\rho_s^\fc$ and its Stieltjes transform $m_s^\fc$ for $0\le s\le 2t_\ast$. The results from \cite{1804.07752} also apply here since $m_s^\fc(z)=\braket{M_\ast(\xi_s(z))}$ can also be realized as the solution
\[-M_\ast(\xi_s(z))^{-1} = z + s\braket{M_\ast(\xi_s(z))}-A_\ast + \SS_\ast[M_\ast(\xi_s(z))] = z -A_\ast + (\SS_\ast+s\SS^{\mathrm{GUE}})[M_\ast(\xi_s(z))] \]
to the Dyson equation with perturbed self-energy $\SS_\ast+ s\SS^{\mathrm{GUE}}$. Since $t_\ast\lesssim 1$ it follows that the shape analysis from \cite{1804.07752} also applies to $\rho_s^\fc$ for any $s\in[0,2t_\ast]$. 

We begin with part \eqref{lemma fc 1 min case}. Set $\nu\defeq s-t_\ast$, then for $0\le\nu\le t_\ast$ we want to find $x_\nu$ such that $\Im m^\fc_s$ has a local minimum in $\mi_s\defeq \cu^\ast+x_\nu$ near $\cu^\ast$, i.e.
\[ x_\nu\defeq \argmin_x \Im m^\fc_s(\cu^\ast+x), \qquad \abs{x_\nu}\lesssim \nu.
\]
First we show that $x_\nu$ with these properties exists and is unique by using 
the extensive shape analysis in \cite{1804.07752}. Uniqueness directly follows from \cite[Theorem 7.2(ii)]{1804.07752}.
For the existence, we set 
\[
 a_\nu(x)\defeq \Im m_{\fc}^s(\cu^\ast+x), \quad b_\nu(x) \defeq \Re m^\fc_s(\cu^\ast+x), \quad a_\nu\defeq a_\nu(x_\nu), \quad b_\nu\defeq b_\nu(x_\nu). \]
Set $\delta\defeq K\nu$ with a large constant $K$. Since $a_0(x)=\Im m_{t_\ast}(\cu^\ast+x)\sim \abs{x}^{1/3}$, we have 
$a_0(\pm \delta)\sim \delta^{1/3}$ and $a_0(0)=0$. Recall from \cite[Proposition 10.1(a)]{1804.07752} that the map $s\mapsto m^\fc_s$ is $1/3$-H\"older continuous.
It then follows  that $a_\nu(\pm \delta)\sim \delta^{1/3} + \landauO{\nu^{1/3}}$, while $a_\nu(0)\lesssim\nu^{1/3}$.
Thus $a_\nu$ necessarily has a local minimum in $(-\delta,\delta)$ if $K$ is sufficiently large. This shows the existence of a local minimum with 
$\abs{x_\nu}\lesssim K\nu\sim \nu$.

We now study the function $f_\nu(x)=x+\nu b_\nu(x)$ in a small neighbourhood around $0$. From \cite[Eqs.~(7.62),(5.43)--(5.45)]{1804.07752} it follows that 
\begin{equation} \label{bprime} 
\begin{split}
b_\nu'(x) &= \Re \frac{c_1(x)+\landauO{a_\nu(x)}}{-\ii c_2(x) a_\nu(x) +   a_\nu(x)^2+\landauO{a_\nu(x)^3}} +\landauO{1}\\
&= \frac{c_1(x)}{c_2(x)^2 +   a_\nu(x)^2} + \landauO{\frac{1}{c_2(x)+a_\nu(x)}} 
\end{split}  \end{equation}
whenever $a_\nu(x)\ll 1$, with appropriate real functions\footnote{We have $c_1=\pi/\psi$, $c_2=2\sigma/\psi$ with the notations $\psi, \sigma$ in \cite{1804.07752},
where $\psi\sim 1$ and $\abs{\sigma}\ll 1$ near the almost cusp, but we refrain from using these letters in the present context to avoid confusions.} $c_1(x)\sim 1$ and $ c_2(x)\ge0$. Moreover, $\abs{c_2(0)}\ll 1$ since $\cu^\ast$ is an almost cusp point for $m_s^\fc$ for any $s\in[0,2t_\ast]$. Thus it follows that $b_\nu'(x)>0$ whenever $a_\nu(x)+c_2(x)\ll1$. Due to the $1/3$-H\"older continuity\footnote{See \cite[Lemma 5.5]{1804.07752} for the $1/3$-H\"older continuity of quantities $\psi,\sigma$ in the definition of $c_2$.} of both $a_\nu(x)$ and $c_2(x)$
and $a_\nu(0)+ \abs{c_2(0)}\ll 1$, it follows that $b_\nu'(x)>0$ whenever $\abs{x}\ll 1$. We can thus conclude that $f_\nu$ satisfies $f_\nu'\ge 1$ in some $\landauo{1}$-neighbourhood of $0$. As $\abs{f_\nu(0)}\lesssim\nu$ we can conclude that there exists a root $\wt x_\nu$, $f_\nu(\wt x_\nu)=0$ of size 
$\abs[0]{\wt x_\nu}\lesssim \nu$. With $\wt\mi_s\defeq \cu^\ast+\wt x_\nu$ we have thus shown the first equality in \eqref{eq min case b}. 

Using \eqref{gamma def}, 
we now expand the defining equation
\[ a_\nu(x) = \Im  m_{t_\ast}^\fc(\cu^\ast+x+\nu b_\nu(x) +\ii \nu a_\nu(x) ) \]
for the free convolution in the regime for those $x$ sufficiently close to $\wt x_\nu$ such that $\abs[0]{x+\nu b_\nu(x)}\lesssim \nu a_\nu(x)$ to find 
\[
\begin{split}
a_\nu(x) &=\frac{\sqrt 3 \gamma^{4/3}}{2\pi} \nu a_\nu(x) \int_\R \frac{\abs{\lambda}^{1/3}+\landauO{\abs{\lambda}^{2/3}}}{(\lambda-x-\nu b_\nu(x))^2+ (\nu a_\nu(x))^2}\diff \lambda \\
&= \frac{\sqrt 3 \gamma^{4/3}}{2\pi} \int_\R \frac{ (\nu a_\nu(x))^{1/3} \abs{\lambda}^{1/3}}{(\lambda-[x+\nu b_\nu(x)]/\nu a_\nu(x))^2+1}\diff \lambda + \landauO{(\nu a_\nu(x))^{2/3}}\\
&= (\nu a_\nu(x))^{1/3} \gamma^{4/3} \left[ 1+ \frac{1}{9}\left(\frac{x+\nu b_\nu(x)}{\nu a_\nu(x)}\right)^2 + \landauO{\left(\frac{x+\nu b_\nu(x)}{\nu a_\nu(x)}\right)^4 + (\nu a_\nu(x))^{1/3}}\right],
\end{split}
\]
i.e.
\begin{equation}
a_\nu(x) = \nu^{1/2}\gamma^2 \left[ 1+ \frac{1}{9}\left(\frac{x+\nu b_\nu(x)}{\nu a_\nu(x)}\right)^2 + \landauO{\left(\frac{x+\nu b_\nu(x)}{\nu a_\nu(x)}\right)^4 + (\nu a_\nu(x))^{1/3}}\right]^{3/2}.
\label{vsigma exp2}
\end{equation}
Note that \eqref{vsigma exp2} implies that $\nu a_\nu(\wt x_\nu)\sim \nu^{3/2}$, i.e.~the last claim in \eqref{eq min case b}. We now pick some large $K$ and note that from \eqref{vsigma exp2} it follows that $a_\nu(\wt x_\nu \pm K \nu^{7/4})> a_\nu(\wt x_\nu)$. Thus the interval $[\wt x_\nu - K \nu^{7/4}, \wt x_\nu + K \nu^{7/4}]$ contains a local minimum of $a_\nu(x)$, but by
the uniqueness this must then be $x_\nu$.
We thus have $\abs[0]{x_\nu-\wt x_\nu}\le K \nu^{7/4}$, proving the second claim in \eqref{eq min case b}. By 1/3-H\"older continuity of $a_\nu(x)$ and by $a_\nu(\wt x_\nu)\sim \nu^{1/2}$ from \eqref{vsigma exp2}, we conclude
that $a_\nu = a_\nu(x_\nu)\sim \nu^{1/2}$ as well. Using that $\wt x_\nu + \nu b_\nu(\wt x_\nu)=0$ and $b_\nu'\lesssim 1/\nu$ from \eqref{bprime} and $a_\nu(x)\gtrsim \sqrt{\nu}$, we conclude that $\abs{x_\nu +\nu b_\nu(x_\nu)}\lesssim \nu^{7/4}$, i.e.~the second claim in \eqref{eq min case}. Plugging this information back into \eqref{vsigma exp2}, we thus find $a_\nu=\gamma^2\sqrt\nu(1+\landauO{\nu^{1/2}})$ and have also proven the first claim in \eqref{eq min case}.

We now turn to part \eqref{lemma fc 1 gap case}. It follows from the analysis in \cite{1804.07752} that $\rho^\fc_s$ exhibits either a small gap, a cusp or a small local minimum close to $\cu^\ast$. It follows from \eqref{lemma fc 1 min case} that a cusp is transformed into a local minimum, and a local minimum cannot be transformed into a cusp along the semicircular flow. Therefore it follows that the support of $\rho^\fc_s$ has a gap of size $\Delta_s=\ed_s^+-\ed_s^-$ between the edges $\ed_s^\pm$. Evidently $\ed_{t_\ast}^-=\ed_{t_\ast}^+=\cu^\ast$, $\ed_0^+-\ed_0^-=\Delta_0$, $\ed_0^\pm=\ed_\pm^\ast$ and for $s>0$ we differentiate \eqref{free conv def} to obtain 
\begin{equation}\label{Mp 1/s}
\frac{(m^\fc_s)'(z)}{1+s(m^\fc_s)'(z)} = m_\ast'(z+sm^\fc_s (z))\quad\text{and conclude}\quad m_\ast'(\xi_s(\ed_s^\pm))=1/s
\end{equation}
by considering the $z\to \ed_s^\pm$ limit and the fact that $\rho^\fc_s$ has a square root at edge (for $s<t_\ast$) hence $(m^\fc_s)'$ blows up at this point. Denoting the $\diff/\diff s$ derivative by dot, from 
\begin{equation*}
\frac{\diff}{\diff s} m^\fc_s(\ed_s^\pm) = m_\ast'(\xi_s(\ed_s^\pm))\left(\dot \ed_s^\pm + m^\fc_s(\ed_s^\pm) +s\frac{\diff}{\diff s} m^\fc_s(\ed_s^\pm)  \right) = \frac{\dot \ed_s^\pm + m^\fc_s(\ed_s^\pm)}{s}+ \frac{\diff}{\diff s} m^\fc_s(\ed_s^\pm)
\end{equation*}
we can thus conclude that $\dot \ed_s^\pm=-m^\fc_s(\ed_s^\pm)$. This implies that the gap as a whole moves with linear speed (for non-zero $m^\fc_s(\ed_s^\pm)$), and, in particular, the distance of the gap of $\rho^\ast$ to $\cu^\ast$ is an order of magnitude larger than the size of the gap. It follows that the size $\Delta_{s}\defeq \ed_s^+-\ed_s^-$ of the gap of $\rho^\fc_s$ satisfies 
\[\dot\Delta_{s}=m^\fc_s(\ed_s^-)-m^\fc_s(\ed_s^+)=\int_\R \Big[\frac{1}{x-\ed_s^-}-\frac{1}{x-\ed_s^+}\Big]\rho^\fc_s(x)\diff x=-\Delta_{s}\int_\R \frac{\rho^\fc_s(x)}{(x-\ed_s^-)(x-\ed_s^+)}\diff x.\] 
We now use the precise shape of $\rho^\fc_s$ close to $\ed_s^\pm$ according to \eqref{gamma def edge} which is given by 
\begin{equation}\label{edge shape formula}
\rho^\fc_s(\ed_s^\pm\pm x)=\frac{\sqrt{3}(2\gamma)^{4/3}\Delta_{s}^{1/3}}{2\pi}\left((1+\landauO[0]{(t_\ast-t)^{1/3}})\Psi_{\text{edge}}(x/\Delta_{s})+\landauO{\Delta_s^{1/3}\Psi^2_{\text{edge}}(x/\Delta_{s})}\right),
\end{equation}
where $\Psi_\mathrm{edge}$ defined in \eqref{Psi edge} exhibits the limiting behaviour 
\[\lim_{\Delta\to 0}\Delta^{1/3}\Psi_{\text{edge}}(x/\Delta) = \abs{x}^{1/3}/2^{4/3}.\] Using \eqref{edge shape formula}, we compute
\begin{equation}\label{diff eq Deltas}
\begin{split}
  \dot\Delta_{s}&=-(1+\landauO[0]{(t_\ast-s)^{1/3}})\frac{\sqrt 3(2\gamma)^{4/3}\Delta_{s}^{1/3}}{\pi} \int_0^\infty \frac{\Psi_{\text{edge}}(x)}{x(1+x)}\diff x \\
  &= -\gamma^{4/3}(2\Delta_{s})^{1/3}\left[1+\landauO[0]{(t_\ast-s)^{1/3}+\Delta_{s}^{1/3}}\right],
\end{split}
\end{equation}
where the $(1+\landauO[0]{(t_\ast-s)^{1/3}})$ factor in \eqref{edge shape formula} encapsulates two error terms; both are due to the fact that the shape factor $\gamma_s$ of $\rho_s^\fc$ from \eqref{gamma def edge} is not exactly the same as $\gamma$, i.e.~the one for $s=t_\ast$. To track this error in $\gamma$ we go back to \cite{1804.07752}. First, $\abs{\sigma}$ in \cite[Eq.~(7.5a)]{1804.07752} is of size $(t_\ast-s)^{1/3}$ by the fact that $\sigma$ vanishes at $s=t_\ast$ and is $1/3$-H\"older continuous according to {\cite[Lemma 10.5]{1804.07752}}. Secondly, according to {\cite[Lemma 10.5]{1804.07752}} the shape factor $\Gamma$ (which is directly related to $\gamma$ in the present context) is also $1/3$-H\"older continuous and therefore we know that the shape factors of $\rho^\ast$ at $\ed_0^\pm$ are at most multiplicatively perturbed by a factor of $(1+\landauO[0]{(t_\ast-s)^{1/3}})$. By solving the differential equation \eqref{diff eq Deltas} with the initial condition $\Delta_{t_\ast}=0$, the claim \eqref{Delta approx eq} follows.
\end{proof}

Besides the asymptotic expansion for gap size and local minimum we also require some quantitative control on the location of $\xi_{t_\ast}(\cu^\ast)$, as defined in \eqref{free conv def}, and some slight perturbations thereof within the spectral gap $[\ed_-^\ast,\ed_+^\ast]$ of $\rho^\ast$. We remark the the point $\xi^\ast\defeq\xi_{t_\ast}(\cu^\ast)$ plays a critical role for the contour integration in Section \ref{sec: contour integral} since it will be the critical point of the phase function. 
From \eqref{Delta approx eq} we recall that the gap size scales as $t_\ast^{3/2}$ which makes it natural to compare distances on that scale. In the regime where $t'\ll t_\ast$ all of the following estimates thus identify points very close to the centre of the initial gap.

\begin{lemma}\label{general lemma fc 2}
Suppose that we are in the setting of Lemma \ref{general lemma fc 1}. We then find that $\xi_{t_\ast}(\cu^\ast)$ is very close to the centre of $[\ed_-^\ast,\ed_+^\ast]$ in the sense that
\begin{subequations}
\begin{equation}\label{xi cusp ineq}
\abs[2]{\xi_{t_\ast}(\cu^\ast) - \frac{\ed_+^\ast+\ed_-^\ast}{2}} \lesssim t_\ast^{3/2+1/3}.
\end{equation}
Furthermore, for $0\le t'\le t_\ast$ we have that 
\begin{equation}\label{xi almost cusp ineq}
\begin{split}
  \abs[2]{\xi_{t_\ast-t'}\Big(\frac{\ed_{t_\ast-t'}^++\ed_{t_\ast-t'}^-}{2}\Big) - \frac{\ed_+^\ast+\ed_-^\ast}{2}} &\lesssim t_\ast^{3/2+1/9},\\ 
  \abs[2]{\xi_{t_\ast+t'}\left(\mi_{t_\ast+t'}\right) - \frac{\ed_+^\ast+\ed_-^\ast}{2}} &\lesssim t_\ast^{3/2}(t_\ast^{1/12}+(t'/t_\ast)^{1/2}).
\end{split}
\end{equation}
\end{subequations}
\end{lemma}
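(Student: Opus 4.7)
The proof of \eqref{xi cusp ineq} will rest on the characterization that $\xi^\ast = \xi_{t_\ast}(\cu^\ast)$ is a double root of $m_\ast'(\xi) = 1/t_\ast$ on the spectral gap. From \eqref{Mp 1/s} we have $m_\ast'(\xi_s(\ed_s^\pm)) = 1/s$ for every $s\in(0,t_\ast)$, and taking $s\nearrow t_\ast$ both points collapse to $\xi^\ast$, so $m_\ast'(\xi^\ast) = 1/t_\ast$ and $m_\ast''(\xi^\ast) = 0$ simultaneously. Since $m_\ast$ is real-analytic on the open gap, Taylor-expansion yields
\begin{equation*}
m_\ast''(c^*) = (c^* - \xi^\ast)\, m_\ast'''(\tilde \xi)
\end{equation*}
for some intermediate $\tilde\xi$, with $c^* = (\ed_+^\ast+\ed_-^\ast)/2$, so $|c^* - \xi^\ast| \lesssim |m_\ast''(c^*)|/m_\ast'''(\tilde\xi)$. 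Both factors will be estimated using the explicit edge shape \eqref{edge shape formula}. A direct power-count in the integral $m_\ast'''(\tilde\xi) = 6\int \rho^\ast(\lambda)(\lambda-\tilde\xi)^{-4}\dd\lambda$, dominated by the edge contributions on the scale $\Delta$, gives $m_\ast'''(\tilde\xi)\sim \Delta^{-8/3}$. For $m_\ast''(c^*) = 2\int_0^\infty [\rho^\ast(c^*+u)-\rho^\ast(c^*-u)]u^{-3}\dd u$ the key gain is that the leading symmetric shape cancels because both edges carry the same shape function with the same parameter $\gamma$, leaving only the $\landauO{\Delta^{1/3}}$ relative correction allowed by \eqref{edge shape formula}. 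This yields $|m_\ast''(c^*)|\lesssim \Delta^{-4/3}$ and hence $|\xi^\ast - c^*|\lesssim \Delta^{4/3}\sim t_\ast^{2}$, which is strictly stronger than the claimed $t_\ast^{3/2+1/3}$.

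For the two statements in \eqref{xi almost cusp ineq}, the plan is to reduce them to the cusp case via the associativity of the free convolution and the induced semigroup law $\xi_s \circ \xi_{t'}^{(s)} = \xi_{s+t'}$, where $\xi_{t'}^{(s)}(z) = z + t'\, m_{s+t'}^\fc(z)$ is the shift associated with flowing $\rho^\fc_s$ by time $t'$. For the first inequality, $\rho^\fc_{t_\ast-t'}$ has its gap collapse into the cusp at $\cu^\ast$ after an additional flow time $t'$ and so has gap size $\Delta_{t_\ast-t'}\sim (t')^{3/2}$; applying \eqref{xi cusp ineq} to this density yields $|\xi_{t'}^{(t_\ast-t')}(\cu^\ast) - c_{t_\ast-t'}|\lesssim (t')^{3/2+1/3}$ where $c_s = (\ed_s^++\ed_s^-)/2$. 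Composing on the left with $\xi_{t_\ast-t'}$ converts the first term into $\xi_{t_\ast}(\cu^\ast) = \xi^\ast$ and replaces the second by $\xi_{t_\ast-t'}(c_{t_\ast-t'})$, at the cost of a factor $\xi_{t_\ast-t'}'(\cdot)$. The main technical point is the control of this derivative, $\xi_s'(z) = 1 + s(m_s^\fc)'(z)$: the edge shape of $\rho^\fc_{t_\ast-t'}$ gives $(m_{t_\ast-t'}^\fc)'(c_{t_\ast-t'})\lesssim (t')^{-1}$, hence $\xi_{t_\ast-t'}'\lesssim t_\ast/t'$, producing the combined error $t_\ast(t')^{5/6}$. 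Adding $|\xi^\ast - c^*|\lesssim t_\ast^{11/6}$ from \eqref{xi cusp ineq} gives the claimed $t_\ast^{3/2+1/9}$ bound uniformly in $t'\in[0,t_\ast]$.

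For the second inequality, apply Lemma~\ref{general lemma fc 1}\eqref{lemma fc 1 min case} to $\rho^\fc_{t_\ast}$, which has an exact cusp at $\cu^\ast$: flowing by time $t'$ produces a local minimum $\mi_{t_\ast+t'}$ with $|\mi_{t_\ast+t'}-\cu^\ast + t'\Re m_{t_\ast+t'}^\fc(\mi_{t_\ast+t'})|\lesssim (t')^{3/2+1/4}$. Consequently $\xi_{t'}^{(t_\ast)}(\mi_{t_\ast+t'}) = \mi_{t_\ast+t'} + t'\, m_{t_\ast+t'}^\fc(\mi_{t_\ast+t'})$ lies at real-distance $\landauO{(t')^{3/2+1/4}}$ from $\cu^\ast$ and at imaginary-distance $\sim (t')^{3/2}$ above it, using $\rho^\fc_{t_\ast+t'}(\mi_{t_\ast+t'})\sim \sqrt{t'}$ from \eqref{eq min case}. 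Composing with $\xi_{t_\ast}$ via the semigroup law and integrating $\xi_{t_\ast}'$ along this short path above the cusp, where $(m_{t_\ast}^\fc)'(\cu^\ast+w)\sim |w|^{-2/3}$ by the cusp shape \eqref{gamma def}, produces the displacement $|\xi_{t_\ast+t'}(\mi_{t_\ast+t'}) - \xi^\ast|\lesssim t_\ast(t')^{1/2}$. Combined with \eqref{xi cusp ineq} this gives the stated bound; the main obstacle here is the holomorphic extension and the careful control of $\xi_{t_\ast}'$ in the complex neighborhood of the cusp, which must exploit the precise $|w|^{-2/3}$ blow-up rather than a uniform bound.
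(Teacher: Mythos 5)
Your proof is correct, and for \eqref{xi cusp ineq} it actually yields the slightly stronger bound $\Delta^{4/3}\sim t_\ast^{2}$ in place of $t_\ast^{11/6}$. The route you take for \eqref{xi cusp ineq} is genuinely different from the paper's: the paper argues dynamically, tracking $D_s^\pm=\pm(\ed_0^\pm-\xi_s(\ed_s^\pm))$ by differentiating $m_\ast'(\xi_s(\ed_s^\pm))=1/s$ in $s$, computing $m_\ast''(\xi_s(\ed_s^\pm))=\pm 3(2\Delta_s)^{1/3}/(s^3\gamma^{8/3})$ from the boundary values of $(m_s^\fc)'$ and $(m_s^\fc)''$, and integrating the resulting ODE from $s=0$ to $t_\ast$; the near-equality $D_{t_\ast}^+\approx D_{t_\ast}^-$ then locates $\xi^\ast$ at the centre up to a relative error $O(t_\ast^{1/3})$ of $\Delta^\ast$. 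You instead use the static characterisation $m_\ast''(\xi^\ast)=0$ combined with the symmetry cancellation in $m_\ast''(c^\ast)$, which trades the time-propagation error for the $O(\Delta^{1/3}\Psi_{\mathrm{edge}})$ error in \eqref{gamma def edge} and avoids the ODE entirely. Two points you should make explicit for this to be airtight: (i) $\xi^\ast$ lies in the open gap $(\ed_-^\ast,\ed_+^\ast)$, which is needed both for the mean value step and for the lower bound $m_\ast'''(\tilde\xi)\gtrsim\Delta^{-8/3}$; this follows because $\xi_s'=1+s(m_s^\fc)'>1$ on the gap of $\rho_s^\fc$, so $\xi_s(\ed_s^-)<\xi_s(\ed_s^+)$ are two \emph{distinct} roots of $m_\ast'=1/s$ in the gap and Rolle's theorem gives a zero of $m_\ast''$ between them converging to $\xi^\ast$; (ii) the contribution to $m_\ast''(c^\ast)$ from the region where the local shape formula \eqref{gamma def edge} no longer applies is only $O(1)\ll\Delta^{-4/3}$, but it breaks the exact symmetry and must be accounted for separately. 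For the two bounds in \eqref{xi almost cusp ineq} your argument coincides in substance with the paper's (reduction via associativity to the exact-cusp case, followed by a continuity estimate for $\xi_{t_\ast-t'}$, respectively $\xi_{t_\ast}$); the only differences are that you replace the global $1/3$-H\"older bound \eqref{continuity xi} by a mean-value estimate with $\xi_{t_\ast-t'}'\lesssim t_\ast/t'$ inside the gap (giving $t_\ast(t')^{5/6}$ instead of $t_\ast(t')^{11/18}$, both sufficient), and that you work with $\mi_{t_\ast+t'}$ directly through \eqref{eq min case} rather than passing through the auxiliary point $\wt\mi_{t_\ast+t'}$; the $\abs{w}^{-2/3}$ derivative bound you invoke near the cusp is equivalent in effect to the $1/3$-H\"older continuity the paper uses, so no extra work on holomorphic extensions is really needed there.
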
 
\begin{proof}
We begin with proving \eqref{xi cusp ineq}. For $s<t_\ast$ we denote the distance of $\xi_s(\ed_s^\pm)$ to the edges $\ed_0^\pm$ by $D_s^\pm\defeq \pm(\ed_0^\pm - \xi_s(\ed_s^\pm))$, cf.~Figure \ref{rhos xi plot}. We have, by differentiating $m_\ast'(\xi_s(\ed_s^\pm))=1/s$ from \eqref{Mp 1/s} that 
\begin{equation}\label{Ddot eq}
\dot D_{s}^\pm=\mp \frac{\diff}{\diff s}\xi_s(\ed_s^\pm),\qquad-\frac{1}{s^2}= m''_\ast(\xi_s(\ed_s^\pm)) \frac{\diff }{\diff s}\xi_s(\ed_s^\pm)
\end{equation}
and by differentiating \eqref{free conv def},
\begin{equation*}
(m^\fc_s)'= m_\ast'(\xi_s) \xi_s',\qquad \xi_s'(m^\fc_s)''= m_\ast''(\xi_s)(\xi_s')^3+(m^\fc_s)'\xi_s'',\qquad m_\ast''(\xi_s)=\frac{(m^\fc_s)''}{(1+s (m^\fc_s)')^3}.
\end{equation*}
We now consider $z=\ed_s^\pm+\ii\eta$ with $\eta\to0$ and compute  from \eqref{edge shape formula}, for any $s<t_\ast$, 
\begin{equation*}
\begin{split}
\lim_{\eta\searrow 0}\sqrt\eta(m^\fc_s)'(z)&=\lim_{\eta\searrow 0}\sqrt\eta\int_\R \frac{\rho^\fc_s(x)}{(x-z)^2}\diff x =\lim_{\eta\searrow 0}\frac{\sqrt{3\eta}(2\gamma)^{4/3}\Delta_{s}^{1/3}}{2\pi}\int_0^\infty \frac{\Psi_\text{edge}(x/\Delta_{s})}{(x-\ii\eta)^2}\diff x\\
&= \frac{(2\gamma)^{4/3}}{2\sqrt{3}\Delta_{s}^{1/6}\pi}\int_0^\infty \frac{x^{1/2}}{(x-\ii)^2}\diff x = \frac{(2\gamma)^{4/3}\sqrt \ii}{4\sqrt{3}\Delta_{s}^{1/6}}
\end{split}
\end{equation*}
and
\begin{equation*}
\begin{split}
\lim_{\eta\searrow 0}\eta^{3/2}(m^\fc_s)''(z)&=\lim_{\eta\searrow 0}\eta^{3/2}2\int_\R \frac{\rho^\fc_s(x)}{(x-z)^3}\diff x = \lim_{\eta\searrow 0}\frac{\sqrt 3\eta^{3/2}(2\gamma)^{4/3}\Delta_{s}^{1/3}}{\pi}\int_0^\infty \frac{\Psi_\text{edge}(x/\Delta_{s})}{(x-\ii\eta)^3}\diff x\\
&=\frac{(2\gamma)^{4/3}}{\sqrt{3} \Delta_{s}^{1/6}\pi}\int_0^\infty \frac{x^{1/2}}{(x-\ii)^3}\diff x = \frac{(2\gamma)^{4/3}\ii^{3/2}}{8\sqrt{3}\Delta_{s}^{1/6}}.
\end{split}
\end{equation*}
Here we used that fact that the error terms in \eqref{edge shape formula} become irrelevant in the $\eta\to0$ limit. We conclude, together with \eqref{Ddot eq}, that
\begin{equation*}
\begin{split}
m_\ast''(\xi_s(\ed_s^\pm)) &= \pm\frac{3(2\Delta_{s})^{1/3}}{s^3\gamma^{8/3}},\\
\dot D_s^\pm &=\pm (s^2 m_\ast''(\xi_s(\ed_s^\pm)))^{-1}=\frac{s\gamma^{8/3}}{3(2\Delta_{s})^{1/3}}= \frac{s\gamma^2}{2\sqrt3\sqrt{t_\ast-s}}[1+\landauO[0]{t_\ast^{1/3}}].
\end{split}
\end{equation*}
Since $D_0^-=D_0^+=0$ and $\dot D_s^-\approx \dot D_s^+ $ 
 it follows that, to leading order, $D_{s}^+\approx D_{s}^-$ and more precisely
\[ D_s^\pm = \gamma^2 \frac{2t_\ast^{3/2} -s \sqrt{t_\ast-s}-2t_\ast\sqrt{t_\ast-s}}{3^{3/2}}[1+\landauO[0]{t_\ast^{1/3}}].\]
In particular it follows that $\abs{\ed_0^\pm-\xi_{t_\ast}(\cu^\ast)}= [1+\landauO[0]{t_\ast}^{1/3}]2\gamma^2 t_\ast^{3/2} /3^{3/2}$. Together with the $s=0$ case from \eqref{Delta approx eq} we thus find
\[ \abs[2]{\xi_{t_\ast}(\cu^\ast)-\frac{\ed_+^\ast+\ed_-^\ast}{2}} \lesssim t_\ast^{3/2+1/3}=t_\ast^{11/6},\]
proving \eqref{xi cusp ineq}.

We now turn to the proof of \eqref{xi almost cusp ineq} where we treat the small gap and small non-zero minimum separately. We start with the first inequality. We observe that \eqref{xi cusp ineq} in the setting where $(\rho^\ast,t_\ast)$ are replaced by $(\rho_{t_\ast-t'}^\fc,t')$ implies
\begin{equation}\label{eq xi t'} \abs[2]{\cu^\ast + t' m_{t_\ast}^\fc(\cu^\ast) - \frac{\ed_{t_\ast-t'}^++\ed_{t_\ast-t'}^-}{2}} \le (t')^{11/6}.\end{equation}
Furthermore, we infer from the definition of $\xi$ and the associativity \eqref{fc assoc} of the free convolution that 
\[  \xi_{t_\ast-t'}\Big( \cu^\ast + t' m_{t_\ast}^\fc(\cu^\ast) \Big) = \cu^\ast + t' m_{t_\ast}^\fc(\cu^\ast) + (t_\ast-t') m_{t_\ast-t'}^\fc\Big( \cu^\ast + t' m_{t_\ast}^\fc(\cu^\ast) \Big) = \xi_{t_\ast}(\cu^\ast) 
\]
and can therefore estimate
\[ 
\begin{split}
   &\abs[2]{\xi_{t_\ast-t'}\Big(\frac{\ed_{t_\ast-t'}^++\ed_{t_\ast-t'}^-}{2}\Big) - \xi_{t_\ast}(\cu^\ast) } = \abs[2]{\xi_{t_\ast-t'}\Big(\frac{\ed_{t_\ast-t'}^++\ed_{t_\ast-t'}^-}{2}\Big) - \xi_{t_\ast-t'}\Big( \cu^\ast + t' m_{t_\ast}^\fc(\cu^\ast) \Big) } \\
   & \lesssim (t')^{11/6} + t_\ast (t')^{11/18} \lesssim t_\ast^{29/18},
\end{split}
\]
just as claimed. In the last step we used \eqref{eq xi t'} and the fact that 
\begin{equation}\label{continuity xi}
\abs{\xi_s(a)-\xi_s(b)} \lesssim \abs{a-b} + s \abs{a-b}^{1/3},
\end{equation}
which directly follows from the definition of $\xi$ and the $1/3$-H\"older continuity of $m_s^\fc$. 

Finally, we address the second inequality in \eqref{xi almost cusp ineq} and appeal to Lemma \ref{general lemma fc 1}\eqref{lemma fc 1 min case} to establish the existence of $\wt\mi_{t_\ast+t'}$ such that 
\begin{equation}\label{wt mi cu aux eq}
\cu^\ast-\wt\mi_{t_\ast+t'} = t' \Re m_{t_\ast+t'}^\fc(\wt\mi_{t_\ast+t'}).
\end{equation}
It thus follows from \eqref{eq min case b} that $\abs[0]{\wt\mi_{t_\ast+t'}-\mi_{t_\ast+t'}}\lesssim (t')^{7/4}$ and therefore from \eqref{continuity xi} that 
\[\abs[0]{\xi_{t_\ast+t'}(\wt\mi_{t_\ast+t'})-\xi_{t_\ast+t'}(\mi_{t_\ast+t'})}\lesssim (t')^{7/4}+ t_\ast (t')^{7/12}\lesssim t_\ast^{19/12}.\]
Using \eqref{wt mi cu aux eq} twice, as well as the associativity \eqref{fc assoc} of the free convolution and $\Im m_{t_\ast}^\fc(\cu^\ast)=0$ we then further compute
\begin{equation}\label{xi mtilde c}
\begin{split}
&\xi_{t_\ast+t'}(\wt\mi_{t_\ast+t'})-\xi_{t_\ast }(\cu^\ast) = \wt\mi_{t_\ast+t'} + (t_{\ast}+t') m_{t_\ast+t'}^\fc(\wt\mi_{t_\ast+t'})-\cu^\ast - t_\ast m_{t_\ast}^\fc(\cu^\ast) \\
&= t_\ast \Re \Big[ m_{t_\ast}^\fc(\cu^\ast +\ii t' \Im m_{t_\ast+t'}^\fc(\wt\mi_{t_\ast+t'}))- m_{t_\ast}^\fc(\cu^\ast) \Big] +\ii (t_{\ast}+t')\Im m_{t_\ast+t'}^\fc(\wt\mi_{t_\ast+t'}).
\end{split}
\end{equation}
By H\"older continuity we can, together with \eqref{xi cusp ineq} and $\Im m_{t_\ast+t'}(\wt\mi_{t_\ast+t'}) \sim (t')^{1/2}$ from \eqref{eq min case b}, conclude that 
\begin{equation*}
\begin{split}
\abs[2]{\xi_{t_\ast+t'}\left(\mi_{t_\ast+t'}\right) - \frac{\ed_+^\ast+\ed_-^\ast}{2}} &\lesssim \abs{\xi_{t_\ast+t'}\left(\mi_{t_\ast+t'}\right) - \xi_{t_\ast+t'}\left(\wt\mi_{t_\ast+t'}\right)}+\abs{\xi_{t_\ast+t'}\left(\wt\mi_{t_\ast+t'}\right) - \xi_{t_\ast}\left(\cu^\ast\right)} \\
&\quad+ \abs[2]{\xi_{t_\ast}\left(\cu^\ast\right) - \frac{\ed_+^\ast+\ed_-^\ast}{2}} \\
& \lesssim \big[t_\ast^{7/4}+t_\ast (t_\ast^{7/4})^{1/3}\big] + t_\ast (t')^{1/2}+t_\ast^{11/6} \lesssim t_\ast^{3/2}(t_\ast^{1/12}+ (t'/t_\ast)^{1/2}).
\end{split}
\end{equation*}
In the first term we used \eqref{continuity xi} and the second estimate of \eqref{eq min case b}. In the second term we used \eqref{xi mtilde c} together with $\Im m_{t_\ast+t'}(\wt\mi_{t_\ast+t'})\sim(t')^{1/2}$ from \eqref{eq min case b} and $1/3$-H\"older continuity of $m_{t_\ast}^\fc$. Finally, the last term was already estimated in the exact cusp case, i.e.~in \eqref{xi cusp ineq}. 
\end{proof}

\subsection{Correlation kernel as contour integral}\label{sec: contour integral}
We denote the eigenvalues of $H_t$ by $\lambda_1,\dots,\lambda_N$. Following the work of Br\'ezin and Hikami (see e.g.~\cite[Eq.~(2.14)]{MR1618958} or \cite[Eq.~(3.13)]{MR2662426} for the precise version used in the present context) the correlation kernel of $\wt H_t=H_t+\sqrt{ct} U$ can be written as
\begin{equation*}
\wh K_N^t(u,v) \defeq \frac{N}{(2\pi \ii)^2 ct} \int_\Upsilon\diff z \int_\Gamma \diff w \frac{\exp\left(N\left[ w^2-2vw+v^2 -z^2 +2zu-u^2\right]/2ct\right)}{w-z} \prod_{i}\frac{w-\lambda_i}{z-\lambda_i}, 
\end{equation*}
where $\Upsilon$ is any contour around all $\lambda_i$, and $\Gamma$ is any vertical line not intersecting $\Upsilon$. With this notation, the $k$-point correlation function of the eigenvalues of $\wt H_t$ is given by 
\[ p_k^{(N)}(x_1,\dots,x_k)=\det\Big(\frac{1}{N}\wh K_N^t(x_i,x_j)\Big)_{i,j\in[k]}.\]
Due to the determinantal structure we can freely conjugate $K_N$ with $v\mapsto e^{N(\xi v-v^2/2)/ct}$ for $\xi\defeq\xi_{ct}(\bu)$ to redefine the correlation kernel as 
\begin{equation*}
K_N^t(u,v) \defeq \frac{N}{(2\pi \ii)^2 c t} \int_\Upsilon\diff z \int_\Gamma \diff w \frac{\exp\left(N\left[ w^2-2v(w-\xi) -z^2+2u(z-\xi)\right]/2ct\right)}{w-z} \prod_{i}\frac{w-\lambda_i}{z-\lambda_i}. \label{Brezin Hikami}
\end{equation*}
This redefinition $K_N^t$ does not agree point-wise with the previous definition $\wh K_N^t$, but gives rise to the same determinant, and in particular to the same $k$-point correlation function. Here $\bu$ is the base point chosen in Theorem \ref{thr:cusp universality}. The central result concerning the correlation kernel is the following proposition.
\begin{proposition}\label{prop pearcey kernel}
Under the assumptions of Theorem \ref{thr:cusp universality}, the rescaled correlation kernel 
\begin{equation}\label{wt K def}
\wt K_N^t(x,y)\defeq \frac{1}{N^{3/4}\gamma} K_N^t\left(\bu+\frac{x}{N^{3/4}\gamma},\bu+\frac{y}{N^{3/4}\gamma}\right)
\end{equation}
around the base point $\bu$ chosen in \eqref{eq pearcey param choice} converges uniformly to the Pearcey kernel from \eqref{Pearcey kernel} in the sense that
\[ \abs{\wt K_N^t(x,y)-K_\alpha(x,y)}\le C N^{-c}\]
for $x,y\in[-R,R]$. Here $R$ is an arbitrary large threshold, $c>0$ is some universal constant, $C>0$ is a constant depending only on the model parameters and $R$, and $\alpha$ is chosen according to \eqref{eq pearcey param choice}.
\end{proposition}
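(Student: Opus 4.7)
The plan is a steepest descent / saddle point analysis of the Brézin--Hikami double contour integral defining $K_N^t$. The first step is to write the product $\prod_i (w-\lambda_i)/(z-\lambda_i) = \exp\bigl(\sum_i[\log(w-\lambda_i)-\log(z-\lambda_i)]\bigr)$ and invoke the optimal local law (Theorem~\ref{thr:Local law}) together with cusp rigidity (Corollary~\ref{crl:Cusp rigidity}) applied to $H_t$ in order to replace $N^{-1}\sum_i \log(\zeta-\lambda_i)$ by its deterministic approximation $\Xi_t(\zeta):=\int\log(\zeta-x)\rho_t(\dd x)$. Since $\rho_t$ has a spectral gap of size $\Delta_0\sim \Delta(ct)\sim t^{3/2}$ around $\bu$ by Lemma~\ref{general lemma fc 1}, this approximation will be valid uniformly on a complex neighbourhood of the critical point $\xi=\xi_{ct}(\bu)$; near the edges, where rigidity degrades to the scale $\eta_{\mathrm f}$, the error will be controlled using Corollary~\ref{cor no eigenvalues outside} and an argument in the spirit of the usual Helffer--Sjöstrand functional calculus to handle the logarithmic singularity.

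The effective phase after this replacement is $\Phi(\zeta)=-\zeta^2/(2ct)+\bu(\zeta-\xi)/(ct)+\Xi_t(\zeta)$ (with sign chosen to match the paper's convention $m_t = \int\rho_t(\dd\tau)/(\tau-\zeta)$). The critical point structure is where the whole construction pays off: $\Phi'(\xi)=0$ by the defining relation $\xi=\bu+ct\,m_t(\xi)$ of the free convolution; $\Phi''(\xi)= -1/(ct)+m_t'(\xi)=0$ by \eqref{Mp 1/s} applied at the time the gap of $\rho_t$ would close into a cusp under semicircular flow; and $\Phi'''(\xi)=m_t''(\xi)=0$ by the cusp symmetry encoded in \eqref{gamma def}. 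The quartic coefficient $\Phi''''(\xi)$ will be computed explicitly from the shape function \eqref{gamma def} and will match precisely the normalising factor $\gamma$ in \eqref{wt K def}. In the almost-cusp regimes (ii) and (iii) the vanishing of $\Phi''(\xi)$ fails by an amount of order $(\Delta^\rho)^{2/3}$ or $\rho(\mi^\rho)^2$; Lemma~\ref{general lemma fc 2} ensures that $\xi$ lies close enough to the centre of the gap (resp.\ the non-zero minimum) that the spurious odd-order corrections vanish to the required precision, producing after the $N^{1/4}$ rescaling exactly the quadratic term $\pm\alpha\zeta^2/2$ of the Pearcey phase with $\alpha$ as in \eqref{eq pearcey param choice}.

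I then rescale $z=\xi+\tilde z/(\gamma N^{1/4})$, $w=\xi+\tilde w/(\gamma N^{1/4})$, $u=\bu+x/(\gamma N^{3/4})$, $v=\bu+y/(\gamma N^{3/4})$. Under this scaling the integrand separates: the quartic/quadratic expansion of $N\Phi(\zeta)$ around $\xi$ converges to $\tilde z^4/4-\alpha\tilde z^2/2$; the cross term $2N(u-\bu)(z-\xi)/(2ct)$, once combined with the derivative relation $\bu=\xi-ct\,m_t(\xi)$, contributes the linear Pearcey term $x\tilde z$; the factor $1/(w-z)$ is scale-invariant; and the Jacobians $\dd z\,\dd w$ produce the required $N^{-1/2}$ which cancels the $N/(ct)\sim N^{1/2+\epsilon}$ prefactor up to an irrelevant factor that matches the kernel normalisation in~\eqref{wt K def}. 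The tails of the integrals along the rescaled contours will be bounded by standard Gaussian estimates once steepest descent directions are in place.

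The main obstacle, which will occupy Section~\ref{sec: contour deformation}, is the \emph{global} deformation of the original contours $\Upsilon$ (a closed curve around $\Spec H_t$) and $\Gamma$ (a vertical line) into steepest descent contours through $\xi$: the $z$-contour must approach $\xi$ along the rays of angle $\pm\pi/4,\pm 3\pi/4$ (the Pearcey contour $\Xi$), and the $w$-contour must approach $\xi$ vertically. Off a microscopic neighbourhood of $\xi$ one needs $\Re\Phi$ to have a definite sign so that these arcs contribute only negligibly; this requires quantitative control of $\Re\Phi$ on the whole plane, in particular across $\supp\rho_t$ where $\Xi_t$ has a branch cut and $\Re\Phi$ jumps. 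The detailed shape information on $\rho_t$ afforded by Lemmas~\ref{general lemma fc 1}--\ref{general lemma fc 2} and \cite{1804.07752} — including the linear motion of the edges $\ed_s^\pm$ under the semicircular flow and the smallness of $\ed_{t_\ast-t'}^+-\ed_{t_\ast-t'}^-$ relative to the distance of $\xi$ to the edges — is the essential input for constructing such globally valid contours and for treating the three cases (i)--(iii) in a unified way.
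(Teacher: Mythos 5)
Your strategy is the same as the paper's: a saddle-point analysis of the Br\'ezin--Hikami double integral around the degenerate critical point $\xi=\xi_{ct}(\bu)$, with the local law supplying the deterministic replacement of the phase, the free-convolution Lemmas~\ref{general lemma fc 1}--\ref{general lemma fc 2} supplying the fine location of $\xi$ and the gap of $\rho_t$, and the global contour deformation as the main technical obstacle (which the paper handles by a topological/subharmonicity argument on the level sets of $\Re g$ in Lemma~\ref{contour deform lemma}). One presentational difference: the paper never touches the logarithmic potential near the spectrum --- it writes the phase as $\int_\xi^{\zeta}\braket{G_t(u)-M_t(\xi)}\,\dd u$ along contours that cross the real axis only inside the gap, so the averaged local law applies directly at distance $\gtrsim\Delta_0\sim t^{3/2}$ from $\supp\rho_t$ and no Helffer--Sj\"ostrand argument is needed.

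Two concrete problems. First, your microscopic rescaling $z=\xi+\tilde z/(\gamma N^{1/4})$ is wrong: with $u-\bu=x/(\gamma N^{3/4})$ the cross term becomes $N(u-\bu)(z-\xi)/(ct)=N^{1/2-\epsilon}x\tilde z/(c\gamma^2)$, not $x\tilde z$, and the prefactors $N/(ct)\cdot\dd z\,\dd w\cdot(w-z)^{-1}\cdot(N^{3/4}\gamma)^{-1}$ do not balance. The correct scale is $z-\xi=ct\gamma\,\tilde z/N^{1/4}\sim N^{-3/4+\epsilon}\tilde z$, i.e.\ a factor $N^{-\epsilon/2}$ \emph{smaller} than the gap $\Delta_0\sim N^{-3/4+3\epsilon/2}$; this is essential, since the quartic Taylor approximation of the phase is only valid that deep inside the gap. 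Second, the claim $\Phi'''(\xi)=m_t''(\xi)=0$ ``by cusp symmetry'' is false as stated: $\rho_t$ has a gap, not a cusp, and $m_t''$ does not vanish at $\xi$ exactly --- indeed $m_t''(\xi_s(\ed_s^\pm))=\pm 3(2\Delta_s)^{1/3}/(s^3\gamma^{8/3})$ blows up relative to the relevant scale near the edges. The cubic term is only \emph{approximately} absent, because $\xi$ sits within $O(t^{3/2+1/3})$ of the centre of the nearly symmetric gap (this is exactly \eqref{xi cusp ineq}), and one must check quantitatively that the residual odd terms are negligible after the $N^{1/4}$ rescaling; the paper does this not by Taylor-expanding $m_t$ at $\xi$ but by inserting the explicit edge shape function \eqref{edge shape formula} into the integral representation of the phase and evaluating the resulting explicit integrals, which simultaneously produces the exact quartic coefficient $-4/27$ and the parameter $\alpha$.
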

\begin{proof}
We now split the contour $\Upsilon$ into two parts, one encircling all eigenvalues $\lambda_i$ to the left of $\xi=\bu+ct\braket{M(\bu)}$, and the other one encircling all eigenvalues $\lambda_i$ to the right of $\xi$, which does not change the value of $K_N^t$. We then move the vertical $\Gamma$ contour so that it crosses the real axis in $\xi$. This does also not change the value $K_N^t$ as the only pole is the one in $z$ for which the residue reads 
\begin{equation*}
\frac{N}{(2\pi\ii)^2 ct}\int_\Upsilon\diff z \exp\left( \frac{N}{ct\gamma}(u-v)(z-\xi)\right)=0.
\end{equation*} 

We now perform a linear change of variables $z\mapsto \xi+\Delta_0 z$, $w\mapsto \xi+ \Delta_0 w$ in \eqref{wt K def} to transform the contours $\Upsilon,\Gamma$ into contours 
\begin{equation}\label{wh Gamma Upsilon} \wh\Gamma \defeq (\Gamma-\xi)/\Delta_0,\qquad \wh\Upsilon \defeq (\Upsilon-\xi)/\Delta_0\end{equation}
to obtain
\begin{equation}\label{tilde Kernel}
\wt K_N^t(x,y)= \frac{N^{1/4}\Delta_0}{(2\pi \ii)^2ct\gamma} \int_{\widehat \Upsilon}\diff z\int_{\widehat\Gamma} \diff w \frac{\exp\left(\Delta_0 N^{1/4} (xz-yw)/ct\gamma + N\Delta_0^2[\wt f(w)-\wt f(z)]/ct\right)}{w-z},
\end{equation}
where 
\begin{equation*}
\wt f(z) \defeq \frac{z^2}{2}-\frac{ct}{\Delta_0^2}\int_\xi^{\xi+\Delta_0 z} \braket{G_t(u)-M_t(\xi)}\diff u.
\end{equation*}
Here $\Delta_0\defeq \ed_0^+-\ed_0^-$ indicates the length of the gap $[\ed_0^-,\ed_0^+]$ in the support of $\rho_t$. From Lemma \ref{general lemma fc 1} with $\rho^\ast=\rho_t$ and $t_\ast=ct$ we infer $\Delta_0\sim t^{3/2}\sim N^{-3/4+3\epsilon/2}$. In order to obtain \eqref{tilde Kernel} we used the relation $\xi-\bu=ct m^\fc_{ct}(\bu)=ct \braket{M_t(\bu+ct m^\fc_{ct}(\bu))}=ct \braket{M_t(\xi)}$.  

We begin by analysing the deterministic variant of $\wt f(z)$, 
\begin{equation*}
f(z)\defeq \frac{z^2}{2}-\frac{ct}{\Delta_0^2}\int_\xi^{\xi+\Delta_0 z} \braket{M_t(u)-M_t(\xi)}\diff u.
\end{equation*}
We separately analyse the large- and small-scale behaviour of $f(z)$.  On the one hand, using the $1/3$-H\"older continuity of $u\mapsto\braket{M_t(u)}$, eq.~\eqref{Delta approx eq} and 
\begin{equation*}
\frac{ct}{\Delta_0^2}\int_\xi^{\xi+\Delta_0 z} \abs{\braket{M_t(u)-M_t(\xi)}}\diff u \lesssim \frac{t(\Delta_0\abs{z})^{4/3}}{\Delta_0^2} \lesssim \abs{z}^{4/3}.
\end{equation*}
we conclude the large-scale asymptotics
\begin{equation}\label{f asymp large}
f(z)=\frac{z^2}{2}+\landauO{\abs{z}^{4/3}}, \qquad \abs{z}\gg 1.
\end{equation}

We now turn to the small-scale $\abs{z}\ll 1$ asymptotics. We first specialize Lemma \ref{general lemma fc 1} and Lemma \ref{general lemma fc 2} to $\rho^\ast=\rho_t$ and collect the necessary conclusions in the following Lemma.
\begin{lemma}\label{lemma Delta xi}
Under the assumptions of Theorem \ref{thr:cusp universality} it follows that $\rho_t$ has a spectral gap $[\ed_0^-,\ed_0^+]$ of size 
\begin{subequations}
\begin{equation}\label{gap size general}
\Delta_0 = \ed_0^+-\ed_0^-= \Delta(ct \pm t^\rho) \left[1+\landauO{t^{1/3}}\right],\quad\text{where}\quad \pm t^\rho\defeq \begin{cases}
0 &\text{in case (i)}\\
3 (\Delta^\rho)^{2/3} /(2\gamma)^{4/3} &\text{in case (ii)}\\
-\pi^2 \rho(\mi^\rho)^2/\gamma^4 &\text{in case (iii)}.
\end{cases}
\end{equation}
Furthermore, in all three cases we have that $\xi$ is is very close to the centre of the gap in the support of $\rho_t$ in the sense that 
\begin{equation}\label{general xi ineq}
\abs{\xi-\frac{\ed_0^++\ed_0^-}{2}} = \landauO{t^{3/2}N^{-\epsilon/2}}.
\end{equation}
\end{subequations}
\end{lemma}
\begin{proof}
We prove \eqref{gap size general}--\eqref{general xi ineq} separately in cases (i), (ii) and (iii).
\begin{enumerate}[(i)]
\item Here \eqref{gap size general} follows directly from \eqref{Delta approx eq} with $\rho^\ast=\rho_t$, $t_\ast=ct$, $s=0$ and $\cu^\ast=\cu^\rho$. Furthermore \eqref{general xi ineq} follows from \eqref{xi cusp ineq} with $\rho^\ast=\rho_t$, $t_\ast=ct$ and $\cu^\ast=\cu^\rho$.
\item We apply \eqref{Delta approx eq} with $\rho^\ast=\rho=\rho_{ct}^\fc$, $t_\ast=t^\rho$, $s=0$ to conclude that $\Delta^\rho = (2\gamma)^2 (t^\rho/3)^{3/2}[1+\landauO[0]{(t^\rho)^{1/3}}]$, and that $\rho_{ct+t^\rho}^\fc$ has an exact cusp in some point $\cu$. Thus \eqref{gap size general} follows from another application of \eqref{Delta approx eq} with $\rho^\ast=\rho_t$, $t_\ast=ct+t^\rho$, $s=0$ and $\cu^\ast=\cu$. Furthermore, \eqref{general xi ineq} follows again from \eqref{xi almost cusp ineq} but this time with $\rho^\ast=\rho_t$, $t_\ast=ct+t^\rho$, $t'=t^\rho$ and $\ed^\pm_{t_\ast-t'}=\ed^\rho_\pm$, and using that $t_\ast^{1/9}\le N^{-\epsilon/2}$ for sufficiently small $\epsilon$.
\item From \eqref{eq min case} with $\rho^\ast=\rho_t$, $t_\ast=ct-t^\rho$, $s=ct$ to conclude $\rho(\mi^\rho)=[1+\landauO[0]{(t^\rho)^{1/2}}]\gamma^2 \sqrt{t^\rho}/\pi$, and that $\rho_{ct-t^\rho}$ has an exact cusp in some point $\cu$. Finally, \eqref{general xi ineq} follows again from \eqref{xi almost cusp ineq} but with $\rho^\ast=\rho_t$, $t_\ast=ct-t^\rho$, $t'=t^\rho$ and $\mi_{t_\ast+t'}=\mi^\rho$, and using $t'/t_\ast\lesssim t^\rho/ct\lesssim N^{-\epsilon}$ and $t_\ast^{1/12}\le N^{-\epsilon/2}$ for sufficiently small $\epsilon$. \qedhere
\end{enumerate}
\end{proof}

Equipped with Lemma \ref{lemma Delta xi} we can now turn to the small scale analysis of $f(z)$ and write out the Stieltjes transform to find 
\begin{equation*}
\begin{split}
  f(z)&=\frac{z^2}{2}-\frac{ct}{\Delta_0^2}\int_{\R} \int_{\xi}^{\xi+\Delta_0 z} \frac{u-\xi}{(x-u)(x-\xi)}\rho_t(x)\diff u\diff x \\
  &= \frac{z^2}{2}-\frac{ct}{\Delta_0}\int_\R \int_{0}^{z} \frac{u}{(x-u)x}\rho_t(\xi+\Delta_0 x)\diff u\diff x.
\end{split}
\end{equation*}
Note that these integrals are not singular since $\rho_{t}(\xi+\Delta_0 x)$ vanishes for $\abs{x}\le 1/2$. We now perform the $u$ integration to find
\begin{equation}\label{f int eq}
f(z)= \frac{z^2}{2}-\frac{ct}{\Delta_0}\int_\R \left[ \log x -\log(x-z)-\frac{z}{x} \right]\rho_t(\xi+\Delta_0 x)\diff x.
\end{equation}
By using the precise shape \eqref{edge shape formula} (with $s=0$) of $\rho_t$ close to the edges $\ed_0^\pm$, and recalling the gap size from \eqref{gap size general} and location of $\xi$ from \eqref{general xi ineq} we can then write
\begin{equation}\label{fg diff}
f(z) = (1+\landauO[0]{t^{1/3}})\wt g(z) + \landauO{\abs{z}^2 t^{1/3}}
\end{equation}
with 
\begin{equation*}
\wt g(z)\defeq \frac{z^2}{2} - \frac{3\sqrt 3}{2\pi (1\pm t^\rho/ct)} \int_\R \left[ \log x -\log(x-z)-\frac{z}{x} \right] \Psi_\textrm{edge}(\abs{x}-1/2) \1_{\abs{x}\ge 1/2}\diff x
\end{equation*}
being the leading order contribution. Here $\pm$ indicates that the formula holds for all three cases (i), (ii) and (iii) simultaneously, where $t^\rho=0$ in case (i). The contribution of the error term in \eqref{edge shape formula} to the integral in \eqref{f int eq} is of order $\landauO[0]{\abs{z}^2 t^{1/2}}$ using that $\log x-\log(x-z)-z/x=\landauO[0]{\abs{z/x}^2}$ and that $\abs{x}\ge 1/2$ on the support of $\rho_t(\xi+\Delta_0 x)$. By the explicit integrals
\begin{equation*}\label{g expl int}
\frac{3\sqrt 3}{2\pi}\int_0^\infty \frac{\Psi_\text{edge}(x)}{(x+1/2)^2}\diff x = \frac{1}{2}, \qquad \frac{3\sqrt 3}{2\pi}\int_0^\infty \frac{\Psi_\text{edge}(x)}{(x+1/2)^4}\diff x = \frac{8}{27}
\end{equation*}
and a Taylor expansion of the logarithm $\log(x-z)$ we find that the quadratic term $z^2/2$ almost cancels and we conclude the small-scale asymptotics 
\begin{equation}\label{f asymp small}
\wt g(z) = \left(\frac{\pm t^\rho}{ct}\frac{z^2}{2} - \frac{4z^4}{27}\right)\Big(1+\landauO{t^\rho/t}\Big)+\landauO{\abs{z}^5}, \qquad \abs{z}\ll 1.
\end{equation}

\subsection{Contour deformations}\label{sec: contour deformation}
We now argue that we can deform the contours $\Upsilon,\Gamma$ and thereby via \eqref{wh Gamma Upsilon} the derived contours $\wh\Upsilon,\wh\Gamma$, in a way which bounds the sign of $\Re g$ away from zero along the contours. Here $g(z)$ is the $N$-independent variant of $\wt g(z)$ given by
\begin{equation}\label{g wt g approx}
\begin{split}
  g(z) \defeq{}& \frac{z^2}{2} - \frac{3\sqrt 3}{2\pi} \int_\R \left[ \log x -\log(x-z)-\frac{z}{x} \right] \Psi_\textrm{edge}(\abs{x}-1/2) \1_{\abs{x}\ge 1/2}\diff x \\
  ={}& \wt g(z) + \landauO{ N^{-\epsilon} \abs{z}^2 }.
\end{split}\end{equation}
The topological aspect of our argument is inspired by the approach in \cite{MR3502605,MR3485343,MR3440796}.
\begin{lemma}\label{contour deform lemma}
For all sufficiently small $\delta>0$ there exists $K=K(\delta)$ such that the following holds true. The contours $\Upsilon,\Gamma$ then can be deformed, without touching $(\supp\rho_t+[-1,1])\setminus\{\xi\}$ or each other, in such a way that the rescaled contours $\wh\Upsilon,\wh\Gamma$ defined in \eqref{wh Gamma Upsilon} satisfy $\Re g\ge K$ on $\wh\Upsilon\cap\{\abs{z}>\delta\}$ and $\Re g\le -K$ on $\wh\Gamma\cap\{\abs{z}>\delta\}$. Furthermore, locally around $0$ the contours can be chosen in such a way that 
\begin{equation} \label{wh contour local}
\begin{split}
  \wh\Gamma\cap \Set{z\in \C|\abs{z}\le \delta} &= (-\ii\delta,\ii\delta), \\ \wh\Upsilon\cap \Set{z\in \C|\abs{z}\le \delta} &=  (- \delta e^{\ii \pi/4},\delta e^{\ii \pi/4})\cup (-\delta e^{-\ii \pi/4},\delta e^{-\ii \pi/4}).
\end{split} \end{equation}
\end{lemma}
\begin{proof}
Just as in \eqref{f asymp small} we have the expansion
\begin{equation}\label{g asymp}
g(z) = - \frac{4z^4}{27}+\landauO{\abs{z}^5}, \qquad \abs{z}\ll 1.
\end{equation}
It thus follows that for some small $\delta>0$, and
\begin{equation*}
\Omega_k^< \defeq \Set{z\in\C|\abs{z}<\delta,\abs[5]{\arg z-\frac{k\pi}{4}}<\delta}
\end{equation*}
we have $\Omega_{\pm 1}^<,\Omega_{\pm 3}^<\subset \Omega_+\defeq \Set{\Re g>0}$ and $\Omega_{0}^<,\Omega_{\pm 2}^<,\Omega_4^<\subset \Omega_-\defeq\Set{\Re g<0}$
in agreement with Figure \ref{small_scale_phase}. For large $z$, however, it also follows from \eqref{f asymp large} together with \eqref{g wt g approx} and \eqref{fg diff} that for some large $R$, and 
\begin{equation*}
\Omega_k^> \defeq \Set{z\in\C|\abs{z}>R, \frac{(k-1)\pi}{4}+\delta<\arg z<\frac{(k+1)\pi}{4}+\delta}
\end{equation*}
we have $\Omega_0^>,\Omega_4^>\subset \Omega_+$ and $\Omega_{\pm 2}^>\subset\Omega_-$, in agreement with Figure \ref{large_scale_phase}. We denote the connected component of $\Omega_\pm$ containing some set $A$ by $\cco(A)$.  

\begin{figure}
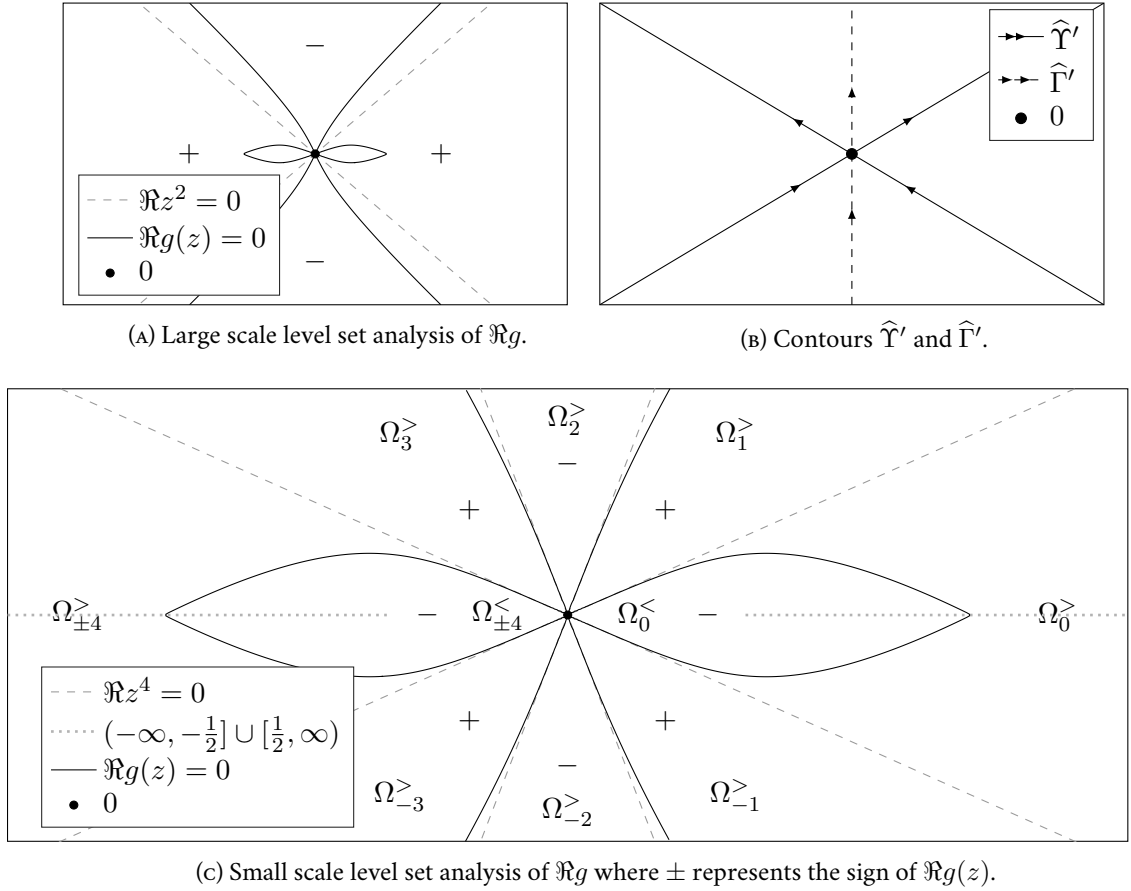

\centering
\begin{subfigure}[t]{.45\textwidth}
\setlength{\figurewidth}{\textwidth}
\setlength{\figureheight}{4cm}
\input{phase_plot_ls}
\caption{Large scale level set analysis of $\Re g$.}
\label{large_scale_phase}
\end{subfigure}
\begin{subfigure}[t]{.45\textwidth}
\setlength{\figurewidth}{\textwidth}
\setlength{\figureheight}{4cm}
\usetikzlibrary{decorations,decorations.markings}
\tikzset{forward arrow/.style={
 postaction={
  decorate,
  decoration={
   markings,
   mark=at position .4 with {\arrow{latex}},%
   mark=at position .623 with {\arrow{latex}}
  }
 }
}}
\tikzset{forward arrow2/.style={
 postaction={
  decorate,
  decoration={
   markings,
   mark=at position .32 with {\arrow{latex}},%
   mark=at position .723 with {\arrow{latex}}
  }
 }
}}
\tikzset{backward arrow/.style={
 postaction={
  decorate,
  decoration={
   markings,
   mark=at position .4 with {\arrow{latex reversed}},%
   mark=at position .623 with {\arrow{latex reversed}}
  }
 }
}}
\begin{tikzpicture}

\begin{axis}[%
width=0.951\figurewidth,
height=\figureheight,
at={(0\figurewidth,0\figureheight)},
scale only axis,
xmin=-1,
xmax=1,
xtick={\empty},
ymin=-1,
ymax=1,
ytick={\empty},
axis background/.style={fill=white},
legend style={legend cell align=left, align=left, draw=white!15!black}
]
\addplot [color=black,forward arrow]
  table[row sep=crcr]{%
-1	-1\\
-0.9	-0.9\\
-0.8	-0.8\\
-0.7	-0.7\\
-0.6	-0.6\\
-0.5	-0.5\\
-0.4	-0.4\\
-0.3	-0.3\\
-0.2	-0.2\\
-0.1	-0.1\\
0	0\\
0.1	0.1\\
0.2	0.2\\
0.3	0.3\\
0.4	0.4\\
0.5	0.5\\
0.6	0.6\\
0.7	0.7\\
0.8	0.8\\
0.9	0.9\\
1	1\\
};
\addlegendentry{$\widehat\Upsilon'$}

\addplot [color=black,backward arrow,forget plot]
  table[row sep=crcr]{%
-1	1\\
-0.9	0.9\\
-0.8	0.8\\
-0.7	0.7\\
-0.6	0.6\\
-0.5	0.5\\
-0.4	0.4\\
-0.3	0.3\\
-0.2	0.2\\
-0.1	0.1\\
0	-0\\
0.1	-0.1\\
0.2	-0.2\\
0.3	-0.3\\
0.4	-0.4\\
0.5	-0.5\\
0.6	-0.6\\
0.7	-0.7\\
0.8	-0.8\\
0.9	-0.9\\
1	-1\\
};
\addplot [color=black, dashed,forward arrow2]
  table[row sep=crcr]{%
0	-1\\
0	-0.9\\
0	-0.8\\
0	-0.7\\
0	-0.6\\
0	-0.5\\
0	-0.4\\
0	-0.3\\
0	-0.2\\
0	-0.1\\
0	0\\
0	0.1\\
0	0.2\\
0	0.3\\
0	0.4\\
0	0.5\\
0	0.6\\
0	0.7\\
0	0.8\\
0	0.9\\
0	1\\
};
\addlegendentry{$\widehat\Gamma'$}

\addplot [color=black, draw=none, only marks, mark options={solid, black}]
  table[row sep=crcr]{%
0	0\\
};
\addlegendentry{$0$}

\end{axis}
\end{tikzpicture}%
\caption{Contours $\wh\Upsilon'$ and $\wh\Gamma'$.}
\label{gammas_plot}
\end{subfigure}\\[1em]
\noindent\begin{subfigure}[t]{\textwidth}
\setlength{\figurewidth}{\textwidth}
\setlength{\figureheight}{6cm}
\input{phase_plot}
\caption{Small scale level set analysis of $\Re g$ where $\pm$ represents the sign of $\Re g(z)$.}
\label{small_scale_phase}
\end{subfigure}
\caption{Representative cusp analysis. Figures \ref{small_scale_phase} and \ref{large_scale_phase} show the level set $\Re g(z)=0$. On a small scale $g(z)\sim z^4$, while on a large scale $g(z)\sim z^2$. Figure \ref{gammas_plot} shows the final deformed and rescaled contours $\wh\Upsilon'$ and $\wh\Gamma'$. Figure \ref{small_scale_phase} furthermore shows the cone sections $\Omega_k^>$ and $\Omega_k^{<}$, where we for clarity do not indicate the precise area thresholds given by $\delta$ and $R$. We also do not specifically indicate $\Omega_k^<$ for $k = \pm 1,\pm 2,\pm 3$ as then $\cco(\Omega_k^<)=\cco(\Omega_k^>)$, cf.~Claims 4--5 in the proof of Lemma \ref{contour deform lemma}.}
\label{cusp figure} 
\end{figure} 
\begin{description}
\item[Claim 1 -- $\cco(\Omega_{0}^>),\cco(\Omega_{4}^>)$ are the only two unbounded connected components of $\Omega_+$] Suppose there was another unbounded connected component $A$ of $\Omega_+$. Since $\Omega_{\pm_2}^>\subset\Omega_-$ we would be able to find some $z_0\in A$ with arbitrarily large $\abs{\Re z_0}$. If $\Re z_0>0$, then we note that the map $x\mapsto\Re g(z_0+x)$ is increasing, and otherwise we note that the map $x\mapsto \Re g(z_0-x)$ is increasing. Thus it follows in both cases that the connected component $A$ actually coincides with $\cco(\Omega_0^>)$ or with $\cco(\Omega_4^>)$, respectively.
\item[Claim 2 -- $\cco(\Omega_{\pm 2}^>)$ are the only two unbounded connected components of $\Omega_-$] This follows very similarly to Claim 1.
\item[Claim 3 -- $\cco(\Omega_{\pm 1}^<),\cco(\Omega_{\pm 2}^<),\cco(\Omega_{\pm 3}^<)$ are unbounded]
We note that the map $z\mapsto \Re g(z)$ is harmonic on $\C\setminus([1/2,\infty)\cup(-\infty,-1/2])$ and subharmonic on $\C$. Therefore it follows that $\cco(\Omega^<_{\pm1}),\cco(\Omega^<_{\pm3})\subset \Omega_+$ are unbounded. Since these sets are moreover symmetric with respect to the real axis it then also follows that $\cco(\Omega_{\pm 2})\cap((-\infty,-1/2]\cup [1/2,\infty))=\emptyset$. This implies that $\Re g(z)$ is harmonic on $\cco(\Omega^<_{\pm 2})$ and consequently also that $\cco(\Omega^<_{\pm 2})$ are unbounded.
\item[Claim 4 -- $\cco(\Omega_{1}^<)=\cco(\Omega_{-1}^<)=\cco(\Omega_0^>)$ and $\cco(\Omega_{3}^<)=\cco(\Omega_{-3}^<)=\cco(\Omega_4^>)$] This follows from Claims 1--3. 
\item[Claim 5 -- $\cco(\Omega_2^<)=\cco(\Omega_2^>)$ and $\cco(\Omega_{-2}^<)=\cco(\Omega_{-2}^>)$] This also follows from Claims 1--3.
\end{description}

The claimed bounds on $\Re g$ now follow from Claims 4--5 and compactness. The claimed small scale shape \eqref{wh contour local} follows by construction of the sets $\Omega^<_k$. 
\end{proof}

From Lemma \ref{contour deform lemma} and Lemma \ref{cor no eigenvalues outside} it follows that $K_N^t$ and thereby also $\wt K_N^t$ remain, with overwhelming probability, invariant under the chosen contour deformation. Indeed, $K_N^t$ only has poles where $z=w$ or $z=\lambda_i$ for some $i$. Due to self-adjointness and Lemma \ref{contour deform lemma}, $z=\lambda_i$ can only occur if $\lambda_i=\xi$ or $\dist(\lambda_i,\supp\rho_t)>1$. Both probabilities are exponentially small as a consequence of Lemma \ref{cor no eigenvalues outside}, since for the former we have $\eta_{\mathrm{f}}(\xi)\sim N^{-3/4+\epsilon/6}$ according to \eqref{eta* in gap}, while $\dist(\xi,\supp\rho_t)\sim N^{-3/4+3\epsilon/2}$.

For $z\in\wh\Gamma\cup\wh\Upsilon$ it follows from \eqref{wh contour local} that we can estimate
\begin{equation} \label{f ftilde}\abs[0]{f(z)-\wt f(z)} = \frac{ct}{\Delta_0^2}\abs{\int_\xi^{\xi+\Delta_0 z} \braket{\wt G_t(u)-M_t(u)}\diff u} \prec \frac{t\Delta_0\abs{z}}{Nt^{3/2}\Delta_0^2} \sim \frac{\abs{z}}{Nt^2}=\abs{z}N^{-2\epsilon}.\end{equation}
Indeed, for \eqref{f ftilde} we used \eqref{wh contour local} to obtain $\dist(\Re u,\supp\rho_t)\gtrsim t^{3/2}$, so that $\abs[0]{\braket{ \wt G_t(u)-M_t(u)}}\prec 1/Nt^{3/2}$ follows from the local law from \eqref{average local law inside spectrum}.

We now distinguish three regimes: $\abs{z}\lesssim N^{-\epsilon/2}$, $N^{-\epsilon/2}\lesssim \abs{z}\ll 1$ and finally $\abs{z}\gtrsim1$ which we call microscopic, mesoscopic and macroscopic. We first consider the latter two regimes as they only contribute small error terms. 

\subsubsection*{Macroscopic regime.} If either $\abs{z}\ge\delta$ or $\abs{w}\ge\delta$, it follows from Lemma \ref{contour deform lemma} that $\Re g(w)\le -K$ and/or $\Re g(z)\ge K$, and therefore together with \eqref{fg diff},\eqref{g wt g approx} and \eqref{f ftilde} that $\Re \wt f(w)\lesssim -K$ and/or $\Re \wt f(z)\gtrsim K$ with overwhelming probability. Using $\Delta_0\sim N^{-3/4+3\epsilon/2}$ from \eqref{gap size general}, we find that $N\Delta_0^2/ct\sim N^{2\epsilon}$ and $\Delta_0 N^{1/4}/ct\gamma\sim N^{\epsilon/2}$, so that the integrand in \eqref{tilde Kernel} in the considered regime is exponentially small.

\subsubsection*{Mesoscopic regime.} 
If either $\delta\ge\abs{z}\gg N^{-\epsilon/2}$ or $\delta\ge\abs{w}\gg N^{-\epsilon/2}$, then $\Re g(w)\sim -\abs{w}^4 \ll - N^{-2\epsilon}$ and/or $\Re g(z)\sim \abs{z}^4 \gg N^{-2\epsilon}$ from \eqref{g asymp}. Thus it follows from \eqref{fg diff} and \eqref{g wt g approx} that also $\Re f(w)\ll - N^{-2\epsilon}$ and/or $\Re f(z)\gg N^{-2\epsilon}$ and by \eqref{f ftilde} that with overwhelming probability $\Re \wt f(w)\ll - N^{-2\epsilon}$ and/or $\Re \wt f(z)\gg N^{-2\epsilon}$. Since $1/\abs{w-z}$ is integrable over the contours it thus follows that the contribution to $\wt K_N^t(x,y)$, as in \eqref{tilde Kernel}, from $z,w$ with either $\abs{z}\gg N^{-\epsilon/2}$ or $\abs{w}\gg N^{-\epsilon/2}$ is negligible. 

\subsubsection*{Microscopic regime.} We can now concentrate on the important regime where $\abs{z}+\abs{w}\lesssim N^{-\epsilon/2}$ and to do so perform another change of variables $z\mapsto ct \gamma z/\Delta_0 N^{1/4}\sim N^{-\epsilon/2} z$, $w\mapsto ct \gamma w/\Delta_0 N^{1/4}\sim N^{-\epsilon/2} w$ which gives rise to two new contours 
\[\wh\Gamma'\defeq \frac{\Delta_0 N^{1/4}}{ct\gamma}\wh\Gamma,\qquad \wh\Upsilon' \defeq \frac{\Delta_0 N^{1/4}}{ct\gamma}\wh\Upsilon,\]
as depicted in Figure \ref{gammas_plot}, and the kernel
\begin{equation}\label{Kernel final rescaling}
\wt K_N^t(x,y)= \frac{1}{(2\pi \ii)^2} \int_{\widehat \Upsilon'}\diff z\int_{\widehat\Gamma'} \diff w \frac{\exp\left( xz-yw + \frac{N\Delta_0^2}{ct}[\wt f(\frac{ct\gamma w}{\Delta_0 N^{1/4}})-\wt f(\frac{ct\gamma z}{\Delta_0 N^{1/4}})]\right)}{w-z}.\end{equation}
We only have to consider $w,z$ with $\abs{w}+\abs{z}\lesssim 1$ in \eqref{Kernel final rescaling} since $t/\Delta_0 N^{1/4}\sim N^{-\epsilon/2}$ and the other regime has already been covered in the previous paragraph before the change of variables. 

We now separately estimate the errors stemming from replacing $\wt f(z)$ first by $f(z)$, then by $\wt g(z)$ and finally by $\pm t^\rho z^2/2ct-4z^4/27$. We recall that $\Delta_0\sim t^{3/2}=N^{-3/4+3\epsilon/2}$ from \eqref{gap size general}, $t^\rho\lesssim N^{-1/2}$ from the definition of $t^\rho$ in \eqref{gap size general},  and that $t=N^{-1/2+\epsilon}$ which will be used repeatedly in the following estimates. According to \eqref{f ftilde}, we have 
\begin{subequations}\label{f g error bounds}
\begin{equation} \frac{N\Delta_0^2}{ct}\abs{\wt f\Big(\frac{ct\gamma z}{\Delta_0 N^{1/4}}\Big)-f\Big(\frac{ct\gamma z}{\Delta_0 N^{1/4}}\Big)} \prec \frac{N\Delta_0^2}{t}\frac{t}{\Delta_0 N^{1/4}} N^{-2\epsilon} \abs{z} \lesssim N^{-\epsilon/2}.\end{equation}
Next, from \eqref{fg diff} we have
\begin{equation} \frac{N\Delta_0^2}{ct}\abs{f\Big(\frac{ct\gamma z}{\Delta_0 N^{1/4}}\Big)-\wt g\Big(\frac{ct\gamma z}{\Delta_0 N^{1/4}}\Big)} \lesssim  t^{1/3} \abs{\frac{ct\gamma z}{\Delta_0 N^{1/4}}}^2 \frac{N\Delta_0^2}{ct} + t^{1/3} \frac{N\Delta_0^2}{ct} \lesssim  N^{-1/6+7\epsilon/3}.\end{equation}
Finally, we have to estimate the error from replacing $\wt g(z)$ by its Taylor expansion with \eqref{f asymp small} and find
\begin{equation} \frac{N\Delta_0^2}{ct} \abs{\wt g\Big(\frac{ct\gamma z}{\Delta_0 N^{1/4}}\Big)-\frac{\pm t^\rho}{2ct}\Big(\frac{ct\gamma z}{\Delta_0 N^{1/4}}\Big)^2+\frac{4}{27} \Big(\frac{ct\gamma z}{\Delta_0 N^{1/4}}\Big)^4} \lesssim  N^{-\epsilon/2}. \end{equation}
Finally, from \eqref{gap size general} and the definition of $\alpha$ from \eqref{eq pearcey param choice} we obtain that
\begin{equation} \frac{N\Delta_0^2}{ct}\left[ \frac{\pm t^\rho}{2ct}\left(\frac{ct\gamma z}{\Delta_0 N^{1/4}}\right)^2-\frac{4}{27}\left(\frac{ct\gamma z}{\Delta_0 N^{1/4}}\right)^4\right] =\left(\alpha \frac{z^2}{2}-\frac{z^4}{4}\right)[1+\landauO[0]{t^{1/3}}].\end{equation}
\end{subequations}
From \eqref{f g error bounds} and the integrability of $1/\abs{z-w}$ for small $z,w$ along the contours we can thus conclude
\begin{equation}\label{eq general pearcey}
\wt K_N^t(x,y)= (1+\landauO{N^{-c}})\frac{1}{(2\pi \ii)^2} \int_{\wh\Upsilon'}\diff z\int_{\wt \Gamma'} \diff w \frac{e^{xz-yw + z^4/4 - \alpha z^2/2-w^4/4+\alpha w^2/2}}{w-z}.
\end{equation}
Furthermore, it follows from \eqref{wh contour local} that, as $N\to\infty$, the contours $\wh\Upsilon',\wh\Gamma'$ are those depicted in Figure \ref{gammas_plot}, i.e.
\[\wh\Upsilon'= (- e^{\ii\pi/4}\infty,e^{\ii\pi/4}\infty) \cup (- e^{-\ii\pi/4}\infty,e^{-\ii\pi/4}\infty),\qquad \wh\Gamma'\defeq (-\ii\infty,\ii\infty).\]
We recognize \eqref{eq general pearcey} as the extended Pearcey kernel from \eqref{Pearcey kernel}.

It is easy to see that all error terms along the contour integration are uniform in $x,y$ running over any fixed compact set. This proves that $\wt K_N^t(x,y)$ converges to $K_\alpha(x,y)$ uniformly in $x,y$ in a compact set. This completes the proof of Proposition \ref{prop pearcey kernel}.
\end{proof}

\subsection{Green function comparison}
We will now complete the proof of Theorem \ref{thr:cusp universality} by demonstrating that the local $k$-point correlation function at the common physical cusp
 location $\tau_0$ of the matrices $\wt{H}_t$ does not change along the flow \eqref{OU flow}. Together with Proposition \ref{prop pearcey kernel} this completes the proof of Theorem \ref{thr:cusp universality}. A version of this \emph{continuity of the matrix Ornstein-Uhlenbeck process} with respect to the local correlation functions that is valid in the bulk or at regular edges is the third step in the well known three step approach to universality \cite{MR3699468}. We will present this argument in the more general setup of 
correlated random matrices, i.e.~in the setting of \cite{MR3941370}. In particular, we assume
that the cumulants of the matrix elements $w_{ab}$ satisfy the decay conditions \cite[Assumptions (C,D)]{MR3941370}, an assumption that is obviously fulfilled for deformed Wigner-type matrices. 

We claim that the $k$-point correlation function \smash{$p_k^{(N)}$} of $H=\wt{H}_0$ and the corresponding $k$-point correlation function \smash{$\wt{p}_{k,t}^{(N)}$} of $\wt{H}_t$ stay close along the OU-flow in the sense that 
\begin{equation}\label{eq OU cont} \abs{\int_{\R^k} F(\vx)\left[ N^{k/4} p_k^{(N)}\left( \bu + \frac{\vx}{\gamma N^{3/4}}\right)- \wt{p}_{k,t}^{(N)}\left( \bu + \frac{\vx}{\gamma N^{3/4}}\right) \right] \diff x_1\dots \diff x_k} = \landauO{N^{-c}},\end{equation}
for $\epsilon>0$, $t \le N^{-1/4-\epsilon}$, smooth functions $F$ and some constant $c=c(k,\epsilon)$, where $\bu$ is the physical cusp point. The proof of \eqref{eq OU cont} follows the standard arguments of computing $t$-derivatives of products of traces of resolvents \smash{$\wt{G}^{(t)}=(\wt{H}_t-z)$} at spectral parameters $z$ just below the fluctuation scale of eigenvalues, i.e.~for $\Im z\ge N^{-{\zeta}}\eta_f(\Re z)$. Since the procedure detailed e.g.~in \cite[Chapter 15]{MR3699468} is well established and not specific to the cusp scaling, we keep our explanations brief. 

The only cusp-specific part of the argument is estimating products of random variables
\[ X_t=X_t(x)\defeq N^{1/4} \braket{\Im \wt G^{(t)}(\bu+\gamma^{-1}N^{-3/4}x + \ii N^{-3/4-\zeta})}\] and we claim that 
\begin{equation} \label{eq prod X}\E \biggl[\prod_{j=1}^k X_t(x_j)-\prod_{j=1}^k X_0(x_j)\biggr] \lesssim N^{-c}\end{equation}
as long as $t \le N^{-1/4-\epsilon}$ for some $c=c(k,\epsilon,\zeta)$. For simplicity we first consider $k=1$ and find from It\^o's Lemma that 
\begin{equation} \label{eq ito}\E \frac{\diff X_t}{\diff t} = \E\biggl[ -\frac{1}{2}\sum_\alpha w_\alpha \partial_\alpha X_t + \frac{1}{2}\sum_{\alpha,\beta}\kappa(\alpha,\beta)\partial_\alpha\partial_\beta X_t \biggr],\end{equation}
which we further compute using a standard cumulant expansion, as already done in the bulk regime in \cite[Proof of Corollary 2.6]{MR3941370} and in the edge regime in \cite[Section 4.2]{1804.07744}. We recall that $\kappa(\alpha,\beta)$, and more generally $\kappa(\alpha,\beta_1,\dots,\beta_k)$ denote the joint cumulants of the random variables $w_\alpha,w_\beta$ and $w_\alpha,w_{\beta_1},\dots,w_{\beta_k}$, respectively, which accordingly scale like $N^{-1}$ and $N^{-(k+1)/2}$. Here greek letters $\alpha,\beta \in [N]^2$ are double indices. After cumulant expansion, the leading term in \eqref{eq ito} cancels, and the next order contribution is 
\[ \sum_{\alpha,\beta_1,\beta_2} \kappa(\alpha,\beta_1,\beta_2) \E\bigl[ \partial_\alpha\partial_{\beta_1}\partial_{\beta_2} X_t \bigr],\]
with $N^{-3/2}$ being the size of the cumulant $\kappa(\alpha,\beta_1,\beta_2)$. With $\alpha=(a,b)$ and $\beta_i=(a_i,b_i)$ we then estimate
\[ 
\begin{split}
  &N^{-3/4}\sum_{a,b,c}\sum_{a_1,b_1,a_2,b_2} \abs{\kappa(ab,a_1b_1,a_2b_2)}\E\abs{\wt G_{ca}^{(t)}\wt G^{(t)}_{ba_1}\wt G^{(t)}_{b_1a_2}\wt G^{(t)}_{b_2c}}\\
  &\quad\le N^{-3/4-3/2+2+3/4+{\zeta}} \norm[0]{\Im \wt G^{(t)}}_3 \norm[0]{\wt G^{(t)}}_3^2,
\end{split}\]
where we used the Ward-identity and that $\max_\alpha\sum_{\beta_1,\beta_2}\kappa(\alpha,\beta_1,\beta_2)\lesssim N^{-3/2}$. We now use that according to \cite[Proof of Prop.~5.5]{MR3941370}, $\eta\mapsto\eta\norm[0]{\wt G^{(t)}}_p$ and similarly $\eta\mapsto\eta\norm[0]{\Im \wt G^{(t)}}_p$ are monotonically increasing with $\eta'=N^{-3/4+{\zeta}}$ to find $\norm[0]{\Im\wt G^{(t)}}_p\le_p N^{3{\zeta}-1/4}$ and $\norm[0]{\wt G^{(t)}}_p\le_p N^{3{\zeta}}$ from the local law from Theorem \ref{thr:Local law} and the scaling of $\rho$ at $\eta'$. Since all other error terms can be handled similarly and give an even smaller contribution it follows that 
\begin{equation}\label{dX eq}
\abs{\E\frac{\diff X_t}{\diff t}}\lesssim N^{1/4+{C\zeta}}\quad \text{and similarly, but more generally,}\quad \abs{\E\frac{\diff}{\diff t} \prod_{j=1}^k X_t(x_j)}\lesssim N^{1/4+{Ck \zeta}},
\end{equation}
for some constant $C>0$.
Now \eqref{eq prod X} and therefore \eqref{eq OU cont} follow from \eqref{dX eq} as in \cite[Theorem 15.3]{MR3699468} using the choice $t=N^{-1/2+\epsilon} \le N^{-1/4-\epsilon}$ and choosing $\zeta$ sufficiently small.

\appendix

\section{Technical lemmata}
\begin{lemma} 
\label{lmm:cubic equation}
Let $\C^{N \times N}$ be equipped with a norm $\norm{\cdot}$. Let $\mathcal{A}\colon \C^{N \times N} \times \C^{N \times N}\to \C^{N \times N}$ be a bilinear form and let $\mathcal{B}\colon \C^{N \times N} \to \C^{N \times N}$ a linear operator with a non-degenerate isolated eigenvalue $\beta$. Denote the spectral projection corresponding to $\beta$ by $\mathcal{P}$  and by $\mathcal{Q}$ the one corresponding to the spectral complement of $\beta$, i.e.
\[
\mathcal{P}\defeq  -\lim_{\epsilon \searrow 0}\frac{1}{2\pi \ii}\oint_{\partial B_\epsilon(\beta)}\frac{\dd \omega}{\mathcal{B}-\omega}= \braket{V_\mathrm{l}, \cdot} V_\mathrm{r},\qquad \mathcal{Q}\defeq  1-\mathcal{P},
\]
where $V_\mathrm{r}$ is the eigenmatrix corresponding to $\beta$ and $\braket{V_\mathrm{l}, \cdot}$ a linear functional. Assume that for some positive constant $\lambda>1$ the bounds 
\begin{equation} \label{upper bound in cubic lemma} \norm{\mathcal{A}}+ \norm{\mathcal{B}^{-1}\mathcal{Q}} +  \norm{\braket{V_\mathrm{l}, \cdot}} + \norm{V_\mathrm{r}} \le \lambda, \end{equation}
are satisfied, where we denote the induced norms on linear operators, linear functionals and bilinear forms on $\C^{N \times N}$ by the same symbol $\norm{\cdot}$. Then there exists a universal constant $c>0$ such that for any $\delta \in (0,1)$ and any ${Y}, {X} \in \C^{N \times N}$ with $\norm{Y}+ \norm{{X}}\le c\lambda^{-4}$ that satisfies  the quadratic equation
\begin{equation} \label{quadratic eq} \mathcal{B}[{Y}] - \mathcal{A}[{Y},{Y}]+ {X}= 0, \end{equation}
the following holds: The scalar quantity 
\[
\Theta \defeq  \braket{V_\mathrm{l},Y},
\]
fulfils the cubic equation
\begin{equation} \label{cubic with mu-coefficients} \mu_3 \Theta^3+\mu_2 \Theta^2 + \mu_1 \Theta + \mu_0= \lambda^{12}\landauO{\delta \abs{\Theta}^3+\abs{\Theta}^4 + \delta^{-2}\norm{{X}}^3}, \end{equation}
with coefficients
\begin{equation}\begin{split} \label{coefficients cubic lemma} \mu_3&=\braket{V_\mathrm{l},\mathcal{A}[V_\mathrm{r},\mathcal{B}^{-1}\mathcal{Q}\mathcal{A}[V_\mathrm{r},V_\mathrm{r}]]+\mathcal{A}[\mathcal{B}^{-1}\mathcal{Q}\mathcal{A}[V_\mathrm{r},V_\mathrm{r}],V_\mathrm{r}]}
\\
\mu_2&=\braket{V_\mathrm{l},\mathcal{A}[V_\mathrm{r},V_\mathrm{r}]}
\\
\mu_1&=-\braket{V_\mathrm{l},\mathcal{A}[\mathcal{B}^{-1}\mathcal{Q}[{X}],V_\mathrm{r}]+\mathcal{A}[V_\mathrm{r},\mathcal{B}^{-1}\mathcal{Q}[{X}]]}-\beta
\\
\mu_0&=\braket{V_\mathrm{l},\mathcal{A}[\mathcal{B}^{-1}\mathcal{Q}[{X}],\mathcal{B}^{-1}\mathcal{Q}[{X}]]-{X}}. \end{split}\end{equation}
Furthermore, 
\begin{equation} \label{R leading order} {Y}= \Theta  V_\mathrm{r}- \mathcal{B}^{-1}\mathcal{Q}[{X}] +\Theta^2\mathcal{B}^{-1}\mathcal{Q}\mathcal{A}[V_\mathrm{r},V_\mathrm{r}] +\lambda^7\landauO{\abs{\Theta}^3+\abs{\Theta}\norm{X}+\norm{{X}}^2}. \end{equation}
Here, the constants implicit in the $\ord$-notation depend on $c$ only.
\end{lemma}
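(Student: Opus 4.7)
\medskip

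\noindent\textbf{Proof plan for Lemma~\ref{lmm:cubic equation}.}
The strategy is to reduce the quadratic matrix equation \eqref{quadratic eq} to a scalar cubic equation for the projection $\Theta=\braket{V_\mathrm{l},Y}$ by iteratively solving for the complementary component $\mathcal{Q}[Y]$ in terms of $\Theta$ and $X$. First I would record two consequences of the spectral decomposition: since $\mathcal{B}$ commutes with its spectral projections, applying $\mathcal{Q}$ to \eqref{quadratic eq} yields
\begin{equation*}
\mathcal{Q}[Y]=\mathcal{B}^{-1}\mathcal{Q}\mathcal{A}[Y,Y]-\mathcal{B}^{-1}\mathcal{Q}[X],
\end{equation*}
while $\braket{V_\mathrm{l},\mathcal{B}[R]}=\beta\braket{V_\mathrm{l},R}$ lets me project \eqref{quadratic eq} onto $V_\mathrm{l}$ to get the scalar identity
\begin{equation*}
\beta\Theta=\braket{V_\mathrm{l},\mathcal{A}[Y,Y]}-\braket{V_\mathrm{l},X}.
\end{equation*}
Combined with the decomposition $Y=\Theta V_\mathrm{r}+\mathcal{Q}[Y]$, this already provides the skeleton of the argument: substitute the first display into $Y$, plug the result into the second, and collect like powers of $\Theta$.

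Next, I would establish the bootstrap bound $\norm{Y}\le 2\lambda(\abs{\Theta}+\norm{X})$. Using \eqref{upper bound in cubic lemma} in the displayed equation for $\mathcal{Q}[Y]$ gives $\norm{\mathcal{Q}[Y]}\le \lambda^2\norm{Y}^2+\lambda\norm{X}$, and since $\norm{Y}\le \norm{V_\mathrm{r}}\abs{\Theta}+\norm{\mathcal{Q}[Y]}$, the smallness assumption $\norm{Y}\le c\lambda^{-4}$ lets me absorb the quadratic term into the linear one. With this linear a priori bound, I would then iterate the equation for $\mathcal{Q}[Y]$ once, writing
\begin{equation*}
Y=\Theta V_\mathrm{r}-\mathcal{B}^{-1}\mathcal{Q}[X]+\mathcal{B}^{-1}\mathcal{Q}\mathcal{A}[Y,Y],
\end{equation*}
and, on the right, expanding $\mathcal{A}[Y,Y]$ using the same identity replaced iteratively for $Y$. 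Keeping only the term $\mathcal{A}[\Theta V_\mathrm{r},\Theta V_\mathrm{r}]=\Theta^2\mathcal{A}[V_\mathrm{r},V_\mathrm{r}]$ and absorbing all cross-products and $\mathcal{A}[Y,Y]^2$-type residues into an error bounded by $\lambda^{7}(\abs{\Theta}^3+\abs{\Theta}\norm{X}+\norm{X}^2)$, the bootstrap bound and bilinearity of $\mathcal{A}$ yield \eqref{R leading order}.

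With \eqref{R leading order} in hand, I would substitute this expansion into $\braket{V_\mathrm{l},\mathcal{A}[Y,Y]}$ and expand bilinearly, collecting contributions by their total order in $\Theta$ and $X$. The $\Theta^2$-coefficient is $\mu_2=\braket{V_\mathrm{l},\mathcal{A}[V_\mathrm{r},V_\mathrm{r}]}$; the $\Theta$-coefficient picks up $-\braket{V_\mathrm{l},\mathcal{A}[V_\mathrm{r},\mathcal{B}^{-1}\mathcal{Q}[X]]+\mathcal{A}[\mathcal{B}^{-1}\mathcal{Q}[X],V_\mathrm{r}]}$ from the cross terms, which after subtracting $\beta\Theta$ from the scalar identity gives the stated $\mu_1$; the $\Theta$-free part contributes $\braket{V_\mathrm{l},\mathcal{A}[\mathcal{B}^{-1}\mathcal{Q}[X],\mathcal{B}^{-1}\mathcal{Q}[X]]}$, which combined with $-\braket{V_\mathrm{l},X}$ from the scalar identity yields $\mu_0$; and the $\Theta^3$-coefficient arises exactly through the $\Theta^2\mathcal{B}^{-1}\mathcal{Q}\mathcal{A}[V_\mathrm{r},V_\mathrm{r}]$ term of \eqref{R leading order} paired with a remaining $\Theta V_\mathrm{r}$, producing the symmetrized expression $\mu_3$ in \eqref{coefficients cubic lemma}.

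The remaining task is to bound the residual by $\lambda^{12}(\delta\abs{\Theta}^3+\abs{\Theta}^4+\delta^{-2}\norm{X}^3)$. All leftover monomials are of the form $\abs{\Theta}^a\norm{X}^b$ with $a+b\ge 3$ and $(a,b)\notin\{(3,0),(0,3)\}$ (since those have already been identified as $\mu_3\Theta^3$ and $\mu_0$-type contributions), plus genuine fourth-order pieces like $\abs{\Theta}^4$ stemming from the next-iteration remainder. The mixed terms $\abs{\Theta}^2\norm{X}$ and $\abs{\Theta}\norm{X}^2$ are handled by Young's inequality: choosing the splittings $(2/3,1/3)$ and $(1/3,2/3)$ in $p^{-1}+q^{-1}=1$ with a weight chosen to produce a prefactor $\delta$ in front of $\abs{\Theta}^3$ gives residuals of the form $\delta\abs{\Theta}^3+C\delta^{-2}\norm{X}^3$. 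The purely $X$-quadratic or higher terms are absorbed into $\delta^{-2}\norm{X}^3$ using the smallness $\norm{X}\le c\lambda^{-4}\ll 1$, and the $\lambda$-powers are tracked via \eqref{upper bound in cubic lemma} which costs at most $\lambda$ per factor of $\mathcal{A}$, $\mathcal{B}^{-1}\mathcal{Q}$, $V_\mathrm{r}$ or $\braket{V_\mathrm{l},\cdot}$; the worst term carries twelve such factors, producing the prefactor $\lambda^{12}$.

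The main obstacle is the bookkeeping in the last step: one must verify that every monomial appearing after fully expanding \eqref{R leading order} inside $\braket{V_\mathrm{l},\mathcal{A}[Y,Y]}$ either matches one of the four explicit $\mu_j$-terms or fits into the claimed error form after a uniformly chosen Young's splitting. The iteration procedure must be stopped at precisely the right order---one more than needed to identify $\mu_3\Theta^3$---so that the truncation error is still $\lambda$-polynomially controlled rather than requiring a geometric series summation.
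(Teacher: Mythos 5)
Your proposal is correct and follows essentially the same route as the paper's proof: decompose $Y$ via the spectral projections, bootstrap a quadratic self-bound on the $\mathcal{Q}$-component into the expansion \eqref{R leading order}, substitute into the $\mathcal{P}$-projected (equivalently, $\braket{V_\mathrm{l},\cdot}$-projected) equation, and absorb the mixed monomials via Young's inequality with the stated $\delta$-weights. The only cosmetic difference is that the paper names the pieces $Y_1=\Theta V_\mathrm{r}-\mathcal{B}^{-1}\mathcal{Q}[X]$ and $Y_2=\mathcal{Q}[Y]+\mathcal{B}^{-1}\mathcal{Q}[X]$ explicitly, which is the same decomposition you use implicitly.
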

\begin{proof}
We decompose ${Y}$ as 
\[
{Y}= {Y}_1+{Y}_2,\qquad {Y}_1 = \Theta  V_\mathrm{r}- \mathcal{B}^{-1}\mathcal{Q}[{X}],\qquad {Y}_2= \mathcal{Q}[{Y}]+ \mathcal{B}^{-1}\mathcal{Q}[{X}].
\]
Then \eqref{quadratic eq} takes the form 
\begin{equation} \label{quadratic with Omega} \Theta \beta  V_\mathrm{r} + \mathcal{P}[{X}]+\mathcal{B}\mathcal{Q}[{Y}_2]=  \mathcal{A}[{Y},{Y}]. \end{equation}
We project both sides with $\mathcal{Q}$, invert $\mathcal{B}$ and take the norm to conclude 
\[
\norm{{Y}_2}= \lambda^2\ord(\norm{{Y}_1}^2 + \norm{{Y}_2}^2),
\]
Then we use the smallness of ${Y}_2$ by properly choosing $\delta$ and the definition of ${Y}_1$ to infer 
${Y}_2= \lambda^4 \ord_2$, where we introduced the notation
\[
\ord_k = \ord(\abs{\Theta}^k + \norm{{X}}^k).
\]
Inserting this information back into \eqref{quadratic with Omega} and using $\abs{\Theta} +\norm{X}=\ord(\lambda^{-3})$ reveals 
\begin{equation} \label{R2 expansion} {Y}_2= \mathcal{B}^{-1}\mathcal{Q}\mathcal{A}[{Y}_1,{Y}_1] + \lambda^7\ord_3. \end{equation} 
In particular, \eqref{R leading order} follows.
Plugging \eqref{R2 expansion} into \eqref{quadratic with Omega} and applying the projection $\mathcal{P}$ yields
\begin{equation*}
\begin{split}
\Theta \beta  V_\mathrm{r} + \mathcal{P}[{X}]&= \mathcal{P}\Big[ \mathcal{A}[{Y}_1,{Y}_1]+ \mathcal{A}[{Y}_1,{Y}_2]+ \mathcal{A}[{Y}_2,{Y}_1] \Big] + \lambda^{11}\ord_4
\\
&= \mathcal{P}\Big[ \mathcal{A}[{Y}_1,{Y}_1]+ \mathcal{A}[{Y}_1,\mathcal{B}^{-1}\mathcal{Q}\mathcal{A}[{Y}_1,{Y}_1]]+ \mathcal{A}[\mathcal{B}^{-1}\mathcal{Q}\mathcal{A}[{Y}_1,{Y}_1],{Y}_1] \Big] + \lambda^{11}\ord_4.
\end{split}
\end{equation*}
For a linear operator $\mathcal{K}_1$ and a bilinear form $\mathcal{K}_2$ with $\norm{\mathcal{K}_1}+\norm{\mathcal{K}_2}\le 1$ we use the general bounds
\[
\Theta \mathcal{K}_2[{R},{R}]\le \delta\Theta^3 +\delta^{-1/2} \norm{{R}}^{3},\qquad \Theta^2\mathcal{K}_1[{R}] \le \delta\Theta^3 + \delta^{-2}\norm{{R}}^3,
\]
for any $R \in \C^{N \times N}$ and $\delta >0$ to find
\begin{equation*}\begin{split} 
\Theta \beta  V_\mathrm{r} + \mathcal{P}[{X}]
&=  \mathcal{P}\Big[ \mathcal{A}[\Theta  V_\mathrm{r}- \mathcal{B}^{-1}\mathcal{Q}[{X}],\Theta  V_\mathrm{r}- \mathcal{B}^{-1}\mathcal{Q}[{X}]]+ \Theta^3\mathcal{A}[V_\mathrm{r},\mathcal{B}^{-1}\mathcal{Q}\mathcal{A}[V_\mathrm{r},V_\mathrm{r}]]\\
&\qquad\qquad+ \Theta^3\mathcal{A}[\mathcal{B}^{-1}\mathcal{Q}\mathcal{A}[V_\mathrm{r},V_\mathrm{r}],V_\mathrm{r}] \Big] 
\\
& \qquad +\lambda^8  \ord\big(\delta \abs{\Theta}^3+\lambda^3 \abs{\Theta}^4+ \delta^{-2}\norm{{X}}^3\big), \end{split}\end{equation*}
which proves \eqref{cubic with mu-coefficients}. 
\end{proof}
\begin{proof}[Proof of Lemma \ref{lemma tilde xi}]
Due to the asymptotics $\Psi_{\mathrm{edge}}\sim \min\{\lambda^{1/2},\lambda^{1/3}\}$ and $\Psi_{\mathrm{min}}\sim\min\{\lambda^2,\abs{\lambda}^{1/3}\}$ and the classification of singularities in \eqref{gamma def eqs}, we can infer the following behaviour of the self-consistent fluctuation scale from Definition~\ref{def:sc fluctuation scale}. There exists a constant $c>0$ only depending on the model parameters such that we have the following asymptotics. First of all, in the spectral bulk we trivially have that $\eta_\mathrm{f}(\tau)\sim N^{-1}$ as long as $\tau$ is at least a distance of $c>0$ away from local minima of $\rho$. In the remaining cases we use the explicit shape formulae from \eqref{gamma def eqs} to compute $\eta_\mathrm{f}$ directly from Definition \ref{def:sc fluctuation scale}. 
\begin{subequations}
\label{eta* behavior}
\begin{enumerate}[(a)]
\item \emph{Non-zero local minimum or cusp.} Let $\tau$ be the location of a non-zero local minimum $\rho(\tau)=\rho_0>0$ or a cusp $\rho(\tau)=\rho_0=0$. Then 
\begin{equation}\label{eta* at cusp or minimum}  \eta_{\mathrm{f}}(\tau +\omega)\sim 
\begin{cases}1/(N\max\{\rho_0, \abs{\omega}^{1/3}\}) , & \max\{\rho_0, \abs{\omega}^{1/3}\} > N^{-1/4},\\
N^{-3/4} , &   \max\{\rho_0, \abs{\omega}^{1/3}\} \le N^{-1/4},
\end{cases}\end{equation}
for $\omega \in (-c,c)$.
\item \emph{Edge.} Let $\tau=\ed_\pm$ be the position of a left/right edge at a gap in $\supp \rho \cap(\ed_\pm-\kappa,\ed_\pm + \kappa)$ of size $\Delta \in (0, \kappa]$ (cf.~\eqref{gamma def edge}). Then 
\begin{equation} \label{eta* at edge} \eta_{\mathrm{f}}(\ed_\pm \pm \omega)\sim 
\begin{cases}
N^{-3/4}, & \omega \le \Delta \le N^{-3/4},
\\
\Delta^{1/6}/\omega^{1/2}N, &\Delta^{1/9}/N^{2/3} < \omega \le \Delta,
\\
\Delta^{1/9}/N^{2/3}, & \omega \le \Delta^{1/9}/N^{2/3},\; \Delta > N^{-3/4},
\\
N^{-3/4}, &  \Delta < \omega \le N^{-3/4},
\\
1/\omega^{1/3}N, & \omega \ge N^{-3/4},\; \omega > \Delta,
\end{cases} \end{equation}
for $\omega \in [0,c)$.
\end{enumerate}
\end{subequations}
The claimed bounds in Lemma \ref{lemma tilde xi} now follow directly from \eqref{xi tilde def} and \eqref{eta* behavior} by distinguishing the respective regimes.
\end{proof}
\begin{proof}[Proof of Lemma \ref{lemma ward moment}]
We start from \eqref{value def} and estimate all vertex weights $\bm w^{(v)}$, interaction matrices $R^{(e)}$ and weight matrices $K^{(e)}$ trivially by 
\[ 
\abs[0]{w^{(v)}_a}\le C, \qquad \abs[0]{r^{(e)}_{ab}}\le C N^{-\deg(e)/2},\qquad \abs[0]{k^{(e)}_{ab}}\le C N^{-l(e)}, \qquad \forall a,b
  \]
 to obtain 
\[ 
\abs{\Val(\Gamma)} \le C^{\abs{V}+\abs{\IE}+\abs{\WeE}} N^{n(\Gamma)-\abs{V}} \norm[3]{\bigg(\prod_{v\in V} \sum_{a_v\in J}\bigg) \prod_{e\in\GE} G_e }_1.
 \]
We now choose the vertex ordering $V=\{v_1,\dots,v_m\}$ as in Lemma \ref{lemma equiv coloring deg}. In the first step we partition the set of $G$-edges into three parts $\GE=E_1\cup E_2\cup E_3$: the edges not adjacent to $v_m$, $E_1=\GE\setminus N(v_m)$, the non-Wardable edges adjacent to $v_m$, $E_2=\GE\cap N(v_m)\setminus\WE$ and the Wardable edges adjacent to $v_m$, $E_3=\WE\cap N(v_m)$. By the choice of ordering it holds that $\abs{E_3}\le 2$. We introduce the shorthand notation $G_{E_i}=\prod_{e\in E_i} G_e$ and use the general H\"older inequality for any collection of random variables $\{ X_A\}$ and $\{Y_A\}$ indexed by some arbitrary index set $\mathcal{A}$ 
\[
\norm[3]{\sum_{A\in\mathcal A} \abs{X_A Y_A}}_q \le \norm[3]{\sum_{A\in\mathcal A}\abs{X_A}}_{q_1} \abs{\mathcal A}^{1/q_2} \max_{A\in\mathcal A} \norm{Y_A}_{q_2}, \qquad \frac{1}{q}=\frac{1}{q_1}+\frac{1}{q_2}
\]
to compute
\begin{equation*}
\begin{split}
&\norm[3]{\sum_{a_{v_1},\dots,a_{v_{m-1}}} \abs{G_{E_1}}\sum_{a_{v_m}} \abs{G_{E_2} G_{E_3}} }_q \\
&\qquad\le N^{(m-1)/q_2} \norm[3]{\sum_{a_{v_1},\dots,a_{v_{m-1}}} \abs{G_{E_1}}}_{q_1}\max_{a_1,\dots,a_{v_{m-1}}} \Bigg( \norm[3]{\sum_{a_{v_m}}\abs{G_{E_3}}}_{2q_2} N^{1/2q_2} \max_{a_{v_m}} \norm{G_{E_2}}_{2q_2} \Bigg),
\end{split}
 \end{equation*}
where we choose $1/q=1/q_1 + 1/q_2$ in such a way that $q_2\ge p/c\epsilon$. Since $\abs{E_3}\le 2$ we can use \eqref{ward identity} to estimate 
\[\norm[3]{\sum_{a_{v_m}} \abs{G_{E_3}} }_{2q_2}\le N (\psi_{2q_2}')^{\abs{E_3}}\le N (\psi+\psi_{2q_2}')^{\abs{E_3}}  \]
and it thus follows from 
\[ 
\norm{G_{E_2}}_{2q_2} \le \prod_{e\in E_2} \norm{G_e}_{2\abs{E_2}q_2} = \norm{G-M}_{2\abs{E_2}q_2}^{\abs{E_2\cap\GE_{g-m}}} \norm{G}_{2\abs{E_2}q_2}^{\abs{E_2\setminus\GE_{g-m}}}
 \]
that 
\begin{equation}\begin{split}\label{Holder induction}
&\norm[3]{\sum_{a_{v_1},\dots,a_{v_{m-1}}} \abs{G_{E_1}}\sum_{a_{v_m}} \abs{G_{E_2} G_{E_3}} }_q \\
&\quad\le N^{\epsilon/c} \norm[3]{\sum_{a_{v_1},\dots,a_{v_{m-1}}} \abs{G_{E_1}}}_{q_1} N(\psi+\psi_{q'}')^{\abs{E_3}} (\psi+\psi_{q'}'+\psi_{q'}'')^{\abs{E_2\cap \GE_{g-m}}}(1+\norm{G}_{q'})^{\abs{E_2}} \end{split}
\end{equation}
for $q'\ge 2q_2\abs{\GE}$. By using \eqref{Holder induction} inductively  $m=\abs{V}\le cp$ times it thus follows that 
\[ 
\norm[3]{\bigg(\prod_{v\in V} \sum_{a_v\in J}\bigg) \prod_{e\in\GE} G_e }_1 \le N^{p\epsilon} N^{\abs{V}} (\psi+\psi_{q'}')^{\abs{\WE}} (\psi+\psi_{q'}'+\psi_{q'}'')^{\abs{\GE_{g-m}}} \big(1+\norm{G}_{q'}\big)^{\abs{\GE}},
 \]
proving the lemma. 
\end{proof}

\begin{lemma} 
\label{lmm:expansion of coefficient}
For the coefficient in \eqref{PP resolution} we have the expansion
\begin{equation} \label{coef exp eq}
\frac{\braket{\vb^{(B)}\vp\vf(R\vb^{(B')})}\braket{\vl^{(B')},\overline{\vl^{(B)}}}}{\braket{\overline{\vb^{(B)}},\overline{\vl^{(B)}}}\braket{\vl^{(B')},\vb^{(B')}}}= c \sigma\norm{F}
\braket{\abs{\vm}^{-2}\vf^2}+ \ord(\rho+{\eta/\rho}), \end{equation}
for some $\abs{c}\sim 1$, provided $\lVert B^{-1}\rVert_{\infty \to\infty} \ge C$ for some  large enough constant $C>0$.
\end{lemma}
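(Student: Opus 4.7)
The plan is to prove the lemma by direct substitution of explicit approximations for the eigenvectors of $B$ and $B'$ near the cusp and then exhibiting a precise cancellation of all normalization constants, leaving exactly the claimed expression.

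First, I would reduce the problem to a canonical operator. By Lemma~\ref{lemma B inverse}, the hypothesis $\norm{B^{-1}}_{\infty\to\infty}\ge C$ rules out the cases $R=T$ and $R=T^t$ and forces $B = 1-\diag(\vm^{\#_1}\vm^{\#_2})S$ with $\vm^{\#_1}\vm^{\#_2}\in\{\abs{\vm}^2,\vm^2,\overline{\vm}^2\}$, and the analogous reduction applies to $B'$. Since Proposition~\ref{prop mde}(a) and item~\eqref{prop m comp} in the proof of \eqref{cusp FA} yield $\vm=\vp\abs{\vm}+\ord(\rho)$, each of the three possible products differs from $\abs{\vm}^2$ by $\ord(\rho)$. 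Thus both $B$ and $B'$ coincide with the single ``canonical'' operator $B_0\defeq 1-\diag(\abs{\vm}^2)S$ up to an $\ord(\rho)$ perturbation, and the nine case distinctions collapse to one at leading order.

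Second, I would extract the eigenvectors of $B_0$ from the saturated self-energy operator $F$ with kernel $F_{ij}=\abs{m_i}s_{ij}\abs{m_j}$. The substitutions $\vb=\abs{\vm}\vv$ and $\vl=\vv/\abs{\vm}$ reduce the right/left eigenvalue equation $B_0\vb=\beta\vb$, $B_0^{\ast}\vl=\beta\vl$ to $F\vv=(1-\beta)\vv$. Consequently the smallest-modulus eigenvalue of $B_0$ is $\beta_0=1-\norm{F}$, with right and left eigenvectors proportional to $\abs{\vm}\vf_0$ and $\vf_0/\abs{\vm}$, respectively, where $\vf_0$ is the Perron--Frobenius vector of $F$. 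By \cite[Corollary~5.2]{1804.07752}, $\vf_0$ equals $\vf/\norm{\vf}$ up to $\ord(\rho+\eta/\rho)$. Since the remaining spectrum of $B_0$ is separated from $\beta_0$ by an $\ord(1)$ gap in the cusp regime, standard analytic perturbation theory applied to the $\ord(\rho)$ perturbation $B-B_0$ (resp.~$B'-B_0$) produces
\[
\vb^{(B)} = c_B\,\abs{\vm}\vf + \ord(\rho+\eta/\rho),\qquad
\vl^{(B)} = c'_B\,\vf/\abs{\vm} + \ord(\rho+\eta/\rho),
\]
with analogous formulas for the primed objects and nonzero scalars $c_B,c'_B,c_{B'},c'_{B'}$.

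Third, I would substitute these expansions into the four inner products in \eqref{coef exp eq}. Using that $\vf$ and $\abs{\vm}$ are real-valued and that $S(\abs{\vm}\vf)=F\vf/\abs{\vm}=\norm{F}\vf/\abs{\vm}+\ord(\rho+\eta/\rho)$, the numerator produces the crucial factor $\braket{\vp\vf^3}=\sigma$:
\[
\braket{\vb^{(B)}\vp\vf\,(S\vb^{(B')})} = c_Bc_{B'}\,\norm{F}\sigma\,+\,\ord(\rho+\eta/\rho),
\]
while the other three inner products reduce to $\overline{c'_{B'}c'_B}\braket{\abs{\vm}^{-2}\vf^2}$, $c_B\overline{c'_B}\braket{\vf^2}$, and $\overline{c'_{B'}}c_{B'}\braket{\vf^2}$, each up to an additive error $\ord(\rho+\eta/\rho)$. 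The four normalization constants cancel exactly, yielding
\[
\text{Coef} \;=\; \frac{\norm{F}\sigma\,\braket{\abs{\vm}^{-2}\vf^2}}{\braket{\vf^2}^{2}}+\ord(\rho+\eta/\rho),
\]
which is precisely \eqref{coef exp eq} with $c=\braket{\vf^2}^{-2}$, a quantity satisfying $\abs{c}\sim 1$ since $\vf,\abs{\vm}\sim 1$.

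The main technical obstacle will be verifying that the $\ord(\rho+\eta/\rho)$ error terms propagate \emph{additively} through the nonlinear combination of four inner products, rather than only relatively; this matters because $\sigma$ itself vanishes at an exact cusp and so relative bounds would be useless. The resolution is that each denominator factor is bounded below by an $\ord(1)$ quantity (since $\braket{\vf^2}\sim 1$), so the quotient inherits an additive error of the same order, while the $\ord(\rho+\eta/\rho)$ remainders in the individual eigenvectors only contaminate $\ord(1)$ matrix elements and thus yield additive errors of the correct size.
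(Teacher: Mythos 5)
Your proposal is correct and follows essentially the same route as the paper's proof: reduce to $R=R'=S$ via Lemma \ref{lemma B inverse}, observe that all conjugation variants of $B$ agree with $\abs{M}(1-F^{(S)})\abs{M}^{-1}$ up to $\ord(\rho)$, use the spectral gap of $F^{(S)}$ to get $\vb^{(B)}=\abs{\vm}\vf^{(S)}+\ord(\rho)$ and $\vl^{(B)}=\abs{\vm}^{-1}\vf^{(S)}+\ord(\rho)$, substitute, and finally replace $\vf^{(S)}$ by $\vf/\norm{\vf}+\ord(\eta/\rho)$. Your explicit tracking of the normalization constants and the remark on additive error propagation through the quotient are sound refinements of the same argument.
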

\begin{proof}Recall from the explanation after \eqref{PP resolution} that $R'=S,T,T^t$ if $R=S,T^t,T$, respectively. As we saw in the proof of Lemma~\ref{lemma B inverse}, in the case $R=T,T^t$  in the complex Hermitian symmetry class, the operator $B$ as well as $B'$ has a bounded inverse. Since we assume that $\norm[0]{B^{-1}}_{\infty\to\infty}$ is large, we have $R=R'=S$, which also includes the real symmetric symmetry class. In particular, we also have $\norm[0]{(B')^{-1}}_{\infty\to\infty}\ge C$ and all subsequent statements hold simultaneously for $B$ and $B'$. We call $\vf^{(S)}$ the normalised eigenvector corresponding to the eigenvalue with largest modulus of $F^{(S)}\defeq \abs{M}S\abs{M}$, recalling $M=\diag(\vm)$.  Since $B=\abs{M}(1-F^{(S)} + \ord(\rho))\abs{M}^{-1}$ we can use perturbation theory of $F^{(S)}$ to analyse spectral properties of $B$. In particular, we find
\begin{equation}\begin{split} \label{perturbation of Bs} \vb^{(B)}&= \abs{M}\vf^{(S)} + \ord(\rho),\qquad \vl^{(B)}=  \abs{M}^{-1}\vf^{(S)}+ \ord(\rho),
\\
B^{-1}Q_B&=\abs{M}\big(1-F^{(S)} \big)^{-1}(1-P_{\vf^{(S)}})\abs{M}^{-1}+ \ord(\rho), \end{split}\end{equation}
where $P_{\vf^{(S)}}$ is the orthogonal projection onto the $\vf^{(S)}$ direction. The error terms are measured in $\norm{\cdot}_\infty$-norm. For the expansions \eqref{perturbation of Bs} we used that $F$ has a spectral gap in the sense that 
\[
\Spec(F^{(S)} / \norm[0]{F^{(S)}}) \subseteq [-1+c,1-c] \cup\{1\},
\]
for some  constant $c>0$, depending only on model parameters. By using \eqref{perturbation of Bs} we see that the lhs.~of \eqref{coef exp eq} becomes $\pm\braket{(\vf^{(S)})^2 \vp \vf}\norm[0]{F^{(S)}}\braket{\abs{\vm}^{-2}(\vf^{(S)})^2}+\landauO{\rho}$. To complete the proof of the Lemma we note that $\vf^{(S)}=\vf/\norm{\vf}+\landauO{\eta/\rho}$ according to \cite[Eq.~(5.10)]{1804.07752}.
\end{proof}

\printbibliography

\end{document}